\theoremstyle{plain}
\newtheorem{theorem}[subsubsection]{Theorem}
\newtheorem*{theorem*}{Theorem}
\newtheorem{proposition}[subsubsection]{Proposition}
\newtheorem*{proposition*}{Proposition}
\newtheorem{lemma}[subsubsection]{Lemma}
\newtheorem*{lemma*}{Lemma}
\newtheorem*{fact*}{Fact}
\newtheorem{corollary}[subsubsection]{Corollary}
\newtheorem*{corollary*}{Corollary}
\theoremstyle{definition}
\newtheorem{definition}[subsubsection]{Definition}
\theoremstyle{remark}
\newtheorem*{remark}{Remark}
\renewcommand{\comment}[1] {  }
\DeclareFontFamily{OT1}{rsfs}{}
\DeclareFontShape{OT1}{rsfs}{n}{it}{<-> rsfs10}{}
\DeclareMathAlphabet{\mathscr}{OT1}{rsfs}{n}{it}
\newcommand{\from}{\leftarrow}
\newcommand{\Rep}{\mathrm{Rep}}
\newcommand{\Res}{\mathrm{Res}}
\newcommand{\Z}{\mathbb{Z}}
\newcommand{\CC}{\mathbb{C}}
\newcommand{\PP}{\mathbb{P}}
\newcommand{\RR}{\mathbb{R}}
\newcommand{\QQ}{\mathbb{Q}}
\newcommand{\Hom}{\operatorname{Hom}}
\newcommand{\Aut}{{\operatorname{Aut}}}
\newcommand{\varchi}{\mathcal{X}}
\newcommand{\Gm}{\mathbb{G}_m}
\newcommand{\Ga}{\mathbb{G}_a}
\newcommand{\GL}{\operatorname{GL}}
\newcommand{\PGL}{\operatorname{PGL}}
\newcommand{\SL}{\operatorname{SL}}
\newcommand{\SO}{{\operatorname{SO}}}
\newcommand{\Gal}{\operatorname{Gal}}
\newcommand{\tr}{\operatorname{tr}}
\newcommand{\Vol}{\operatorname{Vol}}
\newcommand{\diag}{{\operatorname{diag}}}
\newcommand{\reg}{{\operatorname{reg}}}
\newcommand{\Id}{\operatorname{Id}}
\newcommand{\Kl}{{\mathcal{K}}}
\newcommand{\pinfty}{{\infty +}}
\begin{document}
\numberwithin{equation}{section}
\setcounter{tocdepth}{1}
\title[Beyond Endoscopy for the relative trace formula I]{Beyond Endoscopy for the relative trace formula I: local theory}
\author{Yiannis Sakellaridis}

\begin{abstract}
For the group $G=\PGL_2$ we prove nonstandard matching and the fundamental lemma between two relative trace formulas: on one hand, the relative trace formula of Jacquet for the quotient $T\backslash G/T$, where $T$ is a nontrivial torus; on the other, the Kuznetsov trace formula with \emph{nonstandard} test functions. The matching is nonstandard in the sense that orbital integrals are related to each other not one-by-one, but via an explicit integral transform. These results will be used in \cite{SaBE2} to compare the corresponding global trace formulas and reprove the celebrated result of Waldspurger \cite{Waldspurger-torus} on toric periods. 
\end{abstract}

\maketitle
\tableofcontents

\section{Introduction}

\subsection{} With the present paper I launch an investigation of new ways to compare trace formulas in the field of automorphic forms, as a means of proving explicit relations between the spectra of two relative trace formulas (RTFs) -- which is translated to relations between periods of automorphic forms. Such relations are predicted, in great generality, by the generalization of the Gross--Prasad--Ichino--Ikeda conjecture \cite{II} which is to appear in my joint work with Venkatesh \cite{SV}, and while the general conjecture is not yet very detailed, many special cases suggest the relevance of the RTF in formulating a detailed conjecture: the conjecture is not really about a \emph{single} pair $(G,H)$ consisting of a group and a subgroup, not even about the quotient space $X=H\backslash G$, but it involves certain \emph{``pure inner forms''} of the pair $(G,H)$ which can  be understood in terms of the algebraic stack $H\backslash G/H$ (equivalently: $X\times X/G$). On the other hand, the relative trace 
formula, as currently being used following the paradigm of endoscopy for the Arthur--Selberg trace formula, seems to have no hope of proving relations in such generality, for reasons that will be explained below. Thus, we need new ways to compare trace formulas, which is what I am doing in the present paper, for two periods about which virtually everything is already known: the Whittaker period and the torus period, both for the group $\PGL_2$. The continuation of the present paper in \cite{SaBE2} will provide a new proof of the celebrated result of Waldspurger on toric periods of automorphic forms.

The topic of interest should not be seen as restricted to the study of periods and hence as something separate from the mainstream Langlands program. Indeed, the relative trace formula should be considered a potential generalization of the Arthur-Selberg trace formula (before stabilization), and the nonstandard comparison performed here is certainly within the spirit of the ambitious ``beyond endoscopy'' program proposed by Langlands \cite{LaBE}. The difference lies in the level of ambition and difficulty: while Langlands wants to filter out only part of the spectrum of a trace formula by taking residues of $L$-functions, in order to detect the image of any chosen functorial lift, here I perform a full comparison of two trace formulas; thus, the spectral content is essentially dictated by the $L$-groups of the pertinant spherical varieties \cite{SV}. On the other hand, many other features of the ``beyond endoscopy'' project are present. In particular, $L$-functions are inserted via nonstandard (not 
compactly supported)
 test functions; this is essential in view of the fact that, say, the Whittaker period and the torus period correspond to different $L$-functions; thus, a comparison of the corresponding RTFs using standard test functions would be impossible.

Moreover, the comparison itself is \emph{nonstandard}: Unlike the paradigm of endoscopy, there is no matching of orbits such that the corresponding orbital integrals be preserved by the matching of functions; instead, matching is accomplished via a certain integral transform on the set of orbital integrals. While the existence of such a transform is more or less predicted by spectral matching, and can in principle be calculated whenever one has enough local information about (generalized) characters, it is important that for the example at hand \emph{we are able to give an explicit formula for it}, in terms of Fourier transforms and birational maps. This is extremely important for the global story: since the comparison does not preserve orbital integrals, one will need to prove some kind of Poisson summation formula in order to obtain an identity between matching trace formulas; clearly, a Poisson summation formula for an arbitrary integral transform is no trivial thing -- it might prove to be a 
reformulation of 
the functional equation of some difficult $L$-function. In my understanding, noone yet has a conceptual reason why such a Poisson summation formula should be provable for the integral transforms that will appear in the ``beyond endoscopy'' project; thus, understanding instances where the transforms between spaces of orbital integrals are tractable is an important task.

\subsection{} Now I summarize the contents of the present paper. Throughout, $G$ denotes the algebraic group $\PGL_2$ over a local field $F$, or the $F$-points of $G$. The goal is to compare the space of orbital integrals for the trace formula corresponding to the quotient $T\backslash G/T$, where $T$ is a split or nonsplit torus, and the space of orbital integrals for the Kuznetsov trace formula -- this is the trace formula for the quotient $(N,\psi)\backslash G/(N,\psi)$, where $N$ is a unipotent subgroup and $\psi$ a nontrivial
character of the $F$-points of $N$.

As explained before, such a comparison should not be possible for reasons related to different special values of $L$-functions appearing in global periods. Therefore, we need to modify the space of test functions -- the standard choice is to take Schwartz functions on $G$, but now our test functions for the Kuznetsov trace formula will have nontrivial asymptotics at infinity. The best way to describe this is to think of the Kuznetsov orbital integrals not as distributions on $G$, but as $G$-invariant hermitian forms on $\mathcal S(N\backslash G ,\psi)$. Then one should replace the standard Schwartz space $\mathcal S(N\backslash G ,\psi)$ by sections which have a certain prescribed behavior at ``infinity'', where ``infinity'' means the partial compactification of $N\backslash G$ by $\PP^1$. I describe these nonstandard sections in \S \ref{ssnonstandard}.

The affine quotient $N\backslash G\sslash N$ is isomorphic to $\mathbb A^1$ (one-dimensional affine space), and so is the quotient $T\backslash G\sslash T$. We will denote both by $\mathcal B$, the ``base'' of our quotient stacks. These quotients rougly parametrize orbits, and with suitable conventions that are explained in sections \ref{sec:babycase}, \ref{sec:torus} and \ref{sec:Kuznetsov} we understand the orbital orbital integrals as densely defined functions on $\mathcal B$. Here ``densely'' means that we only consider regular orbital integrals, where the stabilizers are trivial. Thus, we end up with two spaces of densely defined functions on $\mathcal B$, the space $\mathcal S(\mathcal Z)$ (from orbital integrals for $T\backslash G/T$ equipped with standard test functions) and the space $\mathcal S(\mathcal W)$ (from orbital integrals for the Kuznetsov quotient with nonstandard test functions). 

Clearly (for anyone who has some experience with those trace formulas), these spaces of functions are not even closely related to each other as spaces of functions, that is: there is no orbit-by-orbit matching of the two trace formulas. However, the first main result (Theorem \ref{matching}) is that there is an explicit integral transform which takes one to the other. To state it, let $\mathcal F$ denote usual Fourier transform in one variable (with respect to characters and measures that are described in the text), let $\eta$ be the character of $F^\times$ corresponding to the splitting field of $T$, and let $\iota$ be the following operator on functions on $\mathcal B$:
$$\iota(f) = \frac{\eta(\bullet)}{|\bullet|} f\left(\frac{1}{\bullet}\right) .$$
Define the operator: 
$$\mathcal G = \mathcal F \circ \iota \circ \mathcal F.$$
\begin{theorem*}[``Matching'' \ref{matching}]
 The operator $|\bullet|\mathcal G$ is an isomorphism: $$\mathcal S(\mathcal Z)\to \mathcal S(\mathcal W).$$
\end{theorem*}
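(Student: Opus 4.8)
The plan is to reduce the statement to the explicit descriptions of the two spaces established in \S\ref{sec:torus} and \S\ref{sec:Kuznetsov}, and then to verify, by a purely local Fourier/Mellin computation, that $|\bullet|\mathcal G$ carries the first onto the second. First I would take in hand the description of $\mathcal S(\mathcal Z)$ as a space of functions on $\mathcal B\setminus\{b_0\}$, where $b_0$ is the point over which a regular torus orbit degenerates: such a function is smooth (locally constant in the non-archimedean case) away from $b_0$, has controlled behaviour at the boundary point $\infty$ of $\mathcal B\hookrightarrow\PP^1$, and near $b_0$ carries a prescribed germ coming from the special fibre of $T\backslash G/T$ — its precise shape, which depends on whether $T$ is split, being recorded in \S\ref{sec:torus}. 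The description of $\mathcal S(\mathcal W)$ is of the same kind — a prescribed germ at its own bad point — but the behaviour imposed at $\infty$ is the enlarged, nonstandard asymptotic profile of \S\ref{ssnonstandard}, which is exactly what distinguishes $\mathcal S(\mathcal W)$ from the orbital integrals of ordinary Schwartz functions. Once both spaces are pinned down as conditions on germs and asymptotics, the theorem becomes analytic.

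The engine of the argument is how the two building blocks of $\mathcal G$ act on such data. The twisted inversion $\iota$ is an involution, $\iota^2=\mathrm{id}$, exchanging $0$ and $\infty$ and twisting by $\eta(\bullet)|\bullet|^{-1}$; the additive Fourier transform $\mathcal F$, by Tate's local functional equation, converts an asymptotic expansion at $0$ into smoothness-plus-rapid-decay and back. On the Mellin transform $\widehat f(\omega)=\int_{F^\times}f(x)\,\omega(x)\,d^\times x$ these act by $\widehat{\mathcal Ff}(\omega)=\gamma(\omega^{-1}|\bullet|,\psi)\,\widehat f(\omega^{-1}|\bullet|)$ and $\widehat{\iota f}(\omega)=\widehat f(\eta\omega^{-1}|\bullet|)$, so composing the three factors of $\mathcal G$ and the outer multiplication by $|\bullet|$ gives
\begin{equation*}
\widehat{\,|\bullet|\mathcal Gf\,}(\omega)=\gamma(\omega^{-1},\psi)\,\gamma(\eta\omega^{-1},\psi)\,\widehat f(\eta\omega^{-1}).
\end{equation*}
Two features of this formula drive the proof. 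First, $|\bullet|\mathcal G$ is, up to the involution $\omega\mapsto\eta\omega^{-1}$, multiplication by a product of \emph{two} local $\gamma$-factors, one of them twisted by $\eta$; these are the local incarnations of the $L$-values $L(\tfrac12,\pi)$ and $L(\tfrac12,\pi\otimes\eta)$ (equivalently $L(\tfrac12,\pi_E)$, with $E$ the quadratic algebra attached to $\eta$) that multiply the torus period in Waldspurger's formula — which is the structural reason the nonstandard test functions are indispensable. Second, since $\iota^2=\mathrm{id}$ and $\mathcal F^2$ is the reflection $f\mapsto f(-\bullet)$, a short computation (or the $\gamma$-factor functional equation) gives $\mathcal G^2=\eta(-1)\cdot\mathrm{id}$; hence $|\bullet|\mathcal G$ is invertible wherever all the operators are defined, with inverse $\eta(-1)\,\mathcal G\circ(\text{division by }|\bullet|)$, an operator of the same shape. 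So invertibility will be formal, and the real content is the inclusion $|\bullet|\mathcal G\bigl(\mathcal S(\mathcal Z)\bigr)\subseteq\mathcal S(\mathcal W)$ together with the analogous inclusion for the inverse.

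To prove the inclusion I would march along the composition $f\mapsto\mathcal Ff\mapsto\iota\mathcal Ff\mapsto\mathcal F\iota\mathcal Ff\mapsto|\bullet|\mathcal F\iota\mathcal Ff$, keeping track at each stage of the germ at the interior bad point and of the expansions at $0$ and $\infty$. The inner $\mathcal F$ is harmless because a standard torus orbital integral is compactly supported off $b_0$, so its Fourier transform is smooth with an expansion at $\infty$ governed by the germ at $b_0$; $\iota$ only relabels and twists; and the outer $\mathcal F$, which may require a principal-value regularization, is legitimate precisely because the asymptotics produced one step earlier are of the permitted type. The interior germ survives because $\mathcal F$ and $\iota$ act near $b_0$ through explicit local operators that match the two germ formulas against each other, while the boundary behaviour of the output is read off from the displayed $\gamma$-factor formula — the poles of the two $\gamma$-factors, together with the substitution $\omega\mapsto\eta\omega^{-1}$, produce exactly the prescribed $\eta$-twisted expansions at $0$ and at $\infty$. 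Surjectivity is the same bookkeeping run through the explicit inverse.

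The hard part, I expect, is the boundary analysis at $\infty$: one must show that $\mathcal F\circ\iota\circ\mathcal F$ applied to the standard decay of a torus orbital integral produces \emph{exactly} the nonstandard asymptotic profile fixed in \S\ref{ssnonstandard}, with the correct leading constants, and that nothing outside it can arise. Concretely this reduces to a local computation with Fourier transforms of functions of the form $\eta(x)|x|^{s}$ times a smooth bump, together with careful control of the principal-value regularizations and of the logarithmic terms that may be shed from $b_0$ (and, in the archimedean case, of continuity in the Schwartz seminorms). By comparison the germ matching at $b_0$ and the formal invertibility are routine once this boundary matching — which is where the design of the nonstandard sections is vindicated — has been carried out.
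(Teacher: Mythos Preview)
Your overall strategy—trace germs and asymptotics through $\mathcal F\circ\iota\circ\mathcal F$ and use an explicit inverse for surjectivity—is the right one, and it is in spirit what the paper does. But your proposal misreads the local geometry of both spaces, and this obscures precisely the step that carries all the content.

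First, $\mathcal S(\mathcal Z)$ has \emph{two} irregular points on $\mathcal B$, at $\xi=0$ and $\xi=-1$ (Proposition~\ref{stackisomorphism} and \S\ref{sec:torus}); elements are Schwartz at~$\infty$. On the Kuznetsov side, the nonstandard asymptotics of \S\ref{ssnonstandard} live over $\PP^1\times X$ upstairs, which on the base corresponds to $\xi=0$, not to $\infty$: by Corollary~\ref{XinW} the resulting germ at $0$ is exactly $|\bullet|\cdot\mathcal S(\mathcal X)$. The stalk of $\mathcal S(\mathcal W)$ at $\xi=\infty$ is something different: it is the \emph{Kloosterman germ} of Proposition~\ref{germs-Kloosterman}, arising from the diagonal in $X\times X$. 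Your proposal collapses the two bad points of $\mathcal Z$ into a single ``$b_0$'' and puts the nonstandard profile at the wrong end of $\mathcal W$; as a result it never isolates what actually has to be proved.

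The paper's proof is organized by the filtrations
\[
0\to \mathcal S(\mathcal X)\to \mathcal S(\mathcal Z)\to \mathcal S(\mathcal Z)/\mathcal S(\mathcal X)\to 0,
\qquad
0\to |\bullet|\mathcal S(\mathcal X)\to \mathcal S(\mathcal W)\to \mathcal S(\mathcal W)_\infty\to 0,
\]
where $\mathcal S(\mathcal X)\subset\mathcal S(\mathcal Z)$ is the subspace of elements smooth at $\xi=-1$. On this subspace the baby case (Proposition~\ref{Fourier-baby}) already says $\mathcal G$ is an automorphism of $\mathcal S(\mathcal X)$, hence $|\bullet|\mathcal G:\mathcal S(\mathcal X)\xrightarrow{\sim}|\bullet|\mathcal S(\mathcal X)=\mathcal S(\mathcal W^+)$. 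Your Mellin/$\gamma$-factor identity is exactly the computation underlying \emph{that} step (it is \S2.8), but it does not touch the quotients.

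The missing ingredient is the matching of quotients: one must carry the germ of $\mathcal S(\mathcal Z)$ at $\xi=-1$ onto the Kloosterman stalk $\mathcal S(\mathcal W)_\infty$. Concretely, a function with a baby-case singularity at $-1$ has Fourier transform of the form $\psi(\xi)\cdot|\xi|^{-1}h(\xi^{-1})$ near $\infty$ (Corollary~\ref{corollaryFourier} plus translation by $-1$), so after $\iota$ its germ at $0$ is $\psi(1/\xi)\,h(\xi)$ with $h$ smooth. The outer $\mathcal F$ then produces the oscillatory integral
\[
\int \psi\!\left(x^{-1}-\xi x\right)\,dx,
\]
and the actual work is to identify this (times $|\xi|$) with the Kloosterman germ of Proposition~\ref{germs-Kloosterman}: a direct support estimate in the nonarchimedean case, and stationary phase (\S\ref{ssstationary}, following \cite{JaKl2}) in the archimedean. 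This oscillatory-integral identification—not ``Fourier transforms of $\eta(x)|x|^s$ times a bump''—is the hard part, and it does not drop out of the Mellin picture. Your claim that ``the interior germ survives'' under $\mathcal F$ and $\iota$ acting locally near $b_0$ is the opposite of what happens: the germ at $-1$ is \emph{transported} to $\infty$, and that transport is exactly the Kloosterman computation.
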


The second main result, for $F$ nonarchimedean and $T$ unramified over $\QQ_p$ or $\mathbb F_p((t))$ is a fundamental lemma for elements of the Hecke algebra. Here we need to specify (\S \ref{ssbasic}) ``basic vectors'' $f^0_{\mathcal Z}$, $f^0_{\mathcal W}$ for the spaces $\mathcal S(\mathcal Z)$, $\mathcal S(\mathcal W)$, which are obtained by the orbital integrals of certain $K:=G(\mathfrak o)$-invariant functions ``upstairs''. For the torus trace formula this will be the standard unramified test function, but for the Kuznetsov formula it has to be nonstandard, and tailored in order to produce the correct $L$-value $L(\pi,\frac{1}{2})L(\pi\otimes \eta,\frac{1}{2})$ on the spectral side of the trace formula.

Acting by an element $h$ of the spherical Hecke algebra $\mathcal H(G,K)$ on those functions upstairs, we denote their orbital integrals by $h\star f^0_{\mathcal Z} \in \mathcal S(\mathcal Z)$, $h\star f^0_{\mathcal W} \in \mathcal S(\mathcal W)$. The fundamental lemma states that the above integral transform carries one to the other:
\begin{theorem*}[``Fundamental Lemma'' \ref{FL}]
 For any $h\in \mathcal H(G,K)$ the operator $|\bullet|\mathcal G$ carries $h\star f^0_{\mathcal Z}$ to $h\star f^0_{\mathcal W}$.
\end{theorem*}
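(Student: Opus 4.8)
\emph{The plan.} I would deduce the Fundamental Lemma from two facts: the \textbf{base case} $h=1_K$, that $|\bullet|\mathcal G$ sends $f^0_{\mathcal Z}$ to $f^0_{\mathcal W}$; and the \textbf{Hecke-equivariance} of $|\bullet|\mathcal G$, meaning that it intertwines the two actions of $\mathcal H(G,K)$ on $\mathcal S(\mathcal Z)$ and $\mathcal S(\mathcal W)$ coming from convolution upstairs. Granting both, the general case is formal: the assignment $h\mapsto h\star f^0_{\mathcal Z}$ (``convolve by $h$ on $G$, then take orbital integrals'') is a morphism of $\mathcal H(G,K)$-modules $\mathcal H(G,K)\to\mathcal S(\mathcal Z)$, and likewise $h\mapsto h\star f^0_{\mathcal W}$; post-composing the first with $|\bullet|\mathcal G$ produces another morphism $\mathcal H(G,K)\to\mathcal S(\mathcal W)$, and a morphism out of the free rank-one module $\mathcal H(G,K)$ is determined by the image of $1_K$, which the base case pins down. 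Equivalently, since $\mathcal H(\PGL_2,K)\cong\mathbb C[T_\varpi]$ is a polynomial ring in one variable, one may check equivariance only for $h=T_\varpi$ and then induct on its degree.

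\emph{The base case.} This I would handle by explicit computation on $\mathcal B\cong\mathbb A^1$. The orbital integrals of $1_K$ for $T\backslash G/T$ are classical: in the normalizations of the text, $f^0_{\mathcal Z}$ is a locally constant, compactly supported function, essentially the characteristic function of $\mathfrak o$ corrected by an explicit local factor (involving $|\bullet|$, and in the inert case $\eta$) that records the split/nonsplit type of $T$. On the other side, $f^0_{\mathcal W}$ is, by construction in \S\ref{ssbasic}, the orbital-integral profile of the nonstandard basic function chosen precisely so that its spectral transform equals $L(\pi,\tfrac12)L(\pi\otimes\eta,\tfrac12)$ times the Plancherel density; this yields an explicit — and, because of the prescribed behavior along the $\PP^1$ at infinity, not compactly supported — formula for $f^0_{\mathcal W}$ on $\mathcal B$. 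It then remains to apply $|\bullet|\mathcal G=|\bullet|\circ\mathcal F\circ\iota\circ\mathcal F$ to the explicit $f^0_{\mathcal Z}$: the two Fourier transforms and the twisted inversion $\iota$ are evaluated by Tate-type local integrals, the inversion being what converts the compactly supported profile near $0$ into the prescribed tail near $\infty$, and one checks that the outcome agrees with $f^0_{\mathcal W}$. This is a finite calculation with $\mathfrak o^\times$-invariant functions on $F^\times$ and their Fourier transforms, the only essential input being the shape of $f^0_{\mathcal W}$ from \S\ref{ssbasic}.

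\emph{Hecke-equivariance, and the main difficulty.} The natural tool here is the Mellin transform in the coordinate on the regular locus $\mathcal B^{\reg}$: after it, elements of $\mathcal S(\mathcal Z)$ and $\mathcal S(\mathcal W)$ become functions of an unramified character $\chi$ of $F^\times$, holomorphic off a finite set, and on \emph{both} sides the Hecke action becomes multiplication by the ordinary Satake transform $\widehat h$, read through the respective relative-Satake (= spectral) parametrization — the statement that, on the generic part, orbital integrals form a rank-one module over $\mathcal H(G,K)$ generated by the basic vector. In the same picture $|\bullet|\mathcal G$ becomes the operator passing between the two relative-Satake normalizations: concretely, multiplication by a product of Tate $\gamma$-factors (one per Fourier transform) composed with the substitution induced by $\iota$, namely $\chi\mapsto\eta\,|\bullet|\,\chi^{-1}$. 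The point is that, once the $|\bullet|$-prefactor and the shifts hidden in the $\gamma$-factors are absorbed, this substitution acts on the Satake parameter $\alpha=\chi(\varpi)$ by the Weyl involution $\alpha\mapsto\alpha^{-1}$; since $\widehat h$ is a symmetric Laurent polynomial in $\alpha$, multiplication by $\widehat h$ commutes with the Mellin-side form of $|\bullet|\mathcal G$, which gives the equivariance. The main obstacle is exactly this step: controlling the Mellin-side description of $|\bullet|\mathcal G$ through the two Fourier transforms while keeping track of the nonstandard asymptotics of $\mathcal S(\mathcal W)$ at the $\PP^1$ boundary, and verifying that the Hecke action genuinely preserves that boundary behavior and carries through it as claimed — information that is really part of the analysis underpinning the Matching theorem \ref{matching}. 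If that proof is organized so that $|\bullet|\mathcal G$ is realized as $\mathcal M_{\mathcal W}^{-1}\circ(\gamma\text{-factors})\circ\mathcal M_{\mathcal Z}$ with a Weyl-invariant $\gamma$-factor operator, then the equivariance — hence, modulo the base case, the Fundamental Lemma — is essentially built in; otherwise it has to be extracted by the Mellin-transform bookkeeping sketched above.
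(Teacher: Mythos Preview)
Your strategy of splitting into ``base case plus Hecke equivariance'' is sound in principle, but the equivariance argument has a genuine gap: you are conflating two different Mellin-type transforms. The description of $\mathcal G$ by Tate $\gamma$-factors and the substitution $\chi\mapsto\chi^{-1}$ (from the baby case) is a statement about the \emph{$\xi$-Mellin transform} on $\mathcal B\simeq\mathbb A^1$, i.e.\ integration against characters of $F^\times$ acting on the coordinate $\xi$. Your claim that the Hecke action becomes multiplication by $\widehat h$ is a statement about the \emph{spectral} (Plancherel/relative Satake) decomposition. These do \emph{not} coincide: on the Kuznetsov side the orbital integrals of $1_{x_mK}\otimes 1_{y_0K}^-$ involve Kloosterman sums (see \S\ref{ssorbital-characteristic}), so the $\xi$-Mellin transform does not simply read off the Satake parameter. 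Your identification ``$\alpha=\chi(\varpi)$'' is exactly the unjustified step. To bridge the two transforms you would need explicit formulas for the Bessel functions on the Kuznetsov side and the relative spherical functions on the torus side, and then check that the bridge is compatible with $\mathcal G$ --- but that computation is essentially equivalent to the fundamental lemma itself. (The paper notes before the proof that this direct route via Casselman--Shalika-type formulas and explicit orbital integrals is possible but ``tedious and not particularly informative''.) A secondary issue: as the paper remarks, $\mathcal H(G,K)$ does not literally act on $\mathcal S(\mathcal Z)$ or $\mathcal S(\mathcal W)$; the action is through the Bernstein center on the unramified component, so your ``Hecke equivariance of $|\bullet|\mathcal G$'' needs to be formulated with care.

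The paper's proof is organized quite differently and avoids both the base-case computation and any abstract equivariance statement. It introduces two intermediate quotients $\mathcal Z_1=A\backslash G/(A,\eta)$ and $\mathcal W_1$ (Kuznetsov with a different splitting of the $L$-factor into the two copies) and proves the fundamental lemma for \emph{each} $h$ simultaneously, along the chain $\mathcal Z\leftrightarrow\mathcal Z_1\leftrightarrow\mathcal W_1\leftrightarrow\mathcal W$: the first step is Jacquet's fundamental lemma from \cite{JaW1}; the last step uses only commutativity of $\mathcal H(G,K)$ to shuffle the $L$-value Hecke series between the two factors; and the middle step is Hecke's unfolding (Fourier expansion along $N$), which reduces the comparison to the baby-case identity for $\mathcal G$ on $\Ga^2/\Gm$ --- this is where the transform $\mathcal G$ actually enters. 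So rather than proving the identity map is Hecke-equivariant, the paper exhibits $|\bullet|\mathcal G$ as the composite of three maps each of which manifestly commutes with the given $h$.
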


\subsection{} The results of this paper will be used in the sequel \cite{SaBE2} in order to reprove the theorem of Waldspurger \cite{Waldspurger-torus} on the Euler factorization of toric periods. This global application is far from a straightforward application of the local results; the first difficulty has to do with the fact that we do not have an orbit-by-orbit matching of trace formulas, and hence the matching of global trace formulas has to be proven by a quite nontrivial application of the Poisson summation formula. Other difficulties have to do with the fact that globally there are conconvergent Euler products, and we need to interpret them by analytic continuation; that is why in section \ref{sec:explicit} we introduce variations of our spaces by a complex parameter $s\in \CC$.

\subsection{} While I am optimistic about the possibility of generalizing such nonstandard comparisons to higher rank (and hence proving new period relations that generalize the results of Waldspurger), I should add a word of caution: The relations that we get here can be seen as reflections at the level of orbital integrals to well-known results ``upstairs'': on one hand, the proof of Waldspurger's results by Jacquet \cite{JaW1} relating orbital integrals for $T\backslash G/T$ to orbital integrals for the same quotient with a split torus; and on the other, the method of Hecke for calculating split torus periods via Fourier coefficients. I explain these relations in the proof of the fundamental lemma \ref{FL}. 

The point if the present paper is that while the work of Jacquet and Hecke does not generalize to all other periods one is interested in, the relations between orbital integrals might generalize. Of course, we will not be able to tell which is the case before studying many more examples and trying to find a pattern for nonstandard comparisons.

\subsection{Acknowledgements} I would like to thank Akshay Venkatesh for pointing my attention to his thesis \cite{V} as a possible source of ideas for attacking the period conjectures of \cite{SV}. I would also like to thank Joseph Bernstein, who taught me the correct way to think about several aspects of the relative trace formula. Finally, it is my pleasure to acknowledge the support of the Institute for Advanced Study via NSF grant DMS-0635607 during the spring semester of 2011, when this work was initiated, and the support of NSF grant DMS-1101471.

\subsection{Notation}\label{ssnotation} Notation is mostly local, redefined in every section. For convenience of the reader, we give an overview of the symbols that are most frequently used (and will be defined in the text, if nonstandard): 

\begin{description}
 \item $F$ is a local, locally compact field, $E$ a quadratic etale extension of $F$. The quadratic character of $F^\times$ associated to $E$ is denoted by $\eta = \eta_{E/F}$. If $F$ is nonarchimedean, we denote by $\mathfrak o$ its ring of integers, by $\varpi$ a uniformizer and by $q$ the order of its residue field. 

 \item We feel free to use the same symbol $Y$ to denote the variety $Y$ and its points over $F$, whenever this causes no confusion. For example, ``functions on $Y$'' means functions on the $F$-points of $Y$. When there is ambiguity, we will be denoting the latter by $Y(F)$.
 
\item We fix a measure $dx$ on $F$ as explained in \S \ref{ssTamagawa}, and a unitary complex character $\psi$ of $F$ with respect to which $dx$ is self-dual.

 \item For $p$-adic groups, the usual notion of ``smooth'' vectors and representations typically gives rise to inductive limits of Fr\'echet spaces. To achieve uniformity with the archimedean case, we describe in appendix \ref{app:almostsmooth} a notion of ``almost smooth'' vectors which gives rise to Fr\'echet space representations. For simplicity, we call these vectors ``smooth'' throughout the rest of the text and treat the archimedean and nonarchimedean cases together whenever possible, but the reader may ignore this and focus on smooth vectors in the traditional sense, replacing the Fr\'echet spaces that we consider with their corresponding limits of Fr\'echet spaces of their smooth vectors. 

 \item The notion of Schwartz functions on an open semialgebraic set of a real or $p$-adic manifold is defined in appendix \ref{app:almostsmooth}. When ``Schwartz function'' appears without specifying on which set, we mean Schwartz function on $F$.

 \item For spaces of smooth functions or sections of line bundles, we use the word ``stalk'' as in \S \ref{ssstalks} of the appendix, that is: an element of the stalk over a closed set is defined modulo Schwartz functions/sections on its complement -- \emph{not} modulo compactly supported functions/sections on its complement. In particular, the germ of a smooth function at a point is completely determined by its derivatives at that point.

 \item The symbols $\mathcal X,\mathcal Z, \mathcal W$ are reserved throughout the text for certain quotient stacks related, respectively, to the ``baby case'' of section \ref{sec:babycase}, the torus quotient of section \ref{sec:torus} or the Kuznetsov quotient of section \ref{sec:Kuznetsov}. Other than that, we define and redefine symbols locally, for instance the letter $X$ usually denotes some homogeneous space, which is changing through the text.

 \item $\mathcal B$ denotes the ``base'' of our quotient stacks, i.e.\ the associated GIT quotient. In fact, throughout our examples we have an isomorphism: $\mathcal B\simeq \Ga$, which in some cases is completely canonical and in others is obtained by some choices which we fix (cf.\ the remarks after Proposition \ref{stackisomorphism}).
For each quotient space that we are considering, the ``base'' $\mathcal B$ has a regular set $\mathcal B^\reg$ (different in each case). We will be using this notation when it is clear which quotient space we are referring to, and the notation $\mathcal B^\reg_{\mathcal X},\mathcal B^\reg_{\mathcal Z}$, etc.\ when we want to indicate the quotient space.

 \item $\mathcal M$ denotes the space of Schwartz measures on the points of a real or $p$-adic variety, as well as on quotient stacks (defined as spaces of coinvariants). For the notions of ``Schwartz'' (measures or functions) in the archimedean case, we point the reader to \cite{AG-Nash}. We explain the natural extension of this to spaces of ``almost smooth'' functions on $p$-adic varieties in appendix \ref{app:almostsmooth}. The reader will not miss any essential point, though, by thinking instead of compactly supported functions (the only mathematical problem being that those are not preserved by Fourier transforms, in the real case; also, notationally they are clumsier because they do not form Fr\'echet spaces).

 \item $\mathcal S$ denotes the space of Schwartz functions on the points of a real or $p$-adic variety, and also the space of functions on $\mathcal B^\reg$ obtained by orbital integrals of the test functions ``upstairs''. 

 \item $O_\bullet$ is used to denote ``regular'' orbital integrals, while $\tilde O_\bullet$ is used to denote invariant distributions supported over the irregular points of the base.
\end{description}

\section{A baby case.} \label{sec:babycase}

\subsection{} Before we work with non-commutative groups, we discuss the baby case of the relative trace formula for the variety $X:=\Ga$ of the group $G:=T:=\Gm$, in order to examine certain integral transforms which will be useful in the sequel. We will also discuss a non-split form of the quotient $X\times X/G$, which although not under the general formalism of the relative trace formula, will provide us with some necessary integral transforms.

\subsection{The split case} We let $V$ denote an 1-dimensional vector space over $F$, and $V^*$ its dual. In some of the calculations below, we will be identifying $V$ and $V^*$ with $F$ under the pairing $\left<x,y\right>=xy$. We let $G=T$ be the group $\Gm$ acting diagonally on $V\times V^*$. In what follows, we will use the symbols $G$ and $T$ interchangeably, choosing in each case the one that is most relevant for the applications of the baby case to later chapters.

The stack-theoretic quotient $V\times V^*/\Gm$ will be denoted by $\mathcal X$; its ``dual'' quotient $V^*\times V/\Gm$ will be denoted by $\mathcal X^*$. 

The pairing between $V$ and $V^*$ induces a canonical identification of the categorical quotient of affine spaces (the ``bases''):
\begin{equation}
\mathcal B:= V\times V^* \sslash T \xrightarrow{\sim} \Ga \xleftarrow{\sim} V^*\times V \sslash T.
\end{equation}

We let $\mathcal B^\reg$, ``the regular part of the base'', denote the complement of zero. Notice that for every $\xi\in \mathcal B^\reg$ the fiber is a single orbit of $T$ (both as a variety and in the sense of $F$-points).

\subsection{Tamagawa measures} \label{ssTamagawa}

If instead of $F$ we were talking about a global field $k$, we would be fixing the measure on its ring of adeles which comes from a globally defined differential form on $\Res_{k/\mathbb Q} \Ga$ and the usual measure on $\mathbb A_{\mathbb Q}$. We factorize this in the standard way locally \cite{Tate}, namely:

If $F=\RR$, the usual Lebesgue measure, if $F=\CC$ the double of the usual Lebesgue measure and if $F$ is nonarchimedean the Haar measure under which the ring of integers has measure equal to the inverse square root of the discriminant (hence, $\le 1$).

\subsection{Measures and coinvariants}

We let $\mathcal S(V\times V^*)$ denote the Fr\'echet space of Schwartz functions on $V\times V^*$. Notice that the usual notion of ``Schwartz space'' in the nonarchimedean case does not correspond to a Fr\'echet space but to a limit of Fr\'echet spaces. We explain in the appendix \ref{app:almostsmooth} how to obtain a Fr\'echet space completion thereof, consisting of functions which have the same decay at infinity as in the archimedean case (faster than the absolute value of any polynomial) and are \emph{almost smooth} instead of smooth. For practical purposes, the difference between the two approaches is unimportant, and the reader can keep the traditional Schwartz space in their mind. The introduction of a Fr\'echet space just creates the convenience of treating the archimedean and nonarchimedean cases simultaneously. As explained in the introduction, we will be just using the word ``smooth'' for what should be ``almost smooth''; also, any statement involving derivatives (other than the zeroth) should 
be considered as void in the nonarchimedean case.

We denote by $\mathcal M(V\times V^*)$ the corresponding Fr\'echet space of Schwartz measures on $V\times V^*$ (i.e.\ products of a Schwartz function with additive Haar measure). A choice of Haar measure on $V\times V^*$ defines an isomorphism: $\mathcal S(V\times V^*)\simeq \mathcal M(V\times V^*)$, but we will not need to fix such an isomorphism except as a convenience for certain calculations. 

We define the \emph{Schwartz space of measures on $\mathcal X$} to be:
\begin{equation}\mathcal M(\mathcal X) := \mathcal M(V\times V^*)_G,
\end{equation}
the coinvariant space $\mathcal M(V\times V^*)_G$. By definition, the $G$-coinvariant space of a Fr\'echet representation $W$ is the quotient by the closed subspace generated by vectors of the form $v-g\cdot v$, $v\in W, g\in G$, or equivalently the universal quotient: $W\to W_G$, with trivial $G$-action on the right, through which every continuous $G$-invariant functional factors. In particular, it is naturally a nuclear Fr\'echet space. The space $\mathcal F(\mathcal X)$ of  $G$-invariant, \emph{tempered} generalized functions on $V\times V^*$ is, tautologically, the dual of $\mathcal M(\mathcal X)$. 

The following is standard: 
\begin{lemma} \label{density0}
 $\mathcal F(\mathcal X)$ is the weak-* closure of the space spanned by those invariant generalized functions which are each supported on a fiber of the map: $\mathcal X\to \mathcal B$. 
\end{lemma}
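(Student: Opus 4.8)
The statement is a soft functional-analytic fact: $\mathcal{F}(\mathcal{X})$, the space of $G$-invariant tempered generalized functions on $V\times V^*$, is the weak-$*$ closure of the span of those invariant distributions each supported on a single fiber of the map $\mathcal{X}\to\mathcal{B}\simeq\Ga$. The plan is to argue by duality: a subspace $\mathcal{D}$ of the dual of a nuclear Fr\'echet space $\mathcal{M}(\mathcal{X})$ is weak-$*$ dense if and only if its annihilator in $\mathcal{M}(\mathcal{X})$ is zero, i.e.\ if and only if every Schwartz measure on $\mathcal{X}$ that is killed by all fiberwise-supported invariant distributions is itself zero. So it suffices to show: if $\mu\in\mathcal{M}(V\times V^*)_G$ pairs to zero against every $G$-invariant tempered distribution supported on a fiber $\{\xi=c\}$ for every $c\in F$, then $\mu=0$.

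First I would set up the fibration concretely. Over the regular locus $\mathcal{B}^{\reg}=\Ga\setminus\{0\}$, each fiber is a single free $T$-orbit, so the pushforward of a Schwartz measure along $V\times V^*\to\Ga$, $(x,y)\mapsto xy$, restricted to $\mathcal{B}^{\reg}$, is itself a Schwartz (in particular smooth) measure on $\mathcal{B}^{\reg}$ — this is the standard statement that integration over orbits is well-behaved in families, the content of the maps denoted $O_\bullet$ later in the text. For a fixed $c\neq 0$, the invariant distribution ``integrate over the fiber $xy=c$ against the invariant orbital measure'' is an element of $\mathcal{F}(\mathcal{X})$ supported on that fiber, and pairing $\mu$ against it gives the value at $c$ of this pushed-forward density. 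Vanishing against all such distributions for $c\neq 0$ therefore forces the pushforward density of $\mu$ to vanish identically on $\mathcal{B}^{\reg}$; combined with vanishing against the (finitely many, by a stratification of the central fiber $xy=0$ into $T$-orbits: the open orbit on each axis and the origin) invariant distributions supported over $c=0$, we conclude that $\mu$ pairs to zero against \emph{every} $G$-invariant tempered distribution on $V\times V^*$. But $\mathcal{M}(\mathcal{X})=\mathcal{M}(V\times V^*)_G$ is by construction the maximal $G$-invariant quotient, whose dual is exactly $\mathcal{F}(\mathcal{X})$; a vector in a nuclear Fr\'echet space annihilated by its entire dual is zero. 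Hence $\mu=0$, which is what we needed.

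The one genuinely non-formal point — and the step I expect to be the main obstacle — is establishing that the weak-$*$ closure of the fiberwise-supported distributions is large enough to separate points of $\mathcal{M}(\mathcal{X})$, i.e.\ that there are ``enough'' such distributions. This reduces to the two claims used above: (i) for $c\neq 0$, the orbital integral distribution on the fiber $xy=c$ is genuinely tempered (the orbit is closed in $V\times V^*$ away from the origin, so this is standard, but one should note the decay of the invariant measure as $c\to 0$ or $c\to\infty$ only affects temperedness of the \emph{family}, not of each individual distribution); and (ii) over $c=0$ the central fiber, being a union of finitely many $T$-orbits (the punctured $x$-axis, the punctured $y$-axis, and the origin), supports enough invariant distributions — pushforward from each of the two one-dimensional orbits, plus the delta at the origin — to detect any Schwartz measure supported there. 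Claim (ii) is again the statement that on each homogeneous piece, invariant distributions correspond to densities that can be prescribed freely, together with the elementary fact that the only invariant distributions supported at the fixed point $0$ are multiples of $\delta_0$ (in the nonarchimedean case) or spanned by $\delta_0$ and its ``radial'' derivatives (archimedean case) — all of which are tempered. Granting these, the duality argument closes and the lemma follows; since the author calls it ``standard,'' I would expect the written proof to cite the coinvariants formalism of the appendix and dispatch (i)–(ii) in a line or two.
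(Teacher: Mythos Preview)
Your reduction via Hahn--Banach is correct, but the argument becomes circular at the crucial step. You assert: ``combined with vanishing against the invariant distributions supported over $c=0$, we conclude that $\mu$ pairs to zero against \emph{every} $G$-invariant tempered distribution.'' But you have only assumed vanishing against fiberwise-supported distributions, and you are now concluding vanishing against all of $\mathcal F(\mathcal X)$ --- that is, you are asserting precisely that the fiberwise distributions are weak-$*$ dense, which is the lemma itself. Nothing in your sketch bridges this: knowing that the orbital-integral function $c\mapsto O_c(\mu)$ vanishes on $\mathcal B^{\reg}$ does not by itself place $\mu$ in any subspace on which the central-fiber distributions are already known to separate points. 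What is missing is a localization principle: one must use the $\mathcal O(\mathcal B)$-module structure on $\mathcal M(\mathcal X)$ to show that a section with vanishing image in every stalk is zero (equivalently, given an arbitrary $D\in\mathcal F(\mathcal X)$ not annihilating $\mu$, multiply $D$ by bump functions on $\mathcal B$ shrinking to a point of the support of $h\mapsto\langle D,h(\xi)\mu\rangle$ and pass to the stalk there, whose dual is exactly the invariant distributions supported on that fiber). This is the genuine content hiding behind the word ``standard.'' A smaller separate issue: your inventory of tempered invariant distributions over $c=0$ is off --- the naive invariant measures $d^\times x\cdot\delta(y)$ and $\delta(x)\cdot d^\times y$ on the punctured axes are not tempered (pairing with a Schwartz function nonvanishing at the origin diverges); the correct basis of the fiber is $\tilde O_0$ and the regularized $\tilde O_u$ of Proposition~\ref{germs-split}.

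The paper does not prove Lemma~\ref{density0} independently; it records it as standard and immediately subsumes it in the strictly stronger Lemma~\ref{density1} (the regular orbital integrals $O_\xi$, $\xi\in\mathcal B^{\reg}$, alone are weak-$*$ dense, so $\mathcal M(\mathcal X)\to\mathcal S(\mathcal X)$ is injective). That stronger statement is obtained from the explicit germ computation of Proposition~\ref{germs-split} together with the Nakayama-type Proposition~\ref{propNakayama} of the appendix, thereby bypassing any general localization argument.
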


We will see a strengthening of it in Lemma \ref{density1}.

\subsection{Orbital integrals}\label{ssbabyorbital}

For $\xi\in \mathcal B^\reg$, $\Phi\in\mathcal S(V\times V^*)$ and a Haar measure $dg$ on $G$ we define the orbital integral:
\begin{equation}
O_\xi(\Phi) = \int_{G} \Phi(\tilde\xi\cdot g) dg.
\end{equation}
Here $\tilde \xi$ is any lift of $\xi$ to $V\times V^*$.

We define $\mathcal S(\mathcal X)$ to be the space of functions on $\mathcal B^\reg$ of the form $\xi\mapsto O_\xi(\Phi)$, for $\Phi\in \mathcal S(V\times V^*)$. Throughout the text, when we talk about ``a lift of an element $f\in\mathcal S(\mathcal X)$ to $\mathcal S(V\times V^*)$'' we will implicitly mean a pair consisting of an element $\Phi\in\mathcal S(V\times V^*)$ and a Haar measure on $G$, so that $f$ is obtained by the orbital integrals of $\Phi$. Our first goal is to define (and normalize) a linear map: $\mathcal M(\mathcal X)\to \mathcal S(\mathcal X)$. To do this, we start with the following integration formula:

\begin{lemma}\label{integration-babysplit}
 For any $\Phi\in\mathcal S(V\times V^*)$ with image $f\in\mathcal S(\mathcal X)$, and Haar measures on $V\times V^*$ and $G$, we have:
\begin{equation}
\int_{V\times V^*} \Phi(v,v^*) dv dv^* = \int_{\mathcal B} f(\xi) d\xi,
\end{equation}
where $d\xi$ is an additive Haar measure on $\mathcal B=F$. If we take on $V\times V^*$ the Haar measure corresponding to the differential form $dv\wedge dv^*$ and the standard Haar measure on $F$, where the coordinates $v$ and $v^*$ are defined using a dual basis, $dg$ is the multiplicative Haar measure $|a|^{-1}da$ on $F^\times$, then $d\xi$ is the standard Haar measure on $F$ (\S \ref{ssTamagawa}).
\end{lemma}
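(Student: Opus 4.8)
The plan is to prove the identity by an explicit change of variables on $V\times V^*$, reducing both sides to iterated integrals and recognizing the inner one as an orbital integral. Fix a dual basis, so that $V\cong F$, $V^*\cong F$, the pairing is $\langle v,v^*\rangle = vv^*$, the $\Gm$-action is $a\cdot(v,v^*)=(av,a^{-1}v^*)$, and the map $\mathcal X\to\mathcal B=\Ga$ is $(v,v^*)\mapsto \xi:=vv^*$. Work on the open dense subset $\{v\neq 0\}$, whose complement is a null set for the Schwartz measure on $V\times V^*$, and note that $\{\xi=0\}$ is likewise a null set in $\mathcal B$.

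First I would change coordinates from $(v,v^*)$ to $(v,\xi)$ via $v^*=\xi/v$; the Jacobian is $\partial v^*/\partial\xi = 1/v$ at fixed $v$, so for the measures attached to $dv\wedge dv^*$ and the standard self-dual measure on $F$ one gets $dv\,dv^* = |v|^{-1}\,dv\,d\xi$. By Fubini — legitimate first for $\Phi\ge 0$, then in general by absolute convergence of the left-hand integral — this turns $\int_{V\times V^*}\Phi\,dv\,dv^*$ into $\int_F\big(\int_{F^\times}\Phi(v,\xi/v)\,|v|^{-1}\,dv\big)\,d\xi$. Next I would identify the inner integral with $O_\xi(\Phi)$: for $\xi\in\mathcal B^\reg$ pick any lift $\tilde\xi=(v_0,v_0^*)$ with $v_0v_0^*=\xi$, substitute $v=v_0a$ so that $\xi/v=v_0^*a^{-1}$ and $|v|^{-1}\,dv=|a|^{-1}\,da$, and conclude $\int_{F^\times}\Phi(v,\xi/v)\,|v|^{-1}\,dv = \int_{F^\times}\Phi(\tilde\xi\cdot a)\,|a|^{-1}\,da = O_\xi(\Phi)$ with $dg=|a|^{-1}\,da$. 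This proves the normalized statement; the unnormalized one follows since both sides scale linearly and compatibly under rescaling any of the three Haar measures.

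The only points needing (routine) care are the justification of Fubini and of the change of variables for Schwartz rather than compactly supported $\Phi$ — handled by absolute convergence of the left-hand side — and the remark that the right-hand side is then automatically convergent even though $f(\xi)=O_\xi(\Phi)$ may blow up at $\xi=0$ (at worst logarithmically), a fact forced by the identity rather than requiring a separate estimate. I do not anticipate a genuine obstacle: this is the baby computation that sections \ref{sec:torus} and \ref{sec:Kuznetsov} will mimic, where the analogous coarea formula is the substantive input rather than a one-line change of variables.
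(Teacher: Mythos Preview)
Your proof is correct and is exactly the straightforward change-of-variables computation one would expect; the paper in fact omits the proof entirely, treating the lemma as elementary, so your argument fills in precisely what the reader is meant to supply.
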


If we now \emph{fix} the measure on $\mathcal B$ to be the standard measure on $F$ discussed in \S \ref{ssTamagawa}, we get a map:
\begin{equation}\label{measurestoorbital-baby}
 \mathcal M(\mathcal X)\to \mathcal S(\mathcal X),
\end{equation}
as follows: a choice of compatible measures on $V\times V^*$ and $G$ gives an isomorphism:
$$\mathcal M(V\times V^*)\simeq \mathcal S(V\times V^*)$$
and a map:
$$\mathcal S(V\times V^*)\to \mathcal S(\mathcal X),$$
and it is easy to see that the composition of the two depends only on the chosen measure on $\mathcal B$. For the purpose of calculations later in the chapter we will fix the Haar measures on $V\times V^*$ and $G$ described in Lemma \ref{integration-babysplit}.\footnote{Even fixing a measure on $\mathcal B$ locally is not important, since globally we always have canonical choices of measures (Tamagawa measures); nonetheless it will be helpful for calculations to fix the local maps (\ref{measurestoorbital-baby}).} 

The following strengthening of Lemma \ref{density0} will be a corollary of Proposition \ref{germs-split}. 

\begin{lemma} \label{density1}
 The functionals $O_\xi$, $\xi\in\mathcal B^\reg$ span a weak-* dense subspace of $\mathcal F(\mathcal X)$.
\end{lemma}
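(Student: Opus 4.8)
The plan is to dualize. By the discussion preceding the statement, $\mathcal M(\mathcal X)$ is a nuclear Fr\'echet space and $\mathcal F(\mathcal X)$ is, tautologically, its dual; so the bipolar theorem for the pairing $\langle\mathcal M(\mathcal X),\mathcal F(\mathcal X)\rangle$ identifies the weak-* closure of $\operatorname{span}\{O_\xi:\xi\in\mathcal B^\reg\}$ with its double annihilator. Using Hahn--Banach on the Fr\'echet space $\mathcal M(\mathcal X)$, this double annihilator is all of $\mathcal F(\mathcal X)$ if and only if the annihilator
\[
\{m\in\mathcal M(\mathcal X):\ O_\xi(m)=0\ \text{for all}\ \xi\in\mathcal B^\reg\}
\]
vanishes. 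But this annihilator is exactly the kernel of the orbital integral map $\mathcal M(\mathcal X)\to\mathcal S(\mathcal X)$ of \eqref{measurestoorbital-baby} (the chosen normalization of the measure on $\mathcal B$ is irrelevant here). So the first step is to reduce Lemma \ref{density1} to the \emph{injectivity} of that map.

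For injectivity I would pull back along the surjection $\mathcal M(V\times V^*)\twoheadrightarrow\mathcal M(\mathcal X)=\mathcal M(V\times V^*)_G$, so that the claim becomes: a Schwartz measure $\Phi$ on $V\times V^*$ with $O_\xi(\Phi)\equiv 0$ already lies in the closed span of the differences $m-g\cdot m$. On the open orbit $\Omega=\{vv^*\neq 0\}$ the $G$-action is free with quotient $\mathcal B^\reg$; choosing a section one gets $\Omega\simeq\mathcal B^\reg\times\Gm$ with $G$ acting by translation on the $\Gm$-factor and trivially on $\mathcal B^\reg$, and the orbital integral becomes the fibrewise integral over $\Gm$. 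Hence, by Fubini together with the elementary fact that for $\Gm$ acting on itself a Schwartz measure of total mass zero lies in the closed span of the differences (and nuclearity, to make the completed tensor product exact), a Schwartz measure on $\Omega$ with vanishing fibrewise integral lies in the closed span of differences \emph{within} $\mathcal M(\Omega)$. The remaining, essential point is to promote this to an approximation inside $\mathcal M(V\times V^*)$, i.e.\ one compatible with the prescribed behavior along the axes $\{v=0\}$, $\{v^*=0\}$ and at the origin. This is where Proposition \ref{germs-split} enters: it describes the germ at $\xi=0$ of $\xi\mapsto O_\xi(\Phi)$ as an asymptotic expansion whose coefficients are the values on $\Phi$ of an explicit finite family of $G$-invariant distributions supported on $\{vv^*=0\}$, so that $O_\xi(\Phi)\equiv 0$ forces all of these to annihilate $\Phi$; one then checks conversely that $\Phi$ being killed by every fibrewise integral and by every such germ-distribution suffices for it to lie in the closed span of differences in $\mathcal M(V\times V^*)$.

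Assembling these gives $\ker(\mathcal M(\mathcal X)\to\mathcal S(\mathcal X))=0$, hence by the first paragraph the asserted weak-* density; alternatively one can combine Lemma \ref{density0} with the observation that Proposition \ref{germs-split} realizes each invariant generalized function supported over $\xi=0$ as a weak-* limit of combinations of the $O_\xi$ (the divergent factors in the asymptotic expansion being absorbed by suitable regularized combinations). The main obstacle is the last boundary step: proving that the conditions extracted from Proposition \ref{germs-split} are \emph{sufficient}, not merely necessary --- equivalently, that orbital integration acquires no hidden kernel from the way the three strata $\{(0,0)\}$, $\{v=0,\,v^*\neq0\}$ and $\{v\neq0,\,v^*=0\}$ of the zero fibre meet. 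The open orbit contributes nothing subtle; it is this stratified analysis near $\xi=0$ that carries the real content, and it is exactly what Proposition \ref{germs-split} is designed to supply.
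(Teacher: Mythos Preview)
Your proposal is correct, and your \emph{alternative} route---combine Lemma~\ref{density0} with the fact, supplied by Proposition~\ref{germs-split}, that the two irregular invariants $\tilde O_0,\tilde O_u$ over $\xi=0$ are recovered from the germ of $\xi\mapsto O_\xi(\Phi)$---is exactly the paper's proof: once one knows that the fiber of invariant distributions over $0$ is spanned by $\tilde O_0,\tilde O_u$ and that these are read off as $C_1(0),C_2(0)$, vanishing of all regular $O_\xi$ forces every fiber-supported invariant to vanish, and Lemma~\ref{density0} upgrades this to all of $\mathcal F(\mathcal X)$.

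Your main approach reaches the same conclusion but takes an unnecessary detour. The open-orbit step (showing that $\Phi|_\Omega$ lies in the closed span of differences inside $\mathcal M(\Omega)$) does not actually help with the ``promotion'' to $\mathcal M(V\times V^*)$: the step you label ``one then checks conversely'' is the entire content, and to carry it out you must appeal to Hahn--Banach and to the classification of \emph{all} $G$-invariant functionals, which is precisely Lemma~\ref{density0} together with Proposition~\ref{germs-split}. So the open-orbit argument can be dropped; nothing is gained by establishing the result first on $\Omega$ and then trying to extend, since the extension step already requires the full strength of the direct argument.
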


 This implies that \emph{the map (\ref{measurestoorbital-baby}) is an isomorphism of vector spaces}. Therefore, we will not be distinguishing from now on between $\mathcal M(\mathcal X)$ and $\mathcal S(\mathcal X)$, and we will endow $\mathcal S(\mathcal X)$ with the Fr\'echet topology induced from this identification.

In appendix \ref{app:cosheaves} we introduce a notion of ``Schwartz cosheaves''. We point the reader there for definitions of restriction, stalks and other notions. By Corollary \ref{corollaryflabby}, $\mathcal S(\mathcal X)$ is the space of global sections of \emph{a flabby} (i.e.\ extension maps are injective -- in fact, closed embeddings) \emph{Schwartz cosheaf} on $\mathcal B$, 
which for simplicity we will also be referring to by the symbol $\mathcal S(\mathcal X)$ when there is no confusion. Via the regular orbital integrals, this cosheaf is identified with a cosheaf of functions on $\mathcal B^\reg$; our purpose is to describe this cosheaf.

\begin{lemma}
 The restriction of the cosheaf $\mathcal S(\mathcal X)$ to $\mathcal B^\reg$ is equal to the cosheaf $\mathcal S(\mathcal B^\reg)$ (Schwartz functions in the usual sense).
\end{lemma}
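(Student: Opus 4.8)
The plan is to show that, away from the irregular point $0\in\mathcal B$, the orbital integral map is a local isomorphism onto Schwartz functions, and then to appeal to the (already established) flabby cosheaf structure to conclude the identification of restrictions. Concretely, fix an open semialgebraic $U\subseteq\mathcal B^\reg$; I want to show that the space of sections of $\mathcal S(\mathcal X)$ over $U$ — which by the flabby cosheaf property of Corollary \ref{corollaryflabby} is the subspace of $f\in\mathcal S(\mathcal X)$ with orbital integrals supported (in the stalk sense) on $U$, equivalently the image of $\mathcal M$ of the preimage of $U$ in $V\times V^*$ — coincides with $\mathcal S(U)$. Since over $\mathcal B^\reg$ every fiber of $V\times V^*\to\mathcal B$ is a single free $T$-orbit (as noted right after the identification of bases), the key is that the map $V\times V^*\supseteq (\text{preimage of }\mathcal B^\reg)\to \mathcal B^\reg$ is, locally on the base, a trivial $T=\Gm$-bundle.

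First I would make this triviality explicit: over the locus $v\neq 0$ we have coordinates $(v,\xi)$ with $v^*=\xi/v$, identifying the preimage of $\mathcal B^\reg$ with $\Gm\times\mathcal B^\reg$ (coordinate $v$ on the $\Gm$ factor), and the $T$-action being translation on the first factor. Then the orbital integral $O_\xi(\Phi)=\int_{F^\times}\Phi(\tilde\xi\cdot g)\,dg$ is literally integration along the $\Gm$-fiber, i.e.\ pushforward of measures along $\Gm\times\mathcal B^\reg\to\mathcal B^\reg$. So the statement reduces to: the pushforward map $\mathcal M(\Gm\times U)\to(\text{functions on }U)$ has image exactly $\mathcal S(U)$, once we fix (as in Lemma \ref{integration-babysplit}) compatible measures. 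Surjectivity onto $\mathcal S(U)$ is immediate — take $\Phi=\phi(\xi)\otimes\chi(v)$ with $\chi\in\mathcal S(\Gm)$ of total mass $1$ against $dg$, giving $O_\xi=\phi$. That the image lands in $\mathcal S(U)$ (rather than something larger) is the content of the fact that integration over the fiber of a Schwartz function is Schwartz on the base, which is a standard property of Schwartz spaces on Nash/$p$-adic manifolds (the archimedean case is in \cite{AG-Nash}, the $p$-adic analogue in appendix \ref{app:almostsmooth}); more precisely one uses that over $U$ the bundle is semialgebraically trivial and that fiber integration is continuous $\mathcal M(\Gm\times U)\to\mathcal S(U)$.

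Having matched the two spaces of global-type sections over every $U\subseteq\mathcal B^\reg$, I would then check compatibility with the cosheaf structure maps: for $U'\subseteq U$ both in $\mathcal B^\reg$, the extension-by-zero map for $\mathcal S(-)$ (Schwartz functions, in the usual cosheaf sense of appendix \ref{app:cosheaves}) agrees under the above identification with the extension map of the cosheaf $\mathcal S(\mathcal X)$; this is clear since both are induced by extension of the fiberwise-integrated function, or equivalently by extension of $\Phi$ itself on the total space. Since a morphism of cosheaves that is an isomorphism on all sections over a basis of opens is an isomorphism of cosheaves, this gives the claim on $\mathcal B^\reg$.

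The main obstacle I anticipate is not the triviality of the $\Gm$-bundle or the surjectivity — those are formal — but rather the precise bookkeeping that fiber integration of a \emph{Schwartz} function (in the almost-smooth/Fréchet sense the paper has set up) is again Schwartz on the base, together with the matching of topologies so that this is an isomorphism of Fréchet spaces and not merely of vector spaces. In the archimedean case this is the standard but slightly technical statement about Schwartz functions on Nash manifolds and submersions; in the $p$-adic case one must invoke the corresponding development in appendix \ref{app:almostsmooth}. One subtlety worth flagging: the cosheaf sections over an open $U\subseteq\mathcal B^\reg$ use "stalk modulo Schwartz on the complement" rather than "modulo compactly supported", per the conventions in \S\ref{ssstalks}; but since $\mathcal B^\reg$ is itself open in $\mathcal B$ and we are only comparing restrictions \emph{within} $\mathcal B^\reg$, no issue at the point $0$ arises, and the argument stays entirely on $\mathcal B^\reg$ where the $T$-action is free.
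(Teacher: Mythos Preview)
Your proof is correct and follows essentially the same approach as the paper. The paper's proof is a single sentence: it observes that over $\mathcal B^\reg$ the variety $V\times V^*$ is $G$-equivariantly isomorphic to $\mathcal B^\reg\times G$, so the quotient stack $\mathcal X^\reg$ is isomorphic to $\mathcal B^\reg$ itself; your explicit trivialization $(v,\xi)\mapsto (v,\xi/v)$ is precisely this isomorphism, and the further details you supply about fiber integration, surjectivity, and cosheaf compatibility are exactly the ``work to be done'' that the paper acknowledges in a footnote but omits.
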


\begin{proof}
 Over $\mathcal B^\reg$ we have a $G$-isomorphism of the variety $V\times V^*$ with $\mathcal B^\reg \times G$, and therefore $\mathcal X^\reg:= \mathcal X\times_{\mathcal B} \mathcal B^\reg$ is isomorphic to $\mathcal B^\reg$.\footnote{There is clearly some work to be done to establish that isomorphisms of stacks give rise to isomorphisms of their Schwartz spaces, but in each of the cases that we are considering in this paper this is easy to see explicitly.}
\end{proof}

Now we focus our attention on the neighborhood of $0$:

\begin{proposition}\label{germs-split}
 For $\Phi\in\mathcal S(V\times V^*)$ we have, for $\xi\in\mathcal B^\reg$ in a neighborhood of $0$:
\begin{equation}
 O_\xi(\Phi) = - C_1(\xi)\cdot \ln |\xi| + C_2(\xi),
\end{equation}
with $C_1, C_2$ (almost) smooth functions (which can be arbitrary).

Moreover, the distributions: 
\begin{equation} \label{babyipsplit}
 \tilde O_0(\Phi):=  C_1(0)
\end{equation}
and 
\begin{equation}
 \tilde O_u(\Phi):= C_2(0) 
\end{equation}
are a basis for the space of functionals on the fiber of $\mathcal S(\mathcal X)$ over $0\in \mathcal B$, and we have:
\begin{equation}\label{O0}
 \tilde O_0(\Phi) =  \Vol(T(F)_0) \Phi(0),  
\end{equation}
where $\Vol(T(F)_0)$ is the volume of the maximal compact subgroup of $T(F)$ described below,
and:
\begin{eqnarray}\label{Ou}
\tilde O_u(\Phi) = \lim_{s\to 0} \left(\zeta(\left.\Phi\right|_{y=0},s) +  \zeta(\left.\Phi\right|_{x=0},-s) \right) \nonumber =\\
 = \left.\frac{d}{ds}\right|_{s=0} \left(s\zeta(\left.\Phi\right|_{y=0},s) + s\zeta(\left.\Phi\right|_{x=0},s)  \right),
\end{eqnarray}
where $\zeta$ is the Tate integral of a function of one variable against unramified characters.
\end{proposition}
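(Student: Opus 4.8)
The plan is to compute the orbital integral $O_\xi(\Phi)$ explicitly by unfolding the $\Gm$-action on $V\times V^*\cong F\times F$ and reading off its asymptotic behavior as $\xi\to 0$. Identifying $V,V^*$ with $F$ via the dual basis, an element $(v,v^*)$ maps under $a\in F^\times$ to $(av, a^{-1}v^*)$, so the invariant $\xi = vv^*$, and a lift of $\xi\in\mathcal B^\reg$ is e.g.\ $(\xi,1)$ (for $\xi\ne 0$) with the orbit being $\{(a\xi, a^{-1}): a\in F^\times\}$. With $dg=|a|^{-1}da$ we get
\begin{equation}
 O_\xi(\Phi) = \int_{F^\times} \Phi(a\xi, a^{-1})\,\frac{da}{|a|}.
\end{equation}
First I would substitute $t = a\xi$ (so $a^{-1} = \xi/t$, $da/|a| = dt/|t|$) to rewrite this as $\int_{F^\times}\Phi(t,\xi/t)\,\frac{dt}{|t|}$, which exhibits the symmetry $\xi\leftrightarrow$ (swap of the two arguments) that will pair the two Tate integrals at the end. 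The integral converges for $\xi\ne 0$ because the decay of $\Phi$ at infinity in each variable kills both the $t\to 0$ and $t\to\infty$ tails.

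Next I would isolate the singular behavior at $\xi=0$. Split the $t$-integral into $|t|\le 1$ and $|t|\ge 1$; by the symmetry above it suffices to analyze one piece, say $|t|\le 1$, where $|\xi/t|$ is large when $\xi$ is small, so one replaces $\Phi(t,\xi/t)$ by its Taylor expansion (archimedean) or locally constant value (nonarchimedean) in the second variable around... — more precisely, write $\Phi(t,\xi/t) = \Phi(t,0) + \big(\Phi(t,\xi/t)-\Phi(t,0)\big)$; the correction term is integrable uniformly near $\xi=0$ and contributes a smooth function of $\xi$, while $\int_{|t|\le 1}\Phi(t,0)\frac{dt}{|t|}$ is divergent and is exactly where the $\ln|\xi|$ comes from once we reinstate the cutoff at $|t|\ge|\xi|$ coming from honest convergence. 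The cleanest bookkeeping is via the Tate zeta integral $\zeta(f,s) = \int_{F^\times} f(t)|t|^s\,\frac{dt}{|t|}$ for $f$ a Schwartz function of one variable: one shows
\begin{equation}
 O_\xi(\Phi) = \zeta\big(\Phi|_{y=0},\,s\big)\big|_{s=0}\text{-regularized} \;+\; (\text{sym.})\;+\;(\text{smooth}),
\end{equation}
and the pole of $\zeta(\cdot,s)$ at $s=0$ (simple, with residue $\Vol(T(F)_0)\cdot(\text{value at }0)$ in the normalization of \S\ref{ssTamagawa}) produces the $-C_1(\xi)\ln|\xi|$ term, with $C_1(0)$ equal to $\Vol(T(F)_0)\Phi(0)$ after matching residues; the finite part of the Laurent expansion at $s=0$, combined with its mirror image under $s\mapsto -s$, gives $C_2(0)=\tilde O_u(\Phi)$ as in (\ref{Ou}). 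That $C_1,C_2$ can be arbitrary (almost) smooth functions follows by exhibiting enough test functions $\Phi$ (e.g.\ products $\phi(v)\otimes\psi(v^*)$) realizing prescribed jets.

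Finally, for the statement that $\tilde O_0,\tilde O_u$ form a basis of the functionals on the fiber of $\mathcal S(\mathcal X)$ over $0$: the fiber cosheaf at $0$ is, by the cosheaf formalism of the appendix and flabbiness, the quotient $\mathcal S(\mathcal X)/\mathcal S(\mathcal B^\reg)$ restricted near $0$, i.e.\ the space of germs at $0$ of functions of the form $-C_1(\xi)\ln|\xi|+C_2(\xi)$ modulo Schwartz (= smooth here) germs; modulo smooth germs such a germ is determined precisely by the pair $(C_1(0),C_2(0))$, since $C_1(\xi)\ln|\xi| \equiv C_1(0)\ln|\xi| \pmod{\text{smooth}}$ and $C_2(\xi)\equiv C_2(0)$. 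Independence of the two functionals is clear (take $\Phi$ with $\Phi(0)\ne 0$ versus $\Phi(0)=0$ but nonzero finite part), and surjectivity onto the $2$-dimensional space of germs is the realizability of arbitrary $(C_1(0),C_2(0))$ just noted. The main obstacle I anticipate is not any single computation but the careful uniform estimates needed to justify that the ``correction'' and cutoff-tail terms are genuinely (almost) smooth in $\xi$ near $0$ — i.e.\ differentiating under the integral sign in the archimedean case and controlling the locally-constant radius in the nonarchimedean case — together with pinning down the exact constant $\Vol(T(F)_0)$ in (\ref{O0}) consistently with the Tamagawa normalization of \S\ref{ssTamagawa}; once the $t\mapsto \xi/t$ substitution is in place, the rest is Tate's local functional-equation bookkeeping.
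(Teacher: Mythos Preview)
Your approach is essentially correct but genuinely different from the paper's. You compute the asymptotic form $-C_1(\xi)\ln|\xi|+C_2(\xi)$ of $O_\xi(\Phi)$ directly, by splitting the integral $\int_{F^\times}\Phi(t,\xi/t)\,d^\times t$ and invoking Tate's local theory, then read off the two functionals from the expansion. The paper goes the other way around: it first identifies the space of $G$-invariant tempered distributions on $\mathcal S(V\times V^*)$ supported on the preimage $\{xy=0\}$ of $0\in\mathcal B$ as two-dimensional (spanned by $\Phi\mapsto\Phi(0)$ and by the symmetric combination in \eqref{Ou}, the individual principal-value integrals along each axis not being $\Gm$-invariant), exhibits specific $\Phi$'s whose orbital integrals realize $c_2-c_1\ln|\xi|$ with constant $c_i$, and then invokes the Nakayama-type Proposition~\ref{propNakayama} to conclude that the entire stalk is generated over $\mathcal O_{\{0\}}$ by these two, i.e.\ that every orbital integral is of the form $C_2(\xi)-C_1(\xi)\ln|\xi|$ with smooth $C_i$. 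In other words, the paper never proves the smoothness estimates you flag as the main obstacle; it replaces them by the abstract module-theoretic lemma. Your route buys explicit formulas as an organic byproduct, while the paper's route buys a template (fiber dimension $+$ explicit generators $\Rightarrow$ description of the stalk) that it reuses at every singular point later on.

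Two small cautions on your write-up. First, on $|t|\le 1$ it is not true that $|\xi/t|$ is large for small $\xi$; rather, the region splits further according to whether $|t|\gtrless|\xi|$, and the $\ln|\xi|$ comes from the annulus $|\xi|\lesssim|t|\le 1$ after subtracting $\Phi(t,0)$, exactly as you say a line later. Second, in your fiber argument you conflate the stalk (quotient by Schwartz sections on the complement, which near $0$ means functions vanishing to infinite order) with the fiber (quotient by $\overline{\mathcal J_{\{0\}}\cdot\mathcal S(\mathcal X)}$); the claim ``modulo smooth germs such a germ is determined by $(C_1(0),C_2(0))$'' is not right as stated, since $C_2$ is itself smooth. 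What you want is that modulo $\xi\cdot\mathcal S(\mathcal X)$ the class is determined by $(C_1(0),C_2(0))$, which does hold once the form of orbital integrals is established.
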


\begin{remark}
 If we set $f(\xi) = O_\xi(\Phi)$ then the distributions $\tilde O_0, \tilde O_u$ have been defined in such a way that they depend only on $f\in \mathcal S(\mathcal X)$ and not on $\Phi$ and the choice of Haar measure on $G$, i.e.\ if we modify $\Phi$ and the Haar measure simultaneously so that the orbital integrals of $\Phi$ continue to give $f$, we get the same values of $\tilde O_0(\Phi), \tilde O_u(\Phi)$. It is therefore meaningful to write: $\tilde O_0(f), \tilde O_u(f)$.

The volume mentioned in the lemma is obtained as follows. Notice that the absolute value gives a canonical short exact sequence:
\begin{equation}\label{seqtori}1 \to T(F)_0 \to T(F) \xrightarrow{\|\bullet\|} \Hom(\varchi^*(T)_F,|F^\times|)\to 1,
\end{equation}
where $T(F)_0$ denotes the maximal compact subgroup of $T(F)$, $\varchi^*(T)_F \simeq \Z$ is the $F$-character group of $T$, and $|F^\times|\subset \RR^\times_+$ denotes the group of absolute values of $F^\times$. The group $\Hom(\varchi(T)_F,|F^\times|)$ is canonically, up to inversion, a subgroup of $\RR_+^\times$ (all of $\RR_+^\times$ in the archimedean case, the group $q^{\mathbb Z}$ in the nonarchimedean case). \emph{We endow it with a Haar measure $d|t|$ that is on average equal to the standard multiplicative measure $t^{-1}dt$ on $\RR_+^\times$.} Hence, in the nonarchimedean case with residual degree $q$, $dt(\{1\}) = \ln q$.  
For a Haar measure $\mu$ on $T$, the disintegration of $\frac{d\mu}{d|t|}$ with respect to the map (\ref{seqtori}) is a Haar measure on $T(F)^0$, and we let:
$$ \Vol(T(F)_0) = \frac{d\mu}{d|t|} (T(F)_0).$$
In particular, in the nonarchimedean case, this is $(\ln q)^{-1}$ times $\mu(T(F)_0)$.

\end{remark}

\begin{proof} It is easy to see that all functions of the form $c_2-c_1\ln|\xi|$ can be obtained as orbital integrals in a neighborhood of zero, with $c_i=C_i(0)$ as claimed. It is then easy to see that the invariant distributions on the fiber of $\mathcal S(V\times V^*)$ over the preimage of $0$ are given by \eqref{O0} and \eqref{Ou}. Thus, by Proposition \ref{propNakayama}, the orbital integrals of all elements of $\mathcal S(V\times V^*)$ are of the form $C_2(\xi) - C_1(\xi)\ln|\xi|$ with $C_i$ (almost) smooth functions.

This proves the lemma, and also Lemma \ref{density1}.
\end{proof}

\subsection{Fourier transform}
We choose a character $\psi: F\to \CC^\times$ to identify the space $V^*\times V$ with the Pontryagin dual of $V\times V^*$; we choose it in such a way that the measure \ref{ssTamagawa} on $F$ is self-dual with respect to Fourier transform. Notice that, when working with a global field, adele class characters can be factorized as products of such characters.

In this paper ``Fourier transform'' stands for the usual, schoolbook Fourier transform without any modifications to preserve equivariance, which in one variable $v\in V$ reads: $\hat f(v^*)= \int_F f(v) \psi^{-1}(\left<v,v^*\right>) dv$. It will be denoted both by $\hat~$ and also by the letter $\mathcal F$.

 Hence we have:
$$\hat{\hat\Phi}(x,y) = \Phi(-x,-y).$$

Since Fourier transform in one variable satisfies: $$\mathcal F(f(a\bullet)) (y)= \frac{1}{|a|} \mathcal F(f) \left(\frac{y}{a}\right),$$ it is clear that Fourier transform on $V\times V^\times$ is equivariant with respect to the action of $G$ on $V\times V^*$ and on its dual, and therefore descends to an isomorphism:
\begin{equation}
 \mathcal S(\mathcal X) \xrightarrow{\sim} \mathcal S(\mathcal X^*).
\end{equation}

It is therefore natural to ask how it transforms orbital integrals, or in other words: For each $\Phi\in \mathcal S(V\times V^*)$, express the function $\mathcal B^\reg\ni \xi\mapsto O_\xi(\Phi)$ in terms of the function $\xi \mapsto O_{\xi}(\hat\Phi)$.

\subsection{The integral transform $\mathcal G$.}

\begin{definition} We let $\mathcal G$ denote the transform which maps $f\in \mathcal S(\mathcal X)$ to the Fourier transform of the (tempered) function $y \mapsto \frac{\eta(y)}{|y|} \hat f\left(\frac{1}{y}\right)$, that is:
 \begin{equation} \label{defG}
  \mathcal G = \mathcal F\circ \iota \circ\mathcal F, 
 \end{equation}
where $\iota(f) = \frac{\eta(\bullet)}{|\bullet|} f\left(\frac{1}{\bullet}\right)$.
\end{definition}
Of course, in the split case that we are currently discussing we have $\eta = 1$. 

The following lemma shows, in particular, that the function $y \mapsto \frac{\eta(y)}{|y|} \hat f\left(\frac{1}{y}\right)$ is in $L^2(\mathcal B)$ and hence its Fourier transform makes sense as a function.

\begin{lemma}\label{asympt}
 The Fourier transform of any $f\in \mathcal S(\mathcal X)$ has the property that:
$$\lim_{x\to\infty} |x| \hat f(x)$$ exists.
\end{lemma}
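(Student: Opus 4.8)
The statement asks for the decay $|x|\hat f(x) \to c$ as $x\to\infty$ for $f\in\mathcal S(\mathcal X)$. The plan is to reduce this to the explicit description of $\mathcal S(\mathcal X)$ near the irregular point that is already available: we know from Proposition \ref{germs-split} that $\mathcal S(\mathcal X)$ consists exactly of functions of the form $C_2(\xi)-C_1(\xi)\ln|\xi|$ with $C_1,C_2$ (almost) smooth on $\mathcal B$, and from the preceding lemma that the restriction of the cosheaf $\mathcal S(\mathcal X)$ to $\mathcal B^\reg$ is $\mathcal S(\mathcal B^\reg)$ — that is, away from $0$ the function $f$ is an honest Schwartz function, in particular rapidly decaying at $\infty$. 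So the Fourier transform $\hat f$ is a well-defined smooth function on $\mathcal B^\reg$ (this is exactly what makes the Fourier transform in the definition of $\mathcal G$ sensible, and what Lemma \ref{asympt} is used for), and the only issue is its behavior as $x\to\infty$.

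First I would split $f = f_0 + f_1$ where $f_0$ is supported in a small neighborhood of $0\in\mathcal B$ and $f_1$ is supported away from $0$; the contribution of $f_1$ is the Fourier transform of a Schwartz function (on all of $\mathcal B$, after extending by zero), hence itself Schwartz, so $|x|\hat f_1(x)\to 0$. It remains to analyze $\hat f_0$. Near $0$ we may write $f_0(\xi) = \phi_2(\xi) - \phi_1(\xi)\ln|\xi|$ with $\phi_1,\phi_2$ compactly supported and smooth; the $\phi_2$ term again contributes a Schwartz function. So everything comes down to the asymptotics at $\infty$ of the Fourier transform of $\phi_1(\xi)\ln|\xi|$ with $\phi_1$ smooth and compactly supported near $0$. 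This is a classical local computation: the Fourier transform of $\ln|\xi|$ (as a tempered distribution) is, up to an additive constant and normalization, $-\frac{1}{|x|}$ (more precisely, in the one-variable Tate/local-zeta language, $\ln|\xi|$ is $\frac{d}{ds}|_{s=0}|\xi|^s$ and $|\xi|^s$ has Fourier transform a constant multiple of $|x|^{-1-s}$ via the local functional equation), and multiplying $\ln|\xi|$ by a smooth cutoff $\phi_1$ perturbs the Fourier transform by a Schwartz function plus a main term $\phi_1(0)$ times (Fourier transform of $\ln|\xi|$). Concretely, I would write $\phi_1(\xi)\ln|\xi| = \phi_1(0)\,\chi(\xi)\ln|\xi| + (\phi_1(\xi)-\phi_1(0)\chi(\xi))\ln|\xi|$ for a fixed cutoff $\chi$ equal to $1$ near $0$; the second summand, being the product of $\ln|\xi|$ with a smooth compactly supported function vanishing at $0$, has rapidly decaying Fourier transform (the local zeta integral argument of Proposition \ref{germs-split}/\eqref{Ou} shows one gains enough regularity), while the first summand has Fourier transform equal to $\phi_1(0)$ times $-c/|x|$ plus a Schwartz error, for the appropriate constant $c$ (depending on the self-dual normalization of $\psi$). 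This yields $\lim_{x\to\infty}|x|\hat f(x) = -c\,\phi_1(0) = -c\,C_1(0)$, which in particular exists; tracking the normalization one gets the precise constant, but for the present statement existence is all that is claimed.

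The main obstacle — really the only nonroutine point — is the clean separation of the ``main term'' $\phi_1(0)/|x|$ from the Schwartz remainder: one must verify that multiplying $\ln|\xi|$ by a smooth bump that vanishes at $0$, or adding a genuinely smooth (non-log) function, produces a Fourier transform that decays faster than $|x|^{-1}$, ideally rapidly. In the nonarchimedean case this is immediate because ``smooth'' means locally constant and compactly supported, so $\phi_1(\xi)-\phi_1(0)\chi(\xi)$ is a finite linear combination of characteristic functions of cosets bounded away from $0$, whose Fourier transforms are compactly supported; the $\ln$ factor is then locally constant there and causes no trouble. In the archimedean case this is the standard fact that Fourier transform interchanges smoothness (decay of derivatives) with decay, applied to the function $(\phi_1(\xi)-\phi_1(0)\chi(\xi))\ln|\xi|$, which is smooth away from $0$ and at $0$ has a tame ($\xi\ln|\xi|$-type) singularity whose successive derivatives are integrable enough to force the Fourier transform to be $o(|x|^{-1})$ — this can be made precise by repeated integration by parts, or by citing the appendix's machinery on Schwartz functions on semialgebraic sets (\cite{AG-Nash}, appendix \ref{app:almostsmooth}). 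Either way the argument is short once organized this way, and it also re-proves, as a byproduct, that $y\mapsto \frac{\eta(y)}{|y|}\hat f(1/y)$ lies in $L^2(\mathcal B)$, since near $y=0$ it behaves like $\frac{1}{|y|}\cdot|y| = O(1)$ and away from $0$ it is Schwartz, justifying the application of $\mathcal F$ in the definition of $\mathcal G$.
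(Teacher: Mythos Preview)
Your argument is correct and lands on the same key fact as the paper---that the only obstruction to Schwartz decay of $\hat f$ comes from the $\ln|\xi|$ singularity at $0$, whose Fourier transform behaves like $c/|x|$ at infinity---but the paper's execution is different and somewhat cleaner. Rather than invoking Tate's functional equation or integration by parts, the paper exploits dilation equivariance: under the unitarily normalized action \eqref{unitaryaction}, the function $l(\xi)=\ln|\xi|$ satisfies $(\Id - |a|^{-1/2}\,a\cdot)l \in C^\infty(F)$ for every $a\in F^\times$, so its Fourier transform (a tempered distribution) becomes Schwartz after applying $(\Id - |a|^{-1/2}\,a^{-1}\cdot)$; this forces $\hat l(x)=c/|x|+h_1(x)$ near infinity with $h_1$ Schwartz. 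Then, writing $f=C_2-C_1\,l$ globally with $C_1,C_2$ Schwartz, one gets $\hat f = \hat C_2 - \hat C_1 \star \hat l$, and the convolution of a Schwartz function with $c/|x|$ visibly has the required limit.

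Your main-term/remainder split is doing the same work as this convolution, just by hand. One small overstatement: in the archimedean case the remainder $(\phi_1-\phi_1(0)\chi)\ln|\cdot|$ does not have \emph{rapidly} decaying Fourier transform---a single vanishing order at $0$ only buys you $o(|x|^{-1})$, as you yourself note a few lines later. That weaker decay is of course all the lemma requires. The paper's convolution formulation absorbs this remainder automatically and avoids the need to track how many derivatives of $\xi\ln|\xi|$ are integrable.
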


For the following proof, and later use, we normalize the action of $F^\times$ on functions on $F$ in such a way that it is unitary (with respect to the $L^2(F)$-inner product):
\begin{equation}\label{unitaryaction}
 (a\cdot f)(x) = |a|^\frac{1}{2} f(ax).
\end{equation}
Hence, Fourier  transform is anti-equivariant with respect to this action.

\begin{proof}

It is easy to see that the function $l:x \mapsto \ln|x|$ becomes smooth by application of the operator $(\Id - |a|^{-\frac{1}{2}} a\cdot)$, for all $a\in F^\times$. Therefore, its Fourier transform (considered as a tempered generalized function) will become rapidly decaying by application of the operator $(\Id - |a|^{-\frac{1}{2}} a^{-1}\cdot)$. Hence:
$$\hat l(x) = \frac{c}{|x|} + h_1(x)$$
in a neighborhood of infinity, for some constant $c$ and some Schwartz function $h_1(x)$.

Thus, the Fourier transform of an element of $\mathcal S(\mathcal X)$ will be of the form: 
$$h(x) =  h_2(x) \star \hat l(x) + h_3(x)$$
in a neighborhood of infinity, where $h_i$ are Schwartz functions and $\star$ denotes convolution. It is easy to see that for such a function the limit: $\lim_{|x|\to\infty} |x| h(x)$ exists.
\end{proof}

Hence, for $f\in \mathcal S(\mathcal X)$ its Fourier transform $\hat f$ belongs to the space of continuous functions $h$ on $\mathcal B$ with the property that $\lim_{|x|\to\infty} |x| h(x)$ exists. It is clear that $\iota$ is an involution on this space. The proof of the next Proposition will show that it preserves the image of $\mathcal S(\mathcal X)$:

\begin{proposition}\label{Fourier-baby}
 Let $\Phi\in\mathcal S(V\times V^*)$ and let $f(\xi)=O_\xi(\Phi)$. Then:
\begin{equation}
 O_\xi(\hat\Phi) = \mathcal G(f)(\xi).
\end{equation}
\end{proposition}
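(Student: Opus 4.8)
The plan is to compute the orbital integrals of $\hat\Phi$ directly and match them against the definition $\mathcal G = \mathcal F\circ\iota\circ\mathcal F$, using the explicit $G=\Gm$-geometry of $V\times V^*$. First I would fix the identifications $V\simeq V^*\simeq F$ via a dual basis and write a lift of $\xi\in\mathcal B^{\reg}$ as $\tilde\xi = (1,\xi)$ (or $(\xi,1)$), so that the orbit through $\tilde\xi$ is $\{(a, a^{-1}\xi) : a\in F^\times\}$ up to the identification $\langle v,v^*\rangle = vv^*$. With $dg = |a|^{-1}\,da$ as fixed in Lemma \ref{integration-babysplit}, the orbital integral becomes
\begin{equation}
O_\xi(\Phi) = \int_{F^\times} \Phi(a,a^{-1}\xi)\,\frac{da}{|a|}.
\end{equation}
The idea is to substitute $\Phi = \hat\Psi$ for $\Psi\in\mathcal S(V\times V^*)$ — since Fourier transform is a bijection on the Schwartz space this loses nothing — expand $\hat\Psi(a,a^{-1}\xi)$ as a double integral against $\psi$, and then interchange the order of integration.

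The key computational step: writing $\hat\Psi(a, a^{-1}\xi) = \int_{F^2}\Psi(u,v)\,\psi^{-1}(au + a^{-1}\xi v)\,du\,dv$ and integrating over $a\in F^\times$ against $\frac{da}{|a|}$, I would isolate the inner $a$-integral
\begin{equation}
\int_{F^\times} \psi^{-1}(au)\,\psi^{-1}(a^{-1}\xi v)\,\frac{da}{|a|},
\end{equation}
which is (up to normalization) a Bessel/Kloosterman-type kernel in the product $uv\xi$. Rather than evaluating this kernel head-on, I would recognize it as the composition of two one-variable Fourier transforms with the inversion $a\mapsto a^{-1}$ sandwiched between them: the integral over $a$ of $\psi^{-1}(au)$ is Fourier transform in $u$, the substitution $a\mapsto a^{-1}$ on $F^\times$ (which carries $\frac{da}{|a|}$ to itself) is the operator underlying $\iota$, and the remaining integral against $\psi^{-1}(a^{-1}\xi v)$ is another Fourier transform. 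Tracking the scalar $\xi$ through this composition and comparing with the projection formula $\int_{V\times V^*}\Phi\,dv\,dv^* = \int_{\mathcal B} f(\xi)\,d\xi$ of Lemma \ref{integration-babysplit}, one sees that $O_\xi(\hat\Phi)$ as a function of $\xi$ is exactly $\mathcal F\big(\tfrac{1}{|\bullet|}(\mathcal F f)(\tfrac{1}{\bullet})\big)(\xi)$, i.e.\ $\mathcal G(f)(\xi)$ (with $\eta=1$ in the split case). Lemma \ref{asympt} and the discussion following it already guarantee that every intermediate object lies in the space of continuous functions $h$ on $\mathcal B$ with $\lim_{|x|\to\infty}|x|h(x)$ existing, on which $\iota$ and the relevant Fourier integrals are genuinely defined, so the formal manipulations are legitimate.

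The main obstacle is the interchange of integrals and the convergence of $\int_{F^\times}\psi^{-1}(au)\psi^{-1}(a^{-1}\xi v)\frac{da}{|a|}$, which is only conditionally convergent — this is the usual subtlety in manipulating Kloosterman-type kernels. I expect to handle it by inserting a convergence factor (regularizing via $|a|^s$ and using analytic continuation, exactly in the spirit of the Tate-integral regularization already used in Proposition \ref{germs-split}), or alternatively by first proving the identity on a dense subspace of $\Phi$ for which everything converges absolutely — e.g.\ $\Phi$ supported away from the coordinate axes, or pure tensors $\Phi_1\otimes\Phi_2$ — and then extending by continuity using that both $\Phi\mapsto O_\bullet(\hat\Phi)$ and $\Phi\mapsto\mathcal G(O_\bullet(\Phi))$ are continuous maps $\mathcal S(V\times V^*)\to\mathcal S(\mathcal X^*)$ into a Fréchet space, the latter by the boundedness of $\mathcal F$, $\iota$ on the relevant spaces. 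A secondary point worth checking is the precise normalization of measures and of $\psi$ so that no stray constant appears; this is routine given the conventions fixed in \S\ref{ssTamagawa} and Lemma \ref{integration-babysplit}, and the self-duality of the chosen measure makes $\mathcal F^2 = (\text{reflection})$, which is consistent with $\mathcal G$ being (essentially) an involution up to the known asymptotics.
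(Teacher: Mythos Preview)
Your direct-computation approach is essentially correct, but the paper takes a cleaner dual route that avoids the convergence issues you flag. Rather than expanding $O_\xi(\hat\Phi)$ and confronting the conditionally convergent kernel $\int_{F^\times}\psi^{-1}(au+a^{-1}\xi v)\,\frac{da}{|a|}$, the paper pairs $f$ against an arbitrary Schwartz test function $h$ on $\mathcal B$, lifts to $V\times V^*$ via the integration formula of Lemma~\ref{integration-babysplit}, and applies Plancherel one variable at a time:
\[
\int_{\mathcal B} f(\xi)\overline{h(\xi)}\,d\xi \;=\; \iint \Phi(x,y)\overline{h(xy)}\,dx\,dy \;=\; \iint \hat\Phi(x,y)\overline{\mathcal G(h)(xy)}\,dx\,dy \;=\; \int_{\mathcal B} O_\xi(\hat\Phi)\overline{\mathcal G(h)(\xi)}\,d\xi.
\]
Since $\mathcal F$ and $\iota$ each preserve the $L^2$ inner product, so does $\mathcal G$, and the identity $\int f\bar h = \int \mathcal G(f)\overline{\mathcal G(h)}$ finishes the argument. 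Every intermediate integral here is absolutely convergent (Schwartz against Schwartz), so no regularization or density reduction is needed.

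What your approach buys is a more explicit, kernel-level understanding of why $\mathcal G$ appears; what the paper's approach buys is that the unitarity of $\mathcal G$ does all the analytic work for free. Your regularization and density fixes would both succeed, but they are genuinely extra labor compared to the two-line Plancherel trick.
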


\begin{proof}
We denote by $\hat\Phi^1,\hat\Phi^2$ the partial Fourier transforms with respect to the first or second argument. 
We can treat $f$ as a tempered distribution on $\mathcal B$; let $h$ be a Schwartz function on $\mathcal B$, then according to the integration formula we have:
$$\int_{\mathcal B} f(\xi) \overline{h(\xi)} d\xi = \iint \Phi(x,y) \overline{h(xy)} dx dy = $$ $$ =\iint \hat \Phi^1(x,y) \overline{\hat h\left(\frac{x}{y}\right)} |y|^{-1} dx dy = \iint \hat\Phi(x,y) \overline{\mathcal G(h)(xy)} dx dy=$$ $$ = \int_{\mathcal B} O_\xi(\hat\Phi) \overline{\mathcal G(h)}(\xi) d\xi.$$

It is easy to see that all the integrals above are absolutely convergent. Now, $\mathcal G = \mathcal F\circ \iota\circ \mathcal F$, and the operations $\mathcal F$ and $\iota$ preserve inner products, therefore:
$$\int_{\mathcal B} f(\xi) \overline{h(\xi)} d\xi = \int_{\mathcal B} \mathcal G(f)(\xi) \overline{\mathcal G(h)}(\xi) d\xi.$$

Hence, $O_\xi(\hat\Phi) = \mathcal G(f)(\xi)$.
\end{proof}

It now follows that $\mathcal G f$ not only is a function, but it belongs to $\mathcal S(\mathcal X)$. Since Fourier transform is a topological automorphism of $\mathcal S(V\times V^*)$, it follows that $\mathcal G$ is a topological automorphism of $\mathcal S(\mathcal X)$, identified with the space of coinvariants. It also follows that the image of $\mathcal S(\mathcal X)$ under Fourier transform is $\iota$-stable:

\begin{corollary} \label{corollaryFourier}
 The Fourier transform of $\mathcal S(\mathcal X)$ is the space of those (almost) smooth functions on $\mathcal B$ which in a neighborhood of infinity are equal to $|x|^{-1} h\left(\frac{1}{x}\right)$, for some $h\in \mathcal S(\mathcal B)$. Moreover, Fourier transform descends to a topological isomorphism between $\mathcal S(\mathcal X)/\mathcal S(\mathcal B)$ and the stalk\footnote{Recall that the notion of ``stalk'' used for smooth functions is the one of appendix \ref{app:cosheaves}; in particular, the germ of a smooth function at a point is determined by its derivatives.} of functions of the form: $$|x|^{-1} h\left(\frac{1}{x}\right)$$
at $\infty$ (with the obvious topology, given by the derivatives of $h$ at $0$). 
\end{corollary}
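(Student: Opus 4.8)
The plan is to leverage Proposition~\ref{Fourier-baby} and the structural information from Proposition~\ref{germs-split} to pin down the image of $\mathcal S(\mathcal X)$ under Fourier transform, together with the cosheaf-theoretic description of $\mathcal S(\mathcal X)$ near infinity that follows from the germ expansion. First I would recall that $\mathcal S(\mathcal X)$, as a cosheaf on $\mathcal B$, restricts to $\mathcal S(\mathcal B^\reg)$ on the regular locus and has, over $0$, exactly the germs of the form $C_2(\xi)-C_1(\xi)\ln|\xi|$ with $C_1,C_2$ (almost) smooth and arbitrary (Proposition~\ref{germs-split}); in particular the stalk of $\mathcal S(\mathcal X)$ at $0$ modulo $\mathcal S(\mathcal B)$ is precisely the stalk of functions of the form $-C_1(0)\ln|\xi|$, parametrized by a single value $C_1(0)$ plus the Taylor data of $C_1$ --- but more usefully, modulo Schwartz germs it is the stalk of $c\cdot\ln|\xi|\cdot(\text{smooth})$, which is the same data as the space $\mathcal S(\mathcal X)/\mathcal S(\mathcal B)$ itself as a local object at $0$.

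Next I would transport this through Fourier transform. By Proposition~\ref{Fourier-baby}, $\mathcal F$ carries $\mathcal S(\mathcal X)$ (identified with orbital integrals) to the set of functions $\{O_\xi(\hat\Phi)\}$, but we want the description as honest functions on $\mathcal B$: the point is that $\mathcal F$ also acts on the \emph{upstairs} Schwartz space $\mathcal S(V\times V^*)$ and on the quotient by $G$, so $\mathcal F(\mathcal S(\mathcal X))=\mathcal F\circ(\text{coinvariants})$, and we must compute where a germ $-C_1(\xi)\ln|\xi|$ near $0$ (which is where the ``interesting'' part of $\mathcal S(\mathcal X)$ lives, since on $\mathcal B^\reg$ it is just Schwartz) goes. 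The computation in the proof of Lemma~\ref{asympt} already shows $\hat l(x)=\frac{c}{|x|}+h_1(x)$ near infinity for $l(x)=\ln|x|$, and more generally that $\mathcal F$ of a germ $g(\xi)\ln|\xi|$ with $g$ smooth at $0$ is, near infinity, a convolution of a Schwartz function with $\hat l$, hence of the form $|x|^{-1}h(1/x)+(\text{Schwartz})$ with $h$ smooth at $0$; conversely every such asymptotic is realized. This identifies $\mathcal F(\mathcal S(\mathcal X))$ with the space of (almost) smooth functions on $\mathcal B$ that near infinity equal $|x|^{-1}h(1/x)$ for some $h\in\mathcal S(\mathcal B)$, and shows $\mathcal F$ sends $\mathcal S(\mathcal B)\subset\mathcal S(\mathcal X)$ (the regular part) into itself plus lower-order terms --- more precisely I would phrase it as: $\mathcal F$ descends to the quotient $\mathcal S(\mathcal X)/\mathcal S(\mathcal B)$, and there it is an isomorphism onto the stalk at $\infty$ of functions of the form $|x|^{-1}h(1/x)$, with topology given by the derivatives of $h$ at $0$. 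The continuity and openness are automatic from the fact that $\mathcal F$ is already known (just above the statement) to be a topological automorphism of $\mathcal S(\mathcal X)$ and from the fact that the subspace $\mathcal S(\mathcal B)$ is a closed subspace (flabbiness of the cosheaf, Corollary~\ref{corollaryflabby}).

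Concretely, the key steps in order: (1) recall from Propositions~\ref{germs-split} and the cosheaf lemmas that $\mathcal S(\mathcal X)$ near $0$ is exactly $\{C_2-C_1\ln|\xi|\}$ and on $\mathcal B^\reg$ is exactly $\mathcal S(\mathcal B^\reg)$; (2) compute the Fourier transform of a germ $g(\xi)\ln|\xi|$ at $0$ using the convolution argument from Lemma~\ref{asympt}, getting the $|x|^{-1}h(1/x)$ behavior at $\infty$, and check the converse by running the computation backwards; (3) note that $\mathcal F$ kills no nonzero element and is a topological automorphism of $\mathcal S(\mathcal X)$ (already established), so it maps the closed subspace $\mathcal S(\mathcal B)$ isomorphically onto the corresponding subspace of functions Schwartz at $\infty$, hence descends to the quotient; (4) identify the quotient $\mathcal S(\mathcal X)/\mathcal S(\mathcal B)$ with the stalk at $0$ of $\ln|\xi|\cdot(\text{smooth})$ and its image with the stalk at $\infty$ of $|x|^{-1}h(1/x)$, matching topologies via the Taylor data. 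The main obstacle I anticipate is step (2) in the archimedean case: one must be careful that the convolution $h_2\star\hat l$ genuinely produces \emph{all} germs $|x|^{-1}h(1/x)$ at infinity and no spurious extra ones, i.e.\ that the map on Taylor coefficients of $h$ at $0$ is a bijection --- this requires tracking how derivatives of the smooth envelope $C_1$ at $0$ correspond to derivatives of $h$ at $0$ through the Fourier/convolution, and ensuring the topologies (given by seminorms on these finite-jet data on each side) actually match rather than merely being algebraically isomorphic. In the nonarchimedean case this is trivial since ``smooth germ at a point'' is just a locally constant germ, so only the real and complex cases carry content here, and the cleanest route is probably to reduce to the one-dimensional statement about $\mathcal F(g(\xi)\ln|\xi|)$ for $g$ ranging over Schwartz functions and invoke the self-duality already built into the setup.
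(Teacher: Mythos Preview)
Your plan is sound and would work, but it takes a more laborious route than the paper's. You propose to compute directly the Fourier transform of germs $g(\xi)\ln|\xi|$ via convolution with $\hat l$ and then verify, by tracking Taylor data on both sides, that the resulting germs at infinity are exactly those of the form $|x|^{-1}h(1/x)$ with $h$ smooth at $0$. You correctly flag the surjectivity and bijectivity of the map on jets as the delicate point, and in the archimedean case this would indeed require a careful asymptotic expansion of the convolution $h_2\star\hat l$ beyond the leading-order statement in Lemma~\ref{asympt}.

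The paper sidesteps this entirely. Its key observation is that, since $\mathcal G=\mathcal F\circ\iota\circ\mathcal F$ is already known (from Proposition~\ref{Fourier-baby}) to be a topological automorphism of $\mathcal S(\mathcal X)$, the operator $\iota$ is a topological automorphism of $W:=\mathcal F(\mathcal S(\mathcal X))$. Now one only needs the easy fact that elements of $W$ agree with Schwartz functions on compact subsets of $\mathcal B$ (hence are smooth at $0$); applying the automorphism $\iota$ transports the germ description at $0$ to the germ description at $\infty$, giving both directions of the characterization at once, together with the identification of topologies. In other words, the $\iota$-symmetry converts the ``hard'' direction you identified into the trivial one. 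Your direct computation would eventually recover this, but invoking $\iota$-stability is the cleaner move and is what the paper does.
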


\begin{proof}
 It is clear that we have a short exact sequence:
$$ 0 \to \mathcal S(\mathcal B) \to \mathcal F\left(\mathcal S(\mathcal X)\right) \to V \to 0,$$
arising as the Fourier transform of the sequence:
$$ 0 \to \mathcal S(\mathcal B) \to \mathcal S(\mathcal X) \to \mathcal S(\mathcal X)/\mathcal S(\mathcal B) \to 0,$$
where $\mathcal S(\mathcal B)$ is endowed with its usual topology, $\mathcal F\left(\mathcal S(\mathcal X)\right)$ is endowed with the topology of $\mathcal S(\mathcal X)$ and $V$ is defined by this short exact sequence. Moreover, the first arrow is a closed embedding, and all elements of $\mathcal F\left(\mathcal S(\mathcal X)\right) $ coincide with elements of $\mathcal S(\mathcal B)$ on any compact subset of $\mathcal B$.

Since $\iota$ is a topological automorphism of $W:=\mathcal F\left(\mathcal S(\mathcal X)\right)$, this implies that $W$ consists precisely of functions as in the statement of the corollary. Such functions are sections of a Schwartz cosheaf over $\overline{\mathcal B}=\PP^1$ in an obvious way, and $\iota$ induces an isomorphism from the stalk at zero to the stalk at infinity -- the latter being equal to the quotient $V$. From this it follows that the topology on $V$ is given by the derivatives of $h$ at $0$ (where $h$ appears in the expansion of a given element at $\infty$ as in the statement).
\end{proof}

\subsection{Mellin transform} Let us now view $\mathcal B\simeq \Ga$ as a vector space, and describe the integral operator $\mathcal G$ in terms of Mellin transforms with respect to the action of $\Gm$ on $\Ga$. We normalize the action of $F^\times$ on $L^2(\mathcal B)$ as in (\ref{unitaryaction}).

By the asymptotic behavior of Lemma \ref{asympt}, any $f\in \mathcal S(\mathcal X)$ satisfies the Mellin inversion formula:
\begin{equation}\label{Mellininversion}
 f(\xi) = \int_{|\bullet|^\kappa\cdot \widehat{F^\times}} \check f(\chi) \chi(\xi) |\xi|^{-\frac{1}{2}} d\chi
\end{equation}
for every $\kappa<-\frac{1}{2}$. Here $\check f(\chi)$ denotes the Mellin transform (not to be confused with the Fourier transform $\hat f$):
\begin{equation}
 \check f(\chi) = \int_{F^\times}  |\xi|^\frac{1}{2} f(\xi) \chi^{-1}(\xi)  d^\times\xi.
\end{equation}
We do not explicate the dual measures of the formulas above and below, because we will only be interested in gamma factors, which do not depend on the measures. For this reason, we ignore the fact that our normalization of multiplicative measures is different from the one of Tate's thesis.

We claim:
\begin{lemma}
 We have $\widecheck{\mathcal G(f)} (\chi) = \gamma(\chi,\frac{1}{2},\psi)^2 \check f (\chi^{-1})$, where $\gamma(\chi,s,\psi)$ is the gamma factor of $\chi$ at $s$ (cf.\ below).
\end{lemma}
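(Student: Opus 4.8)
The plan is to compute the Mellin transform of each of the three operators composing $\mathcal{G} = \mathcal{F}\circ\iota\circ\mathcal{F}$ separately, and then multiply. The key observation is that all three operators intertwine the (unitarized) action of $F^\times$ on $L^2(\mathcal{B})$ in a controlled way: $\mathcal{F}$ is anti-equivariant (as noted after \eqref{unitaryaction}), and $\iota$ is the composition of the inversion $\xi\mapsto 1/\xi$ (which on Mellin transforms sends $\chi\mapsto\chi^{-1}$ up to the $|\xi|^{1/2}$ twist) with multiplication by $\eta$. On the Mellin side each of these is diagonalized, so the computation reduces to bookkeeping on the character variety.

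First I would recall the basic Mellin-theoretic translation of the Fourier transform: the functional equation of Tate's local zeta integral says precisely that, for a suitable function $g$ of one variable, $\widecheck{\mathcal{F}(g)}(\chi) = \gamma(\chi,\tfrac12,\psi)\,\check g(\tilde\chi)$, where $\tilde\chi$ is the ``dual'' character $\chi^{-1}|\bullet|$ — but with our unitarized normalization \eqref{unitaryaction} the $|\bullet|$-twist is absorbed, so Fourier transform on the Mellin side is simply $\check g(\chi)\mapsto \gamma(\chi,\tfrac12,\psi)\check g(\chi^{-1})$ (the anti-equivariance statement). This is where one must be slightly careful to match conventions: the fact, flagged in the text, that we do not explicate dual measures and ignore the discrepancy with Tate's normalization of multiplicative measures, is exactly what makes it legitimate to state the answer purely in terms of $\gamma$-factors without measure corrections. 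I would verify that the unitary normalization makes $\mathcal{F}$ an isometry on $L^2(\mathcal B)$ and that Lemma \ref{asympt} guarantees $\check f(\chi)$ is defined on the relevant contour $|\bullet|^\kappa\cdot\widehat{F^\times}$, so that Mellin inversion \eqref{Mellininversion} applies and the composition is legitimate.

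Next I would handle $\iota$. Writing $\iota = \eta\cdot(\text{inv})\cdot\frac{1}{|\bullet|}$ and tracking the $|\xi|^{1/2}$ normalization, the map $f(\xi)\mapsto \frac{1}{|\xi|}f(1/\xi)$ corresponds on Mellin transforms to $\check f(\chi)\mapsto\check f(\chi^{-1})$ (the $1/|\bullet|$ is the correction needed to keep the substitution unitary, i.e.\ it implements the inversion $d^\times\xi$-isometrically), and multiplication by $\eta$ corresponds to $\check f(\chi)\mapsto\check f(\eta^{-1}\chi)$ — or $\check f(\chi)\mapsto\check f(\eta\chi)$, since $\eta^2=1$. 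Then the composition $\mathcal{G} = \mathcal{F}\circ\iota\circ\mathcal{F}$ on Mellin transforms is the chain
\begin{equation*}
\check f(\chi)\;\mapsto\;\gamma(\chi,\tfrac12,\psi)\check f(\chi^{-1})\;\mapsto\;\gamma(\chi,\tfrac12,\psi)\check f(\eta\chi)\;\mapsto\;\gamma(\chi,\tfrac12,\psi)\,\gamma(\chi,\tfrac12,\psi)\,\check f(\eta\cdot\eta\chi^{-1}),
\end{equation*}
where in the last step one applies the $\mathcal F$-rule again and uses $\gamma(\eta\chi,\tfrac12,\psi)\,\gamma(\chi^{-1}\text{-type factor})$; collecting, the two $\gamma$-factors appear and the character arguments collapse via $\eta^2 = 1$ to give $\widecheck{\mathcal{G}(f)}(\chi) = \gamma(\chi,\tfrac12,\psi)^2\,\check f(\chi^{-1})$. (In the split case $\eta=1$ this is transparent; in general one checks $\gamma(\eta\chi,s,\psi)$ enters and is then renamed, but since $\eta$ is quadratic the two $\gamma$-factors in the final answer are both $\gamma(\chi,\tfrac12,\psi)$ after the substitutions — this identification of which $\gamma$-factors survive is the one genuinely fiddly point.)

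The main obstacle I anticipate is \emph{not} the formal Mellin computation, which is essentially forced, but the careful tracking of conventions: the interplay between the unitarized action \eqref{unitaryaction}, the non-Tate normalization of multiplicative Haar measure, the placement of the $|\xi|^{\pm1/2}$ factors in the definitions of $\check f$ and of Mellin inversion, and above all the bookkeeping of which character gets an $\eta$-twist at each stage and why $\eta^2=1$ lets everything collapse to $\check f(\chi^{-1})$ with exactly the factor $\gamma(\chi,\tfrac12,\psi)^2$. One also should note explicitly that since $\mathcal{G}$ is known (Proposition \ref{Fourier-baby} and the discussion after Corollary \ref{corollaryFourier}) to be a topological automorphism of $\mathcal{S}(\mathcal{X})$, and since $\mathcal F$ and $\iota$ preserve $L^2(\mathcal B)$-inner products, the Mellin transforms involved are all honestly defined and the identity of functions on the character variety is equivalent to the operator identity — so no analytic continuation subtlety arises beyond what Lemma \ref{asympt} already provides. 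I would close by remarking that this lemma is exactly the computation that will later match the $\gamma$-factor $\gamma(\chi,\tfrac12,\psi)^2$ against the $L$-factor $L(\pi,\tfrac12)L(\pi\otimes\eta,\tfrac12)$ appearing on the spectral side, which motivates stating the answer in this form.
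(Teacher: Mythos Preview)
Your approach is valid for the lemma as stated (split case, $\eta=1$) but differs from the paper's. The paper does not track the Mellin transform through the three factors of $\mathcal G=\mathcal F\circ\iota\circ\mathcal F$; instead it invokes Proposition~\ref{Fourier-baby}, which has already identified $\mathcal G(f)$ with the orbital integrals of $\hat\Phi$. By continuity it then suffices to take $\Phi=\Phi_1\otimes\Phi_2$ with $\Phi_i\in\mathcal S(F)$, whereupon both $\check f(\chi)$ and $\widecheck{\mathcal G(f)}(\chi)$ factor as products of two Tate integrals of honest Schwartz functions --- $\zeta(\Phi_i,\chi^{-1},\tfrac12)$ and $\zeta(\hat\Phi_i,\chi^{-1},\tfrac12)$ respectively --- and Tate's functional equation applies cleanly to each factor. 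This sidesteps two issues in your route. First, you apply the rule $\widecheck{\mathcal F g}(\chi)=\gamma(\chi,\tfrac12,\psi)\check g(\chi^{-1})$ to $g=f\in\mathcal S(\mathcal X)$ and to $g=\iota\mathcal F f$, neither of which is Schwartz on $F$, so invoking Tate's functional equation there requires an extra justification (density or analytic continuation) that you gesture at but do not supply. Second, the bookkeeping in your displayed chain is not right: after applying $\iota$ the $\gamma$-factor's argument must also be substituted, and a careful pass yields
\[
\widecheck{\mathcal G(f)}(\chi)=\gamma(\chi,\tfrac12,\psi)\,\gamma(\eta\chi,\tfrac12,\psi)\,\check f(\eta\chi^{-1}).
\]
In the split case this collapses to the claimed formula, but your parenthetical assertion that ``since $\eta$ is quadratic the two $\gamma$-factors in the final answer are both $\gamma(\chi,\tfrac12,\psi)$'' is false in general: the second factor is genuinely $\gamma(\eta\chi,\tfrac12,\psi)$, as the paper's later non-split computation confirms. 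So your method works here, but the paper's two-line argument via Proposition~\ref{Fourier-baby} is both shorter and avoids these pitfalls.
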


By the Mellin inversion formula (\ref{Mellininversion}), this completely characterizes the operator $\mathcal G$.

\begin{proof}
By continuity of $\mathcal G$, it is enough to prove it for $f$ in a dense subspace of $\mathcal S(\mathcal X)$, so let us assume that $f(\xi)=O_\xi(\Phi)$ with $\Phi(x,y) = \Phi_1(x) \Phi_2(y)$. Then by the integration formula we can write:
$$ \check f(\chi) = \iint \Phi_1(x) \Phi_2(y) \chi^{-1}(xy)|xy|^\frac{1}{2} d^\times x d^\times y = $$
$$ \zeta(\Phi_1,\chi^{-1},\frac{1}{2}) \cdot \zeta(\Phi_2,\chi^{-1},\frac{1}{2}),$$
where $\zeta(\Phi_i,\chi,s) = \int_{F^\times} \Phi_i(x) \chi(x) |x|^s d^\times x$ denotes the \emph{Tate integral} of $\Phi_i$ \cite{Tate}.

 Similarly, $\widecheck{\mathcal G(f)} = \zeta(\hat \Phi_1,\chi^{-1},\frac{1}{2}) \cdot \zeta(\hat \Phi_2,\chi^{-1},\frac{1}{2})$, and by the functional equation for Tate integrals we have, by definition:
$$\gamma(\chi,s,\psi)\zeta(\Phi_i,\chi,s) = \zeta(\hat\Phi_i, \chi^{-1}, 1-s).$$
This implies the claim of the lemma.
\end{proof}

\subsection{The non-split case} We discuss now the non-split version of the previous example, where $V\times V^*$ has been replaced by the space whose $F$-points are equal to the elements of a quadratic field extension $E$, under the action of the group of elements of norm $1$. In other words, we take:
$$ X = \Res_{E/F} \Ga,$$
the one-dimensional torus $T$ over $k$ defined by the short exact sequence:
$$ 1\to T \to \Res_{E/F} \Gm \xrightarrow{N^E_F} \Gm \to 1,$$
and we will be interested in the quotient stack:
$$\mathcal X = X/T.$$
The quadratic character of $F^\times$ associated to the extension $E$ will be denoted by $\eta_{E/F}$ or simply $\eta$.

Again we have a canonical isomorphism of categorical quotients: $\mathcal B:= X\sslash T \xrightarrow{\sim} \Ga$ given by the norm map.

The first thing to notice here is that the quotient stack has ``points'' corresponding to nontrivial torsors of $T$ and which are, therefore, not accounted by $F$-points of $X$; this is already evident by the fact that the map: $X\twoheadrightarrow \mathcal B$ is not surjective at the level of $F$-points. We therefore propose the following two definitions of $\mathcal M(\mathcal X)$, which can be seen to be equivalent (the first one was suggested to me by Joseph Bernstein):

\begin{enumerate}
 \item We let $T\to \GL_2$ be an embedding, and let $\mathcal \mathcal M(\mathcal X)$ denote the $\GL_2(F)$-coinvariants of $\mathcal M\left((X\times^T \GL_2)(F)\right)$ (Schwartz measures).

 \item We let $\mathcal M(\mathcal X)$ be the direct sum, over all isomorphism classes $\alpha$ of $T$-torsors, of the coinvariant spaces: $\mathcal M(X^\alpha(F))_{T^\alpha(F)}$.  

Here for a $T$-torsor $R^\alpha$ in the isomorphism class denoted by $\alpha$ we let $T^\alpha = \Aut(R^\alpha)^T$ and $X^\alpha = X\times^T R^\alpha$. (In terms of Galois cohomology, $\alpha$ can be regarded as denoting an element of $H^1(F,T)$, $T^\alpha$ is defined by its image in $H^1(F,\operatorname{Aut}(T))$ and $X^\alpha$ by its image in $H^1(F,\Aut(X))$.) Of course, in this case we have $T^\alpha \simeq T$ for all $\alpha$, but these constructions make sense in a much more general setting -- and explain inner forms appearing in the relative trace formula.

Notice that $X^\alpha$ has $F$-points if and only if $R^\alpha$ admits a $T$-equivariant morphism into $X$.
\end{enumerate}

Although the first definition is more natural and geometric, the second one is more suitable for spectral expansions, and we will be working with that.

\subsection{Integration formula, Orbital integrals}

We endow $X(F)=E$ with a Haar measure, and then the exact sequence:
$$1 \to T(F) \to E^\times \xrightarrow{N^E_F} F^\times, $$ together with the multiplicative measure $|x|^{-1} dx$ on $F^\times$ (where $dx$ is the standard additive measure on $F$ discussed in \ref{ssTamagawa}), endow $T(F)$ with a Haar measure which, by definition, satisfies the integration formula:

$$ \int_E \Phi(e) d^\times e = \int_F O_\xi(\Phi) d^\times \xi,$$
where $O_\xi(\Phi)$ is the orbital integral:

\begin{equation}
 O_\xi(\Phi) = \int_{T(F)} \Phi(\tilde \xi\cdot t) dt \,\,\text{   (here }\tilde \xi\text{: a lift of }\xi\text{ to }E\text{)}.
\end{equation}

Since, by definition, the absolute value of an element of $E$ is equal to the absolute value of its norm in $F$, we also have:
\begin{equation} \int_E \Phi(e) de = \int_F O_\xi(\Phi) d\xi.\end{equation}

The following is immediate:
\begin{lemma}\label{integration-babynonsplit}
There are compatible choices of invariant measures on $\sqcup_\alpha X^\alpha(F)$ such that for any $\Phi\in\mathcal S(\sqcup_\alpha X^\alpha)$ with image $f\in \mathcal S(\mathcal X)$ we have:
\begin{equation}
\int_{\sqcup_\alpha X^\alpha(F)} \Phi(x) dx = \int_{\mathcal B} f(\xi) d\xi.
\end{equation}
\end{lemma}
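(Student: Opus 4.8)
The plan is to imitate the split case (Lemma~\ref{integration-babysplit}), but keeping careful track of the fact that $X \twoheadrightarrow \mathcal B$ fails to be surjective on $F$-points, so that the various torsors $X^\alpha$ are all needed to cover $\mathcal B^\reg$. First I would recall that $\mathcal B^\reg = \mathcal B \setminus \{0\}$ as before, and that a point $\xi \in \mathcal B^\reg = F^\times$ lifts to $X^\alpha(F)$ precisely for the unique $\alpha$ such that $\xi$ lies in the image of the norm-type map attached to $X^\alpha$; concretely, for the two $T$-torsors (corresponding to $\xi \in N^E_F(E^\times)$ and $\xi \notin N^E_F(E^\times)$, i.e.\ $\eta(\xi) = 1$ or $-1$) one checks that over $\mathcal B^\reg$ the fiber of $\sqcup_\alpha X^\alpha$ is a single orbit of the relevant $T^\alpha(F) \simeq T(F)$, exactly as in the split picture. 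Thus $\sqcup_\alpha X^\alpha(F) \to \mathcal B^\reg$ is, orbit-by-orbit, a $T(F)$-bundle, and $\mathcal S(\mathcal X)$ (the direct sum of the coinvariant spaces) maps to functions on $\mathcal B^\reg$ via the orbital integrals $O_\xi$ just defined.

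Next I would assemble the measures. On each $X^\alpha(F) = E$ (up to the torsor twist) fix the Haar measure used above to define $O_\xi$, namely the one for which the exact sequence $1 \to T(F) \to E^\times \xrightarrow{N^E_F} F^\times$ and the measure $|x|^{-1}dx$ on $F^\times$ (with $dx$ the standard measure of \S\ref{ssTamagawa}) induce the chosen Haar measure on $T(F)$. The already-recorded identity
\begin{equation*}
\int_E \Phi(e)\, de = \int_F O_\xi(\Phi)\, d\xi
\end{equation*}
is then exactly the desired formula on a single torsor component, with $d\xi$ the standard measure on $F = \mathcal B$; summing over the (finitely many) torsor classes $\alpha$ and using that the images of the $X^\alpha$ tile $\mathcal B^\reg$ up to measure zero gives
\begin{equation*}
\int_{\sqcup_\alpha X^\alpha(F)} \Phi(x)\, dx = \int_{\mathcal B} f(\xi)\, d\xi
\end{equation*}
for $f \in \mathcal S(\mathcal X)$ the image of $\Phi$. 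The phrase ``compatible choices'' in the statement is just the assertion that these component-wise measures are all normalized against the \emph{same} measure $d\xi$ on $\mathcal B$, which the construction guarantees by fiat, and that the left-hand side is finite for Schwartz $\Phi$, which follows since each $O_\xi(\Phi)$ is (almost) smooth on $\mathcal B^\reg$ with controlled behavior near $0$ and at $\infty$, making the right-hand integral absolutely convergent.

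The only genuine point requiring care — and the one I would expect to be the main obstacle — is the behavior near the irregular point $0 \in \mathcal B$ and at infinity: one must know that the orbital integral $O_\xi(\Phi)$, assembled across all torsors, is integrable against $d\xi$ on all of $\mathcal B$, not merely locally on $\mathcal B^\reg$. Near $\infty$ this is Schwartz decay of $\Phi$ transported through the norm map. Near $0$ one expects, by the same local analysis as in Proposition~\ref{germs-split} (adapted to the non-split torus, where the logarithmic term is controlled by $\zeta$-integrals of $\Phi|_{N^E_F = 0}$ against unramified and possibly $\eta$-twisted characters), an expansion of the shape $C_2(\xi) - C_1(\xi)\ln|\xi|$ with $C_i$ (almost) smooth; such a function is locally integrable at $0$, so the right-hand side converges and the integration formula holds as stated. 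I would therefore present the proof as: (i) identify fibers over $\mathcal B^\reg$ with $T(F)$-orbits across the torsor decomposition; (ii) quote the single-component identity $\int_E \Phi\, de = \int_F O_\xi(\Phi)\, d\xi$ and sum; (iii) note convergence via the germ expansion at $0$ and Schwartz decay at $\infty$.
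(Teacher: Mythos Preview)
Your approach is essentially the paper's: the lemma is declared ``immediate'' from the single-component identity $\int_E \Phi\, de = \int_F O_\xi(\Phi)\, d\xi$ together with the fact that the torsors tile $\mathcal B^\reg$, which is exactly your (i)--(ii). One correction worth noting: in the non-split case $T(F)=\ker N^E_F$ is \emph{compact}, so the orbital integral near $\xi=0$ is not logarithmic at all---it has the form $C_1(\xi)+C_2(\xi)\eta(\xi)$ with $C_i$ smooth (this is Proposition~\ref{germsnonsplit}, not the split Proposition~\ref{germs-split}). Your convergence worry in (iii) is therefore even milder than you suggest, and the reference to $\zeta$-integrals and a $\ln|\xi|$ term is misplaced here.
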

Notice that, although $X^\alpha$ is $T=T^\alpha$-isomorphic to $X$, the measures are not preserved by such a (non-canonical) isomorphism.

We define $\mathcal S(\mathcal X)$ as the cosheaf over $\mathcal B$ of functions on $\mathcal B^\reg$ obtained as orbital integrals of elements of $\mathcal S(\sqcup_\alpha X^\alpha)$. Notice that this includes the summands parametrized by all classes of $T$-torsors. Again, the standard measure on $\mathcal B=F$ gives rise to a linear map: $\mathcal M(\mathcal X)\to \mathcal S(\mathcal X)$, which can be seen (and will follow from the sequel) to be an isomorphism.

For the rest of this section we denote by $X^\alpha$ only the copy corresponding to the nontrivial torsor, and by $X$ the copy corresponding to the trivial one. The ``origins'' of $X$ and $X^\alpha$ (i.e.\ their unique $F$-points over $\xi = 0$) will be denoted by $0_X$ and $0_{X^\alpha}$. Here $T(F)=T^\alpha(F)$ is compact, therefore the asymptotic behavior of orbital integrals is easy to determine:

\begin{proposition} \label{germsnonsplit}
The restriction of $\mathcal S(\mathcal X)$ to $\mathcal B^\reg$ is equal to $\mathcal S(\mathcal B^\reg)$. In a neighborhood of zero, the sections of $\mathcal S(\mathcal X)$ are precisely those functions of the form: \begin{equation}C_1(\xi) + C_2(\xi) \eta(\xi),\end{equation} with $C_1$, $C_2$ (almost) smooth functions in one variable. Moreover, the values $C_1(0),C_2(0)$ are a basis for functionals on the fiber of $\mathcal S(\mathcal X)$ over $\xi=0$, and we have $C_1(0) =\tilde O_{0,1}(\Phi)$, $C_2(0) = \tilde O_{0,\kappa_0} (\Phi)$,
where:
\begin{equation}\label{babyipnonsplit}
\tilde O_{0,1} (\Phi) = \frac{1}{2}\Vol(T(F)) \left(\Phi(0_X) + \Phi(0_{X^\alpha})\right),
\end{equation}
\begin{equation}
\tilde O_{0,\kappa_0}(\Phi) = \frac{1}{2}\Vol(T(F)) \left(\Phi(0_X) - \Phi(0_{X^\alpha})\right).
\end{equation}
\end{proposition}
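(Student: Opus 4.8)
The plan is to reduce everything to the elementary structure of $X(F) = E$ under the compact torus $T(F)$, mirroring the split case of Proposition~\ref{germs-split} but with the nonstandard complication that there are now two torsor components $X$ and $X^\alpha$. First I would establish the statement about the restriction to $\mathcal B^\reg$: over $\mathcal B^\reg$ the fiber of $X$ (the trivial torsor component) is a single $T$-orbit exactly over those $\xi$ which are norms from $E^\times$, and the fiber of $X^\alpha$ is a single $T^\alpha$-orbit over the non-norms; together they cover all of $\mathcal B^\reg$ with disjoint open pieces (norms vs.\ non-norms), and on each piece the orbital integral map is just integration over a compact group, which identifies the space of such orbital integrals with $\mathcal S(\mathcal B^\reg)$. (One uses here that the characteristic function of the norm subgroup is, up to a constant, $\frac{1+\eta}{2}$, which is locally constant away from $0$ but genuinely singular at $0$ — this is precisely where $\eta(\xi)$ enters.)

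Next I would analyze the germ at $\xi = 0$. Pick $\Phi \in \mathcal S(\sqcup_\alpha X^\alpha)$; write $\Phi = \Phi^{X} \oplus \Phi^{X^\alpha}$. Near $0$, the orbital integral $O_\xi(\Phi)$ is a sum of the contribution of $\Phi^{X}$, supported on norms, and that of $\Phi^{X^\alpha}$, supported on non-norms. Because $T(F)$ is compact, each contribution is, near $0$, simply the value $\mathrm{Vol}(T(F)) \cdot \Phi^{X}(\tilde\xi)$ (resp.\ $\mathrm{Vol}(T^\alpha(F))\cdot\Phi^{X^\alpha}(\tilde\xi)$) for a lift $\tilde\xi$, i.e.\ an (almost) smooth function of $\xi$ on the norm locus (resp.\ non-norm locus) — there is no logarithmic term, in contrast to the split case, precisely because the stabilizer is compact. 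Writing the indicator of the norm locus as $\frac{1+\eta(\xi)}{2}$ and of its complement as $\frac{1-\eta(\xi)}{2}$, one gets that near $0$ every section of $\mathcal S(\mathcal X)$ has the form $\frac{1+\eta(\xi)}{2}a(\xi) + \frac{1-\eta(\xi)}{2}b(\xi)$ with $a,b$ (almost) smooth, which rearranges to $C_1(\xi) + C_2(\xi)\eta(\xi)$ with $C_1 = \frac{a+b}{2}$, $C_2 = \frac{a-b}{2}$. Conversely any such pair is realized: choose $\Phi^{X},\Phi^{X^\alpha}$ with prescribed germs at $0_X, 0_{X^\alpha}$, which is possible since germs of Schwartz functions at a point are arbitrary jets.

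It remains to identify the functionals $C_1(0), C_2(0)$ on the fiber over $0$ and to check they form a basis. By the appendix's Nakayama-type statement (Proposition~\ref{propNakayama}), the fiber of the cosheaf $\mathcal S(\mathcal X)$ over $0$ is dual to the span of the invariant distributions on the preimage of $0$ in $\sqcup_\alpha X^\alpha(F)$; that preimage consists of the two points $0_X$ and $0_{X^\alpha}$, so the fiber is two-dimensional with dual basis the evaluations $\Phi \mapsto \Phi(0_X)$ and $\Phi \mapsto \Phi(0_{X^\alpha})$. From the computation above, $C_1(0) = \frac12\mathrm{Vol}(T(F))(\Phi(0_X)+\Phi(0_{X^\alpha}))$ and $C_2(0) = \frac12\mathrm{Vol}(T(F))(\Phi(0_X)-\Phi(0_{X^\alpha}))$ (using $\mathrm{Vol}(T(F)) = \mathrm{Vol}(T^\alpha(F))$ and the compatible normalization of measures from Lemma~\ref{integration-babynonsplit}, plus the fact that $\eta$ takes value $+1$ on the norm side and $-1$ on the non-norm side so that the $\eta$-twisted term picks up the sign), and these two functionals are linearly independent, hence a basis. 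The main obstacle — such as it is — is bookkeeping the measure normalization: making sure that the single constant $\mathrm{Vol}(T(F))$ appears with the right factor $\frac12$ in both formulas requires tracking the integration formula of Lemma~\ref{integration-babynonsplit} through the decomposition of $\mathcal B^\reg$ into norm and non-norm loci, each of which has "half" the mass in the relevant averaged sense; everything else is the soft structure theory already set up for the split case.
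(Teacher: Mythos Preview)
Your proof is correct and follows the same line as the paper's (very terse) argument: split $\Phi$ over the two torsor components, use compactness of $T(F)$ to see that each orbital integral is an (almost) smooth function of $\xi$ on the norm (resp.\ non-norm) locus with value $\Vol(T(F))\Phi(0_X)$ (resp.\ $\Vol(T(F))\Phi(0_{X^\alpha})$) at $0$, and recombine via $\tfrac{1\pm\eta}{2}$. Two minor corrections: the orbital integral near $0$ is $\int_T \Phi^X(\tilde\xi\, t)\,dt$, not literally $\Vol(T(F))\,\Phi^X(\tilde\xi)$ --- only their values at $\xi=0$ coincide; and the factor $\tfrac12$ you worry about at the end is purely the algebra $C_1=(a+b)/2$, $C_2=(a-b)/2$, not any ``half the mass'' phenomenon from Lemma~\ref{integration-babynonsplit}.
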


We explain the index $\kappa_0$: the Langlands dual group of $T$ is the semi-direct product of $\CC^\times$ with $\Gal(\bar F/F)$, and the $\Gal(\bar F/F)$-invariants of $\CC^\times$ consist of two elements $1,\kappa_0$. In the second (global) part of the paper we will recall that these elements parametrize characters on $H^1(F, T)$, and hence the distribution $\tilde O_{0,\kappa}$ ($\kappa = 1,\kappa_0$) is a ``$\kappa$-twisted orbital integral'' over a stable orbit.

\begin{proof}
 It is very easy to see that if $\Phi$ is supported on $X(F)$ then, close to zero, $O_\xi(\Phi)$ is equal to $0$ if $\xi$ is not a norm from $E$ and equal to a smooth function with value $\Vol(T(F)) \cdot \Phi(0_X)$ at zero if $\xi$ is a norm; similarly for the case that $\Phi$ is supported on $X^\alpha(F)$, but with $\xi$ not a norm. The result follows.
\end{proof}

\subsection{Fourier transform}

We define Fourier transform on $E$ by identifying it with its Pontryagin dual via the pairing $(x,y)\mapsto \psi(\tr(x\bar y))$, where $\psi$ is as before and $\bar y$ denotes the Galois conjugate of $y$. The chosen measure on $E$ is self-dual with respect to Fourier transform. Since we will be interested only in characters $\chi_E$ of $E^\times$ which are base change of characters of $F^\times$, i.e.\ $\chi_E = \chi\circ N_F^E $ or, equivalently, $\bar\chi_E = \chi_E$, the functional equation of Tate integrals does not change under this alternative definition of duality, i.e.:
\begin{equation}\label{localfeE}\gamma(\chi_E,s,\psi_E)\zeta_E(\Phi,\chi_E,s) = \zeta_E(\hat\Phi, \chi_E^{-1}, 1-s)
\end{equation}
for such characters $\chi_E$. The additive character $\psi_E$ will be taken to be the composition of the character $\psi$ used previously on $F$ with the trace map.

The correct way to define Fourier transform on $X^\alpha(F)=E^\alpha$ is to notice that the hermitian map: $(x,y)\mapsto \tr(x\bar y)$ extends naturally to $E^\alpha$: if we choose any isomorphism $\iota: E\to E^\alpha$ which maps $1\in E$ to the element $a\in E^\alpha$ then we have:
$$\tr(\iota x\cdot \overline{\iota y}) := \tr(N_F^E(a) x\bar y),$$
and this definition clearly does not depend on $\iota$. Then we have on $E^\alpha$, as we had on $E$:
\begin{equation}\label{Fourierontorsor-def}
 \widehat {\Phi^\alpha}(y) := \int_{E^\alpha} \Phi(x) \psi^{-1}\left(\tr(x\cdot \bar y)\right) dx,
\end{equation}
for the Haar measure defined previously, and this is self-dual, i.e.:

\begin{lemma} For $\Phi^\alpha\in C_c^\infty(E^\alpha)$,
 \begin{equation}
\widehat{\widehat{  \Phi^\alpha}}(x) = \Phi^\alpha(-x).
 \end{equation}
\end{lemma}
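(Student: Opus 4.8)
The plan is to deduce this from the already-recalled self-duality of the chosen measure on $E$ by transport of structure along the isomorphism $\iota\colon E\xrightarrow{\sim}E^\alpha$ that enters the definition (\ref{Fourierontorsor-def}). First I would fix such an $\iota$, $F$-linear with $\iota(1)=a$, and set $c:=N^E_F(a)\in F^\times$, a representative of the nontrivial class in $F^\times/N^E_F(E^\times)$. By construction the hermitian pairing on $E^\alpha$ pulls back along $\iota$ to $(u,v)\mapsto\tr(c\,u\bar v)$ on $E$, and, compatibly with the norm map defining $\mathcal B$, one has $N^E_F\circ\iota=c\cdot N^E_F$. Using the compatibility of invariant measures of Lemma \ref{integration-babynonsplit} — both the measure on $E$ and the one on $E^\alpha$ push forward to the standard measure on $\mathcal B=F$ under their norm maps — together with the fact that pushing $\lambda\,du$ forward through $c\cdot N^E_F$ multiplies it by $\lambda|c|^{-1}$, I would identify the pullback of the measure on $E^\alpha$ as $\lambda\,du$ with $\lambda=|c|$.

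Writing $\Psi:=\Phi^\alpha\circ\iota\in C_c^\infty(E)$ and using $\bar c=c$, the definition (\ref{Fourierontorsor-def}) then collapses to the clean formula
\[
\widehat{\Phi^\alpha}(\iota v)=\lambda\int_E\Psi(u)\,\psi^{-1}\!\big(\tr(u\,\overline{cv})\big)\,du=\lambda\,\widehat{\Psi}(cv),
\]
where on the right $\widehat{\phantom{x}}$ is the Fourier transform on $E$ for the self-dual measure; that is, $\widehat{\Phi^\alpha}\circ\iota=\lambda\cdot\big(\widehat{\Psi}\big)(c\,\bullet)$. Iterating this identity with $\widehat{\Phi^\alpha}$ in place of $\Phi^\alpha$ gives $\widehat{\widehat{\Phi^\alpha}}\circ\iota=\lambda^2\cdot\widehat{\big(\widehat{\Psi}(c\,\bullet)\big)}(c\,\bullet)$. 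Now I would invoke the dilation rule for Fourier transform on the $2$-dimensional $F$-vector space $E$, namely $\widehat{g(c\,\bullet)}(z)=|c|^{-2}\widehat g(z/c)$ (change of variables; the additive module of multiplication by $c\in F^\times$ on $E$ is $|c|_E=|c|^2$), together with the self-duality $\widehat{\widehat{\Psi}}=\Psi(-\bullet)$ on $E$, to get $\widehat{\widehat{\Phi^\alpha}}(\iota w)=\lambda^2|c|^{-2}\Psi(-w)$. Since $\lambda=|c|$, the constant is $1$, so $\widehat{\widehat{\Phi^\alpha}}(\iota w)=\Psi(-w)=\Phi^\alpha(-\iota w)$; as $\iota$ is onto, this is exactly $\widehat{\widehat{\Phi^\alpha}}(x)=\Phi^\alpha(-x)$. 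For the trivial torsor $\iota$ is the identity and $c=1$, so the statement is literally the self-duality on $E$; independence of the choice of $\iota$ is automatic since the final identity does not mention $\iota$.

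The computation is entirely formal once Fourier inversion on $E$ is granted; the only place that requires any care — and hence the "main obstacle", such as it is — is the bookkeeping of normalizations: correctly pinning down the ratio $\lambda$ of the two Haar measures as $|c|$ from the measure compatibility in Lemma \ref{integration-babynonsplit}, and remembering that the additive module of $c\in F^\times$ acting on $E$ is $|c|^2$ rather than $|c|$, so that the two powers of $|c|$ cancel. Everything else is transport of structure.
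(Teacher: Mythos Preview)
Your proof is correct and follows essentially the same route as the paper's own argument: transport everything to $E$ via the chosen isomorphism, obtain the formula $\widehat{\Phi^\alpha}\circ\iota = \lambda\,\widehat{\Psi}(c\,\bullet)$, iterate, and cancel using the dilation rule on $E$ together with $|c|_E=|c|_F^2$ and Fourier inversion on $E$. The only cosmetic difference is notation (your $c$ is the paper's $a$) and that you justify the measure factor $\lambda=|c|$ via Lemma~\ref{integration-babynonsplit}, whereas the paper simply writes it down.
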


\begin{proof}
 In what follows, it is important to distinguish between absolute values in $F$ and in $E$, therefore we will be distinguishing them by an index. Choosing an isomorphism $\phi: E\to E^\alpha$ with $1\mapsto e$ and $N_F^E(e) = a$, and denoting the pullback of $\Phi^\alpha$ under this isomorphism by $\Phi^0$, we have:
\begin{equation} \label{Fourierontorsor} \phi^* \widehat{\Phi^\alpha}(y) = \int \Phi^\alpha(\phi x) \psi^{-1} (a x\bar y) |a|_F dx = |a|_F \widehat{\Phi^0} (ay)\end{equation}
and, similarly,
$$ \phi^* \widehat{\widehat{\Phi^\alpha}} (x) = |a|_F \widehat{\phi^* \widehat{\Phi^\alpha}} (ax) = |a|_F^2 \mathcal F\left(  \mathcal F (\Phi^0(a\cdot \bullet))\right) (ax) =$$
$$ = |a|_F^2 \cdot \frac{1}{|a|_E} \widehat{\widehat{\Phi^0}}\left(\frac{ax}{a}\right) = \widehat{\widehat{\Phi^0}} (x) = \Phi^0(-x).$$
\end{proof}

For $f\in \mathcal S(\mathcal X)$ we now define the transform $\mathcal G$ as in (\ref{defG}), except that now $\eta$ is nontrivial.

We claim:

\begin{proposition}
  Let $f\in\mathcal S(\mathcal X)$ with lift $\Phi\in \mathcal S(X)$. Then:
\begin{equation}
 O_\xi(\hat\Phi) = \mathcal G(f)(\xi).
\end{equation}
\end{proposition}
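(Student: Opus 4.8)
The plan is to follow the same route as the proof of Proposition \ref{Fourier-baby}, but since $X = \Res_{E/F}\Ga$ has no product structure, the role played there by the partial Fourier transform is taken over here by the local functional equation of Tate zeta integrals over $E$, equation \eqref{localfeE}, together with the factorization of the gamma factor of a base-changed character. Concretely, I would reduce the asserted identity $O_\xi(\widehat\Phi) = \mathcal G(f)(\xi)$ to an equality of Mellin transforms on $\mathcal B \simeq F$. Both sides lie in $L^2(\mathcal B)$: the function $\xi\mapsto O_\xi(\widehat\Phi)$ by Proposition \ref{germsnonsplit} (it is of the form $C_1(\xi) + C_2(\xi)\eta(\xi)$ near $0$ and rapidly decaying near $\infty$, since $\widehat\Phi \in \mathcal S(E)$), and $\mathcal G(f)$ because $f \in \mathcal S(\mathcal X) \subset L^2(\mathcal B)$ while $\mathcal F$ and $\iota$ are $L^2(\mathcal B)$-isometries. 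Hence it is enough to check $\widecheck{O_\bullet(\widehat\Phi)}(\chi) = \widecheck{\mathcal G(f)}(\chi)$ and then to invoke Mellin inversion (as in \eqref{Mellininversion}; the nonsplit analogue of Lemma \ref{asympt} holds by the same proof).

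First I would compute $\check f$ and $\widecheck{O_\bullet(\widehat\Phi)}$. Using the integration formula of the nonsplit baby case and the exact sequence $1 \to T(F) \to E^\times \xrightarrow{N_F^E} F^\times$, the fiber integration collapses $\int_{F^\times} |\xi|^{1/2} O_\xi(\Phi) \chi^{-1}(\xi)\, d^\times\xi$ into the Tate zeta integral $\zeta_E(\Phi, \chi_E^{-1}, \tfrac12)$ over $E^\times$, where $\chi_E := \chi \circ N_F^E$ and I have used $|N_F^E(e)|_F = |e|_E$; likewise $\widecheck{O_\bullet(\widehat\Phi)}(\chi) = \zeta_E(\widehat\Phi, \chi_E^{-1}, \tfrac12)$. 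Applying \eqref{localfeE} at $s = \tfrac12$ gives $\widecheck{O_\bullet(\widehat\Phi)}(\chi) = \gamma(\chi_E, \tfrac12, \psi_E)\, \zeta_E(\Phi, \chi_E, \tfrac12)$. Next I would compute the Mellin multiplier of $\mathcal G = \mathcal F \circ \iota \circ \mathcal F$ over $F$, exactly as in the split-case Mellin lemma but now with $\eta$ nontrivial: Mellin transform turns $\mathcal F$ into multiplication by the Tate gamma factor $\gamma(\,\cdot\,, \tfrac12, \psi)$ (composed with inversion of the character) and turns $\iota$ into $\check g(\,\cdot\,) \mapsto \check g(\eta\,(\cdot)^{-1})$, whence $\widecheck{\mathcal G(f)}(\chi) = \gamma(\chi, \tfrac12, \psi)\, \gamma(\eta\chi, \tfrac12, \psi)\, \check f(\eta\chi^{-1})$; and since $\eta \circ N_F^E = 1$, this last factor is again $\zeta_E(\Phi, \chi_E, \tfrac12)$ (because $(\eta\chi^{-1}) \circ N_F^E = \chi_E^{-1}$).

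Comparing the two formulas, the proposition comes down to the single classical identity
\[
\gamma\bigl(\chi \circ N_F^E,\, \tfrac12,\, \psi_E\bigr) \;=\; \gamma\bigl(\chi,\, \tfrac12,\, \psi\bigr)\, \gamma\bigl(\chi\eta,\, \tfrac12,\, \psi\bigr), \qquad \psi_E = \psi \circ \tr,
\]
i.e.\ the inductivity of local constants applied to $\Ind_{W_E}^{W_F}(\chi \circ N_F^E) \simeq \chi \oplus \chi\eta$ (equivalently, a direct Gauss-sum computation, reducing to the unramified and the ramified cases). Once this is in hand, $\widecheck{\mathcal G(f)} = \widecheck{O_\bullet(\widehat\Phi)}$ identically and Mellin inversion finishes the argument; as a byproduct this also shows $\mathcal G(f) \in \mathcal S(\mathcal X)$. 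Alternatively one could run the direct argument of Proposition \ref{Fourier-baby}: pair $O_\bullet(\widehat\Phi)$ against a Schwartz function $h$ on $\mathcal B$, rewrite the pairing over $E$ via the integration formula, apply Plancherel on $E$ — legitimate here because $T(F)$ is compact, so $h \circ N_F^E$ is itself a Schwartz function on $E$ — and then use $\widehat{h \circ N_F^E} = (\mathcal G h) \circ N_F^E$ (which is precisely the Mellin/gamma-factor computation above) together with the unitarity of $\mathcal G$ on $L^2(\mathcal B)$ to conclude $\mathcal G(f) = O_\bullet(\widehat\Phi)$.

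I expect the main obstacle to be not the gamma-factor identity itself (which is standard) but the bookkeeping it forces: keeping track of the precise additive characters $\psi$ versus $\psi_E = \psi \circ \tr$, of the Haar measures on $E$, on $T(F)$ and on $\mathcal B$, and of the normalization \eqref{Fourierontorsor-def} of Fourier transform, so that the scalar constants in the two Mellin computations cancel exactly and one obtains an equality of functions rather than of functions up to a constant. A secondary point worth recording is that the statement concerns only lifts $\Phi \in \mathcal S(X)$ (the trivial torsor), so $f$ is automatically supported on the set of norms and no torsor bookkeeping is needed; the same argument applies to a lift supported on the nontrivial torsor $X^\alpha$ once the self-dual Fourier transform on $E$ is replaced by the one on $E^\alpha$ from \eqref{Fourierontorsor-def}.
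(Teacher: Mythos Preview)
Your proposal is correct and follows essentially the same route as the paper: reduce to an equality of Mellin transforms over $F$, express $\check f(\chi)$ and $\widecheck{O_\bullet(\hat\Phi)}(\chi)$ as Tate integrals over $E$, and invoke the factorization $\gamma(\chi\circ N_F^E,\tfrac12,\psi_E)=\gamma(\chi,\tfrac12,\psi)\gamma(\chi\eta,\tfrac12,\psi)$. You are in fact slightly more explicit than the paper in one place: you compute the Mellin multiplier of $\mathcal G$ directly (getting $\check f(\eta\chi^{-1})$ rather than $\check f(\chi^{-1})$) and then observe that these agree because $\eta\circ N_F^E=1$, whereas the paper simply says the comparison with $\widecheck{\mathcal G(f)}$ is ``an easy calculation''. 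Your alternative pairing argument via Plancherel on $E$ (using that $T(F)$ is compact so $h\circ N_F^E$ is Schwartz) is a nice variant not in the paper, closer in spirit to the split-case proof of Proposition~\ref{Fourier-baby}.
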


\begin{proof}
 Before we discuss the proof, let us discuss Tate integrals on $E^\alpha$: they will be defined as:
$$\zeta_E(\Phi^\alpha, \chi\circ N_F^E, s) = \int_{E^\alpha} \Phi^\alpha(x) \chi(N^E_F x) |x|^{s-1} dx,$$
where $|x|$ denotes the absolute value, extended to $E^\alpha$ via the norm map. The Tate integral is defined only for characters of the form $\chi\circ N_F^E$ in this case. If, now, $\Phi$ is a function supported on the union of $E$ and $E^\alpha$ (with corresponding restrictions denoted by $\Phi^0$ and $\Phi^\alpha$) we define:
$$\zeta_E(\Phi, \chi\circ N_F^E, s) = \zeta_E(\Phi^0, \chi\circ N_F^E, s) + \zeta_E(\Phi^\alpha, \chi\circ N_F^E, s).$$

It can be seen that, with these definitions, the local functional equation (\ref{localfeE}) still holds. 
Notice, moreover, that since $\chi_E = \chi\circ N_F^E$, we have:
\begin{equation}
 \gamma(\chi_E,s,\psi_E) = \gamma(\chi,s,\psi)\cdot \gamma(\chi\otimes \eta_{E/F},s,\psi)
\end{equation}

It is now clear, as in the split case, that the Mellin transform of $f(\xi) = O_\xi(\Phi)$, $\Phi\in\mathcal S(\mathcal X)$, can be written as:
\begin{equation}
 \check f(\chi) = \zeta_E (\Phi,\chi^{-1}\circ N_F^E, \frac{1}{2}),
\end{equation}
and similarly if $h_\xi = O_\xi(\hat \Phi)$ we have:
$$ \check h(\chi) = \zeta_E (\hat\Phi,\chi^{-1}\circ N_F^E, \frac{1}{2}).$$

Therefore:
$$ \check h(\chi) = \gamma(\chi,\frac{1}{2},\psi)\gamma(\chi\otimes \eta_{E/F},\frac{1}{2},\psi)  \check f(\chi^{-1}).$$

Both $h$ and $\mathcal G(f)$ satisfy the Mellin inversion formula \ref{Mellininversion}, since they belong to $\mathcal S(\mathcal X)$, therefore it suffices to check that their Mellin transforms coincide, which is immediate by the above calculation and an easy calculation of the Tate integrals of $G(f)$.

\end{proof}

\section{The torus quotient.} \label{sec:torus}

\subsection{} From now on $G$ will denote the group $\PGL_2$ over $F$. By $T$ we will be denoting a nontrivial torus in $G$, and $E$ will be the quadratic etale extention of $F$ such that $T = \ker(N_F^E)$. If $T$ is split we have $E=F\oplus F$, and in that case we will sometimes be identifying $T$ with some maximal torus inside of a chosen Borel subgroup, and will also be denoting it by $A$. As before, $\eta=\eta_{E/F}$ is the quadratic character associated to $E$.

The first main result of this section will be:

\begin{proposition}\label{stackisomorphism}
 Let $\mathcal Y$ denote the stack $(\Res_{E/F} G_a) / T$ with the preimage of $\xi = -1 \in \mathcal B:= (\Res_{E/F} G_a) \sslash T \simeq \Ga$ removed. Let $\mathcal Y^\times$ denote $\mathcal Y$ with the preimage of $\xi=0$ removed; then the morphism to $\mathcal B$ defines an isomorphism between $\mathcal Y^\times$ and $\mathbb A^1\smallsetminus\{-1,0\}$. 

If $\mathcal Y_1,\mathcal Y_2$ denote two distinct copies of $\mathcal Y$, the stack $\mathcal Z:= T\backslash G/T$ is isomorphic to the glueing of $\mathcal Y_1,\mathcal Y_2$ by the map:
$$ \xi\mapsto -1-\xi.$$
\end{proposition}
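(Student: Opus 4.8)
The plan is to make the geometry of $\mathcal Z = T\backslash G/T$ completely explicit by realizing $G = \PGL_2$ together with its double $T$-action inside a single $\Res_{E/F}\Ga$-bundle, and then tracking which categorical-quotient parameter $\xi$ labels each $T\times T$-orbit. First I would choose the concrete model of $T$: if $T$ is nonsplit, embed $E^\times/F^\times \hookrightarrow \PGL_2(F)$ via the regular representation of $E$ on itself as an $F$-vector space (so $T(F) = E^1/(\pm 1)$ acts by multiplication); if $T$ is split write $T = A$, the diagonal torus. In either case $G = \PGL_2$ is an open subset of $\PP(\mathrm{Mat}_{2}) = \PP^3$, and the left-and-right $T$-action extends to $\PP(E \oplus E)$ where $T$ acts diagonally by multiplication; concretely a matrix $g$ corresponds to a pair $(a,b)\in E\oplus E$ (its two ``columns'' with respect to the $E$-module structure), and the two $T$-actions become $t\cdot(a,b) = (ta, tb)$ and $(a,b)\cdot t = (at^{-1}, bt^{-1})$... the point being that after dividing by one copy of $T$ one lands in exactly the stack $(\Res_{E/F}\Ga)/T$ studied in the baby case. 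The invariant $\xi$ should be (up to the normalizing choices the remark alludes to) a ratio like $N(b)/N(a)$ or $\det$-type expression, landing in $\mathcal B \simeq \Ga$.

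The key steps, in order: (1) Produce the explicit rational map $G \dashrightarrow \mathcal B = \Ga$ whose fibers are $T\times T$-orbits, and identify its base locus and the fibers over the special values; this is where the special points $\xi = -1$ and $\xi = 0$ enter — one of them ($\xi = 0$, say) is the locus where $g$ fails to be ``regular'' for one $T$-action (the columns become $T$-aligned), and the other ($\xi = -1$) is where $g$ leaves the open cell $G \subset \PP^3$, i.e. becomes noninvertible. (2) Show that after quotienting by one copy of $T$ on the left, the residual stack-with-$T$-action is precisely $\mathcal Y$, i.e. $(\Res_{E/F}\Ga)/T$ minus the fiber over the parameter forced out by invertibility of $g$ — this uses the identification of categorical quotients from Proposition~\ref{stackisomorphism}'s own first sentence and the baby-case normalizations. (3) Observe that the double-coset space $T\backslash G/T$ has a symmetry swapping the two $T$-actions (transpose, or $g\mapsto g^{-1}$ composed with conjugation), which on the parameter $\xi$ acts by the involution $\xi \mapsto -1-\xi$ — the unique affine involution of $\mathbb A^1$ exchanging the two marked points $0$ and $-1$. (4) Conclude that $\mathcal Z$ is covered by two copies of $\mathcal Y$ (one ``seeing'' the first $T$-quotient clearly, one the second), glued along $\mathcal Y^\times \cong \mathbb A^1\setminus\{-1,0\}$ via that involution — and for $\xi \notin \{0,-1\}$ one checks directly that a regular double coset is a single $T\times T$-orbit, matching the statement $\mathcal Y^\times \cong \mathbb A^1\setminus\{-1,0\}$.

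The main obstacle I expect is step (1)–(2): pinning down the correct coordinate $\xi$ so that the two marked values come out as exactly $0$ and $-1$ (rather than some other pair), and verifying carefully that over every $\xi \neq 0,-1$ the fiber of $G \to \mathcal B$ really is a single orbit rather than a finite union of orbits or a nontrivial gerbe — the stabilizers must be checked to be trivial on the nose (as the excerpt emphasizes ``regular = trivial stabilizer''), and in the nonsplit case one must be careful that $T$-torsors do not sneak in, i.e. that every $F$-point of the base over the regular locus is hit by an $F$-point of $G$. The gluing in step (4) then also requires checking compatibility on the overlap, i.e. that the two charts induce the \emph{same} stack structure on $\mathcal Y^\times$ — which should follow formally once the parameter and the involution are correctly identified, since both charts restrict there to the trivial-stabilizer locus $\mathbb A^1\setminus\{-1,0\}$.
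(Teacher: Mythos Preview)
Your two-chart-plus-gluing picture is correct in outline, but step~(3) contains a concrete error that breaks the argument. The maps $g\mapsto g^{-1}$ and $g\mapsto g^t$ both act \emph{trivially} on the GIT quotient $T\backslash G\sslash T$: in the split case, writing $g=\iota(x,y)=\left(\begin{smallmatrix}1&x\\y&1+xy\end{smallmatrix}\right)$ one has $\det g=1$ and $g^{-1}=\left(\begin{smallmatrix}1+xy&-x\\-y&1\end{smallmatrix}\right)$, whose $A\times A$-invariant $\frac{(1,2)(2,1)}{(1,1)(2,2)}=\frac{xy}{1+xy}$ is unchanged. So neither inversion nor transpose can produce $\xi\mapsto -1-\xi$. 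The involution in fact comes from the \emph{Weyl element}: the paper's two charts are $\mathcal Z_1=A\backslash(B^-B^+\cap B^+B^-)/A$ and $\mathcal Z_2=A\backslash(BwB\cap B^-wB^-)/A$, related by $g\mapsto gw$ (equivalently, the nontrivial $G$-automorphism of $X=T\backslash G$ applied to one factor of $X\times X$). A direct computation of the invariant of $gw$ gives $\frac{1+xy}{xy}$, and solving $\frac{\xi'}{1+\xi'}=\frac{1+\xi}{\xi}$ yields $\xi'=-1-\xi$. Your heuristic ``the unique affine involution swapping $0$ and $-1$'' is fine once you know the two special points, but you have misidentified its geometric origin, and your ``one chart per side of the $T$-action'' picture does not produce two distinct open substacks --- quotienting left-first or right-first gives the same big cell.

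Step~(2) is also too optimistic: $T\backslash G$ with its residual $T$-action is not globally $(\Res_{E/F}\Ga)/T$; only the open Bruhat-type cell is. In the paper this is exactly Lemma~\ref{opensetisom}, whose content is the explicit parametrization $\iota$. The nonsplit case is then handled not by your direct $E\otimes_F E$ model but by Galois descent: the cocycle $\sigma\mapsto(\text{conjugation by }w)$ twists both $A$ and $\PGL_2$, and the key computation is that the $w$-conjugation on $(x,y)$, namely $(x,y)\mapsto\bigl(y(1+xy),\frac{x}{1+xy}\bigr)$, equals the swap $(x,y)\mapsto(y,x)$ composed with an element of $A$, so the descended stack is exactly $(\Res_{E/F}\Ga\smallsetminus N^{-1}(-1))/T$. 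Your $\mathrm{Mat}_2\simeq E\otimes_F E$ approach could in principle be made uniform, but you would still need to isolate the correct open cell and recognize that the second chart is its $w$-translate, not its inverse.
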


Among other things, this allows us to identify the GIT quotient $X\times X\sslash G$ with $\mathcal B=\Ga$. To preserve consistency of notation, the diagonal copy of $X$ in $X\times X$ will have image $-1\in \mathcal B$, and the open subset $\mathcal Y_1$ will be such that it contains the image of the diagonal copy (and hence the map $\mathcal Y_2\to \mathcal B$ will be the one defined in the previous section, while the map $\mathcal Y_1\to \mathcal B$ will be  obtained from that by $\xi\mapsto -1-\xi$). We let $\mathcal B^\reg=\mathcal B_{\mathcal Z}^\reg$ be the complement of $\{0,-1\}$ in $\mathcal B$.  

\subsection{The open subset in the split case}

We consider first the case $E=F\oplus F$, let $B^+, B^-$ denote the two Borel subgroups which contain $T=A$ and let $\mathcal Z_1:= A\backslash (B^- B^+ \cap B^+ B^-) / A$, which is open in $\mathcal Z$. 

\begin{lemma}\label{opensetisom}
\begin{enumerate}
              \item 
 The map:\footnote{For notational clarity, we formulate in terms of $F$-points some statements which should, strictly speaking, be formulated in terms of schemes.}
\begin{equation}\label{EtoMat}
 F\oplus F \ni (x,y)\overset{\iota}{\mapsto} \left(\begin{array}{cc}
                                   1& x\\ & 1
                                  \end{array}\right) \left(\begin{array}{cc}
                                                            1\\ y& 1
                                                           \end{array}\right) = \left(\begin{array}{cc}
                                   1& x\\ y & 1+ xy 
                                  \end{array}\right)  \in G
\end{equation}
descends to an isomorphism: 
\begin{equation}
 \mathcal Y \xrightarrow{\sim} \mathcal Z_1.
\end{equation}

\item Let $w$ be an element in the $F$-points of the non-identity component of the normalizer of $A$, then the automorphism $g\mapsto {^wg}$ fixes the preimage of $\mathcal Z_1$ in $G$ and has the property that:
\begin{equation}
 ^w \iota(x,y) \sim \iota \left(y(1+xy), \frac{x}{1+xy}\right)
\end{equation}
modulo the action of $A\times A$.
             \end{enumerate}
\end{lemma}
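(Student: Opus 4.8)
For part (1), the plan is to verify directly that the map $\iota$ in \eqref{EtoMat} is $A\times A$-equivariant in the appropriate sense and then descends to the claimed isomorphism of stacks. First I would record how $A$ acts on the source: identifying $\Res_{E/F}\Ga$ with $F\oplus F$ via $(x,y)$, the torus $T=A$ acts by $t\cdot(x,y) = (tx, t^{-1}y)$ (writing the split torus multiplicatively, with $t$ the ratio of the two diagonal entries), so that the product $xy$ is the invariant pinning down the isomorphism $\mathcal B\simeq\Ga$, and indeed $\iota(tx,t^{-1}y)$ differs from $\iota(x,y)$ by conjugation by $\diag(t,1)$ (or the relevant representative in $A$). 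This shows $\iota$ induces a map from $\mathcal Y$ to $A\backslash G/A$. Next I would identify the image: the matrix $\left(\begin{smallmatrix} 1 & x \\ y & 1+xy\end{smallmatrix}\right)$ lies in $B^-B^+\cap B^+B^-$ precisely because it is visibly a product of a lower-triangular unipotent and an upper-triangular unipotent (and symmetrically), and conversely any element of the big double open cell can be brought, modulo $A\times A$, to such a form; this is a Bruhat-cell computation. The constraint $\xi=xy\neq -1$ is exactly the condition that the bottom-right entry $1+xy$ be nonzero, which is what keeps us in $B^+B^-$; this matches the statement that $\mathcal Y$ has the fiber over $\xi=-1$ removed. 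Finally, on $\mathcal Y^\times$ (i.e.\ $xy\neq 0$) the map to $\mathcal B$ is an isomorphism onto $\mathbb A^1\setminus\{-1,0\}$ by Proposition \ref{stackisomorphism}, so injectivity and surjectivity of $\iota$ follow by tracking the fibers over $\mathcal B^\reg$ (single orbits) together with the fiber over $\xi=0$, which contains the base point of $\mathcal Y$ and maps to the identity double coset.

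For part (2), the plan is a straightforward matrix computation. Take a concrete representative $w$ of the nontrivial component of $N(A)$, e.g.\ $w=\left(\begin{smallmatrix} & 1 \\ 1 & \end{smallmatrix}\right)$ (or with a sign, in $\PGL_2$ it does not matter). Since conjugation by $w$ swaps $B^+$ and $B^-$ and normalizes $A$, it preserves $B^-B^+\cap B^+B^-$ and hence the preimage of $\mathcal Z_1$; that is the first assertion. For the formula, compute ${}^w\iota(x,y) = w\left(\begin{smallmatrix} 1 & x \\ y & 1+xy\end{smallmatrix}\right)w^{-1} = \left(\begin{smallmatrix} 1+xy & y \\ x & 1\end{smallmatrix}\right)$, and then rewrite this matrix in the normal form $\iota(x',y') = \left(\begin{smallmatrix} 1 & x' \\ y' & 1+x'y'\end{smallmatrix}\right)$ after the $A\times A$-action, i.e.\ after left- and right-multiplication by diagonal matrices. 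Multiplying on the left by $\diag((1+xy)^{-1},1)$ and on the right by $\diag((1+xy),1)$ (legitimate since $1+xy\neq 0$ on $\mathcal Z_1$), one gets a matrix whose $(1,1)$ entry is normalized to $1$; reading off the remaining entries gives upper-right $y/(1+xy)$ after suitable bookkeeping and lower-left $y(1+xy)$, which is exactly $\iota\bigl(y(1+xy), \tfrac{x}{1+xy}\bigr)$ up to the $A\times A$-action. So this reduces to carefully choosing the two diagonal normalizing factors and checking the four entries.

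The only genuinely delicate point — and the part I would be most careful about — is the bookkeeping of \emph{which} copy of $A$ acts on the left versus the right and the precise normalization of the isomorphism $\mathcal B\simeq\Ga$ (the ``choices which we fix'' mentioned after Proposition \ref{stackisomorphism}), since the formula $\xi\mapsto -1-\xi$ gluing $\mathcal Y_1$ and $\mathcal Y_2$ and the sign conventions in \eqref{EtoMat} have to be mutually consistent; a wrong convention would turn the clean formula $\left(y(1+xy),\tfrac{x}{1+xy}\right)$ into something with spurious signs. Everything else is elementary $2\times 2$ linear algebra over $F$ (or, scheme-theoretically, over an arbitrary base), and the Bruhat-decomposition input needed for part (1) is completely standard for $\PGL_2$. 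I would therefore present part (1) by exhibiting $\iota$, checking equivariance, identifying the image with the open double cell via Bruhat, and invoking Proposition \ref{stackisomorphism} to upgrade the bijection on $\mathcal B$-fibers to an isomorphism of stacks; and present part (2) as the one-line conjugation computation followed by the normalization.
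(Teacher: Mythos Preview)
Your plan is correct and matches the paper's proof, which simply says ``direct calculation'' and records the same two steps: identify $A\backslash(B^-B^+\cap B^+B^-)$ with $\{\iota(x,y):xy\ne -1\}$ and check $A$-equivariance for part (1), then conjugate and renormalize for part (2). One small correction to your part (2) bookkeeping: the pair $\diag((1+xy)^{-1},1)$ on the left and $\diag((1+xy),1)$ on the right does \emph{not} normalize the $(1,1)$-entry to $1$---you get $\left(\begin{smallmatrix}1+xy & y/(1+xy)\\ x(1+xy) & 1\end{smallmatrix}\right)$---and your stated off-diagonal entries are garbled. A clean choice is to act on the right alone by $\diag(1,(1+xy)^2)$, which in $\PGL_2$ takes ${}^w\iota(x,y)=\left(\begin{smallmatrix}1+xy & y\\ x & 1\end{smallmatrix}\right)$ directly to $\iota\bigl(y(1+xy),x/(1+xy)\bigr)$; alternatively, left by $\diag((1+xy)^{-1},1)$ and right by $\diag(1,1+xy)$ yields $\iota(y,x)$, which differs from the stated target only by the source $T$-action (as the paper observes just after the lemma). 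Also note that your appeal to Proposition~\ref{stackisomorphism} for part (1) is not circular only because you use its first, elementary assertion ($\mathcal Y^\times\simeq\mathbb A^1\smallsetminus\{-1,0\}$), which is independent of this lemma; the paper instead argues directly via the $N^-AN$ decomposition.
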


\begin{proof}
 Direct calculation. For the first statement, one easily sees that $A\backslash (B^- B^+ \cap B^+ B^-)$ is isomorphic to the variety of matrices of the form (\ref{EtoMat}) with $xy\ne -1$, and that, thinking of those matrices this way, the map $\iota$ is $A$-equivariant. The second statement is immediate.
\end{proof}

\subsection{} Let $X=A\backslash G$ and let $w$ denote the nontrivial $G$-automorphism of $X$. We have a natural isomorphism of stacks:
$$X\times X/G \ni (x_1,x_2)\mapsto x_1x_2^{-1} \in A\backslash G/A,$$
and applying the ``$w$''-automorphism on the second copy of $X$ induces an isomorphism of open substacks:
$$\mathcal Z_2\xrightarrow{\sim} \mathcal Z_1,$$
where $\mathcal Z_2 = A\backslash (BwB\cap B^-wB^-)/A$.

Combined with the isomorphism of Lemma \ref{opensetisom}, this proves Proposition \ref{stackisomorphism} in the split case.

\subsection{The non-split case}

In the above setting, but with $E$ now denoting a field extension, the cocycle which takes the nontrivial element $\sigma$ of $\Gal(E/F)$ to the inner automorphism of $G$ by $w$ (viewed also as an automorphism of $A$) defines a form of both $\PGL_2$ and $A$ over $F$. The form of $A$ is $T=\ker(N_F^E)$, while the form of $\PGL_2$ could be split or non-split according as the cocycle chosen lifts to $\GL_2$ or not. (This depends on the representative $w\in \mathcal N(A)(F)$ chosen, more precisely on whether the negative of the quotient of its eigenvalues is a norm from $E$ or not.) This shows, in particular, that:
\begin{equation}
 T\backslash G/T \simeq T\backslash G'/T,
\end{equation}
 where $G=\PGL_2$ and $G'$ is an inner form of $G$ which splits over $E$.

Notice that $w$ preserves the open substacks $\mathcal Z_1,\mathcal Z_2$ of $\mathcal Z$ and hence defines forms of those.

At the same time, we have seen that the ``$w$'' automorphism on $A\backslash G$ corresponds, under the map $\iota$ of Lemma \ref{opensetisom}, to the automorphism:
$$ \tau: (x,y)\mapsto \left(y(1+xy), \frac{x}{1+xy}\right)$$
of the subset of $(x,y)\in k\oplus k$ with $xy\ne -1$. This is the same as the composition of the automorphism: $(x,y)\mapsto (y,x)$ with the action of $(1+xy, \frac{1}{1+xy})\in A$, and therefore the form of the quotient stack defined by the cocycle $\sigma\mapsto \tau$ is isomorphic to:
$$\mathcal Y_1 \simeq  \left(\Res_F^E (\Ga)\smallsetminus (N_F^E)^{-1}(-1)\right) /T.$$

Therefore, even in the non-split case we have: $\mathcal Z_1\simeq \mathcal Y_1$ and, similarly, $\mathcal Z_2\simeq\mathcal Y_2$, which completes the proof of Proposition \ref{stackisomorphism}.

\subsection{Schwartz functions and orbital integrals}

We define $\mathcal M(\mathcal Z)$ to be the $G$-coinvariant space of $\mathcal M(X\times X)$ (the Fr\'echet space of Schwartz measures) in the split case. In the non-split case we must, as before, use one of the following equivalent definitions:
\begin{enumerate}
 \item For some embedding of $G$ into $\GL_n$ we let $\mathcal M(\mathcal Z)$ be the space of $\GL_n(F)$-coinvariants of $\mathcal M\left(((X\times X)\times^G \GL_n)(F)\right)$.

 \item We let $\mathcal M(\mathcal Z)$ be the direct sum, over all isomorphism classes of $T$-torsors $R^\alpha$, of the coinvariant space: $\mathcal M((X^\alpha\times X^\alpha)(F))_{G^\alpha(F)}$, where $G^\alpha = \Aut_G(R^\alpha\times^T G)$, $X^\alpha = T\backslash G^\alpha$.  Equivalently, $G^\alpha$ ranges over inner forms of $G$ which split over $E$. 
\end{enumerate}

By the above isomorphisms of stacks, we also have: 
\begin{equation}\label{Schwartzisom}\mathcal M(\mathcal Z) = \left(\mathcal M(\mathcal Y_1) \oplus \mathcal M(\mathcal Y_2)\right) / \mathcal M(\mathcal Y_1\cap \mathcal Y_2).
\end{equation}

We let $\mathcal S(\mathcal Z)$ denote the cosheaf on $\mathcal B$ of functions on $\mathcal B^\reg$ which are obtained as regular orbital integrals of Schwartz functions on $\sqcup_\alpha (X^\alpha\times X^\alpha)$.
\begin{equation}
\xi\mapsto O_\xi(\Phi) = \int_{G(F)} \Phi(\tilde\xi\cdot g),
 \end{equation}
where $\tilde \xi$ is a representative for the orbit parametrized by $\xi$.

The results of section \ref{sec:babycase} immediately imply:
\begin{proposition}
The restriction of $\mathcal S(\mathcal Z)$ to $\mathcal B^\reg = \Ga\smallsetminus\{0,-1\}$ is equal to the cosheaf of Schwartz functions $\mathcal S(\mathcal B^\reg)$. In neighborhoods of $\xi=0$ and $\xi=-1$ they have the behavior of the germs of Propositions \ref{germs-split}, \ref{germsnonsplit} around zero.
\end{proposition}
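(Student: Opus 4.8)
The plan is to deduce everything from the geometric description of $\mathcal Z$ in Proposition~\ref{stackisomorphism} together with the cosheaf decomposition \eqref{Schwartzisom}, thereby reducing the local analysis of $\mathcal S(\mathcal Z)$ to the baby case of Section~\ref{sec:babycase}. Recall that \eqref{Schwartzisom} presents $\mathcal S(\mathcal Z)$, as a cosheaf on $\mathcal B=\Ga$, as the glueing of the cosheaves $\mathcal S(\mathcal Y_1)$ and $\mathcal S(\mathcal Y_2)$ along their common restriction to the overlap $\mathcal Y_1\cap\mathcal Y_2$, which lies over $\mathcal B^\reg=\Ga\smallsetminus\{0,-1\}$. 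With the normalizations fixed after Proposition~\ref{stackisomorphism}, the map $\mathcal Y_2\to\mathcal B$ is the canonical one of Section~\ref{sec:babycase}, so that $\mathcal Y_2$ covers $\Ga\smallsetminus\{-1\}$ with irregular point $\xi=0$, whereas $\mathcal Y_1\to\mathcal B$ is obtained from it by $\xi\mapsto -1-\xi$, so that $\mathcal Y_1$ covers $\Ga\smallsetminus\{0\}$ with irregular point $\xi=-1$. In particular a punctured neighborhood of $0$ meets only the chart $\mathcal Y_2$, and a punctured neighborhood of $-1$ meets only the chart $\mathcal Y_1$.

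First I would dispose of the regular locus. By Proposition~\ref{stackisomorphism} (and the remark of Section~\ref{sec:babycase} that each fibre over $\mathcal B^\reg$ is a single orbit with trivial stabilizer), each $\mathcal Y_i$ becomes, over $\mathcal B^\reg$, isomorphic as a stack to $\mathcal B^\reg$; hence, by the lemma of Section~\ref{sec:babycase} identifying the restriction of $\mathcal S(\mathcal X)$ to $\mathcal B^\reg$ with $\mathcal S(\mathcal B^\reg)$ — which applies verbatim to both the split and the non-split forms — the restriction of each $\mathcal S(\mathcal Y_i)$ to $\mathcal B^\reg$ is $\mathcal S(\mathcal B^\reg)$. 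Glueing two copies of $\mathcal S(\mathcal B^\reg)$ along $\mathcal S(\mathcal B^\reg)$ returns $\mathcal S(\mathcal B^\reg)$, which is the first assertion.

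Then I would read off the behavior at the two special points. Since a neighborhood of $\xi=0$ involves only the chart $\mathcal Y_2$, the stalk of $\mathcal S(\mathcal Z)$ at $0$ coincides with that of $\mathcal S(\mathcal Y_2)$ at $0$; but by Proposition~\ref{stackisomorphism} the stack $\mathcal Y_2$ is exactly the baby stack $\mathcal X$ of Section~\ref{sec:babycase} — the split one $V\times V^*/\Gm$ if $T$ is split, the non-split one $(\Res_{E/F}\Ga)/T$ if $T$ is a field torus — with $\xi=0$ its origin. Hence the germs of $\mathcal S(\mathcal Z)$ near $0$ are precisely those described in Proposition~\ref{germs-split} (resp.\ Proposition~\ref{germsnonsplit}). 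Symmetrically, a neighborhood of $\xi=-1$ involves only $\mathcal Y_1$, and the coordinate change $\xi\mapsto-1-\xi$ carries the baby origin to $\xi=-1$, so the germs of $\mathcal S(\mathcal Z)$ near $-1$ are the same, read in the variable $-1-\xi$.

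I expect the only real work — implicit in the phrase ``the results of Section~\ref{sec:babycase} immediately imply'' — to be the passage from the \emph{stack} isomorphisms of Proposition~\ref{stackisomorphism} to isomorphisms of the associated Schwartz \emph{cosheaves}, compatibly with the projections to $\mathcal B$ and with the identification of sections with regular orbital integrals. In the split case this amounts to checking that the explicit $A$-equivariant birational map $\iota$ of Lemma~\ref{opensetisom}, together with the twist by $\tau$ (equivalently by $w$ on the second $X$-factor), carries Schwartz measures on $X\times X$ to Schwartz measures on $\Res_{E/F}\Ga$ and intertwines orbital integrals with the normalizations of Lemma~\ref{integration-babysplit}; in the non-split case one additionally tracks the Galois descent along the cocycle $\sigma\mapsto\tau$ and the matching of the indexing sets of $T$-torsors on the two sides. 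This compatibility, rather than anything about the germs themselves, is the main — though essentially routine — obstacle.
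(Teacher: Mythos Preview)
Your proposal is correct and is exactly the argument the paper has in mind: the paper states no separate proof, simply prefacing the proposition with ``The results of section~\ref{sec:babycase} immediately imply,'' which is precisely the reduction you carry out via Proposition~\ref{stackisomorphism} and the decomposition~\eqref{Schwartzisom}. Your final paragraph on the passage from stack isomorphisms to isomorphisms of Schwartz cosheaves is also in line with the paper, which acknowledges this point in a footnote (``There is clearly some work to be done\ldots but in each of the cases\ldots this is easy to see explicitly'') without carrying it out.
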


The choice of Haar measure on $G(F)$ does not matter for the definition of the sheaf $\mathcal S(\mathcal Z)$, and again by a ``lift of an element of $\mathcal S(\mathcal Z)$ to $\mathcal S(X\times X)$'' we will implicitly mean an element of $\mathcal S(X\times X)$ together with a choice of Haar measure on $G(F)$. However, we would now like to define a linear isomorphism:
\begin{equation}\label{measurestoorbital}
 \mathcal M(\mathcal Z)\xrightarrow{\sim} \mathcal S(\mathcal Z).
\end{equation}
We do this locally on the open cover $\mathcal Z_1\cup \mathcal Z_2$ by using the identification of $\mathcal M(\mathcal Z_i)$ with $\mathcal M(\mathcal Y_i)$, together with the identification (\ref{measurestoorbital}) from the previous section: $\mathcal M(\mathcal Y_i) \simeq \mathcal S(\mathcal Y_i)$, which gives rise to an map to $\mathcal S(\mathcal Z)$:
$$ \mathcal M(\mathcal Z_i) \xrightarrow{\sim} \mathcal M(\mathcal Y_i) \xrightarrow{\sim} \mathcal S(\mathcal Y_i) \to \mathcal S(\mathcal Z).$$

Equivalently, this isomorphism arises from the standard additive measure on $\mathcal B=F$. Notice that the same integration formula as in the previous section (Propositions \ref{integration-babysplit} and \ref{integration-babynonsplit}) follows from the local isomorphisms of stacks:

\begin{lemma}\label{integration-torus}
There are compatible choices of invariant measures on $\mathcal B$ (additive), $G$ and $\sqcup_\alpha (X^\alpha\times X^\alpha)$ such that for any $\Phi\in\mathcal S(\sqcup_\alpha (X^\alpha\times X^\alpha))$ with image $f\in \mathcal S(\mathcal Z)$ we have:
\begin{equation}
\int_{\sqcup_\alpha (X^\alpha\times X^\alpha)} \Phi(x_1,x_2) dx_1 dx_2 = \int_{\mathcal B} f(\xi) d\xi.
\end{equation}
\end{lemma}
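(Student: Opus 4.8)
The statement is the integration formula for the torus quotient $\mathcal Z = T\backslash G/T$, asserting that orbital integrals push Schwartz measures on $\sqcup_\alpha(X^\alpha\times X^\alpha)$ to Schwartz measures on $\mathcal B$, compatibly with the fixed measure normalizations. The plan is to reduce everything to the corresponding statements already proved for the ``baby case'' (Lemmas \ref{integration-babysplit} and \ref{integration-babynonsplit}) by transporting them across the stack isomorphisms of Proposition \ref{stackisomorphism}. Recall that by that proposition $\mathcal Z$ is glued from two copies $\mathcal Y_1,\mathcal Y_2$ of the baby-case stack $(\Res_{E/F}\Ga)/T$ (with the fiber over $\xi=-1$ removed in each), identified along $\mathcal Y_1\cap\mathcal Y_2 \simeq \mathbb A^1\smallsetminus\{-1,0\}$; on $\mathcal Y_2$ the map to $\mathcal B$ is exactly the baby-case norm map, while on $\mathcal Y_1$ it is obtained from it by the involution $\xi\mapsto -1-\xi$, which preserves additive Haar measure up to the trivial Jacobian $1$.

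First I would fix, on each copy $X^\alpha\times X^\alpha$, the invariant measure transported from the baby-case measures on $X^\alpha$ (for the non-split torsor summands) or on $V\times V^*$ (split case) via the local stack isomorphisms $\mathcal Z_i \simeq \mathcal Y_i$ of Lemma \ref{opensetisom} and its non-split analogue, choosing the Haar measure on $G$ so that its pushforward matches the Haar measure on $T$ used in Section \ref{sec:babycase} (this is possible because $G$ is, over the open locus, a $T$-torsor times $\mathcal B^\reg$, exactly as in the baby case). With these choices, for $\Phi$ supported over $\mathcal Z_i$ the integration formula is literally Lemma \ref{integration-babysplit} or \ref{integration-babynonsplit} applied through the isomorphism $\mathcal Z_i\simeq\mathcal Y_i$: the left side $\int \Phi\, dx_1dx_2$ equals the integral of the corresponding baby-case function over $V\times V^*$ (resp.\ $\sqcup_\alpha X^\alpha$), which equals $\int_{\mathcal B} f(\xi)\,d\xi$ with $d\xi$ the standard additive measure — unchanged on $\mathcal Y_2$ and only reparametrized by $\xi\mapsto -1-\xi$ on $\mathcal Y_1$, which has measure-preserving Jacobian.

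To handle a general $\Phi$ I would use a partition of unity subordinate to the open cover $\mathcal Z = \mathcal Z_1\cup\mathcal Z_2$: write $\Phi = \Phi_1+\Phi_2$ with $\Phi_i$ (almost) smooth of Schwartz type supported over $\mathcal Z_i$ (such a partition exists at the level of Schwartz functions since $\mathcal B = \Ga$ can be covered by the preimages of two open sets whose union is $\mathcal B$ and whose complements are the removed points $-1$ and $0$ respectively, and Schwartz partitions of unity exist in this setting — cf.\ Appendix \ref{app:almostsmooth}). Then $O_\xi(\Phi) = O_\xi(\Phi_1) + O_\xi(\Phi_2)$, each term falls under the previous paragraph, and the two integration formulas add. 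The compatibility of measures across $\mathcal Y_1\cap\mathcal Y_2$ is automatic because the two local normalizations both descend from the single standard additive measure on $\mathcal B=F$, which is how the map (\ref{measurestoorbital}) was defined; this is exactly the coherence built into (\ref{Schwartzisom}).

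\textbf{Main obstacle.} The one genuinely nontrivial point is the claim, flagged already in the footnote after (\ref{corollaryflabby})-type statements, that an isomorphism of quotient \emph{stacks} induces an isomorphism of the associated Schwartz spaces compatibly with orbital integrals and measures — in particular that the measures on $X^\alpha\times X^\alpha$ transported via $\mathcal Z_i\simeq\mathcal Y_i$ from the two overlapping charts agree on the overlap, and that the Jacobian of the birational identification $\tau$ of Lemma \ref{opensetisom} (which involves the factor $1+xy$) does not disturb the normalization. I would resolve this by an explicit local computation on the overlap $\mathcal Z_1\cap\mathcal Z_2$: there both charts are honestly isomorphic to $(\mathbb A^1\smallsetminus\{-1,0\})\times T$ (a scheme, no stacky issues), the transition map is the one recorded after Lemma \ref{opensetisom}, and one checks directly that it carries the invariant measure to the invariant measure with trivial Jacobian on the base — the non-trivial factors $1+xy$ all land in the $T$-direction and are absorbed by the Haar measure on $T$. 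Everything else is the formal bookkeeping of adding two instances of an already-established formula.
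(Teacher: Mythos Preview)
Your proposal is correct and follows the same approach as the paper: the paper simply asserts that the lemma ``follows from the local isomorphisms of stacks'' (i.e.\ Proposition \ref{stackisomorphism}), reducing to the baby-case formulas of Lemmas \ref{integration-babysplit} and \ref{integration-babynonsplit}. You have spelled out the details the paper omits --- the partition of unity over $\mathcal Z_1\cup\mathcal Z_2$, the fact that $\xi\mapsto -1-\xi$ has unit Jacobian on $\mathcal B$, and the compatibility check on the overlap --- but the underlying idea is identical.
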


\subsection{Inner products} \label{ssiptorus}

In order not to introduce excessive notation, we will not reserve any symbols for the irregular distributions on $\mathcal Z$ which are the analogs of $\tilde O_0,\tilde O_u,\tilde O_{0,1}, \tilde O_{0,\kappa_0}$ of \S \ref{sec:babycase}, with one exception:

Let $\alpha$ be a class of $T$-torsors. For $\Phi_1,\Phi_2 \in \mathcal S(X^\alpha)$, and an invariant measure $dx$ on $X^\alpha$ (where ``invariant'' means, of course, invariant under the pertinent inner form of $G$ on each copy) we define the \emph{inner product} of $\Phi_1,\Phi_2$ as:
$$\left<\Phi_1,\Phi_2\right> = \int_{X^\alpha} \Phi_1(x)\Phi_2(x) dx,$$
i.e.\ as a bilinear form. Clearly, this extends continuously to $\mathcal S(X^\alpha\times X^\alpha)$, and for an element $\Phi$ of the latter we will simply write $\left<\Phi\right>$. 

Now, given $f\in \mathcal S(\mathcal Z)$, choose a pair $(\Phi,(dg_\alpha)_\alpha)$ consisting of an element $\Phi = \sum \Phi^\alpha \in \oplus_\alpha \mathcal S(X^\alpha\times X^\alpha)$ and a collection of Haar measures on the inner forms $G^\alpha$ such that $f$ arises as the regular orbital integrals of $\Phi$ with respect to those measures. Let:
\begin{equation}
 (-1)^\alpha = \begin{cases} 1, & \mbox{ if $\alpha$ corresponds to the trivial torsor,}\\
  -1 & \mbox{otherwise;}
 \end{cases}
\end{equation}
that is, we are identifying $H^1(F,T)$ with $\Z/2$ in the nonsplit case.

Then we define the \emph{``inner product''} of $f$ as:
\begin{equation}\label{eqiptorus}\left<f\right>:= (F^\times:N_F^EE^\times)^{-1}\cdot \sum_\alpha  (-1)^\alpha \Vol(T(F)_0)\left<\Phi^\alpha\right>,
\end{equation}
where we have implicitly chosen a decomposition of $dg$ as an invariant measure on $T(F)$ times a measure on $T\backslash G^\alpha(F)$ in order to define both the inner product on $X^\alpha$ and the volume of $T(F)_0$ according to the recipe of \S \ref{ssbabyorbital} (of course, in the non-split case $T(F)_0=T(F)$ so no recipe is needed). Clearly, the definition does not depend on this choice, so we have a well-defined functional on $\mathcal S(\mathcal Z)$. The following is easy to see by the results of the previous section:

\begin{lemma}
 In the split case $\left<f\right>$ is equal to the distribution $\tilde O_0(f)$ of \eqref{babyipsplit} when a neighborhood of $\xi=-1$ of $\mathcal Z$ is identified with a neighborhood of $\xi=0$ of $\mathcal X$ according to Proposition \ref{stackisomorphism}. In the nonsplit case, $\left< f\right>$ is equal to the distribution $\tilde O_{0,\kappa_0}(f)$ of \eqref{babyipnonsplit} under the same identification.
\end{lemma}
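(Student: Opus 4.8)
The plan is to reduce the identity to the local analysis of \S\ref{sec:babycase} via Proposition \ref{stackisomorphism}. A neighbourhood of $\xi=-1$ in $\mathcal Z$ lies in the open substack $\mathcal Z_1\simeq\mathcal Y_1$, whose stacky point corresponds to $\xi=-1$, and near that point $\mathcal Y_1$ is the baby-case quotient $\mathcal X$ of \S\ref{sec:babycase} (split or non-split according as $E$ is); under this identification $\tilde O_0(f)$ (resp.\ $\tilde O_{0,\kappa_0}(f)$) is by construction the functional $(\ref{O0})$ (resp.\ the formula for $\tilde O_{0,\kappa_0}$ displayed after $(\ref{babyipnonsplit})$) applied to the germ of $f$ at $\xi=-1$, regarded as an element of the baby-case Schwartz cosheaf at $0$. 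By bilinearity and continuity it is enough to treat a lift $\Phi=\bigoplus_\alpha\Phi_1^\alpha\otimes\Phi_2^\alpha$ with $\Phi_i^\alpha\in\mathcal S(X^\alpha)$, since such $\Phi$ are dense and both $\langle\cdot\rangle$ and $\tilde O_0$ (resp.\ $\tilde O_{0,\kappa_0}$) are continuous. Thus everything comes down to the identity
\[
\tilde\Phi(0)\;=\;\int_{X^\alpha}\Phi_1^\alpha(x)\Phi_2^\alpha(x)\,dx\qquad\text{(one torsor component at a time)},
\]
where $\tilde\Phi$ is the class of $\Phi_1^\alpha\otimes\Phi_2^\alpha$ in $\mathcal M(\Res_{E/F}\Ga)_T=\mathcal M(\mathcal Y_1)=\mathcal M(\mathcal Z_1)$ and $\tilde\Phi(0)$ is its value at the origin in the sense of $(\ref{O0})$, $(\ref{babyipnonsplit})$; together, in the non-split case, with the statement that the trivial, resp.\ non-trivial, $T$-torsor downstairs corresponds to the $X$-, resp.\ $X^\alpha$-summand of the non-split baby space.

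I would prove the displayed identity conceptually. Over $\mathcal Z_1$, Lemma \ref{opensetisom} together with the non-split cocycle description exhibit $(X\times X)|_{\mathcal Z_1}$ as the $G$-torsor induced, via $T\hookrightarrow G$, from the $T$-torsor $\Res_{E/F}\Ga|_{\mathcal Y_1}$ over $\mathcal Z_1=\mathcal Y_1$; that is $(X\times X)|_{\mathcal Z_1}\simeq(\Res_{E/F}\Ga)\times^T G$, compatibly with the two presentations of $\mathcal Z_1$ appearing in the proof of Proposition \ref{stackisomorphism}, and likewise $(X^\alpha\times X^\alpha)|_{\mathcal Z_1^\alpha}\simeq(\alpha\text{-twist of }\Res_{E/F}\Ga)\times^T G^\alpha$, the disjoint union over $\alpha$ being the non-split baby space $\sqcup_\beta X^\beta$. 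Under this identification the preimage of the origin $0\in\Res_{E/F}\Ga$ is exactly the diagonal copy of $X$ in $X\times X$, which is the whole fibre of $(X\times X)|_{\mathcal Z_1}$ over $\xi=-1$ and has stabilizer $T$. Hence the functional ``value at the origin'' on $\mathcal M(\Res_{E/F}\Ga)_T$, which by $(\ref{O0})$ is $\Vol(T(F)_0)^{-1}\tilde O_0$, pulls back under $\mathcal M((X\times X)|_{\mathcal Z_1})_G\xrightarrow{\sim}\mathcal M(\Res_{E/F}\Ga)_T$ to integration of $\Phi$ over that diagonal, i.e.\ to $\int_X\Phi_1^\alpha\Phi_2^\alpha\,dx$ with $dx=dg/dt$ the measure used in \S\ref{ssiptorus}. (Equivalently, avoiding the torsor formalism, one computes the germ of $O_\xi(\Phi_1^\alpha\otimes\Phi_2^\alpha)$ directly: conjugating so that $x_1$ is the base point and $x_2=x_0\cdot\iota(a_\xi,b_\xi)$ with $(a_\xi,b_\xi)\to0$ as $\xi\to-1$ in the coordinates of $(\ref{EtoMat})$, writing $O_\xi=\int_G\Phi_1^\alpha(x_0 g)\Phi_2^\alpha(x_0\iota(a_\xi,b_\xi)g)\,dg$, disintegrating $dg$ along $X=T\backslash G$, and noting that to leading order the inner $T$-integral is the baby-case orbital integral of \S\ref{ssbabyorbital}, whose logarithmic coefficient --- in the non-split case, whose $\eta$-term --- is $\Vol(T(F)_0)$ times the integrand; this reproves Propositions \ref{germs-split}, \ref{germsnonsplit} near the diagonal and gives coefficient $\Vol(T(F)_0)\int_X\Phi_1^\alpha\Phi_2^\alpha$.)

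Granting the displayed identity, the lemma follows by matching normalizations. In the split case $H^1(F,T)=0$, there is only the trivial torsor, $(F^\times:N_F^E E^\times)=1$ and $(-1)^\alpha=1$, so
\[
\langle f\rangle=\Vol(T(F)_0)\int_X\Phi_1\Phi_2\,dx=\Vol(T(F)_0)\,\tilde\Phi(0)=\tilde O_0(f)
\]
by $(\ref{O0})$. In the non-split case $H^1(F,T)\simeq\Z/2$, $(F^\times:N_F^E E^\times)=2$, $T(F)_0=T(F)$, and the two values $(-1)^\alpha=\pm1$ match the signs in $(\ref{babyipnonsplit})$, so
\[
\langle f\rangle=\tfrac12\Vol(T(F))\bigl(\langle\Phi^{\mathrm{triv}}\rangle-\langle\Phi^{\alpha_0}\rangle\bigr)=\tfrac12\Vol(T(F))\bigl(\tilde\Phi(0_X)-\tilde\Phi(0_{X^\alpha})\bigr)=\tilde O_{0,\kappa_0}(f).
\]

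The main obstacle is the displayed identity \emph{with the correct constant $1$}: concretely, checking that the $G$-torsor underlying $X\times X$ over $\mathcal Z_1$ is indeed the one induced from the $T$-torsor $\Res_{E/F}\Ga$ (unwinding Lemma \ref{opensetisom} and the non-split cocycle), that the measure the stack isomorphism puts on the diagonal is $dg/dt$, and --- in the non-split case --- that the labelling of $T$-torsors on the two sides is the one making the signs in $(\ref{eqiptorus})$ and $(\ref{babyipnonsplit})$ correspond; or, if one prefers the direct route, carrying out the standard but normalization-sensitive computation of the germ of the torus orbital integral at the diagonal. Everything else is formal once \S\ref{sec:babycase} and Proposition \ref{stackisomorphism} are in hand.
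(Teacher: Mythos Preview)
Your proposal is correct and follows the same approach the paper intends: the paper's own proof is merely ``easy to see by the results of the previous section,'' i.e.\ transport the baby-case germ analysis (Propositions \ref{germs-split}, \ref{germsnonsplit}) through the stack isomorphism of Proposition \ref{stackisomorphism}, which is exactly what you do. Your direct germ computation in the parenthetical remark is in fact the cleanest way to pin down the constant: writing $O_\xi(\Phi_1\otimes\Phi_2)=\int_{T\backslash G}\Phi_2(x_0\bar g)\bigl[\int_T\Phi_1(x_0\iota(u,v)\cdot t\,\bar g)\,dt\bigr]\,d\bar g$ exhibits the inner $T$-integral as a baby-case orbital integral of $\Psi_{\bar g}:(u,v)\mapsto\Phi_1(x_0\iota(u,v)\bar g)$, and then the leading coefficient at the singular point is $\Vol(T(F)_0)\int_X\Phi_1\Phi_2\,dx$ with $dx=dg/dt$, matching \eqref{eqiptorus} on the nose. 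One small point of phrasing: your ``displayed identity'' $\tilde\Phi(0)=\int_{X^\alpha}\Phi_1^\alpha\Phi_2^\alpha\,dx$ is literally a statement about a \emph{particular} lift $\tilde\Phi=\int_X\Psi_{\bar g}\,d\bar g$ on the baby side, not about the coinvariant class; it becomes invariant only after multiplying by $\Vol(T(F)_0)$ to get $\tilde O_0$. Your ``main obstacle'' paragraph correctly flags the normalization checks; the torsor-label matching in the non-split case holds because the isomorphism $\mathcal Z_1\simeq\mathcal Y_1$ is obtained by Galois-twisting the split isomorphism by the \emph{same} cocycle on both sides.
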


\section{The Kuznetsov quotient with nonstandard functions.} \label{sec:Kuznetsov}

\subsection{} The Kuznetsov trace formula is the relative trace formula for $\mathcal S(X,\mathcal L_\psi)\otimes\mathcal S(X,\mathcal L_{\psi}^{-1})$, where $X$ is the quotient of $\PGL_2$ by a nontrivial unipotent subgroup $N$ and $\mathcal L_\psi$ is the complex $G$-line bundle on $X$ defined by a character $\psi$ of $N(F)$. Here, however, we will extend it to nonstandard sections of this line bundle, that is, sections which are not Schwartz, with prescribed asymptotic behavior at infinity. One can identify $X$ with the quotient by $\{\pm 1\}$ of two-dimensional affine space, minus the origin, and ``infinity'' is precisely the partial compactification of this space by $\mathbb P^1$ at ``infinity''.

Let us start in a slightly different way: Let $G = \Aut(\mathbb P^1)$ and let $\bar X$ denote the total space of the line bundle $O(-2)$ over $\mathbb P^1$; it is $G$-linearizable, i.e.\ it carries an action of $G$ which commutes with the natural action of $\Gm$. We denote by $X$ the complement of the zero section -- it is homogeneous under $G$, and stabilizers are unipotent subgroups. There is a unique, up to the action of $\Gm(F)$, $G(F)$-linear complex line bundle on the $F$-points of $\bar X$ on which the stabilizers of points on $X$ act by a nontrivial unitary character; we fix such a line bundle, and denote it by $\mathcal L_\psi$. Over $\mathbb P^1$ this is $G$-isomorphic (non-canonically) to the trivial line bundle. 
For any subset $S$ of $\bar X \times \bar X$ we will denote by $S^+$ the open subset of $S$ lying over the open $G$-orbit on $\mathbb P^1\times\mathbb P^1$. If $S$ is stable under the diagonal action of $G$, then so is $S^+$.

\subsection{Orbits} \label{ssorbits}

Let $N$ be the subgroup of upper triangular unipotent matrices in $\PGL_2$, $N^-$ the subgroup of lower triangular matrices, both identified with the additive group $\Ga$ in the usual way, and $A$ the torus of diagonal elements. Fix a nontrivial unitary character $\psi$ of $F$, the same character that we used for Fourier transforms in previous sections. We claim that there is a canonical map from $(X\times X)^+$ to the open subset:
$$ \Gm \simeq \left\{ \left.N\left(\begin{array}{cc} \xi \\ & 1\end{array}\right) N^- \right| \xi\in \Gm\right\}$$
of $N\backslash G/N^-$. 

Indeed, let $(x,y)\in (X\times X)^+$, then there is a unique isomorphism of the triple $(G,G_x,G_y)$ with $(\PGL_2,N, N^-)$ such that $G_x$ acts on the fiber of $\mathcal L_\psi$ by the character $\psi$ of $N(F)=\Ga(F)$ (standard isomorphism), and a unique isomorphism such that $G_y$ acts on the fiber of $\mathcal L_\psi^{-1}$ by the character $\psi^{-1}$ of $N^-(F)=\Ga(F)$. Then $\left(\begin{array}{cc} \xi \\ & 1\end{array}\right)$ is the unique element of $A$ which conjugates one isomorphism to the other; our convention to distinguish between $\xi$ and $\xi^{-1}$ is that as $(x,y)$ approach the complement of $(X\times X)^+$ in $X\times X$, $\xi$ 
goes to infinity. This defines an isomorphism of quotient stacks (varieties):
\begin{equation}\label{Kuzquotientopen}(X\times X)^+/G \to \Gm.
\end{equation}
We embed $\Gm\hookrightarrow \mathcal B:=\Ga \hookrightarrow \overline{\mathcal B}:=\PP^1$, and set $\mathcal B^\reg=\mathcal B_{\mathcal W}^\reg = \mathcal B\smallsetminus\{0\}$. We will sometimes be denoting the regular set by $\mathcal B^\times$.

\begin{lemma}\label{rationalmap}
 The map (\ref{Kuzquotientopen}) extends to a rational map: 
\begin{equation}\label{rationalmapeq}
\bar X\times X \to \overline{\mathcal B},
\end{equation}
which is regular away from the complement of $(\PP^1\times X)^+$ in $\PP^1\times X$, that is: away from the set of points $(p,x)\in \PP^1\times X$ such that $x$ lies in the fiber over $p$.
\end{lemma}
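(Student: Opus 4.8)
The plan is to make the morphism \eqref{Kuzquotientopen} completely explicit and then extend it by inspection. First I would use the model of the preceding paragraphs in which $X$ is identified with $(\mathbb A^2\smallsetminus 0)/\{\pm 1\}$, with $G=\PGL_2=\SL_2/\{\pm 1\}$ acting through $\SL_2$, and $\bar X$ is the total space of $O(-2)$, i.e.\ the minimal resolution of $\mathbb A^2/\{\pm 1\}$, covered by the two usual affine charts with coordinates $(t,\sigma)$ and $(s,\tau)$ (related by $s=t^{-1}$, $\tau=\sigma t^{2}$), the zero section $\PP^1$ being the exceptional curve $\{\sigma=0\}\cup\{\tau=0\}$. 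Writing $[u]$ for the class of $u\in\mathbb A^2\smallsetminus 0$, the map
$$ q\colon X\times X\longrightarrow \mathbb A^1,\qquad q([u],[v])=(u\wedge v)^2 $$
is a well-defined $G$-invariant morphism (the square is needed precisely because $u\wedge v$ itself changes sign under the two copies of $\{\pm 1\}$), and a short computation of invariants shows that a generic fibre of $q$ is a single $G$-orbit, so that $q$ realizes the categorical quotient $X\times X\sslash G\xrightarrow{\sim}\mathbb A^1=\mathcal B$ and restricts to an isomorphism $(X\times X)^+/G\xrightarrow{\sim}\Gm$.

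Next I would compare this with \eqref{Kuzquotientopen}. Both maps are isomorphisms of $(X\times X)^+/G$ onto $\Gm$, hence differ by an automorphism of $\Gm$, i.e.\ by $z\mapsto \lambda z^{\pm 1}$ for some $\lambda\in F^\times$; the normalization fixed in \S\ref{ssorbits} — that $\xi\to\infty$ as $(x,y)$ leaves $(X\times X)^+$ inside $X\times X$, i.e.\ as $u\wedge v\to 0$ — forces the exponent $-1$, so that \eqref{Kuzquotientopen} is the map $([u],[v])\mapsto \lambda\, q([u],[v])^{-1}$. (For the lemma even the weaker assertion $\xi=\lambda q^{\pm 1}$ would suffice.)

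To extend, I would work in the chart $(t,\sigma)$ of $\bar X$: a point of $X$ lying there has $u=(a,ta)$ with $a^{2}=\sigma$, so for $y=[v]$ with $v=(c,d)$ one gets $q=(u\wedge v)^{2}=\sigma\,(d-tc)^{2}$, and $(d-tc)^{2}=d^{2}-2t\,cd+t^{2}c^{2}$ is a regular function on $X$; hence $q$ extends to a regular function on this chart times $X$, the $(s,\tau)$-chart symmetrically gives $q=\tau\,(c-sd)^{2}$, and the two agree on the overlap. Thus $q$ is a morphism $\bar X\times X\to\mathbb A^1$, and therefore $\xi=\lambda q^{-1}$, written homogeneously as $(z,x)\mapsto[\lambda:q(z,x)]\in\PP^1$, is a morphism $\bar X\times X\to\overline{\mathcal B}$, because $\lambda\in F^\times$ never vanishes. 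So \eqref{Kuzquotientopen} extends to a morphism on all of $\bar X\times X$ — in particular to a rational map regular away from the complement of $(\PP^1\times X)^+$ in $\PP^1\times X$, which is what is asserted; one may note that its value is $\infty$ exactly along the zero section times $X$ together with the incidence locus inside $X\times X$ (the vanishing locus of $q$), in agreement with the normalization.

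The one step needing genuine care is the identification of \eqref{Kuzquotientopen} with $\lambda\, q^{\pm 1}$: one must check that a generic fibre of $q$ really is a single $G$-orbit — this is exactly where the two copies of $\{\pm 1\}$ enter, merging the $\SL_2$-orbits on which $u\wedge v$ takes opposite values — and then pin the exponent down from the normalization of \S\ref{ssorbits}. That the construction of that subsection, though phrased through the analytic datum $(\psi,\mathcal L_\psi)$, yields an \emph{algebraic} morphism is already part of \eqref{Kuzquotientopen}. The remaining work — the chart bookkeeping for $\bar X$ near its zero section — is routine, and is the only place where the structure of $\bar X$ is actually used.
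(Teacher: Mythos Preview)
The paper does not supply a proof of this lemma; it is stated and then used without further justification. Your argument is correct, and in fact proves more than is claimed: once $q$ is seen to extend to a regular function on all of $\bar X\times X$ (your chart computation), the map $(z,x)\mapsto[\lambda:q(z,x)]$ (or $[q(z,x):\lambda]$, depending on the sign of the exponent) is a genuine \emph{morphism} $\bar X\times X\to\overline{\mathcal B}$, not merely a rational map regular off the incidence locus. The identification $\xi=\lambda q^{\pm 1}$ via the fact that both \eqref{Kuzquotientopen} and $q$ realize $(X\times X)^+/G\xrightarrow{\sim}\Gm$, together with the explicit chart formulas $q=\sigma(d-tc)^2=\tau(c-sd)^2$, is exactly the right way to make this transparent.

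One tangential remark: with the exponent $-1$ that you (correctly) deduce from the paper's stated normalization, the extended map takes the value $\infty$ on \emph{all} of $\PP^1\times X$ (since $q$ vanishes there), which conflicts with the sentence immediately following the lemma asserting that ``$\xi=0$ corresponds to points on $(\PP^1\times X)^+$.'' This appears to be an inconsistency internal to the paper's conventions (possibly $O(-2)$ versus $O(2)$, or an inversion of the coordinate), not a flaw in your reasoning; the lemma as stated is unaffected, and your proof of it is complete.
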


The reader should keep in mind that \emph{$\xi =0$ corresponds to points on $(\PP^1\times X)^+$, while $\xi =\infty$ corresponds to the complement of $(X\times X)^+$ in $X\times X$.} This paradoxical way of parametrizing orbits plays a role when discussing Fourier transforms (where the vector space structure imposed on $\mathcal B$ is important).

There is a certain degree of arbitrariness in choosing the ``standard'' identifications of $N,N^-$ with $\Ga$; therefore, there is nothing special about the orbit above $\xi=1$. However, this choice should be compared to the choice that the ``irregular'' orbits of $\mathcal Z$ map to $\{0,-1\}\in\mathcal B$ in the discussion of section \ref{sec:torus}; both are essentially choices of a generator of the ring of invariants, they are related and should be changed simultaneously.

\subsection{Orbital integrals} \label{ssorbital-Kuz}

In what follows, we try to make explicit the choices made in order to think of orbital integrals on the Kuznetsov trace formula as functions on $\mathcal B^\reg\simeq F^\times$. The reader may wish to skip straight to (\ref{Kuzorbital}).

Let $\Phi_1, \Phi_2$ be smooth sections of $\mathcal L_\psi$, resp.\ $\mathcal L_\psi^{-1}$, of compact support on $X$. Fix a Haar measure on $G=G(F)$. At a first stage, we define the (regular) orbital integrals of $\Phi_1\otimes \Phi_2$ to be the $G$-invariant section of $\mathcal L_\psi\boxtimes \mathcal L_\psi^{-1}$ on $(X\times X)^+$ obtained by integrating $\Phi_1\cdot \Phi_2$, that is:
\begin{equation}
 O_{(x,y)} (\Phi_1\otimes\Phi_2) = \int_G g\cdot (\Phi_1,\Phi_2) (x,y) dg,
\end{equation}
where $g\cdot$ denotes the right regular representation under the diagonal action of $G$. 

At a second stage, we would like to represent these orbital integrals as functions on $\mathcal B^\times :=\mathcal B\smallsetminus\{0\} = F^\times$. Let $(x_0,y_0)\in (X\times X)^+$, with image $1\in \mathcal B$, let $A$ denote the unique torus in $G$ which normalizes both $G_{x_0}$ and $G_{y_0}$, identified with $\Gm$ according to the image of $(x_0a,y_0)$ in $\mathcal B$ ($a\in A$). For $\xi\in \Gm$ corresponding to $a\in A$, let:
\begin{equation}
 O_\xi (\Phi_1\otimes\Phi_2) = O_{(x_0,y_0)} (a\cdot \Phi_1\otimes\Phi_2),
\end{equation}
where $a$ acts on the section $O_{(x_0,y_0)} (\Phi_1\otimes\Phi_2)$ by the regular representation on the first coordinate; hence, $O_\xi (\Phi_1\otimes\Phi_2)$ is understood as an element in the fiber of $\mathcal L_\psi$ over $(x_0,y_0)$. We remark that $a\cdot $ denotes the action of $a$ as an element of $G$; the natural action of $\Gm$ on $O(-2)$ by dilations does not extend to the line bundle $\mathcal L_\psi$. When, later, we will replace $\mathcal L_\psi$ by the trivial line bundle, we will be denoting the action of $\Gm$ by dilations by $\mathscr L_a$ (where $\mathscr L$ is supposed to be reminiscent of ``left action'' in terms of the torus acting on $X\simeq N\backslash G$), in order to avoid confusion. Notice that the orbit map: $\mathcal B^\times \ni a \mapsto (x\cdot a, y) \in (X\times X)^+$ extends to:
\begin{equation}\label{BinX}
 \mathcal B \to (\bar X\times X)^+.
\end{equation}

Finally, we choose an isomorphism of the fiber of $\mathcal L_\psi\boxtimes \mathcal L_\psi^{-1}$ over $(x_0,y_0)$ with $\CC$, in order to consider $O_\xi (\Phi_1\otimes\Phi_2)$ as a complex-valued function on $F^\times$, as $\xi$ varies. This last choice of isomorphism only affects the orbital integrals by a common scalar multiple, and will be reflected in our choice of unramified sections in the fundamental lemma (e.g. we will ask that some sections be equal to ``1'' at $(x_0,y_0)$, which only makes sense after choosing this isomorphism).

Explicitly, if we identify the stabilizers of $x_0$, $y_0$ with $N, N^-$ such that they act on $\mathcal L_\psi$, resp.\ $\mathcal L_\psi^{-1}$ by $\psi,\psi^{-1}$, respectively, and if we trivialize the fiber in order to think of $\Phi_1\otimes\Phi_2$ as an element of $C^\infty(N\backslash G\times N^-\backslash G,\psi\otimes\psi^{-1})$, then:
\begin{equation}\label{Kuzorbital}O_\xi(\Phi_1\otimes\Phi_2) = \int_G \Phi_1(\left(\begin{array}{cc} \xi & \\ & 1\end{array}\right)g) \Phi_2(g) dg.
\end{equation}

Given these choices -- that is, the embedding (\ref{BinX}) and the identification of the fiber over $1$ with $\CC$, we can easily see:
\begin{lemma} \label{Kuztrivialization}
\begin{enumerate}
               \item The action map: \begin{equation}
                                      \mathcal B \times G \to (\bar X \times X)^+
                                     \end{equation}
is an isomorphism.
\item The chosen trivialization of the fiber over $(x_0,y_0)$, the action of $\mathcal B^\times \times G$ (here $\mathcal B^\times$ is acting as the torus $A$ on the first copy, as before), and the above isomorphism give rise to a $G$-equivariant isomorphism of:
$$\left.\mathcal L_\psi \boxtimes \mathcal L_\psi^{-1}\right|_{(\bar X \times X)^+}$$
with the trivial line bundle over $\mathcal B \times G $.
              \end{enumerate}
\end{lemma}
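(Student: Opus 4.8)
The plan is to unwind the definitions and reduce everything to the geometry established in \S\ref{ssorbits}, in particular the isomorphism \eqref{Kuzquotientopen} and the rational map of Lemma \ref{rationalmap}. For part (1), one should check that the action map $\mathcal B \times G \to (\bar X\times X)^+$ is bijective on $F$-points (or, properly, an isomorphism of varieties) and that its differential is everywhere an isomorphism. The point is that $(\bar X\times X)^+$ fibers over the open $G$-orbit $\mathcal O$ in $\mathbb P^1\times\mathbb P^1$ with fiber over the base point $(p,p')$ being (the $F$-points of) the $O(-2)$-fiber over $p$ — a copy of $\mathbb A^1$ — while $G$ acts simply transitively on $\mathcal O$; since the stabilizer of $(p,p')$ is the split torus $A$ which acts on that $\mathbb A^1$-fiber with a single orbit $\Gm$ plus the origin, one recovers $(\bar X\times X)^+ \cong \mathcal B \times G/\!\!\sim$. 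More honestly: fix the base point $(x_0,y_0)\in (X\times X)^+$ over $\xi=1$; by construction of the map \eqref{Kuzquotientopen} every point of $(X\times X)^+$ is $G$-conjugate to a unique $(x_0 a, y_0)$ with $a\in A\cong\Gm$, and the degeneration to the zero section of $\bar X$ (the first factor) is exactly the degeneration $\xi\to 0$ by our sign convention. So $\mathcal B \times G \to (\bar X\times X)^+$ is the orbit map of a point with trivial stabilizer on the open piece, extended across $\xi=0$; the extension is an isomorphism because $(\bar X\times X)^+ \setminus (X\times X)^+$ is the image of $\{0\}\times G$, which is a $G$-orbit of the right dimension, and a direct coordinate computation (e.g. in the affine chart where $X$ is $\mathbb A^2\setminus 0$ mod $\pm 1$ and $\bar X$ is $\mathbb A^2$) shows the map is étale there.

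For part (2), one takes the $G$-equivariant trivialization to be transport of structure: declare the chosen identification of the fiber of $\mathcal L_\psi\boxtimes\mathcal L_\psi^{-1}$ over $(x_0,y_0)$ with $\CC$ to be "$1$", and then use the isomorphism from part (1) to move it everywhere. Concretely, a point of $(\bar X\times X)^+$ is $(x_0,y_0)\cdot(a,g)$ for unique $(a,g)\in\mathcal B\times G$ (writing $a$ for the action of $\mathcal B^\times=A$ on the first coordinate, extended across $0$), and we send the fiber there to $\CC$ via the inverse of the $G$-action map $\mathcal L_\psi\boxtimes\mathcal L_\psi^{-1}|_{(x_0,y_0)} \xrightarrow{\sim} \mathcal L_\psi\boxtimes\mathcal L_\psi^{-1}|_{(x_0,y_0)\cdot(a,g)}$ followed by our chosen $\CC$-identification. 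This is manifestly $G$-equivariant for the right regular action on $\mathcal B\times G$ on the trivial bundle, by associativity of the $G$-action; and it is a genuine isomorphism of line bundles (not merely a trivialization on $\mathcal B^\times\times G$) because the $G$-action on $\mathcal L_\psi\boxtimes\mathcal L_\psi^{-1}$ is defined over all of $\bar X\times X$, so the map extends continuously/algebraically across $\xi=0$. One should remark explicitly that the $\mathcal B^\times$ here acts through $A\subset G$, not through the dilation action of $\Gm$ on $O(-2)$ — the latter does not lift to $\mathcal L_\psi$, which is why we phrase everything in terms of $G$.

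The one place that requires care — and the step I expect to be the main obstacle — is the behavior at $\xi=0$, i.e.\ checking that the orbit map genuinely extends to an \emph{isomorphism} rather than just a dominant map, and that the trivialization extends across that locus. Away from $\xi=0$ everything is formal (orbit map of a free action). At $\xi=0$ one is passing to the zero section of $\bar X$ in the first factor, and one must verify that (a) the extended map $\mathcal B\times G\to(\bar X\times X)^+$ of part (1) is an isomorphism at those points — this is the étale-ness computation alluded to above, best done in the explicit matrix coordinates of Lemma \ref{rationalmap} together with the local trivialization of $\mathcal L_\psi$ over $\mathbb P^1$ — and (b) the $G$-equivariant section of $\mathcal L_\psi\boxtimes\mathcal L_\psi^{-1}$ over $\mathcal B^\times\times G$ obtained from the trivial section does not blow up or vanish as $\xi\to 0$. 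Point (b) follows once (a) is known, since then the section is just the pullback under an isomorphism of the nowhere-zero constant section; so really the whole lemma rests on the explicit local computation at the zero section, which is a finite check in coordinates and not conceptually deep, merely bookkeeping with the $O(-2)$-twist and the $\pm1$-quotient.
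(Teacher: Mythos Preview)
The paper does not actually give a proof of this lemma; it is introduced with the phrase ``we can easily see'' and left at that. Your proposal is correct and supplies what the paper omits. The decomposition you use for part (1) --- free diagonal $G$-action on $(\bar X\times X)^+$ with quotient $\mathcal B$, trivialized by the section \eqref{BinX} --- is exactly the intended argument, and your transport-of-structure for part (2) is the natural thing.

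One remark on the point you flag as the main obstacle, the extension across $\xi=0$: you can dispose of this more cleanly than by a coordinate check. For part (1), the extension of the orbit map to $\mathcal B\to(\bar X\times X)^+$ is precisely \eqref{BinX}, which the paper states (and which follows from Lemma~\ref{rationalmap}); once you have that section, the action map $\mathcal B\times G\to(\bar X\times X)^+$ is a $G$-equivariant map between $G$-torsors over $\mathcal B$ (freeness at $\xi=0$ being immediate since the stabilizer of $y_0$ moves every point of $\mathbb P^1$ other than the image of $y_0$), hence an isomorphism. For part (2), rather than arguing that the trivialization ``extends continuously'', observe that since $G$ acts freely the $G$-line bundle descends to a line bundle on $\mathcal B\simeq\mathbb A^1$, which is automatically trivial; your chosen identification of the fiber over $(x_0,y_0)$ then pins down the trivialization. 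This avoids worrying about whether a section constructed on $\mathcal B^\times$ blows up or vanishes at $0$.
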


\subsection{Integration formula}

Identifying sections $\Phi\in \mathcal S\left(\left.\mathcal L_\psi \boxtimes \mathcal L_\psi^{-1}\right|_{(\bar X \times X)^+}\right)$ with sections of the trivial line bundle according to Lemma \ref{Kuztrivialization}, we have:

\begin{lemma}\label{Kuzintegration}
 For suitable choices of a $G$-invariant measure on $X=X(F)$ and a Haar measure on $G=G(F)$, we have:
\begin{equation}
 \int_{(\bar X \times X)^+} \Phi (x,y) d(x,y) = \int_{\mathcal B} O_\xi(\Phi) |\xi|^{-2} d\xi,
\end{equation}
where $d\xi$ is our fixed, standard additive measure on $\mathcal B \simeq F$.
\end{lemma}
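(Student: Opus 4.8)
The plan is to reduce the integration formula to the ``baby case'' integration formula (Lemma \ref{integration-babysplit}) by explicitly parametrizing the open set $(\bar X\times X)^+$ and tracking all the measures. First I would use Lemma \ref{Kuztrivialization}: the action map $\mathcal B\times G\to(\bar X\times X)^+$ is an isomorphism, and under the chosen trivialization a section $\Phi$ becomes a genuine function on $\mathcal B\times G$. Thus the left-hand side $\int_{(\bar X\times X)^+}\Phi\,d(x,y)$ becomes an integral over $\mathcal B\times G$ against the pull-back of the product measure $dx\,dy$; the content of the lemma is that, for suitable normalizations, this pull-back is $|\xi|^{-2}d\xi\,dg$, after which Fubini gives $\int_{\mathcal B}\left(\int_G\Phi(\xi,g)\,dg\right)|\xi|^{-2}d\xi=\int_{\mathcal B}O_\xi(\Phi)|\xi|^{-2}d\xi$, which is exactly the claim. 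So the whole lemma is a Jacobian computation.

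To run that computation I would work in the affine model: realize $X$ as $\mathbb A^2\smallsetminus\{0\}$ modulo $\{\pm1\}$, with $\bar X$ its partial compactification by $\PP^1$ at infinity (the total space of $O(-2)$), and pick the invariant measure on $X$ coming from the standard translation-invariant measure on $\mathbb A^2$; pick on $G=\PGL_2$ a Haar measure. Then I would choose base points $x_0,y_0$ with $(x_0,y_0)\in(X\times X)^+$ mapping to $1\in\mathcal B$, and compute the Jacobian of $(\xi,g)\mapsto(x_0\xi\cdot g,\ y_0\cdot g)$ near a regular point; because $G$ acts by measure-preserving transformations on $X\times X$ (up to the fixed normalization), the Jacobian factors through the $\xi$-direction only, and one is left to compute how the measure $dx$ on the first copy transforms under the dilation/torus action $\xi\mapsto x_0\xi$ on the image point near infinity in $\bar X$. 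Since $\xi\to\infty$ corresponds to approaching the complement of $(X\times X)^+$, i.e.\ the $\PP^1$ at infinity (cf.\ the remark after Lemma \ref{rationalmap}), near that locus $\xi$ behaves like the \emph{inverse} of a local coordinate and the line-bundle $O(-2)$ twist contributes the square; a direct chart computation on $O(-2)$ shows the measure picks up exactly the factor $|\xi|^{-2}$. (Equivalently: $dx$ on $X=\mathbb A^2\smallsetminus\{0\}$ in ``radial'' coordinates $(r,\theta)$ adapted to the $\Gm$-action on $O(-2)$ is $|r|\,dr\,d\theta$ with $r\sim\xi^{-1}$, giving $|\xi|^{-2}\,|\xi|\,d\xi\cdot(\dots)$, and the remaining $|\xi|^{-1}d\xi$ is absorbed into $dg$, matching the baby-case normalization $dg=|a|^{-1}da$ of Lemma \ref{integration-babysplit}.)

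For bookkeeping I would organize it as: (i) fix the measure on $X$ and on $G$; (ii) identify $(\bar X\times X)^+\simeq\mathcal B\times G$ via Lemma \ref{Kuztrivialization} and write the left side as an integral over $\mathcal B\times G$; (iii) compute the Radon--Nikodym derivative of the pulled-back measure with respect to $d\xi\,dg$, reducing by $G$-invariance to a one-variable computation on the fiber of $\bar X$ over a point of $\PP^1$, where the $O(-2)$-model makes the $|\xi|^{-2}$ visible; (iv) apply Fubini and recognize the inner integral as $O_\xi(\Phi)$ in the normalization (\ref{Kuzorbital}); (v) check that the normalizations can be chosen ``suitably'' so that no extra constant appears — here the freedom in the Haar measure on $G$ and the invariant measure on $X$ is exactly enough, and consistency with the baby case of \S\ref{sec:babycase} pins the choice. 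The main obstacle is step (iii): correctly handling the behavior near $\xi=\infty$, where the parametrization (\ref{BinX}) runs off $X$ onto the boundary $\PP^1\subset\bar X$, so the naive ``torus orbit'' coordinates degenerate and one must genuinely use the $O(-2)$-chart (not the group-theoretic $N\backslash G$ picture) to see that the Jacobian is $|\xi|^{-2}$ rather than, say, $|\xi|^{-1}$; getting this power right — and consistent with the sign/inversion convention ``$\xi\to\infty$ at the boundary'' fixed in \S\ref{ssorbits} — is the crux.
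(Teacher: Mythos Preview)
Your overall strategy is correct and is exactly what the paper implicitly has in mind (the paper states the lemma without proof, treating it as routine): use the isomorphism $\mathcal B\times G\xrightarrow{\sim}(\bar X\times X)^+$ of Lemma~\ref{Kuztrivialization}, compute the Jacobian, and apply Fubini. However, your explanation of \emph{where} the factor $|\xi|^{-2}$ comes from is muddled, and contains a genuine error of orientation.

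First, the orientation: you write that ``near $\xi=\infty$ \dots\ the parametrization (\ref{BinX}) runs off $X$ onto the boundary $\PP^1\subset\bar X$''. This is backwards. The paper's convention (stated right after Lemma~\ref{rationalmap}) is that $\xi=0$ corresponds to $(\PP^1\times X)^+$, i.e.\ the first coordinate hitting the $\PP^1$ boundary of $\bar X$, whereas $\xi=\infty$ corresponds to the complement of $(X\times X)^+$ inside $X\times X$ (the locus where the two images in $\PP^1$ collide). So your heuristic about the $O(-2)$ chart ``near $\xi=\infty$'' is pointed at the wrong boundary.

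Second, and more importantly, the factor $|\xi|^{-2}$ is not a boundary phenomenon at all: it is a \emph{global} Jacobian, constant in form over all of $\mathcal B^\times$. The clean computation is this. Write the map as $(\xi,g)\mapsto(N\,\diag(\xi,1)\,g,\ N^-g)$ into $X\times X'$ with $X=N\backslash G$, $X'=N^-\backslash G$. Decompose $g=n^-h$ with $dg=dn^-\,dy$ (the invariant measure on $X'$). For fixed $h$, the map $(a,n^-)\mapsto N\,a\,n^-\,h$ identifies $A\times N^-$ with the open cell in $X$, and the $G$-invariant measure on $X$ in these coordinates is
\[
dx \;=\; \delta(a)^{-1}\,da\,dn^- \;=\; |\xi|^{-1}\cdot|\xi|^{-1}\,d\xi\,dn^- \;=\; |\xi|^{-2}\,d\xi\,dn^-,
\]
where $\delta(\diag(\xi,1))=|\xi|$ is the modular character (equivalently, $dg=\delta(a)^{-1}\,dn\,da\,dn^-$ on the big cell). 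Putting the two factors together,
\[
dx\,dy \;=\; |\xi|^{-2}\,d\xi\,(dn^-\,dy) \;=\; |\xi|^{-2}\,d\xi\,dg,
\]
which after Fubini gives exactly $\int_{\mathcal B}O_\xi(\Phi)\,|\xi|^{-2}\,d\xi$ with $O_\xi$ as in (\ref{Kuzorbital}). There is no need to invoke the $O(-2)$ chart, radial coordinates, or any limiting behavior; the ``suitable choices'' of measures are just the ones making $dg=dn\,dx=dn^-\,dy$. Your steps (i), (ii), (iv), (v) are fine; replace (iii) by this two-line modular-character calculation and drop the discussion of asymptotics near a boundary.
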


\subsection{Nonstandard sections} \label{ssnonstandard}

We let $\mathcal M(\bar X\times X, \mathcal L_\psi\boxtimes \mathcal L_\psi^{-1})$ (resp.\ $\mathcal S(\bar X\times X, \mathcal L_\psi\boxtimes \mathcal L_\psi^{-1})$) denote the Schwartz cosheaf over $\bar X\times X$ consisting of smooth measures (resp.\ functions) on $X\times X$, valued in $\mathcal L_\psi\boxtimes \mathcal L_\psi^{-1}$, with the following properties:
\begin{itemize}
 \item the restriction of the cosheaf to $X\times X$ coincides with the standard cosheaf of Schwartz measures (resp.\ functions) valued in $\mathcal L_\psi\boxtimes \mathcal L_\psi^{-1}$; 
 \item in a neighborhood of $\mathbb P^1\times X$ they are finite sums of the form $\sum_i f_i F_i$, where:
\begin{enumerate}
 \item 
 the $f_i$'s are $\mathcal L_\psi\boxtimes\mathcal L_\psi^{-1}$-valued Schwartz functions on $\bar X\times X$;
 \item the $F_i$ are scalar-valued measures (resp.\ functions) on $X\times X$ which are $G$-invariant in the second coordinate\footnote{This is just one of many equivalent ways of describing our measures, and the reader should not get confused trying to figure out the purpose of invariance in the second coordinate; the point is that our resulting measures will be of Schwartz type in the second coordinate, which is taken care of by the $f_i$'s, so we only need the $F_i$'s in order to describe their asymptotic behavior in the first coordinate.}, and in the first coordinate are annihilated asymptotically by the operator:
\begin{equation}\label{asymptotics}
 \left(1- \delta^{-\frac{1}{2}} (a) \mathscr L_a\right)\cdot \left(1- \eta_{E/F}\delta^{-\frac{1}{2}} (a)\mathscr L_a\right).
\end{equation}
\end{enumerate}
\end{itemize}

Our notation is hiding the details of the asymptotics, and is just replacing $X$ by $\bar X$ to remind that these measures are not Schwartz on $X$; however, they cannot be considered as smooth measures on $\bar X$. 

We explain what it means, for a \emph{scalar-valued} function or measure on $X$, to be asymptotically annihilated by (\ref{asymptotics}). 
Thinking of $\Gm$ as the dilation group of the bundle $O(-2)$ over $\mathbb P^1$, we have an $L^2$-isometric action of it on functions on $X$ given by:
\begin{equation}
 \mathscr L_a f(x) = \delta(a)^{-\frac{1}{2}} f(ax),
\end{equation}
where $\delta(a)$ is the inverse of the character by which the non-normalized action of $\Gm$ transforms an invariant measure on $X$, written suggestively so that in an identification of $X$ with $N\backslash \PGL_2$ it corresponds to: $$\left(\begin{array}{cc}
                        a \\ & 1                                                                                                                                                                                                   \end{array}\right)\mapsto |a|.$$

Similarly, we have an $L^2$-isometric action of $\Gm$ on measures on $X$ given by:
\begin{equation}
 \mathscr L_a \mu(x) = \delta(a)^{\frac{1}{2}} \mu(ax),
\end{equation}
and of course the map from functions to measures: $f \mapsto f dx$ is equivariant with respect to these actions.

Hence, ``asymptotically annihilated'' by (\ref{asymptotics}) means that applying the operator (\ref{asymptotics}) to the given function/measure produces a function/measure which is supported away from $\mathbb P^1$. Thus, these functions can be identified in a neighborhood of $\mathbb P^1$ with elements of a representation $\pi$ of the form:
\begin{equation}\label{asymptoticrep}
 0\to I(\delta^{\frac{1}{2}}) \to \pi \to I(\eta_{E/F}\delta^\frac{1}{2})\to 0,
\end{equation}
where the sequence is nonsplit if (and only if) $\eta_{E/F}$ is trivial. Here $I(\bullet)$ denotes the principal series representation obtained by normalized induction from the character $\bullet$. However, the isomorphisms with principal series are not canonical, and we prefer to think of $\pi$ as the space of smooth functions on $X$ which are annihilated by (\ref{asymptotics}).

\subsection{Coinvariants}

We let $\mathcal M(\mathcal W)$ denote the $G$-coinvariants of $\mathcal M(\bar X\times X, \mathcal L_\psi\boxtimes \mathcal L_\psi^{-1})$. Here the letter $\mathcal W$ is reminiscent of a stack, but for us it is just formal notation, because I do not know how to make sense of $\mathcal L_\psi\boxtimes\mathcal L_\psi^{-1}$ as a bundle on the stack. We denote by $\mathcal M(\mathcal W^+)$ the $G$-coinvariants of those measures which are almost supported on $(\bar X\times X)^+\subset \bar X\times X$.

Using the trivializations of Lemma \ref{Kuztrivialization}, we have a map from $\mathcal M(\mathcal W)$ to smooth measures on $\mathcal B^\times$; as we shall see, this map is injective, so we feel free not to distinguish between an element of $\mathcal M(\mathcal W)$ and the corresponding measure on $\mathcal B^\times$. We let $\mathcal S(\mathcal W)$ be the space of functions on $\mathcal B^\times$ which are obtained as ``regular'' orbital integrals (understood as in \S \ref{ssorbital-Kuz}), with respect to a Haar measure on $G$, of elements of $\mathcal S(\bar X\times X, \mathcal L_\psi\boxtimes \mathcal L_\psi^{-1})$. According to Lemma \ref{Kuzintegration}, the map $\mu\mapsto \frac{\mu}{|\xi|^{-2} d\xi}$ is a linear isomorphism:

\begin{equation}\label{Kuzmeastoorb}
 \mathcal M(\mathcal W)\to \mathcal S(\mathcal W).
\end{equation}

We would like to understand the spaces $\mathcal M(\mathcal W^+)$, $\mathcal M(\mathcal W)$ as sections over $\mathcal B$, $\overline{\mathcal B}$ of the $G$-coinvariants of the push-forward to $\overline{\mathcal B}$ of the Schwartz cosheaf $\mathcal M(\bar X\times X, \mathcal L_\psi\boxtimes \mathcal L_\psi^{-1})$, in order to take advantage of the results of appendix \ref{app:cosheaves}. This is not directly possible, as the map \eqref{rationalmapeq} is rational, not regular. We will see, however, in Proposition \ref{propnoirregular} that the stalk over the irregular locus $S$ of \eqref{rationalmapeq} does not contribute at all to $\mathcal M(\mathcal W)$; thus, we may indeed view $\mathcal M(\mathcal W)$ as (sections of) the flabby cosheaf of coinvariants of the push-forward of $\mathcal M((\bar X\times X)\smallsetminus S, \mathcal L_\psi\boxtimes \mathcal L_\psi^{-1})$.

\subsection{Limiting behavior at 0}

Let $\mathcal X$ be the quotient stack of section \ref{sec:babycase}, that is: $\mathcal X = \Res_{E/F} \Ga/ T$, where $T = \ker N_F^E$. We consider elements of $\mathcal M(\mathcal X)$ as measures on $\mathcal B\smallsetminus \{0\}=F^\times$, as we do with elements of $\mathcal M(\mathcal W)$. Recall that $\mathcal M(\mathcal W^+)$ denotes the sections of this cosheaf over $(\bar X\times X)^+$, and $\mathcal S(\mathcal W^+)$ their images under the operation of orbital integrals.

\begin{proposition} \label{propKuzstalk}
As spaces of measures on $\mathcal F^\times$, we have:
\begin{equation}
 \mathcal M(\mathcal W^+) = |\bullet|^{-1}\mathcal M(\mathcal X). 
\end{equation}
\end{proposition}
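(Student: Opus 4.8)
The plan is to compare the two sides by choosing concrete local models near the fiber over $\xi=0$. Recall that $\xi=0$ corresponds to points lying on $(\PP^1\times X)^+$; in a neighborhood of such a point, a section of $\mathcal L_\psi\boxtimes\mathcal L_\psi^{-1}$ that is Schwartz on $\bar X\times X$ restricts (after the trivialization of Lemma \ref{Kuztrivialization}) to a Schwartz function on $\mathcal B$ near $0$, but the \emph{orbital integral} over $G$ introduces a convolution against the delta-supported measure on the $N$-orbit, which produces exactly the logarithmic and constant germs of Proposition \ref{germs-split} (split case) or the $C_1+C_2\eta$ germs of Proposition \ref{germsnonsplit} (nonsplit case). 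So first I would set up, over a small neighborhood $U$ of $\xi=0$ in $\mathcal B$, an explicit $G$-equivariant identification of $(\bar X\times X)^+|_U$ with the corresponding piece of $\Res_{E/F}\Ga$ minus its zero section (times $G$): concretely, the stabilizer of a point $(p,x)\in(\PP^1\times X)^+$ is a torus, and the ``transverse'' coordinate to the open $G$-orbit on $\PP^1\times\PP^1$ plays the role of the coordinate on $\Res_{E/F}\Ga$, with the torus acting by scaling through the norm. This is the geometric heart: near $\xi=0$ the Kuznetsov picture \emph{becomes} the baby-case picture of Section \ref{sec:babycase}.

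Second, I would track measures through this identification. In the baby case the integration formula of Lemma \ref{integration-babysplit}/\ref{integration-babynonsplit} gives $\mathcal M(\mathcal X)$ as pushforward of Schwartz measures with respect to $d\xi$ (the standard additive measure on $\mathcal B$). In the Kuznetsov case the integration formula of Lemma \ref{Kuzintegration} has the extra factor $|\xi|^{-2}d\xi$, and the definition of $\mathcal S(\mathcal W)$ via \eqref{Kuzmeastoorb} divides by $|\xi|^{-2}d\xi$. Meanwhile the relation between $\mathcal M$ and the corresponding $\mathcal S$ differs between the two setups precisely by the normalization of the action of the scaling torus: recall $\mathcal M(\mathcal W^+)$ consists of measures on $\mathcal B^\times=F^\times$, and $\mathcal M(\mathcal X)$ likewise; comparing the two Haar-measure-on-$G$ conventions and the $\mathscr L_a$-versus-$a\cdot$ discrepancy flagged in \S\ref{ssorbital-Kuz}, one extra factor of $|\xi|^{-1}$ appears. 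Assembling these bookkeeping factors should yield $\mathcal M(\mathcal W^+)=|\bullet|^{-1}\mathcal M(\mathcal X)$ as spaces of measures on $F^\times$.

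Third, one must check this at the level of \emph{cosheaves}, i.e.\ that the identification is not merely an equality of global sections on $F^\times$ but respects the germs at $\xi=0$: that a measure in $\mathcal M(\mathcal W^+)$, pushed to $\overline{\mathcal B}$, has germ at $0$ of the form $|\bullet|^{-1}$ times a germ in $\mathcal M(\mathcal X)$ at $0$, and conversely. Here the asymptotic condition \eqref{asymptotics} is automatically satisfied on $\mathcal M(\mathcal W^+)$ because near $\xi=0$ the relevant ``infinity'' of $X$ (the $\PP^1$ at $\infty$) is not approached — points over $\xi=0$ lie on $(\PP^1\times X)^+$, a \emph{different} stratum from the one where \eqref{asymptotics} imposes constraints — so on the $\mathcal W^+$ side the only behavior is the ordinary Schwartz-on-$\bar X\times X$ behavior, exactly matching the $\mathcal X$ side. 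The germs at $0$ then match by Proposition \ref{germs-split} (resp.\ \ref{germsnonsplit}): both sides have germs spanned by $|\bullet|^{-1}\cdot\{1,\ \ln|\bullet|\}$ (split) or $|\bullet|^{-1}\cdot\{1,\eta\}$ (nonsplit), and surjectivity of both families of orbital integrals onto these germs, already established in Section \ref{sec:babycase}, pins down the isomorphism.

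The main obstacle I expect is the second step: getting \emph{all} the measure normalizations — the $\mathcal L_\psi$-trivialization of Lemma \ref{Kuztrivialization}, the Haar measure on $G$ hidden in the definition of orbital integrals, the $|\xi|^{-2}$ in Lemma \ref{Kuzintegration}, and the unitary-normalization conventions $\mathscr L_a$ versus plain $a\cdot$ — to conspire into the single clean factor $|\bullet|^{-1}$ rather than some other power of $|\bullet|$ or an extra $\eta$. This is not conceptually deep but is exactly the sort of place where a sign or a square can slip in, and it is worth doing the bookkeeping against the explicit formula \eqref{Kuzorbital} and the baby-case integration formula simultaneously as a consistency check.
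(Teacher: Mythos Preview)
Your plan rests on a misreading of the geometry at $\xi=0$, and this misreading propagates into a genuine gap.

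\textbf{The source of the germs at $\xi=0$.} You say in your third step that near $\xi=0$ ``the relevant `infinity' of $X$ (the $\PP^1$ at $\infty$) is not approached'' and hence the asymptotic condition \eqref{asymptotics} is automatically satisfied. This is exactly backwards. As stated after Lemma \ref{rationalmap}, $\xi=0$ corresponds to points on $(\PP^1\times X)^+$, i.e.\ to points whose \emph{first} coordinate lies on the boundary $\PP^1\subset \bar X$. The condition \eqref{asymptotics} is imposed precisely in a neighborhood of $\PP^1\times X$ (see \S\ref{ssnonstandard}), so it is the asymptotic condition --- not any orbital-integral effect --- that governs the behavior of elements of $\mathcal M(\mathcal W^+)$ near $\xi=0$. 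Your first paragraph, which attributes the logarithmic/$\eta$ germs to ``convolution against the delta-supported measure on the $N$-orbit,'' is therefore looking in the wrong place.

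\textbf{The stabilizer claim.} Relatedly, the stabilizer of a point $(p,x)\in(\PP^1\times X)^+$ is \emph{trivial}, not a torus: the open $G$-orbit on $(\bar X\times X)^+$ is free, which is why Lemma \ref{Kuztrivialization} gives an isomorphism $(\bar X\times X)^+\simeq \mathcal B\times G$. So there is no torus integration producing the germs; the $G$-coinvariants are simply push-forward to $\mathcal B^\times$.

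\textbf{What the paper actually does.} The paper's argument is short and direct. Under the trivialization of Lemma \ref{Kuztrivialization}, the nonstandard measures on $(\bar X\times X)^+$ become scalar measures on $\mathcal B^\times\times G$ that are Schwartz away from $\{0\}\times G$ and, near $\{0\}\times G$, equal to a Schwartz function times $\mu(b)\,dg$ where $\mu$ satisfies the functional equation
\[
\mu(b)-\mu(ab)-\eta_{E/F}(a)\mu(ab)+\eta_{E/F}(a)\mu(a^2b)=0,
\]
which is nothing but the translation of \eqref{asymptotics} to the $\mathcal B$-coordinate. Taking $G$-coinvariants is push-forward to $\mathcal B^\times$, and then one compares this functional equation with the explicit description of $\mathcal M(\mathcal X)$ near $0$ from Propositions \ref{germs-split} and \ref{germsnonsplit} to read off the factor $|\bullet|^{-1}$.

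Your measure-bookkeeping instinct (step two) is fine, and indeed Corollary \ref{XinW} is deduced from the Proposition exactly by comparing the two integration formulas. But the substance of the Proposition lies in recognizing that \eqref{asymptotics} \emph{is} the mechanism producing the baby-case germs, not something to be set aside.
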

The symbol $|\bullet|^{-1}$ denotes multiplication of a measure $\mu(\xi)$ by $|\xi|^{-1}$.

\begin{proof}
 Using the isomorphisms of Lemma \ref{Kuztrivialization}, the space $\mathcal M((\bar X\times X)^+, \mathcal L_\psi\boxtimes \mathcal L_\psi^{-1})$ can be identified with a space of scalar-valued measures on $\mathcal B^\times \times G$ with the following properties:
\begin{itemize}
 \item away from a neighborhood of $\{0\}\times G$ they coincide with Schwartz measures;
 \item in a neighborhood of $\{0\}\times G$ they are equal to a Schwartz function on $\mathcal B\times G$ times a measure $\mu(b) dg$ with $dg$ a Haar measure on $G$ and:
\begin{equation}\label{muproperty} \mu(b) - \mu(ab) - \eta_{E/F}(a) \mu(ab) + \eta_{E/F}(a)\mu(a^2b) = 0,\end{equation}
for every $a\in F^\times$.
\end{itemize}
Their $G$-coinvariants coincide with their push-forwards to $\mathcal B^\times$, which are characterized by the analogous properties (i.e.\ Schwartz away from $0$ and same condition on the measure $\mu$). By the explicit description of $\mathcal M(\mathcal X)$ in Propositions \ref{germs-split} and \ref{germsnonsplit}, the claim follows.
\end{proof}

\begin{corollary} \label{XinW} 
 As spaces of functions on $\mathcal B^\times$, we have:
\begin{equation} \label{isomW-X}
 \mathcal S(\mathcal W^+) = |\bullet|\mathcal S(\mathcal X). 
\end{equation}
\end{corollary}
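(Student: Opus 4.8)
The plan is to deduce this directly from Proposition \ref{propKuzstalk} by tracking the normalizations that convert a measure on $\mathcal B^\times = F^\times$ into a function on $\mathcal B^\times$. Recall that the isomorphism $\mathcal M(\mathcal X)\xrightarrow{\sim}\mathcal S(\mathcal X)$ is, by construction via the standard additive measure on $\mathcal B\simeq F$, division by $d\xi$: a measure $\mu$ on $F^\times$ is sent to the function $\mu/d\xi$. On the other side, by Lemma \ref{Kuzintegration} and \eqref{Kuzmeastoorb}, the isomorphism $\mathcal M(\mathcal W)\xrightarrow{\sim}\mathcal S(\mathcal W)$ is division by $|\xi|^{-2}d\xi$, and this restricts to an isomorphism $\mathcal M(\mathcal W^+)\xrightarrow{\sim}\mathcal S(\mathcal W^+)$ of spaces of objects on $\mathcal B^\times$, regular orbital integrals being functions on $\mathcal B^\times=F^\times$ as in \S\ref{ssorbital-Kuz}.

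First I would fix $\mu\in\mathcal M(\mathcal W^+)$, viewed as a measure on $F^\times$. Its image in $\mathcal S(\mathcal W^+)$ is $\mu/(|\xi|^{-2}d\xi) = |\xi|^2\cdot(\mu/d\xi)$. By Proposition \ref{propKuzstalk}, $|\xi|\mu\in\mathcal M(\mathcal X)$, and its image in $\mathcal S(\mathcal X)$ is $(|\xi|\mu)/d\xi = |\xi|\cdot(\mu/d\xi)$. Comparing the two, the function attached to $\mu$ equals $|\xi|$ times the function attached to $|\xi|\mu$; hence every element of $\mathcal S(\mathcal W^+)$ lies in $|\bullet|\mathcal S(\mathcal X)$. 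Since both measure-to-function maps are bijections and Proposition \ref{propKuzstalk} is an equality $\mathcal M(\mathcal W^+)=|\bullet|^{-1}\mathcal M(\mathcal X)$, this chain of identifications is reversible, which gives $\mathcal S(\mathcal W^+)=|\bullet|\mathcal S(\mathcal X)$.

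There is essentially no obstacle beyond this bookkeeping; the content lies entirely in Proposition \ref{propKuzstalk}. The only points requiring a line of care are that the restriction to the $+$-part of the isomorphism \eqref{Kuzmeastoorb} is still given by the same local recipe (division by $|\xi|^{-2}d\xi$) — clear, since that recipe is local on $\mathcal B^\times$ — and that the regular orbital integrals for $\mathcal W^+$ and the functions in $\mathcal S(\mathcal X)$ are being compared as functions on the one and the same space $\mathcal B^\times=F^\times$, which they are by the conventions of \S\ref{ssorbits}--\S\ref{ssorbital-Kuz} and of section \ref{sec:babycase}.
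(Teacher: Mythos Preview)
Your proof is correct and is exactly the approach the paper takes: the paper's own proof is the single sentence ``This follows immediately from the integration formula of Lemma \ref{Kuzintegration},'' and your argument simply unpacks that sentence by combining the equality $\mathcal M(\mathcal W^+)=|\bullet|^{-1}\mathcal M(\mathcal X)$ of Proposition \ref{propKuzstalk} with the two measure-to-function normalizations (division by $d\xi$ for $\mathcal X$, division by $|\xi|^{-2}d\xi$ for $\mathcal W$).
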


This follows immediately from the integration formula of Lemma \ref{Kuzintegration}. In the next subsection we will identify the limiting behavior of an element of $\mathcal S(\mathcal W^+)$ as $\xi\to 0$ in terms of invariant distributions supported on the fiber over $\xi=0$.

\subsection{Explication}

We would now like to explicate the ``irregular'' distributions that determine the limiting behavior at zero.

Let $V$ be defined by the short exact sequence of Fr\'echet spaces:
$$0\to \mathcal S(X\times X, \mathcal L_\psi\boxtimes \mathcal L_\psi^{-1})\to \mathcal S(\bar X\times X, \mathcal L_\psi\boxtimes \mathcal L_\psi^{-1}) \to V \to 0.$$
(Recall that $\mathcal S(\bar X\times X, \mathcal L_\psi\boxtimes \mathcal L_\psi^{-1}) $ denotes the nonstandard sections defined in \ref{ssnonstandard} and not really sections of the line bundle over $\bar X\times X$.) That is, in the language of appendix \ref{app:cosheaves}, $V$ is the stalk over $\PP^1\times X$ of the Schwartz cosheaf whose global sections are $\mathcal S(\bar X\times X, \mathcal L_\psi\boxtimes \mathcal L_\psi^{-1}) $.

Let $\mathscr G_\psi$ denote the Fr\'echet space of germs of smooth sections of $\mathcal L_\psi$ around $\PP^1$. Since $\mathcal L_\psi$ is trivializable over $\PP^1$, this space is isomorphic (non-canonically) to the space of germs of smooth functions around $\PP^1$. Then we have an isomorphism of $G\times G$-representations:
\begin{equation}\label{germatzero}  V\simeq (\mathscr G_\psi\widehat\otimes_{\mathcal C^\infty(\PP^1)} \pi) \widehat\otimes \mathcal S(X),\end{equation}
(completed, projective tensor products), where $\pi$ is as in (\ref{asymptoticrep}).

Clearly, the \emph{fiber} over $\PP^1\times X$ is obtained by evaluation of the element of $\mathscr G_\psi$ on $\PP^1$, which gives a map:
\begin{equation}\label{german}\mathcal S(\bar X\times X, \mathcal L_\psi\boxtimes \mathcal L_\psi^{-1}) \to \left( C^\infty(\PP^1,\mathcal L_\psi) \otimes_{\mathcal C^\infty(\PP^1)} \pi\right) \otimes \mathcal S(X,\mathcal L_\psi^{-1}).
\end{equation}
The first tensor product is isomorphic to $\pi$, depending on a trivialization of $\mathcal L_\psi$ over $\PP^1$; for simplicity, but to remember that this isomorphism is not canonical, we will be denoting it by $\pi'$. Since the elements of $\pi$ can be thought of as sections of the trivial line bundle on $X$ annihilated by the operator (\ref{asymptotics}), elements of $\pi'$ should be thought of as similar sections of the pullback of $\mathcal L_\psi$ under the projection map: $X\to \PP^1$. 
We are going to encode the $G$-coinvariants of the fiber in two ``irregular'' orbital integrals.

For now we will define those irregular orbital integrals only for elements of $\mathcal S(\mathcal W^+)$.  Let $f\in \mathcal S(\mathcal W^+)$ and $(\Phi,dg)$ be a pair consisting of an element $\Phi\in \mathcal S((\bar X\times X)^+, \mathcal L_\psi\boxtimes \mathcal L_\psi^{-1})$ and a Haar measure on $G$ such that $f$ is obtained by the orbital integrals of $\Phi$. Under the isomorphisms of Lemma \ref{Kuztrivialization} (see also the proof of Proposition \ref{propKuzstalk}), $\Phi$ can be written as a function on $\mathcal B\times G$ of the form:
\begin{eqnarray*}
|\xi|\left(-h_1(\xi,g)\ln|\xi| + h_2(\xi,g)\right)& \mbox{ in the split case } (\eta=1), \\
|\xi|\left(h_1(\xi,g) + \eta(\xi) h_2(\xi,g)\right)& \mbox{ in the non-split case } (\eta\ne 1)
\end{eqnarray*}
where $h_i(\xi,g)$ are Schwartz functions on $\mathcal B\times G$. 

In the split case we set:
\begin{eqnarray}
 \tilde O_{0,\delta^\frac{1}{2}}(f) = \int_G h_1 (0,g) dg,\\
 \tilde O_{u, \delta^\frac{1}{2}}(f) = \int_G h_2(0,g) dg.
\end{eqnarray}

In the nonsplit case we set:
\begin{eqnarray}
 \tilde O_{0,\delta^\frac{1}{2}}(f) = \int_G h_1 (0,g) dg,\\
 \tilde O_{0,\eta\delta^\frac{1}{2}}(f) = \int_G h_2(0,g) dg.
\end{eqnarray}

Then it is easy to see:
\begin{lemma}
 For $f\in \mathcal W^+$, there are smooth functions $C_1, C_2$ so that in a neighborhood of zero:
\begin{eqnarray}
f(\xi) = |\xi|\left(-C_1(\xi)\ln|\xi| + C_2(\xi)\right)& \mbox{ in the split case,} \\
f(\xi) = |\xi|\left(C_1(\xi) + \eta(\xi) C_2(\xi)\right)& \mbox{ in the non-split case. }
\end{eqnarray}
Moreover, $C_1(0) =  \tilde O_{0,\delta^\frac{1}{2}}(f)$ and $C_2(0)=\tilde O_{u, \delta^\frac{1}{2}}(f)$ in the split case,
$C_1(0) = \tilde O_{0,\delta^\frac{1}{2}}(f)$ and $C_2(0) = \tilde O_{0,\eta\delta^\frac{1}{2}}(f)$ in the nonsplit case.
\end{lemma}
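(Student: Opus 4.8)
The plan is to reduce everything to the identification $\mathcal S(\mathcal W^+) = |\bullet|\,\mathcal S(\mathcal X)$ of Corollary \ref{XinW} together with the germ expansions of Propositions \ref{germs-split} and \ref{germsnonsplit}. Concretely, any $f\in\mathcal S(\mathcal W^+)$ is of the form $f(\xi) = |\xi|\,g(\xi)$ with $g\in\mathcal S(\mathcal X)$, and near $\xi=0$ the cited propositions give $g(\xi) = -C_1(\xi)\ln|\xi| + C_2(\xi)$ in the split case, resp.\ $g(\xi) = C_1(\xi)+\eta(\xi)C_2(\xi)$ in the non-split case, with $C_1,C_2$ (almost) smooth; moreover the germs of $C_1,C_2$ at $0$ — in particular the values $C_1(0),C_2(0)$ — are uniquely determined by $g$, hence by $f$. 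Multiplying by $|\xi|$ yields at once the asserted shape of $f$ in a neighborhood of $0$.

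It then remains to identify $C_1(0),C_2(0)$ with the irregular orbital integrals defined just above. For this I would pick a lift $(\Phi,dg)$ of $f$ and, using the trivialization of Lemma \ref{Kuztrivialization} together with the explicit description of $\Phi$ near $\{0\}\times G$ obtained in the proof of Proposition \ref{propKuzstalk}, write $\Phi$ as a function on $\mathcal B\times G$ of the form $|\xi|\big(-h_1(\xi,g)\ln|\xi|+h_2(\xi,g)\big)$, resp.\ $|\xi|\big(h_1(\xi,g)+\eta(\xi)h_2(\xi,g)\big)$, with $h_1,h_2$ Schwartz on $\mathcal B\times G$. Under this trivialization the regular orbital integral $f(\xi)=O_\xi(\Phi)$ of \S\ref{ssorbital-Kuz} is obtained by integrating $\Phi(\xi,\cdot)$ over $G$ against $dg$, so that $f(\xi)=|\xi|\big(-\big(\int_G h_1(\xi,g)\,dg\big)\ln|\xi|+\int_G h_2(\xi,g)\,dg\big)$ in the split case, and similarly in the non-split one; each $\xi\mapsto\int_G h_i(\xi,g)\,dg$ is Schwartz, hence (almost) smooth, because $h_i$ is Schwartz on $\mathcal B\times G$. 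Comparing with the first paragraph and invoking the uniqueness of the germ decomposition, I get that $C_i$ and $\xi\mapsto\int_G h_i(\xi,g)\,dg$ have the same germ at $0$; evaluating at $\xi=0$ gives $C_1(0)=\int_G h_1(0,g)\,dg=\tilde O_{0,\delta^{\frac{1}{2}}}(f)$ and $C_2(0)=\int_G h_2(0,g)\,dg=\tilde O_{u,\delta^{\frac{1}{2}}}(f)$ in the split case, and $C_1(0)=\tilde O_{0,\delta^{\frac{1}{2}}}(f)$, $C_2(0)=\tilde O_{0,\eta\delta^{\frac{1}{2}}}(f)$ in the non-split case, which is the assertion. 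As a by-product, since the left-hand sides depend only on $f$, this shows that the irregular orbital integrals above do not depend on the chosen lift $(\Phi,dg)$.

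The step requiring the most care is the normalization bookkeeping concealed in the phrase ``$O_\xi(\Phi)$ is obtained by integrating $\Phi(\xi,\cdot)$ over $G$'': one must check, against the conventions of \S\ref{ssorbital-Kuz} and the integration formula of Lemma \ref{Kuzintegration}, that the various powers of $|\xi|$ in play — the $|\xi|^{-2}$ of the integration formula, the $|\bullet|^{-1}$ of Proposition \ref{propKuzstalk}, and the normalization of the $\mathscr L$-action entering the asymptotic condition (\ref{asymptotics}) — combine to produce precisely the prefactor $|\xi|$ appearing in the statement. Once that is in place, the remainder is a direct comparison of the two normal-form expansions.
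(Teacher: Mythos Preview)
Your proposal is correct and is precisely the argument the paper has in mind when it writes ``Then it is easy to see''. The key observation you make explicit---that under the trivialization of Lemma~\ref{Kuztrivialization} the orbital integral $O_\xi(\Phi)$ is simply $\int_G \Phi(\xi,g)\,dg$, so that integrating the displayed expression for $\Phi$ termwise yields $C_i(\xi)=\int_G h_i(\xi,g)\,dg$---is exactly the content of the lemma; your final paragraph's caution about normalization is prudent but in the end no extra powers of $|\xi|$ intervene, since the factor $|\xi|^{-2}$ in Lemma~\ref{Kuzintegration} concerns the passage from measures to functions on $\mathcal B$ and not the orbital integral itself.
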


Thus, with the isomorphism of Corollary \ref{XinW} and the distributions defined in section \ref{sec:babycase}, we have:
\begin{eqnarray*} \tilde O_{0,\delta^\frac{1}{2}}(f) =  \tilde O_{0}(|\bullet|^{-1}f),\\
\tilde O_{u,\delta^\frac{1}{2}}(f) =  \tilde O_{u}(|\bullet|^{-1}f)
\end{eqnarray*}
in the split case, and:
\begin{eqnarray*}
 \tilde O_{0,\delta^\frac{1}{2}}(f) = \tilde O_{0,1} (|\bullet|^{-1} f),   \\
 \tilde O_{0,\eta\delta^\frac{1}{2}}(f) = \tilde O_{0,\kappa_0} (|\bullet|^{-1} f)
\end{eqnarray*}
in the nonsplit case.

\subsection{Inner product and limiting behavior at $\infty$} \label{ssipKuz}

Let now $Y$ be the complement of $(X\times X)^+$ in $X\times X$ (that is, the union of $\Gm$-translates of the diagonal copy of $X$), and denote by $\mathcal S(X\times X, \mathcal L_\psi\boxtimes \mathcal L_\psi^{-1})_Y$ the stalk of $\mathcal S(X\times X, \mathcal L_\psi\boxtimes \mathcal L_\psi^{-1})$ over $Y$. Notice that at this point we have restricted our attention to sections of our cosheaves over $X\times X$; that is, standard Schwartz sections of $\mathcal L_\psi\boxtimes \mathcal L_\psi^{-1}$ on $X\times X$. We denote by $\mathcal S(Y, \mathcal L_\psi\boxtimes \mathcal L_\psi^{-1})$ the fiber of $\mathcal S(X\times X, \mathcal L_\psi\boxtimes \mathcal L_\psi^{-1})$ over $Y$ -- it is the space of $\mathcal L_\psi\boxtimes \mathcal L_\psi^{-1}$-valued Schwartz functions on $Y$. Just for this subsection, we introduce the notation $\mathcal S(\mathcal W^0)$ for the $G$-coinvariants of $\mathcal S(X\times X, \mathcal L_\psi\boxtimes \mathcal L_\psi^{-1})$.

For $\Phi_1\in\mathcal S(X,\mathcal L_\psi)$, $\Phi_2\in \mathcal S(X,\mathcal L_\psi^{-1})$ and a measure $dx$ on $X$, we define the inner product:
$$\left<\Phi_1,\Phi_2\right> = \int_X (\Phi_1\cdot \Phi_2)(x) dx,$$
i.e.\ as a bilinear map. Clearly, it extends to a linear functional on $\mathcal S(X\times X, \mathcal L_\psi\boxtimes \mathcal L_\psi^{-1})$, and for $\Phi$ in this space we will be using the notation $\left<\Phi\right>$.

Given $f \in \mathcal S(\mathcal W^0)$, choose a pair $(\Phi,dg)$ consisting of an element $\Phi\in \mathcal S(X\times X, \mathcal L_\psi\boxtimes \mathcal L_\psi^{-1})$ and a Haar measure on $G$ so that $f$ is obtained as the coinvariants of $\Phi$ with respect to this measure. The chosen measure on $G$ induces a measure on $X$ as follows: let $x\in X$ and $N=G_x$, the stabilizer of $x$; hence, $X=N\backslash G$. The group $N$ acts by a character $\Psi$ on the fiber of $\mathcal L_\psi$ over $x$, and we choose an identification of $N(F)$ with $F$ such that the character $\Psi$ becomes our fixed additive character $\psi$; we then let $dn$ be the Haar measure on $N$ corresponding to our fixed measure $dx$ of \S \ref{ssTamagawa}, and we let $dx$ be the measure on $X$ corresponding to $dg, dn$. Clearly, it does not depend on the choice of point.

We then define the \emph{``inner product''} of $f$ to be the functional:
\begin{equation}\label{Kuzip}
 \left<f\right> = \left<\Phi\right>,
\end{equation}
where the ``inner product'' of $\Phi$ is defined with respect to the measure described above.

The following is immediate:
\begin{lemma}
 The inner product spans the space of $G$-invariant functionals on $\mathcal S(Y, \mathcal L_\psi\boxtimes \mathcal L_\psi^{-1})$ (the fiber of $\mathcal S(X\times X, \mathcal L_\psi\boxtimes \mathcal L_\psi^{-1})$ over $Y$).
\end{lemma}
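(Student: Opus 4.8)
The statement is the routine ``dimension one'' computation identifying the space of $G$-invariant functionals on the fiber $\mathcal S(Y,\mathcal L_\psi\boxtimes\mathcal L_\psi^{-1})$ over $Y$ with the line spanned by the inner product $\left<\bullet\right>$. I would establish it in three steps, none of which should be an essential obstacle.

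\medskip

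First I would reduce to a computation on a single copy of $X$. Recall that $Y$ is the complement of $(X\times X)^+$ in $X\times X$, i.e.\ the union of the $\Gm$-translates of the diagonal; as a $G$-variety it is a single orbit, with stabilizer of a diagonal point $(x,x)$ equal to the stabilizer $G_x = N$ of $x$ in $X$. Choosing a base point $(x_0,x_0)$ identifies $Y$ $G$-equivariantly with $N\backslash G$, and under the trivializations implicit in the line bundle $\mathcal L_\psi\boxtimes\mathcal L_\psi^{-1}$ the group $N$ acts on the fiber over $(x_0,x_0)$ by the character $\psi\cdot\psi^{-1} = 1$: the two twists cancel exactly on the diagonal. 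Hence $\mathcal S(Y,\mathcal L_\psi\boxtimes\mathcal L_\psi^{-1})$ is, as a $G$-representation, just $\mathcal S(N\backslash G)$ with the trivial central behavior, and a $G$-invariant functional on it is a $G$-invariant functional on $\mathcal S(N\backslash G)$.

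\medskip

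Second, I would invoke the standard fact that $\dim\Hom_G(\mathcal S(N\backslash G),\CC) = 1$, the unique such functional (up to scalar) being integration against an invariant measure on $N\backslash G = X$. For a spherical — in fact, here, a \emph{horospherical} — variety this is elementary: an invariant functional on $\mathcal S(X)$ is an invariant tempered measure on $X$, and since $G$ acts transitively on $X$ with unipotent (hence unimodular) stabilizer, the invariant measure is unique up to scalar. (Equivalently: the only distribution on $N\backslash G$ invariant under right translation is, up to scalar, the one coming from Haar measure, by uniqueness of Haar measure on the unimodular homogeneous space.) This produces exactly the functional $\left<\Phi_1,\Phi_2\right> = \int_X(\Phi_1\cdot\Phi_2)(x)\,dx$ after one unwinds the identification of the diagonal with $X$.

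\medskip

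Third, I would check that the functional $\left<\bullet\right>$ defined in \eqref{Kuzip} actually descends to the fiber over $Y$, i.e.\ that it kills the subspace of sections vanishing to the appropriate order along $Y$ (in the cosheaf/stalk sense of \S\ref{ssstalks}), so that it is genuinely a functional on the fiber and not merely on the ambient Schwartz space. But this is built into the definition: $\left<\Phi_1,\Phi_2\right>$ only involves the restriction of $\Phi_1\cdot\Phi_2$ to the diagonal $X\hookrightarrow X\times X$, which is precisely (a component of) $Y$, so it factors through evaluation on $Y$. Combining the three steps: every $G$-invariant functional on $\mathcal S(Y,\mathcal L_\psi\boxtimes\mathcal L_\psi^{-1})$ is a scalar multiple of $\left<\bullet\right>$, and $\left<\bullet\right>$ is nonzero (take $\Phi$ supported near a diagonal point with $\Phi_1\cdot\Phi_2\ge 0$ and not identically zero there), so the span is exactly one-dimensional.

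\medskip

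The only place requiring a modicum of care — and the step I would flag as the ``main obstacle,'' though it is mild — is matching the measure normalizations: verifying that the invariant measure $dx$ on $X$ produced abstractly from uniqueness of Haar measure is the \emph{same} $dx$ that was singled out in \S\ref{ssipKuz} via the chosen $dg$ on $G$ and the normalization $N(F)\simeq F$ sending $\Psi$ to the fixed $\psi$. This is a bookkeeping check using unimodularity of $G$ and of $N$, and the fact that $\mathcal L_\psi$ is $G$-homogeneous so the identification of $N(F)$ with $F$ can be made compatibly at every point of the orbit; I would dispatch it in a sentence or two rather than belabor it.
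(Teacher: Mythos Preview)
Your step 1 contains a genuine error: $Y$ is \emph{not} a single $G$-orbit. You correctly recall that $Y$ is the union of $\Gm$-translates of the diagonal, but then immediately contradict this by asserting it is one orbit. In fact, writing a point of $Y$ as $(Ng, Nag)$ with $a\in A$ (equivalently $(x, t\cdot x)$ for $t\in\Gm$ the dilation parameter), the diagonal $G$-action preserves $a$, so $Y/G \simeq A \simeq \Gm$ and the diagonal $X\hookrightarrow Y$ is only the closed orbit $a=1$. A dimension count confirms this: $\dim Y = 3$ while each $G$-orbit has dimension $\dim G - \dim N = 2$.

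Because of this, your reduction to $\mathcal S(N\backslash G)$ is incomplete: you have only analyzed the single orbit $t=1$. What is missing is the observation that on the orbit through $(x_0, a\cdot x_0)$ with $a=\diag(t,1)$, the stabilizer $N$ acts on the fiber of $\mathcal L_\psi\boxtimes\mathcal L_\psi^{-1}$ by the character $n_x\mapsto \psi(x)\psi^{-1}(tx)=\psi((1-t)x)$, which is \emph{nontrivial} for $t\ne 1$. One clean way to finish: by Frobenius reciprocity (projecting $Y\to X$ on one factor), $G$-invariant functionals on $\mathcal S(Y,\mathcal L_\psi\boxtimes\mathcal L_\psi^{-1})$ correspond to $N$-invariant distributions on the fiber $\Gm$ (coordinate $t$) where $n_x$ acts by multiplication by $\psi((1-t)x)$. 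For $t_0\ne 1$ the function $\psi((1-t)x)-1$ is invertible near $t_0$ for generic $x$, forcing any such distribution to be supported at $t=1$; in the archimedean case one checks (by expanding $\psi((1-t)x)-1$ to first order in $(1-t)$) that derivatives of $\delta_{t=1}$ are not invariant either. This leaves exactly the one-dimensional space spanned by $\delta_{t=1}$, which unwinds to your inner product. Your steps 2 and 3 then apply verbatim to the diagonal orbit, and the remark about measure normalizations is fine but, as you say, inessential for the statement.
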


Based on Proposition \ref{propNakayama} now, the stalk of $\mathcal S(\mathcal W^0)$ at $\xi=\infty$ is generated over the stalk of smooth functions by an element with nonzero ``inner product''. 

\begin{proposition}\label{germs-Kloosterman}
The stalk of $\mathcal S(\mathcal W^0)$ at $\xi=\infty$ coincides with the set of germs of all functions $f$ of the following form:
\begin{itemize}
 \item  in the nonarchimedean case:
 \begin{equation}f(\xi) = C(\xi^{-1}) \cdot \int_{|x^2| = |\xi|} \psi(\xi x^{-1}-x) dx,\end{equation}
where $C$ denotes a(n almost) smooth function\footnote{Notice that this stalk is one dimensional; equivalently, the stalk is generated by such functions with $C$ constant. This can be seen by direct computation, or by showing that the stalk is generated by the images of locally constant functions ``upstairs''.} defined in a neighborhood of zero, with $C(0)=\left<f\right>$.
 \item in the archimedean case:
 \begin{equation} \label{Rgerms-Kloosterman}
         f(\xi) = \int F \left(\frac{\xi}{|x|^2+1}, x\right) \psi\left(\frac{\xi \bar x}{|x|^2+1} - x\right) dx,
 \end{equation}
where $F$ is a Schwartz function on $\Gm\times \PP^1$, with $F(-1,\infty) = \left< f\right>$.
\begin{equation} 
\end{equation}
\end{itemize}
\end{proposition}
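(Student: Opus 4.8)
The proof has two parts — producing one orbital integral with the asserted germ at $\xi=\infty$, and showing that such germs exhaust the stalk — and the second part is essentially already at hand. By the Lemma preceding the proposition, the $G$-invariant functionals on the fiber $\mathcal S(Y,\mathcal L_\psi\boxtimes\mathcal L_\psi^{-1})$ are spanned by the inner product $\langle\cdot\rangle$; the ensuing remark, based on Proposition \ref{propNakayama}, then says that the stalk of $\mathcal S(\mathcal W^0)$ at $\xi=\infty$ is generated over the stalk $\mathcal O_\infty$ of (almost) smooth functions at $\infty$ by \emph{any} element $f_0$ with $\langle f_0\rangle\neq 0$. So it suffices to (a) exhibit such an $f_0$ of the stated shape, and (b) observe that $\mathcal O_\infty\cdot f_0$ is precisely the family of germs in the statement, with $\langle\cdot\rangle$ reading off the leading coefficient. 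Part (b) is then formal: $\mathcal O_\infty$ acts on the one-dimensional fiber over $Y$ through evaluation at $\infty$, so $\langle c\cdot f_0\rangle=c(\infty)\langle f_0\rangle$ for $c\in\mathcal O_\infty$; once $\langle f_0\rangle$ is normalized — which is the content of the choice of trivialization of $\mathcal L_\psi\boxtimes\mathcal L_\psi^{-1}$ over $(x_0,y_0)$ made in \S\ref{ssorbital-Kuz} — the parameter $c(\infty)$ becomes $C(0)$, resp.\ $F(-1,\infty)$. In the nonarchimedean case $\mathcal O_\infty$ consists of locally constant, hence germ-wise constant, functions at $\infty$, which gives the one-dimensionality of the stalk noted in the footnote (equivalently, one reduces to locally constant data upstairs).

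For (a) I would compute directly from \eqref{Kuzorbital},
\[
 O_\xi(\Phi_1\otimes\Phi_2)=\int_G \Phi_1\!\Big(\begin{pmatrix}\xi & 0\\ 0 & 1\end{pmatrix}g\Big)\Phi_2(g)\,dg ,
\]
with $\Phi_1\in\mathcal S(X,\mathcal L_\psi)$ left-$(N,\psi)$-equivariant and $\Phi_2\in\mathcal S(X,\mathcal L_\psi^{-1})$ left-$(N^-,\psi^{-1})$-equivariant. Decomposing $g$ along the open Bruhat cell of $\PGL_2$ and collapsing the unipotent integrations by means of the equivariances of $\Phi_1$ and $\Phi_2$, one is left with an iterated integral over the torus coordinate and one unipotent coordinate $x$ in which the oscillatory factor is $\psi(\xi x^{-1}-x)$ — this is the classical Bessel--Kloosterman manipulation for $\PGL_2$. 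In the nonarchimedean case compact support of $\Phi_1,\Phi_2$ confines the torus coordinate to a bounded set and then pins $|x^2|=|\xi|$ (forcing the integral to vanish when $\val(\xi)$ is odd), producing $C(\xi^{-1})\int_{|x^2|=|\xi|}\psi(\xi x^{-1}-x)\,dx$ with $C$ locally constant and $C(0)$ a nonzero multiple of $\Phi_1(x_0)\Phi_2(x_0)$ for suitable $\Phi_i$. In the archimedean case there is no sharp cutoff: one combines the Bruhat manipulation with the Iwasawa decomposition in the $K$-direction, trades the maximal-compact variable for a point of $\PP^1$ (which contributes the denominators $|x|^2+1$), and finds that the Schwartz decay of $\Phi_1,\Phi_2$ becomes a Schwartz function $F$ on $\Gm\times\PP^1$, the point $(-1,\infty)$ being the image of the diagonal copy of $X$ inside $Y$. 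One then checks that these $\Phi$'s are genuine Schwartz sections on $X\times X$ (nothing nonstandard enters away from $\xi=0$) and that the resulting Kloosterman germs have growth $O(|\xi|^{1/4})$ as $\xi\to\infty$, compatibly with the integration formula of Lemma \ref{Kuzintegration} — from the Weil bound for Kloosterman sums in the nonarchimedean case, from stationary phase in the archimedean case. Finally, since $Y$ is the union of $\Gm$-translates of the diagonal copy of $X$ and $\langle\Phi\rangle=\int_X(\Phi_1\Phi_2)(x)\,dx$ is precisely the evaluation of $\Phi$ on the fiber over $Y$, the same collapsing of integrations identifies $C(0)$, resp.\ $F(-1,\infty)$, with $\langle\Phi\rangle$ up to that normalization, which finishes (b) and the proof.

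The step I expect to be the main obstacle is the archimedean half of (a): making precise that, after the Iwasawa decomposition and the passage from $\Gm$ to $\PP^1$, the datum read off is a \emph{Schwartz} function $F$ \emph{across all of} $\PP^1$ — not merely on the affine line $\Ga$ — and that its value at the ``point at infinity'' $(-1,\infty)$ is honestly the inner product rather than only proportional to it on a dense set. This is where the partial compactification $\bar X$ and the trivialization choices of \S\ref{ssorbital-Kuz} interact most delicately, and where one must be careful to work with germs rather than global functions. The nonarchimedean half is comparatively mechanical, the only points requiring care being the emergence of the sphere $|x^2|=|\xi|$ and the vanishing on the odd-valuation locus.
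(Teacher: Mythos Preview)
Your proposal is correct and follows the paper's strategy: use the one-dimensionality of the fiber over $Y$ plus Proposition~\ref{propNakayama} to reduce to computing a single germ, then compute that germ explicitly. The differences are only in execution. The paper reduces at once to a single section $\Phi\in\mathcal S(N\backslash G,\psi)$ and writes the germ as
\[
\int \Phi\!\left(\begin{pmatrix}\xi\\&1\end{pmatrix}\begin{pmatrix}1\\x&1\end{pmatrix}\right)\psi^{-1}(x)\,dx,
\]
then applies the explicit $NAK$ decomposition of the matrix $\begin{pmatrix}\xi\\&1\end{pmatrix}\begin{pmatrix}1\\x&1\end{pmatrix}$; there is no detour through the Bruhat cell, and no separate ``torus coordinate'' survives as an integration variable. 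In the nonarchimedean case the paper simply picks the specific unramified $\Phi$ (equal to $\psi(n)$ on $NA(\mathfrak o)K$ and zero elsewhere), for which the integral is literally $\int\psi(\xi x^{-1}-x)\,dx$; the restriction to $|x^2|=|\xi|$ is then deferred to the proof of Theorem~\ref{matching}. In the archimedean case the paper writes out the $NAK$ factorization explicitly and reads off $F$ directly; the Schwartz-on-$\PP^1$ property you flag as the main obstacle is then immediate, since a Schwartz section on $N\backslash G\simeq A\times K$ is Schwartz jointly in the $A$- and $K$-coordinates, with $K$ parametrized by $\PP^1$.

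One small imprecision: your ``$C(0)$ is a nonzero multiple of $\Phi_1(x_0)\Phi_2(x_0)$'' is not quite right. The inner product corresponds to evaluation at the Weyl element $w=\left(\begin{smallmatrix}&1\\1&\end{smallmatrix}\right)$ in the $(x_0,y_0)$-trivialization, because $w$ is the limit of $\begin{pmatrix}\xi\\&1\end{pmatrix}\begin{pmatrix}1\\x&1\end{pmatrix}$ as $x\to\infty$, $\frac{\xi}{|x|^2+1}\to -1$; this is exactly how the paper identifies $F(-1,\infty)=\langle f\rangle$.
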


\begin{remark}\label{remark-Kloosterman}
 It may not be clear at first from the above expressions, but it will become clear from the stationary phase analysis of the archimedean integrals in \S \ref{ssstationary} that the stalks are generated over the stalk of smooth functions by a single element, as they should. Thus, we could also write them as $C(\xi^{-1})$ times the same integral with $F$ replaced by any preferred function with $F(-1,\infty)=1$. (Jacquet has computed the integrals explicitly in \cite{JaKl2}, and his work will be the basis for the analysis of \S \ref{ssstationary}.)
\end{remark}

\begin{proof}
 We can easily see that the germ of $f$ can be written as an integral of the form:
$$ \int \Phi \left(\left(\begin{array}{cc} \xi \\ & 1 \end{array}\right) \left(\begin{array}{cc} 1 \\ x & 1 \end{array}\right)\right) \psi^{-1}(x) dx,$$
where $\Phi \in \mathcal S(N\backslash \PGL_2,\psi)$, with $\psi$ here denoting the character $\left(\begin{array}{cc} 1 & x \\ & 1 \end{array}\right)\mapsto \psi(x)$, the measures being the standard ones, and $\Phi\left(\left(\begin{array}{cc} & 1 \\ 1 \end{array}\right)\right) = \left<f\right>$.

Now we decompose in terms of the Iwasawa decomposition $G=NAK$, where $K = \PGL_2(\mathfrak o)$ in the nonarchimedean case, and $K$ is the ``standard'' $\SO(2)$ or $\operatorname{SU}(2)$ in the real and complex case, respectively. 

We can easily see that in the nonarchimedean case, for:
$$\Phi(nak) = \begin{cases}
                                                                      \psi(n), & \mbox{ if } a \in A(\mathfrak o)\\
									 0, &\mbox{ otherwise}
                                                                     \end{cases}$$
we get:
$$\int \psi(\xi x^{-1} -x) dx.$$
Since this particular $\Phi$ satisfies $\Phi\left(\begin{array}{cc} & 1 \\ 1 \end{array}\right)) = \left<f\right> =1 \ne 0,$ it generates the fiber = stalk of $\mathcal S(\mathcal W^0)$ over $\infty$. It is easy to see that for large $|\xi|$ only the $x$ with $|x^2|=|\xi|$ contribute (see the proof of Theorem \ref{matching}), and this gives the desired claim.

In the archimedean case, the Cartan decomposition reads:
$$ \left(\begin{array}{cc} \xi \\ & 1 \end{array}\right) \left(\begin{array}{cc} 1 \\ x & 1 \end{array}\right) =$$ $$ = \left(\begin{array}{cc} 1 & \frac{\xi \bar x}{\sqrt{|x|^2+1}} \\ & 1 \end{array}\right) \left(\begin{array}{cc} \frac{\xi}{\sqrt{|x|^2+1}} \\  & {\sqrt{|x|^2+1}} \end{array}\right)\left(\begin{array}{cc} \frac{1}{\sqrt{|x|^2+1}}& \frac{-\bar x}{\sqrt{|x|^2+1}} \\ \frac{x}{\sqrt{|x|^2+1}} & \frac{1}{\sqrt{|x|^2+1}} \end{array}\right),$$
and the matrix $\left(\begin{array}{cc} & 1 \\ 1 \end{array}\right)$ is obtained as the limit when $x\to \infty$, $\frac{\xi}{|x|^2+1}\to -1$, hence the claim.
\end{proof}

\subsection{Contribution of irregular locus and convergence of orbital integrals}

Let $S\subset \bar X \times X$ be the irregular locus of the map: (\ref{rationalmap}), that is: the set of points $(\bar x, x)$ with $\bar x\in \PP^1$ equal to the image of $x$ under the natural map: $X\to \PP^1$.

\begin{proposition}\label{propnoirregular}
 The embedding: 
$$\mathcal S((\bar X\times X)\smallsetminus S, \mathcal L_\psi\boxtimes \mathcal L_\psi^{-1}) \hookrightarrow \mathcal S(\bar X\times X, \mathcal L_\psi\boxtimes \mathcal L_\psi^{-1})$$
induces an isomorphism on $G$-coinvariants, that is: 
$$\mathcal S(\mathcal W) = \mathcal S((\bar X\times X)\smallsetminus S, \mathcal L_\psi\boxtimes \mathcal L_\psi^{-1})_G.$$
\end{proposition}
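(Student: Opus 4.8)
The plan is to show that the ``irregular locus'' $S$ contributes nothing to the $G$-coinvariants, which by the standard long exact sequence for the Schwartz cosheaf reduces to proving that the fiber of $\mathcal S(\bar X\times X, \mathcal L_\psi\boxtimes\mathcal L_\psi^{-1})$ over $S$ has vanishing $G$-coinvariants. Concretely, write down the exact sequence of $G$-representations
\begin{equation*}
0\to \mathcal S((\bar X\times X)\smallsetminus S,\mathcal L_\psi\boxtimes\mathcal L_\psi^{-1})\to \mathcal S(\bar X\times X,\mathcal L_\psi\boxtimes\mathcal L_\psi^{-1})\to \mathcal S_S\to 0,
\end{equation*}
where $\mathcal S_S$ denotes the stalk (in the sense of \S\ref{ssstalks}) over $S$. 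Taking $G$-coinvariants is right exact, so it suffices to prove $(\mathcal S_S)_G=0$; then the map on coinvariants from the left-hand term is surjective, and combined with the fact (established via the trivializations of Lemma \ref{Kuztrivialization} on the complement of $S$) that the composite $\mathcal S((\bar X\times X)\smallsetminus S,\mathcal L_\psi\boxtimes\mathcal L_\psi^{-1})_G\to \mathcal S(\mathcal W)$ is injective, we conclude it is an isomorphism.

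So the crux is the vanishing $(\mathcal S_S)_G=0$. First I would describe $S$ geometrically: $S$ is the graph of the projection $X\to\PP^1$ inside $\bar X\times X$, hence $S\cong X$, and the diagonal $G$-action on $\bar X\times X$ restricts to the \emph{transitive} action of $G$ on $S\cong X$ with stabilizer a unipotent subgroup $N$. Therefore $\mathcal S_S$ is, as a $G$-representation, an induced-type object: sections of a $G$-equivariant (pro-)bundle on $X\cong N\backslash G$, twisted by the character $\psi$ coming from $\mathcal L_\psi$ restricted along $S$. More precisely, using the description (\ref{germatzero}) of the germ and restricting to the irregular locus, $\mathcal S_S$ is built out of $\mathscr G_\psi\widehat\otimes_{\mathcal C^\infty(\PP^1)}\pi$ with $\pi$ as in (\ref{asymptoticrep}), tensored with a $\psi^{-1}$-twisted Schwartz space along the $X$-direction; the key point is that the \emph{nonvanishing} character $\psi$ (resp.\ $\psi^{-1}$) enters in the direction of $N$.

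The vanishing of coinvariants then follows from the usual mechanism: a Schwartz-type space of sections of an equivariant bundle over a homogeneous space $N\backslash G$ twisted by a \emph{nontrivial} character $\psi$ of $N$ has no nonzero $G$-invariant functional, hence (dualizing, and using nuclearity/reflexivity of the Fr\'echet spaces involved as in \S\ref{app:almostsmooth}) no nonzero $G$-coinvariants. One implements this by a partition-of-unity/Bruhat-filtration argument along $X$: cover $X$ by $N$-invariant open pieces on which the character $\psi$ is ``visibly'' nontrivial, and show an averaging operator $\Phi\mapsto \int_{N_0}(n\cdot\Phi-\psi(n)\Phi)\,dn$ against a suitable compact $N_0\subset N$ acts invertibly modulo the smaller Schwartz pieces — the standard fact that $\mathcal S(N)_{(N,\psi)}=0$ for $\psi\ne 1$, propagated over $G$ by the induction. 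I expect the main obstacle to be purely technical rather than conceptual: making the filtration/partition-of-unity argument rigorous in the Fr\'echet (``almost smooth'') setting with the \emph{nonstandard} asymptotics (\ref{asymptotics}) baked into $\pi$ — i.e.\ checking that the extension $\pi$ in (\ref{asymptoticrep}), being a genuine principal-series-type object rather than a compactly supported space, still has the required Schwartz decay in the relevant direction so that the averaging argument closes. Once one knows $\pi$ behaves like a reasonable Fr\'echet $G$-module (which the discussion around (\ref{asymptoticrep}) provides), the vanishing is forced by the nontriviality of $\psi$, and the proposition follows.
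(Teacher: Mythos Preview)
Your overall strategy coincides with the paper's: reduce to showing the stalk $\mathcal S_S$ has vanishing $G$-coinvariants, use that $S\cong X$ is $G$-homogeneous with unipotent stabilizer, and invoke a Frobenius-type reduction to the fiber at a point. However, the key step has a genuine gap.

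The assertion that ``a Schwartz-type space of sections of an equivariant bundle over $N\backslash G$ twisted by a nontrivial character $\psi$ of $N$ has no nonzero $G$-invariant functional'' is not true without further hypotheses: by Frobenius reciprocity the $G$-coinvariants of such an induced module equal the $N$-coinvariants of the fiber, and these vanish only if the $N$-action on the fiber is \emph{exactly} by $\psi$, i.e.\ trivial after untwisting. Your ``standard fact $\mathcal S(N)_{(N,\psi)}=0$'' is in fact false for the regular action --- $\mathcal S(N)_{(N,\psi)}$ is one-dimensional (the $\psi$-Fourier coefficient survives). The correct elementary input is that a module with \emph{trivial} $N$-action has vanishing $(N,\psi)$-coinvariants for $\psi\ne 1$; but then one must verify that the fiber in question indeed carries the trivial $N$-action, and you do not do this.

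The paper supplies precisely this missing verification. Fixing a point of $S$ with stabilizer $N^-$, it filters the stalk of $\mathcal S(\bar X,\mathcal L_\psi)$ over the fixed point $y\in\PP^1$ by powers of the maximal ideal $\mathcal J_{\{y\}}$, and checks that $N^-$ acts trivially on each graded piece $\mathcal J_{\{y\}}^n/\mathcal J_{\{y\}}^{n+1}\otimes\mathcal L_\psi|_y$. This rests on two geometric facts you omitted: (i) a unipotent subgroup of $\PGL_2$ acts trivially on the cotangent space of its fixed point in $\bar X$; and (ii) $\mathcal L_\psi$ restricted to $\PP^1\subset\bar X$ is $G$-equivariantly trivial, so $N^-$ acts trivially on $\mathcal L_\psi|_y$. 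The nontrivial character enters only through $\mathcal L_\psi^{-1}$ on the \emph{second} factor (not, as you wrote, from $\mathcal L_\psi$ along $S$), and it is the mismatch between this $\psi^{-1}$ and the trivial $N^-$-action on the jet pieces that forces the $(N^-,\psi)$-equivariant functionals to vanish.
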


This proposition already implies that all the invariant distributions that we have defined on $\mathcal S((\bar X\times X)\smallsetminus S, \mathcal L_\psi\boxtimes \mathcal L_\psi^{-1})$ (regular and irregular orbital integrals, including the inner product) extend to the whole space; for later use, we mention the following (which is easy to see):

\begin{lemma}\label{stabilizing}
 In the nonarchimedean case, the regular orbital integrals of an element of $\mathcal S(\bar X\times X, \mathcal L_\psi\boxtimes \mathcal L_\psi^{-1})$ can be decomposed as: $$\int_{N\backslash G} \int_N^*\,\,\, , $$
where $\int_N^*$ is a stabilizing integral over large compact open subgroups of $N$. 
\end{lemma}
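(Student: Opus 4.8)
The plan is to reduce to the Iwasawa decomposition $G = NAK$ and to observe that, for a nonstandard section $\Phi\in \mathcal S(\bar X\times X, \mathcal L_\psi\boxtimes \mathcal L_\psi^{-1})$ in the nonarchimedean case, the regular orbital integral (\ref{Kuzorbital})
$$O_\xi(\Phi) = \int_G \Phi_1\!\left(\left(\begin{array}{cc}\xi&\\&1\end{array}\right)g\right)\Phi_2(g)\,dg$$
only fails to be an honest compactly-supported integral because the $F_i$-part of $\Phi$ (with notation as in \S\ref{ssnonstandard}) is not of compact support in the first coordinate near $\PP^1$. First I would write the integral in the coordinates $g = n a k$, $n\in N$, $a\in A$, $k\in K$, so that the orbital integral becomes an iterated integral $\int_{N\backslash G}\int_N$, the inner integral being over $N$ and the outer over a compact set (since $K$ is compact and $A$-support is controlled once we quotient by $N$ on the left). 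The point is to show that the inner $N$-integral, although not absolutely convergent in the naive sense for the nonstandard sections, \emph{stabilizes}: integrating over a compact open subgroup $N_m\subset N$ (say $\varpi^{-m}\mathfrak o$ under the fixed identification $N(F)\simeq F$) gives a value independent of $m$ once $m$ is large enough, depending only on the support data of $\Phi$ and on $\xi$ (locally uniformly on $\mathcal B^\times$).

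The key steps, in order: (1) Decompose $\Phi$ near $\PP^1\times X$ as a finite sum $\sum_i f_i F_i$ as in \S\ref{ssnonstandard}, with $f_i$ Schwartz on $\bar X\times X$ and $F_i$ annihilated asymptotically by the operator (\ref{asymptotics}); the Schwartz part $\Phi - \sum_i f_i F_i$ plus the contributions of the $f_i$ themselves are genuinely of Schwartz type, so their orbital integrals converge absolutely and the iterated-integral claim is classical. (2) For each $F_i$, using that it is $G$-invariant in the second coordinate and annihilated asymptotically by (\ref{asymptotics}), identify it near $\PP^1$ with an element of the representation $\pi$ of (\ref{asymptoticrep}); concretely, under the trivialization of Lemma \ref{Kuztrivialization} it is (asymptotically in the first variable, near $\xi=0$) a combination of $|\xi|\cdot$(smooth) and $|\xi|\ln|\xi|\cdot$(smooth) in the split case (resp. $|\xi|(C_1 + \eta(\xi)C_2)$ in the nonsplit case), times $dg$. (3) Feeding such an $F_i$ into the orbital integral and performing the Iwasawa unfolding, the inner integral over $N$ becomes a Tate-type integral $\int_{F}\psi(\cdots)\,(\text{asymptotic data})\,dx$ of a function which, away from a compact set, is a character times a slowly varying factor; one checks — exactly as in the nonarchimedean stationary-phase estimate invoked in the proof of Proposition \ref{germs-Kloosterman} and which reappears in the proof of Theorem \ref{matching} — that the oscillation of $\psi$ forces the tail contributions to cancel, so that integrating over $N_m$ for $m\gg 0$ gives a value independent of $m$. (4) Assemble: $\int_N^* := \lim_{m}\int_{N_m}$ exists as a stabilized value, and $O_\xi(\Phi) = \int_{N\backslash G}\int_N^*$, the outer integral now being over a compact set with a continuous integrand.

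The main obstacle I expect is step (3): making the cancellation in the tail of the inner $N$-integral precise for the genuinely nonstandard factors $F_i$, i.e.\ showing that $\int_{N_m} F_i\big(\left(\begin{smallmatrix}\xi&\\&1\end{smallmatrix}\right)nak\big)\,\psi^{-1}(n)\,dn$ is eventually constant in $m$. This is where one must use the precise asymptotic shape of $F_i$ (the eigenfunction property under $\mathscr L_a$ from (\ref{asymptotics}), giving the $|\xi|$, $|\xi|\ln|\xi|$ or $\eta(\xi)|\xi|$ behavior), combine it with the Iwasawa decomposition of $\left(\begin{smallmatrix}\xi&\\&1\end{smallmatrix}\right)\left(\begin{smallmatrix}1&\\x&1\end{smallmatrix}\right)$ — whose $A$-part has absolute value $\max(1,|x|)^{-1}|\xi|$-type behavior — and observe that on the annuli $|x|$ large the phase $\psi(\text{(function of }\xi,x))$ is a nontrivial additive character of $x$ on each coset of a small subgroup, killing the integral over that region. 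For $|x|$ bounded the integrand is already locally constant and compactly supported, so the integral stabilizes trivially. The archimedean analogue is not claimed here, consistent with the paper's convention that derivative/convergence statements of this kind are nonarchimedean.
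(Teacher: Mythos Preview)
The paper offers no proof of this lemma; it is stated parenthetically as something ``which is easy to see.''  Your proposal is a correct elaboration of what the author leaves implicit, and the mechanism you identify in step~(3) --- oscillation of the additive character against a locally constant integrand on large annuli --- is exactly the reason the inner integral stabilizes.

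That said, your argument can be streamlined, and your step~(2) introduces a small conceptual detour.  The asymptotic description in terms of the downstairs variable $\xi$ (the $|\xi|$, $|\xi|\ln|\xi|$, $\eta(\xi)|\xi|$ behavior) is the shape of the \emph{orbital integral} as $\xi\to 0$; it is not what directly drives the stabilization of the inner $N$-integral for a \emph{fixed} regular $\xi$.  What you actually need from the nonstandard structure is much less: any $\Phi\in \mathcal S(\bar X\times X,\mathcal L_\psi\boxtimes\mathcal L_\psi^{-1})$ is, in the nonarchimedean case, still a smooth vector under $G\times G$, hence right-invariant under some open compact $K_0$.  (This is clear term by term in the decomposition $\sum f_i F_i$: the $f_i$ are Schwartz, hence compactly supported and of finite level; the $F_i$ lie in the smooth representation $\pi$ of (\ref{asymptoticrep}).)  Once you know this, the Iwasawa computation of \S\ref{ssorbital-characteristic} shows that on an annulus $|x|=q^i$ with $i$ large the integrand is $\psi^{-1}(x)$ times a function constant on multiplicative cosets $u_0(1+\varpi^{j'}\mathfrak o)$; for $i>j'$ the character is nontrivial on each such coset and the annulus contributes zero.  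The compactness of the outer integral then comes simply from the Schwartz (compactly supported) nature of the second factor, not from any delicate control of the $A$-direction.

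So: correct approach, but you can delete the appeal to the explicit $\xi$-asymptotics and argue directly from right $K_0$-invariance plus the Iwasawa decomposition already written out in \S\ref{ssorbital-characteristic}.
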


\begin{proof}[Proof of Proposition \ref{propnoirregular}]
If we fix the stabilizer $N^-$ of a point on $X$, and denote by $\psi^{-1}$ the character by which it acts on the fiber of $\mathcal L_\psi^{-1}$, the problem is easily reduced to that of finding $(N^-,\psi)$-equivariant distributions on the stalk $V$ of $\mathcal S(\bar X,\mathcal L_\psi)$ over the unique point $y$ of $\PP^1$ fixed by $N^-$.  The notation $\mathcal S(\bar X,\mathcal L_\psi)$ means similar asymptotics as in \S \ref{ssnonstandard}, not sections of $\mathcal L_\psi$ over $\bar X$, but it is easy to see that as an $N^-$-module this stalk has a filtration:
$$0\to W\to V\to W\to 0,$$
where $W$ is isomorphic to the stalk of smooth sections of $\mathcal L_\psi$ over $y$.

If $S'=\{y\}\subset \overline X$, in the notation of appendix \ref{app:cosheaves} the stalk $V$ of $\mathcal S(\bar X, \mathcal L_\psi)$ over $S'$ has a separated decreasing filtration by $V_n:= \overline{\mathcal J_{S'}^n V}$. Clearly, the group $N^-$ acts trivially on $\mathcal J_{S'}/\mathcal J_{S'}^2=$ the cotangent space of $y$, and hence also on $\mathcal J_{S'}^n/\mathcal J_{S'}^{n+1}=$ the $n$-th symmetric power of the cotangent space. Moreover, recall that $\mathcal L_\psi$ is the trivial line bundle over $\PP^1$. Therefore, there are no $(N^-,\psi)$-equivariant functionals on the $n$-th graded piece of this filtration, which is an image of (actually, isomorphic to):
$$\mathcal J_{S'}^n/\mathcal J_{S'}^{n+1} \otimes \mathcal S(\{y\}, \mathcal L_\psi).$$

\end{proof}

\section{Matching and the fundamental lemma.} \label{sec:matching}

\subsection{Matching}

\begin{theorem} \label{matching}
 The operator $|\bullet|\cdot \mathcal G$ gives rise to a topological isomorphism:
\begin{equation}
 \mathcal S(\mathcal Z) \xrightarrow{\sim} \mathcal S(\mathcal W), 
\end{equation}
which satisfies: 
\begin{equation}
 \left< |\bullet|\mathcal Gf\right> = \gamma^*(\eta,0,\psi) \left<f\right>,
\end{equation}
where $\left<\,\,\right>$ denotes the inner products defined in \S \ref{ssiptorus}, \ref{ssipKuz}, and $\gamma^*(\eta,0,\psi)$ denotes the leading term in the Taylor expansion of the gamma factor $\gamma(\eta,s,\psi)$ around $s=0$.
\end{theorem}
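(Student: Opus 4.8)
The plan is to prove both assertions --- that $|\bullet|\mathcal G$ is a topological isomorphism $\mathcal S(\mathcal Z)\to\mathcal S(\mathcal W)$ and that it rescales the inner product by $\gamma^*(\eta,0,\psi)$ --- by unwinding $\mathcal G=\mathcal F\circ\iota\circ\mathcal F$ and following the resulting functions through the cosheaf descriptions of the two sides built up in \S\ref{sec:torus} and \S\ref{sec:Kuznetsov}. First I would record the shape of both sides as flabby Schwartz cosheaves on $\PP^1$: on the $\mathcal Z$-side the special points are $\xi=0$ and $\xi=-1$, with the germs of Propositions~\ref{germs-split} and \ref{germsnonsplit} (via the cover $\mathcal Z=\mathcal Y_1\cup\mathcal Y_2$ of Proposition~\ref{stackisomorphism} and the identification of each $\mathcal S(\mathcal Y_i)$ with a localization of $\mathcal S(\mathcal X)$), and Schwartz behaviour --- in particular rapid decay --- at $\xi=\infty$ and elsewhere; on the $\mathcal W$-side the special points are $\xi=0$, where by Corollary~\ref{XinW} the germ is $|\bullet|$ times the germ of $\mathcal S(\mathcal X)$ at $0$, and $\xi=\infty$, where by Proposition~\ref{germs-Kloosterman} the germ is the one-dimensional Kloosterman stalk, with Proposition~\ref{propnoirregular} guaranteeing that this data determines $\mathcal S(\mathcal W)$.

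Next I would analyze $|\bullet|\mathcal G$ on $f\in\mathcal S(\mathcal Z)$ one map at a time. Since $f$ is Schwartz at $\infty$ and has only mild singularities at $0$ and $-1$, it is tempered and $\mathcal Ff$ is almost smooth on $\Ga$; by Corollary~\ref{corollaryFourier}, applied in the $\mathcal Y_2$-chart at $\xi=0$ and --- after the translation $\xi\mapsto\xi+1$, which produces a factor $\psi(\bullet)$ --- in the $\mathcal Y_1$-chart at $\xi=-1$, the asymptotics of $\mathcal Ff$ at $\infty$ take the form $|\bullet|^{-1}h_0(1/\bullet)+\psi(\bullet)\,|\bullet|^{-1}h_{-1}(1/\bullet)$, with the $\ln$- or $\eta$-singularity in the germ of $f$ reflected, as usual, by a derivative in $s$ of, resp.\ by the value of, a gamma factor, and with leading data equal to the inner-product-type coefficients of $f$ at $0$ and $-1$. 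In particular $\mathcal Ff\in L^2$ near $\infty$, so $\iota\mathcal Ff$ --- which interchanges $0$ with $\infty$ and multiplies by $\eta(\bullet)/|\bullet|$ --- is again $L^2$; near $\xi=0$ it is $\eta(\bullet)$ times a germ of the shape $(\text{smooth})+\psi(1/\bullet)(\text{smooth})$ carrying the data of $f$ at $-1$, and near $\xi=\infty$ it decays like $\eta(\bullet)/|\bullet|$ with the data of $f$ at $0$ in front. Applying the second $\mathcal F$ and then $|\bullet|$, the behaviour at $\xi=\infty$ of $\iota\mathcal Ff$ feeds the germ at $\xi=0$ of the result, which one checks lies in $|\bullet|\mathcal S(\mathcal X)$ as demanded by Corollary~\ref{XinW}, while the $\psi(1/\bullet)$-oscillatory germ of $\iota\mathcal Ff$ at $0$ is carried to exactly a Kloosterman integral of the form in Proposition~\ref{germs-Kloosterman} --- this last point being the stationary-phase computation of \S\ref{ssstationary}, resting on Jacquet's evaluation \cite{JaKl2}. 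Thus $|\bullet|\mathcal Gf\in\mathcal S(\mathcal W)$; and since $\mathcal F$, $\iota$ and multiplication by $|\bullet|$ are topological automorphisms of the relevant completed spaces, $|\bullet|\mathcal G$ is a continuous injection with continuous inverse $\mathcal F^{-1}\circ\iota\circ\mathcal F^{-1}\circ|\bullet|^{-1}$. Running the same analysis backwards (the one-dimensionality of the Kloosterman stalk and Corollary~\ref{corollaryFourier} again determining which $\mathcal Z$-germ produces a given $\mathcal W$-germ) shows this inverse maps $\mathcal S(\mathcal W)$ into $\mathcal S(\mathcal Z)$, so $|\bullet|\mathcal G$ is onto; flabbiness of the two cosheaves then lets one patch these local statements over the cover of $\PP^1$ by neighbourhoods of $0$, of $\infty$, and of the generic locus.

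For the inner-product identity I would combine the germ descriptions of the two inner products with the tracking above. By the Lemma at the end of \S\ref{ssiptorus}, $\left<f\right>$ is, under the identification of a neighbourhood of $\xi=-1$ in $\mathcal Z$ with one of $0$ in $\mathcal X$, the coefficient $C_1(0)$ of $-\ln|1+\xi|$ (split case) or the coefficient $C_2(0)$ of $\eta(1+\xi)$ (nonsplit case) in the germ of $f$ at the diagonal point $\xi=-1$; by Proposition~\ref{germs-Kloosterman} together with the Lemma of \S\ref{ssipKuz}, $\left<g\right>$ for $g\in\mathcal S(\mathcal W)$ is the leading coefficient $C(0)$ of the Kloosterman germ of $g$ at $\xi=\infty$. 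The singularity of $f$ at $\xi=-1$ is the \emph{only} part of $f$ producing a $\psi(\bullet)$-oscillation in $\mathcal Ff$ at $\infty$, hence the only part contributing to the Kloosterman germ of $|\bullet|\mathcal Gf$ at $\infty$; so the relevant scalar is the one attached to the ``clean'' composition that sends the germ of $\mathcal S(\mathcal X)$ at $0$ through $\mathcal F\circ\iota\circ\mathcal F$ to the Kloosterman germ, and this can be read off either from the local functional equation of Tate integrals in the form $\widecheck{\mathcal G(h)}(\chi)=\gamma(\chi,\tfrac{1}{2},\psi)\,\gamma(\chi\otimes\eta_{E/F},\tfrac{1}{2},\psi)\,\check h(\chi^{-1})$, specialized at the relevant limiting character, or directly from $\gamma(\eta,s,\psi)=\int\eta(y)\psi(y)|y|^{s-1}\,d^\times y$: it is $\gamma^*(\eta,0,\psi)$. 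The measure and trivialization normalizations of Lemmas~\ref{integration-torus}, \ref{Kuzintegration}, \ref{Kuztrivialization} and of \S\ref{ssorbital-Kuz} are exactly what force the remaining universal constant to be $1$; the split and nonsplit cases run in parallel, differing only in whether $\gamma^*(\eta,0,\psi)$ is the finite nonzero value $\gamma(\eta,0,\psi)$ of a ramified gamma factor or the first nonvanishing Taylor coefficient of $\gamma(1,s,\psi)$ at $s=0$, mirroring the $\eta$-twist versus $\ln$-singularity dichotomy in the germs.

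The main obstacle will be the asymptotic bookkeeping behind the second and third paragraphs: controlling the Fourier transforms of these non-compactly-supported, logarithmically- or $\eta$-singular functions simultaneously near $\xi=0$, $\xi=-1$ and $\xi=\infty$, checking that no spurious lower-order terms are introduced, and above all verifying that the $\psi(1/\bullet)$-oscillatory germ created by $\iota$ is sent by the second Fourier transform precisely onto the Kloosterman germ of Proposition~\ref{germs-Kloosterman} --- where the stationary-phase analysis of \S\ref{ssstationary} is indispensable --- and then pinning the transition scalar to be exactly $\gamma^*(\eta,0,\psi)$ rather than merely a nonzero multiple of it, which is what the careful normalization of measures and trivializations ultimately delivers.
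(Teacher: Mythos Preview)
Your proposal is correct and hits all the same key ingredients as the paper's proof: the germ descriptions at $0$, $-1$, $\infty$; Corollary~\ref{corollaryFourier} for the Fourier asymptotics; the stationary-phase/Kloosterman matching of \S\ref{ssstationary}; and the Tate gamma-factor computation for the inner-product scalar.

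The main organizational difference is that the paper does not track all three special points through the composition simultaneously. Instead it filters both sides by short exact sequences
\[
0\to \mathcal S(\mathcal X)\to \mathcal S(\mathcal Z)\to \mathcal S(\mathcal Z)/\mathcal S(\mathcal X)\to 0,
\qquad
0\to |\bullet|\mathcal S(\mathcal X)\to \mathcal S(\mathcal W)\to \mathcal S(\mathcal W)_\infty\to 0,
\]
and invokes the baby-case result (Proposition~\ref{Fourier-baby} and its nonsplit analogue) that $\mathcal G$ is already a topological \emph{automorphism} of $\mathcal S(\mathcal X)$. This disposes of the subspaces in one stroke --- including the continuity and bijectivity you argue for by hand --- and reduces the whole theorem to matching the quotients: the stalk of $\mathcal S(\mathcal Z)$ at $\xi=-1$ versus the Kloosterman stalk of $\mathcal S(\mathcal W)$ at $\xi=\infty$. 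Your step ``one checks [the result at $\xi=0$] lies in $|\bullet|\mathcal S(\mathcal X)$'' is exactly this baby-case automorphism, and invoking it directly would spare you the somewhat delicate claim that ``$\mathcal F$, $\iota$ and multiplication by $|\bullet|$ are topological automorphisms of the relevant completed spaces'' (which spaces?). For the remaining stalk-to-stalk map and the constant $\gamma^*(\eta,0,\psi)$, the paper proceeds just as you outline: show $\iota\mathcal F f$ has germ $f_1(\xi)+\psi(1/\xi)h(\xi)$ at $0$ with $h(0)=\gamma^*(\eta,0,\psi)\langle f\rangle$ via the Tate functional equation $\widehat{\chi}=\gamma(\chi^{-1},0,\psi)\,|\bullet|^{-1}\chi^{-1}$, then check that $\mathcal F$ carries such oscillatory germs onto $|\bullet|^{-1}$ times the Kloosterman germs of Proposition~\ref{germs-Kloosterman}, by direct computation in the nonarchimedean case and by the stationary-phase Lemma~\ref{stationaryphase} in the archimedean case.
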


\proof
 We have short exact sequences:
\begin{equation}
 0\to \mathcal S(\mathcal X) \to \mathcal S(\mathcal Z) \to \mathcal S(\mathcal Z)/\mathcal S(X) \to 0
\end{equation}
and:
\begin{equation}
 0\to |\bullet| \mathcal S(\mathcal X) \to \mathcal S(\mathcal W) \to \mathcal S(\mathcal W)_\infty \to 0.
\end{equation}

The arrows on the left are closed embeddings and come from (\ref{Schwartzisom}), where we restrict only to sections of $\mathcal M(\mathcal Y_1)$ with smooth orbital integrals -- that is, we allow singularities only at $\xi=0$, not at $\xi = -1$; and from Corollary \ref{XinW}. 

We have already seen that $\mathcal G$ is an automorphism of $\mathcal S(\mathcal X)$, hence $|\bullet|\mathcal G$ is an automorphism between the subspaces of the above sequences. There remains to see that it induces isomorphisms of the quotients.

By Corollary \ref{corollaryFourier} and standard properties of Fourier transform, the germs at $\xi=0$ of elements of $\iota \mathcal F(\mathcal S(\mathcal Z))$ are precisely the germs of functions of the form $f_1(\xi) + \psi\left(\frac{1}{\xi}\right) h(\xi)$ with $f_1,h$ smooth. Moreover, we claim that for $\iota \mathcal F (f) \sim \psi\left(\frac{1}{\xi}\right) h(\xi)$ (where $\sim$ denotes equality of germs), we have: 
\begin{equation}\label{h0}
 h(0)= \gamma^*(\eta,0,\psi) \left<f\right>.
\end{equation}

It suffices to prove (\ref{h0}) for one element $f$ for which $\left<f\right>$ is nonzero. Recall that for (almost) every character $\chi$ of $F^\times$, considered as a tempered distribution on $k$ by meromorphic continuation according to Tate's thesis, we have a relation:
\begin{equation}
 \widehat{\chi(\bullet)}= \gamma(\chi^{-1},0,\psi) \cdot |\bullet|^{-1}\cdot \chi^{-1}(\bullet).
\end{equation}
Indeed, this is just a reformulation of the functional equation for zeta integrals; in what follows, we denote the obvious \emph{bilinear} (not hermitian) pairing by angular brackets, and use the exponent $\psi$ when Fourier transform is taken with respect to the character $\psi$, instead of $\psi^{-1}$ which is our standard convention. We denote Tate's zeta integral of a function $\phi\in \mathcal S(F)$ by $\zeta(\phi,\chi,s)$.
$$\left<\phi,\widehat{\chi}\right> = \left<{\widehat{\hat\phi}}^\psi,\widehat{\chi}\right> = \left<\hat\phi,\chi\right>= \zeta(\hat \phi,\chi, 1) = $$
$$ =\gamma(\chi^{-1},0,\psi) \zeta(\phi,\chi^{-1},0) = \gamma(\chi^{-1},0,\psi) \left<\phi,\chi^{-1}(\bullet)\cdot |\bullet|^{-1}\right>.$$

This implies that a function on $F$ which is equal to $\chi(\xi)$ in a neighborhood of zero (and Schwartz elsewhere) has Fourier transform which is equal\footnote{Asymptotically equal in the archimedean case, i.e.\ the quotient by the stated function tends to $1$. This is proven by an easy argument multiplying the character by a smooth cutoff function.} to $\gamma(\chi^{-1},0,\psi) |\xi|^{-1}\chi^{-1}(\xi)$ in a neighborhood of infinity (and Schwartz elsewhere). In particular, (\ref{h0}) holds for the nonsplit case $\eta\ne 1$.

For the split case, we can obtain the function $-\cdot \ln|\xi|$ as the limit of:
$$ \frac{1}{t} - \frac{|\xi|^t}{t}.$$
A function which is equal to this in a neighborhood of zero has Fourier transform which is equal to $-\frac{\gamma(1,-t,0)}{t} |\xi|^{-t-1}$ in a neighborhood of $\infty$, and in the limit $t\to 0$ we obtain $\gamma^*(1,-t,0) |\xi|^{-1}$.

There remains to show that Fourier transform gives a continuous surjection from the set of functions of the form $\psi\left(\frac{1}{\xi}\right) h(\xi)$ around $\xi = 0$ (and Schwartz otherwise) to $|\bullet|^{-1}$ times the germs of Kloosterman integrals described in Proposition \ref{germs-Kloosterman}. It will be an implicit byproduct of the proof that, if $C$ is as in the remark following Proposition \ref{germs-Kloosterman}, then $C(0)=h(0)$, hence $ \left< |\bullet|\mathcal Gf\right> = \gamma^*(\eta,0,\psi) \left<f\right>$. 

We perform this for the archimedean case in the next subsection. For the nonarchimedean case, let us say that $h=1_{\mathfrak o}$. Then:
$$\mathcal F\left( \psi\left(\frac{1}{\bullet}\right) h(\bullet)\right) (\xi) = \int_{\mathfrak o} \psi(x^{-1}-\xi x)dx.$$

For $|\xi|$ larger than $|\mathfrak p^{-2} \cdot \mathfrak c^2|$ (and larger than $1$), where $\mathfrak c$ denotes the conductor of $\psi$,  we claim that only the terms with $|x^2|= |\xi|^{-1}$ contribute. Indeed, set $u=x^{-1}$ and $v = \xi x$ and assume that $|u|>|v|$ (the case $|u|<|v|$ is identical). Then $u$ has norm larger than $|\mathfrak p^{-1}\mathfrak c|$, and as it varies in a ball of radius $|\mathfrak p^{-1}\mathfrak c|$ around some point $u_0$, $v$ varies in a ball of radius less or equal than $|\mathfrak c|$ around $v_0 = \xi u_0^{-1}$. Therefore:
$$\int_{u_0 + \mathfrak p^{-1} \mathfrak c} \psi (u - \xi u^{-1} ) du = \int_{u_0 + \mathfrak p^{-1} \mathfrak c} \psi(u) du =0.$$

Hence,
$$\mathcal F\left( \psi\left(\frac{1}{\bullet}\right)1_{\mathfrak o}\right) (\xi) = \int_{|x|^2 = |\xi|^{-1}} \psi(x^{-1}-\xi x) dx = $$ 
$$ =|\xi|^{-1} \int_{|x|^2 = |\xi|} \psi(\xi x^{-1}-x) dx  .$$

\subsection{Stationary phase} \label{ssstationary}

We complete the proof of matching in the archime\-dean case, based on the arguments of \cite{JaKl2}. We only discuss the real case, as the complex case can be treated similarly.

\begin{lemma}\label{stationaryphase}
 Let $F=\RR$ and let $\phi(u,\delta)$ be a Schwartz function in two variables. The integral: 
$$ \int \phi(u,\frac{1}{\lambda}) \psi(\lambda(u+u^{-1})) du$$
is equal to $f_1(\lambda)+ |\lambda|^{-\frac{1}{2}}\psi(2\lambda) \theta_+ \left(\frac{1}{\lambda}\right) + |\lambda|^{-\frac{1}{2}}\psi(-2\lambda) \theta_-\left(\frac{1}{\lambda}\right)$, 
where $f_1$ is a Schwartz function of $\lambda$, and $\theta_\pm$ are smooth functions (supported in a neighborhood of zero) whose derivatives at zero are polynomials, without constant terms, on the derivatives of $\phi(u,\delta)$ at $u=\pm 1$ (respectively), $\delta=0$. In particular, $\theta_\pm(0)$ depends only on $\phi(\pm 1,0)$ (respectively).

Moreover, in the special case that $\phi(u,\delta) = f(u\delta)$ for some smooth function $f$, each derivative of $\theta_+$ at $0$ depends on a finite number of derivatives of $f$ at $0$, and the germ of $\theta_+$ at zero can be arbitrary. Similarly for $\theta_-$.
\end{lemma}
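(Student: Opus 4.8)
The plan is to treat this as a classical one-dimensional stationary phase problem. The phase is $\varphi(u) = u + u^{-1}$, whose critical points are $u = \pm 1$, with $\varphi(1) = 2$, $\varphi(-1) = -2$, and nondegenerate second derivatives $\varphi''(\pm 1) = \pm 2$. Away from these critical points we have $\varphi'(u) \ne 0$, so repeated integration by parts in $u$ (writing $\psi(\lambda\varphi(u)) = (2\pi i \lambda \varphi'(u))^{-1}\frac{d}{du}\psi(\lambda\varphi(u))$ and integrating by parts) gains a factor $|\lambda|^{-1}$ each time, showing that the contribution of any region bounded away from $\pm 1$ (in particular any region where $|u|$ is large or $|u|$ is near $0$, after also using the $\delta = \lambda^{-1} \to 0$ decay of $\phi$) is a Schwartz function of $\lambda$; this produces $f_1(\lambda)$. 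First I would therefore use a smooth partition of unity $1 = \chi_+(u) + \chi_-(u) + \chi_0(u)$ where $\chi_\pm$ are supported in small neighborhoods of $\pm 1$ and $\chi_0$ vanishes near $\pm 1$, and dispose of the $\chi_0$ piece as just described.

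For the $\chi_+$ piece, the standard move is the Morse lemma: choose a smooth change of variables $u = u(w)$ near $u = 1$ with $\varphi(u(w)) = 2 + w^2$ and $w(1) = 0$, so the integral becomes $\psi(2\lambda)\int \tilde\phi(w, \lambda^{-1})\, \psi(\lambda w^2)\, dw$ where $\tilde\phi$ absorbs the Jacobian and the cutoff and is again Schwartz in both arguments (smooth and compactly supported in $w$, Schwartz in the second slot). The Gaussian integral $\int e^{2\pi i \lambda w^2}\, dw = c\,|\lambda|^{-1/2} e^{\pm i\pi/4}$ converges, and expanding $\tilde\phi(w,\lambda^{-1})$ — for fixed $\lambda^{-1}$ — in its Taylor series in $w$ and integrating term by term against the Gaussian yields an asymptotic expansion $|\lambda|^{-1/2}\psi(2\lambda)\,\theta_+(\lambda^{-1})$ whose coefficients are (by the standard formula for moments of a Gaussian) linear combinations of the even $w$-derivatives of $\tilde\phi$ at $w=0$, hence polynomials without constant term in the derivatives of $\phi$ at $(u,\delta) = (1,0)$ once one unwinds the change of variables (the change of variables is polynomial-in-derivatives and the Jacobian at $w=0$ contributes the leading nonzero factor). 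In particular $\theta_+(0)$ is a nonzero multiple of $\tilde\phi(0,0)$, which is a nonzero multiple of $\phi(1,0)$. The $\chi_-$ piece is handled identically with phase value $-2$ and the conjugate Gaussian phase, giving the $\psi(-2\lambda)\theta_-(\lambda^{-1})$ term; the statement that the derivatives of $\theta_\pm$ at $0$ are polynomials in derivatives of $\phi$ follows by keeping track of the remainder in the finite Taylor expansion and the $|\lambda|^{-1}$-type decay of the error from the remainder term and from the interaction with the $\lambda^{-1}$-decay of $\phi$ in its second slot.

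For the final special case $\phi(u,\delta) = f(u\delta)$: here $\tilde\phi(w, \lambda^{-1}) = f(u(w)\lambda^{-1})\cdot(\text{cutoff})(u(w))\cdot|u'(w)|$, so each $w$-derivative of $\tilde\phi$ at $w = 0$, evaluated before letting $\lambda^{-1}$ vary, is a finite sum of terms $f^{(j)}(u(0)\lambda^{-1})\cdot\lambda^{-j}\cdot(\text{smooth in }\lambda^{-1})$ with $u(0) = 1$; substituting $\delta = \lambda^{-1}$ shows each derivative of $\theta_+$ at $0$ depends only on finitely many derivatives of $f$ at $0$. Conversely, to see the germ of $\theta_+$ at zero can be arbitrary, I would note that the map $f \mapsto \theta_+$ is, at the level of Taylor coefficients at $0$, triangular with nonzero diagonal: the $n$-th Taylor coefficient of $\theta_+$ equals a nonzero constant times $f^{(n)}(0)$ plus a polynomial in the lower-order derivatives $f^{(0)}(0), \dots, f^{(n-1)}(0)$. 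Hence by induction on $n$ one can solve for $f^{(n)}(0)$ to realize any prescribed Taylor series for $\theta_+$, and then invoke Borel's lemma to produce an actual smooth $f$ with those derivatives.

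The main obstacle I anticipate is bookkeeping rather than conceptual: carefully tracking that the change of variables in the Morse lemma, the Jacobian, and the term-by-term Gaussian integration genuinely produce \emph{polynomial} (not merely smooth) dependence of the derivatives of $\theta_\pm$ at $0$ on the derivatives of $\phi$ — and in particular confirming the vanishing of the constant term in those polynomials, which comes from the fact that each coefficient past the leading one picks up a strictly positive power of $w$-derivatives beyond the zeroth — together with verifying uniformity so that the error term really is Schwartz in $\lambda$ (this is where one must use both the compact support in $u$ from the cutoff near the critical point and the Schwartz decay of $\phi$ in its second variable to control the $\lambda^{-1} \to 0$ regime). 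Jacquet's computations in \cite{JaKl2} should provide the template for this bookkeeping.
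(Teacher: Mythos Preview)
Your plan is correct and follows essentially the same route as the paper: localize near the critical points $u=\pm 1$ via a Morse-type change of variables, extract the stationary-phase contribution, and for the surjectivity statement argue that the map $(f^{(k)}(0))_k \mapsto (\theta_+^{(n)}(0))_n$ is lower-triangular with nonzero diagonal.

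The one technical difference worth flagging: the paper cites \cite[Proposition 1]{JaKl2} for the first assertion and then uses Jacquet's \emph{exact} integral representation of $\theta_+$ rather than your asymptotic-expansion approach. Namely, with the explicit substitution $v=(u-1)/\sqrt{u}$ (so $u+u^{-1}=2+v^2$) and a partial Fourier transform in $v$, one gets $\theta_+(\delta)$ as a genuine integral, and differentiating under the integral sign yields the closed formula
\[
\theta_+^{(n)}(0)=\left.\left(\partial_\delta - c\,\partial_v\right)^n\!\Bigl[\phi(u(v),\delta)\,\tfrac{du}{dv}\Bigr]\right|_{v=\delta=0}.
\]
From this the triangularity is immediate: the $\partial_\delta^n$ term alone contributes $f^{(n)}(0)$ times the nonzero Jacobian, while terms with $j<n$ powers of $\partial_\delta$ involve only $f^{(k)}(0)$ with $k<n$. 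Your route via term-by-term Gaussian integration reaches the same conclusion, but you should be explicit that the stationary-phase expansion is only asymptotic, so you need Borel's lemma (or the Jacquet formula) to produce an honest smooth $\theta_+$ whose difference from the integral is Schwartz; the paper's exact formula sidesteps this step.
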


\begin{proof}
This is \cite{JaKl2}[Proposition 1], except for the last statement.

It is proven in \cite{JaKl2} that, up to a certain nonzero constant:
$$\theta_+ (\delta) = \int \phi_1(u,\delta) \psi\left(-\frac{u^2 \delta}{4}\right) du,$$
where $\phi_1$ is the partial Fourier transform in the variable $v = \frac{u-1}{\sqrt{u}}$ of the function:
$$\phi(u(v), \delta) \frac{du}{dv}.$$
(We assume without loss of generality that $\phi$ is supported close to $u=1$, so that the change of variables $v = \frac{u-1}{\sqrt{u}}$ is valid.)

Hence, 
$$\theta_+^{(n)}(\delta) = \int \left[\left(\frac{\partial}{\partial\delta} - \frac{2\pi i u^2}{4}\right)^n \phi_1(u,\delta)\right] \psi\left(-\frac{u^2 \delta}{4}\right) du.$$
(We assume without loss of generality that $\psi(x) = e^{2\pi i x}$.)

Therefore: $$\theta_+^{(n)}(0) = \left.\left(\frac{\partial}{\partial\delta} - \frac{1}{8\pi i} \frac{\partial}{\partial v}\right)^n \phi(u(v),\delta) \frac{du}{dv}\right|_{v=\delta =0}.$$

It is clear that, if $\phi = f(u\delta)$, this expression is bounded by a finite number of derivatives of $f$ at $0$, and that the evaluation of $\theta_+^{(n)}(0)$ involves higher derivatives of $f$ at $0$ than the evaluation of all $\theta_+^{k}(0)$, $k<n$. Therefore, the map $f\mapsto \theta_+$ is surjective onto the stalk of smooth functions at zero.
\end{proof}

This allows us to complete the proof of Theorem \ref{matching} in the real case: Indeed, for the by the stationary phase method or the arguments of \cite{JaKl2} it is easy to see that (\ref{Rgerms-Kloosterman}) is a Schwartz function of $\xi$ for $\xi>0$ or $F$ supported away from $-1$ (in the first variable). For $F$ supported close to $-1$ and $\xi<0$ we can make first the change of variables: $t	= \frac{x^2+1}{x}$, and the integral (\ref{Rgerms-Kloosterman}) becomes: 
$$ \int F_1 \left(\frac{\xi}{t^2}, t\right) \psi(\xi t^{-1}-t) dt,$$
where $F_1$ is another (arbitrary) smooth function on $\Gm\times \PP^1$. Then we can make the change $u= -\sqrt{-\xi}^{-1} t$ to turn this into:
\begin{equation}\label{Kythnos1}
  \sqrt{|\xi|} \int F_1 \left(- u , t\right) \psi(\sqrt{-\xi} (u^{-1}+u)) du.
\end{equation}

Similarly, for the Fourier transform of a function of the form $h(x) \psi\left(\frac{1}{x}\right)$ we have:
$$ \int h(x) \psi(x^{-1} - \xi x) dx,$$
which again by the same arguments depends up to a Schwartz function of $\xi$ only on the restriction of $h$ in a neighborhood of zero, and only for $\xi<0$. By the change of variables $u = -\sqrt{-\xi} x$ we get: 
\begin{equation}\label{Kythnos2} \sqrt{|\xi|}^{-1} \int h_1(-\sqrt{-\xi}^{-1}u) \psi(\sqrt{-\xi}(u+u^{-1})) du,
\end{equation}
where $h_1$ is another (arbitrary) smooth function in a neighborhood of zero.

By the last statement of Lemma \ref{stationaryphase}, the stalks at zero of (\ref{Kythnos1}), (\ref{Kythnos2}) coincide. This completes the proof of Theorem \ref{matching}.

\qed

\subsection{Basic vectors} \label{ssbasic}

From now on, until the end of this section, we assume that $F$ is nonarchimedean, $E$ (and hence $F$) is unramified over the base field $\QQ_p$ or $\mathbb F_p((t))$, and endow the groups $G,T,N, N^-$ with smooth group scheme structures over the ring of integers $\mathfrak o$. We set $K= G(\mathfrak o)$, a hyperspecial maximal compact subgroup. The conductor of our fixed self-dual character $\psi$ is equal to the ring of integers of $F$. We consider the $\mathfrak o$-schemes:
$$ X_1 = T\backslash G, \,\, X_2 = N\backslash G,$$
where the latter is equipped with the line bundle $\mathcal L_\psi$ defined by $\psi$ and an $\mathfrak o$-identification: $N\simeq \Ga$.

We endow the various groups with invariant differential forms defined over $\mathfrak o$, which are nonzero when reduced to the residue field. Based on our fixed measure on $F$ of \S \ref{ssTamagawa}, this gives rise to invariant measures on their $F$-points, and the $F$-points of their quotients; these measures are canonical, as they do not depend on the choice of differential form.

We consider the spaces $\mathcal S(\mathcal Z)$ and $\mathcal S(\mathcal W)$ of coinvariants corresponding to $X_1$, resp.\ $X_2$, as defined previously. We will define distinguished vectors $f_{\mathcal Z}^0$, $f_{\mathcal W}^0$ on them, the \emph{basic vectors}.

For $\mathcal S(\mathcal Z)$ we define:
\begin{equation}\label{basic-torus}
f_{\mathcal Z}^0:= \mbox{ the image of $1_{X_1(\mathfrak o)}\otimes 1_{X_1(\mathfrak o)}$ in }\mathcal S(\mathcal Z). 
\end{equation}
(Having fixed measures on the various groups, this image is a well-defined element of $\mathcal S(\mathcal Z)$.)

The description for $f_{\mathcal W}^0$ will be more complicated, as it is a ``nonstandard'' test function, i.e.\ not compactly supported. Recall from \ref{ssorbital-Kuz} that, in order to define orbital integrals for the Kuznetsov trace formula as functions, we have chosen a point $(x_0,y_0)\in (X_2\times X_2)^+$ with image $1\in \mathcal B$, and have trivialized the fiber of $\mathcal L_\psi\boxtimes \mathcal L_{\psi}^{-1}$ over that point. We now assume that $(x_0,y_0) \in (X_2\times X_2)^+(\mathfrak o)$, hence after trivializing the fiber and choosing suitable $\mathfrak o$-isomorphisms of the stabilizers with $N,N^-$ (and of the latter with $\Ga$) our sections become elements of
$C^\infty(N\backslash G\times N^-\backslash G, \psi \otimes \psi^{-1})$. In fact, we may trivialize both the fibers of $\mathcal L_\psi$ over $x_0$ and $\mathcal L_\psi^{-1}$ over $y_0$, to consider smooth sections of $\mathcal L_\psi$ (resp.\ of $\mathcal L_\psi^{-1})$ as elements of $C^\infty(N\backslash G,\psi)$ (resp.\ $C^\infty(N^-\backslash G,\psi^{-1})$).

For $n\in \mathbb N$, we denote by $1_{x_nK}$ the section:
\begin{equation}
 1_{x_nK} \left( u \left(\begin{array}{cc} \varpi^m \\ 1\end{array}\right) k\right) \,(\mbox{where }u\in N,k\in K) = \begin{cases} 0, & \mbox{ if } m\ne n, \\ \psi(u),& \mbox{ otherwise,} \end{cases}
\end{equation}
of $\mathcal L_\psi$. As $n$ varies in $\mathbb N$, these form a basis for the space of compactly supported, $K$-invariant sections of $\mathcal L_\psi$. We similarly define $1_{y_nK}^-$ for $\mathcal L_\psi^{-1}$.

For an algebraic representation $V$ of the dual group $\check G = \SL_2$, denote by $h_V$ the element of the spherical Hecke algebra $\mathcal H(G,K)$ corresponding under the Satake isomorphism:
$$\mathcal H(G,K) = \CC[\Rep(\check G)]$$
to the representation $V$. Here the monoid of dominant weights of $\check G$ is isomorphic to $\mathbb N$, and we will be writing $h_n$ for $h_{V_n}$, where $V_n$ is the $n$-th highest weight representation.

The Casselman-Shalika formula states that:
\begin{equation}
 h_n\star 1_{x_0K} = q^{-\frac{n}{2}}1_{x_nK}.
\end{equation}

Let $H_s$ be the formal series in the spherical Hecke algebra which \emph{corresponds under the Satake isomorphism to the $L$-function}: 
\begin{equation}L(\pi,\frac{1}{2}+ s)L(\pi\otimes\eta,\frac{1}{2}+s). 
\end{equation}
To understand what this means, we view an $L$-function $L(\pi,\rho,s)$ (where $\rho$ is a representation of the dual group) as a formal series (in the parameter $q^{-s}$) of traces of representations:
$$L(\pi,\rho,s) = \sum_{i=0}^\infty q^{-is}\tr(S^i\rho(\hat\pi)),$$
where $\hat\pi$ is the Satake parameter of $\pi$, hence the corresponding series in the Hecke algebra will be:
$$\sum_{i=0}^\infty q^{-is}h_{S^i\rho}.$$

We then define, for each $s$:
\begin{equation}
\Phi_s^0:= \Phi_{1,s}^0\otimes \Phi_2^0 = \left(H_s \star 1_{x_0K}\right)\otimes 1_{y_0K}^- \in C^\infty(X_2\times X_2,\mathcal L_\psi\otimes \mathcal L_\psi^{-1})^{K\times K}.
\end{equation}

To see that $H_s \star 1_{x_0K}$, a priori a formal series of elements of $C_c^\infty(X_2,\mathcal L_\psi)$, makes sense as a section of $\mathcal L_\psi$ when we fix $s$, write $H_s=h_{1,s}\star h_{2,s}$, where $h_{1,s}$ corresponds to the $L$-function $L(\pi,\frac{1}{2}+s)$ and $h_{2,s}$ corresponds to $L(\pi\otimes\eta,\frac{1}{2}+s)$.
Recall that for a representation $(\rho,V)$ of $\check G$, and $t\in \check G$, we have:
\begin{equation}
 \det^{-1}(I-q^{-s}\rho(t)|_V) = \sum_{n\ge 0} q^{-ns} \tr \rho(t)|_{S^nV}.
\end{equation}

Hence:
$$ h_{1,s} = \sum_{n\ge 0} q^{-n(s+\frac{1}{2})} h_n,$$
$$ h_{2,s} = \sum_{n\ge 0} q^{-n(s+ \frac{1}{2})} \epsilon^n h_n,$$
where $\epsilon = \pm 1$, according as $\eta$ is trivial or not.

Let $V_n$ denote the highest weight representation of $\check G$ corresponding to the $n$-th dominant weight, then we have the Klebsch-Gordan formula:
\begin{equation}
 V_m\otimes V_n = \sum_{l=0}^{\min(m,n)} V_{m+n-2l}.
\end{equation}

We use it to compute the convolution of $h_{1,s}$ with $h_{2,s}$, i.e.\ to write the series:
$$\left(\sum_{m\ge 0} q^{-m(s+\frac{1}{2})} h_m\right)\star\left(\sum_{n\ge 0} q^{-n(s+\frac{1}{2})} \epsilon^n h_n\right) = \sum_{n,m\ge 0} q^{-(n+m)(s+\frac{1}{2})} \epsilon^n h_m\star h_n=$$
$$= \sum_{n,m\ge 0} \sum_{l=0}^{\min(m,n)} q^{-(n+m)(s+\frac{1}{2})} \epsilon^n h_{m+n-2l}.$$

Let $k=m+n-2l$, then the restrictions between the different indices correspond to the system:
\begin{eqnarray*}
l\le \min(m,n) &\le& l+\frac{k}{2} \\ m+n&=&k+2l.
\end{eqnarray*}

To count all $m,n$ for a given $k$, we add over all $l=0,1,\dots$ and have two cases: either $m=\min(m,n)$, in which case $m$ ranges over: $l\le m\le \lfloor\frac{k}{2}\rfloor$; or $m>\min(m,n)$ in which case $n$ ranges over: $l\le n\le l+\lfloor\frac{k+1}{2}\rfloor$. Altogether, $n$ ranges from $l$ to $k+l$. Therefore, the coefficient for $h_k$ will be:
$$\sum_{l=0}^\infty q^{-(k+2l)(s+\frac{1}{2})} \sum_{n=l}^{k+l} \epsilon^n = \sum_{l=0}^\infty \epsilon^l q^{-(k+2l)(s+\frac{1}{2})}\cdot \begin{cases}
                                                                                                    k+1 &\mbox{ if }\epsilon=1,\\
0&\mbox{ if } \epsilon=-1, k\mbox{is odd},\\
1&\mbox{ if } \epsilon=-1, k\mbox{is even}.
                                                                                                   \end{cases}$$
Hence,
\begin{equation}\label{basicwhittaker}
 \Phi_{1,s}^0 = H_s \star 1_{x_0K} = \sum_{n=0}^\infty \frac{q^{-n(s+1)}}{1-\epsilon q^{-2s-1}}\cdot 1_{x_nK} \cdot \begin{cases}
                                                                                                    k+1 &\mbox{ if }\epsilon=1,\\
0&\mbox{ if } \epsilon=-1, k\mbox{is odd},\\
1&\mbox{ if } \epsilon=-1, k\mbox{is even}.                                             \end{cases}
\end{equation}

We deduce:

\begin{lemma}\label{lemmabasicOK}
 For each fixed $s$ such that $1-\epsilon q^{-2s-1}\ne 0$, $\Phi_{1,s}^0 $ makes sense as a smooth section of $\mathcal L_\psi$. Moreover, for $s=0$ we have: $\Phi_0^0 \in \mathcal S(\bar X\times X,\mathcal L_\psi\boxtimes \mathcal L_\psi^{-1})$. 
\end{lemma}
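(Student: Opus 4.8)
The plan is to read off both statements directly from the explicit formula (\ref{basicwhittaker}). For the first statement: by (\ref{basicwhittaker}) the series defining $\Phi_{1,s}^0$ has coefficients $\frac{q^{-n(s+1)}}{1-\epsilon q^{-2s-1}}\cdot(\cdots)$, which are finite numbers precisely because $1-\epsilon q^{-2s-1}\ne 0$, and its $n$-th term is supported on the double coset $x_nK$. Since the cosets $x_nK$, $n\in\Z$, are pairwise disjoint, open, compact, and cover $X$, the series is locally finite; hence it defines a genuine smooth section of $\mathcal L_\psi$ on $X$, and tensoring with the compactly supported section $\Phi_2^0=1_{y_0K}^-$ produces a smooth section $\Phi_s^0$ of $\mathcal L_\psi\boxtimes\mathcal L_\psi^{-1}$ on $X\times X$.

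For the case $s=0$ I would first reduce the claim to the one-variable assertion $\Phi_{1,0}^0\in\mathcal S(\bar X,\mathcal L_\psi)$, the space of nonstandard sections of \S\ref{ssnonstandard} in one variable. Indeed $\Phi_2^0$ is genuinely Schwartz (compactly supported) on $X$, and the closures of the supports of $\Phi_{1,0}^0$ and $\Phi_2^0$ lie over two \emph{distinct} points of $\PP^1$, hence in $(\bar X\times X)^+$; so if $\Phi_{1,0}^0$ lies in the one-variable nonstandard space, writing $\Phi_{1,0}^0=\sum_i f_iF_i$ near $\PP^1$ gives $\Phi_0^0=\sum_i(f_i\boxtimes\Phi_2^0)\,(F_i\boxtimes 1)$ near $\PP^1\times X$ — of exactly the required form, since $F_i\boxtimes 1$ is invariant in the second coordinate and $f_i\boxtimes\Phi_2^0$ is Schwartz on $\bar X\times X$ — and $\Phi_0^0$ is visibly Schwartz away from $\PP^1\times X$.

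For the one-variable assertion I would describe the $K$-invariant part of $\mathcal S(\bar X,\mathcal L_\psi)$ near $\PP^1$ concretely. A $K$-invariant section of $\mathcal L_\psi$ on $X$ is determined by its sequence of Whittaker values $c_n:=\Phi(\diag(\varpi^n,1))$, and, since the conductor of $\psi$ is $\mathfrak o$, one gets $c_n=0$ for $n<0$; the end of $X$ toward $\PP^1$ is $n\to+\infty$. Unwinding the operator (\ref{asymptotics}) on $K$-fixed scalar functions, the dilation $\mathscr L_\varpi$ acts on such a sequence as the normalized shift $c_n\mapsto \delta(\varpi)^{-1/2}c_{n+1}=q^{1/2}c_{n+1}$, so a $K$-invariant section lies in $\mathcal S(\bar X,\mathcal L_\psi)$ if and only if its sequence satisfies, for $n\gg0$, the two-term recursion
$$c_n-(1+\epsilon)q\,c_{n+1}+\epsilon q^2c_{n+2}=0,$$
whose characteristic polynomial $\epsilon q^2X^2-(1+\epsilon)qX+1$ has roots $\delta(\varpi)=q^{-1}$ and $\eta(\varpi)\delta(\varpi)=\epsilon q^{-1}$ (a double root in the split case, $\pm q^{-1}$ in the nonsplit case), matching the two Jordan–Hölder constituents $I(\delta^{1/2})$, $I(\eta\delta^{1/2})$ of $\pi$ in (\ref{asymptoticrep}). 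Finally I would check from (\ref{basicwhittaker}) that the coefficient sequence of $\Phi_{1,0}^0$ — namely $c_n=(n+1)q^{-n}/(1-q^{-1})$ for $n\ge0$ in the split case ($\epsilon=1$), and $c_n=q^{-n}/(1+q^{-1})$ for $n\ge0$ even, $c_n=0$ for $n$ odd, in the nonsplit case ($\epsilon=-1$) — satisfies this recursion for \emph{every} $n\ge0$; this is a one-line substitution in each case. Hence $\Phi_{1,0}^0\in\mathcal S(\bar X,\mathcal L_\psi)$, and with the reduction above $\Phi_0^0\in\mathcal S(\bar X\times X,\mathcal L_\psi\boxtimes\mathcal L_\psi^{-1})$.

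The main obstacle is the bookkeeping in the third step: translating the definition of the nonstandard sections in \S\ref{ssnonstandard} — phrased via the $\Gm$-dilation operator (\ref{asymptotics}) acting on scalar functions, the extension $\pi$ of (\ref{asymptoticrep}), and the (non-canonical) trivialization of $\mathcal L_\psi$ over $\PP^1$ — into the clean statement that the Whittaker-value sequence satisfies the displayed recursion for $n\gg0$. This requires care about the interplay between the $\Gm$-equivariant structure on $\mathcal L_\psi$ used to define (\ref{asymptotics}) and the $(N,\psi)$-equivariant structure used to parametrize $K$-fixed sections by sequences; once this is in place, the rest is either general nonsense (the first statement and the reduction to one variable) or the short explicit verification of the recursion. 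A cleaner-looking alternative for the $s=0$ case would be to deduce it from Corollary \ref{XinW} and Proposition \ref{propKuzstalk} by showing that $\xi\mapsto O_\xi(\Phi_0^0)$ lies in $|\bullet|\,\mathcal S(\mathcal X)$, but that route must first confront the convergence of the regular orbital integral of a section not yet known to belong to the Schwartz space.
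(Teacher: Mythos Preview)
Your proposal is correct and follows essentially the same approach as the paper. The paper dispatches the bookkeeping you flag as the ``main obstacle'' in one stroke: it writes $\Phi_{1,s}^0 = F_{1,s}^0\cdot f$, where $F_{1,s}^0$ is the scalar $K$-invariant function given by the same series with $1_{x_nK}$ replaced by the characteristic function $1'_{x_nK}$ of the $K$-orbit of $\diag(\varpi^n,1)$, and $f$ is a fixed section of $\mathcal L_\psi$ extending smoothly to $\PP^1$; this is exactly the $f_iF_i$ form required in \S\ref{ssnonstandard}, so one only needs to check that the operator \eqref{asymptotics} annihilates $F_{1,0}^0$ asymptotically. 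Since $\mathscr L_{\diag(\varpi,1)}1'_{x_nK}=q^{1/2}1'_{x_{n-1}K}$, this is precisely your recursion $c_n-(1+\epsilon)q\,c_{n+1}+\epsilon q^2 c_{n+2}=0$ on the coefficient sequence, and your verification of it from \eqref{basicwhittaker} is the same one-line check the paper alludes to.
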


\begin{proof}
 Only the last assertion remains to be proven. We denote by $F_{1,s}^0$ the $K$-invariant \emph{function} on $X_2$ which, under the above trivializations, is equal to $\Phi_{1,s}^0$ on diagonal elements; that is, $F_{1,s}^0$ is given by the same series, but $1_{x_nK}$ is replaced by $1_{x_nK}':=$ the characteristic function of the $K$-orbit represented by $\diag(\varpi^n,1)$. Then it is easy to see that $\Phi_{1,s}^0$ is the product of $F_{1,s}^0$ by a section of $\mathcal L_\psi$ which extends to $\PP^1$. Therefore, it suffices to prove that $F_{1,0}^0$ satisfies, in the notation of \S \ref{ssnonstandard}:

$$ \left(1- \delta^{-\frac{1}{2}} (a) \mathscr L_a\right)\cdot \left(1- \eta_{E/F}\delta^{-\frac{1}{2}} (a)\mathscr L_a\right) F_{1,0}^0 = 0.$$

This follows immediately from the fact that $\mathscr L_{\diag(\varpi^m,1)} 1_{x_n K}' = q^\frac{m}{2} 1_{x_{n-m}K}'$.
\end{proof}

Therefore, we may define the basic vector:
\begin{equation}\label{basic-Kuz}
 f_{\mathcal W}^0:= \mbox{ the image of $\Phi_0^0$ in }\mathcal S(\mathcal W). 
\end{equation}

\subsection{Fundamental lemma}

Finally, we arrive at the ``fundamental lemma'' for elements of the Hecke algebra. Notice that the Hecke algebra $\mathcal H(G,K)$ does not act on the quotients $\mathcal S(\mathcal Z)$, $\mathcal S(\mathcal W)$. However, the Bernstein center does, since these are quotients of $G\times G$ representations (and we accept the convention that it is the Bernstein center of the first copy of $G$ which acts). The Bernstein center for the component of the spectrum corresponding to unramified principal series is isomorphic to $\mathcal H(G,K)$ under the natural map; therefore, we will abuse notation to write $h\star f$ for $h\in \mathcal H(G,K)$ and $f\in \mathcal S(\mathcal Z)$ or $\mathcal S(\mathcal W)$. Of course, this discussion serves only aesthetic purposes and is redundant otherwise, as we will only use such expressions for $f= $ the image of a $K\times K$-invariant function/section $\Phi_1\otimes \Phi_2$, and then $h\star f$ can be interpreted as the image of $(h\star \Phi_1)\otimes \Phi_2$.

\begin{theorem}\label{FL}
 For $f_{\mathcal Z}^0$, $f_{\mathcal W}^0$ the basic vectors defined in the previous subsection, and all $h\in \mathcal H(G,K)$, the integral transform $|\bullet|\mathcal G$ satisfies:
\begin{equation}
 |\bullet|\mathcal G \left( h\star f_{\mathcal Z}^0\right) = h\star f_{\mathcal W}^0.
\end{equation}
\end{theorem}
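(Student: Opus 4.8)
The plan is to reduce the fundamental lemma to a statement purely about the Mellin transforms of the basic vectors, using the spectral characterization of $|\bullet|\mathcal G$ from the baby case together with the Casselman--Shalika formula. First I would record the Mellin-transform description: by the lemmas on Mellin transforms in \S\ref{sec:babycase}, the operator $\mathcal G$ acts on $\mathcal S(\mathcal X)$ by $\widecheck{\mathcal Gf}(\chi) = \gamma(\chi,\frac12,\psi)\gamma(\chi\otimes\eta,\frac12,\psi)\check f(\chi^{-1})$; correspondingly, $|\bullet|\mathcal G$ has a similar description once we track the twist by $|\bullet|$ (which shifts the character by $|\bullet|$, i.e.\ shifts $s$), and the action of $h\in\mathcal H(G,K)$ multiplies the Mellin transform by the Satake polynomial $\hat h$ evaluated at the parameter attached to $\chi$. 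So the identity to be proven becomes: $\widecheck{h\star f^0_{\mathcal W}}(\chi) = \hat h(\chi)\cdot \widecheck{|\bullet|\mathcal G f^0_{\mathcal Z}}(\chi)$, and since $h$ acts by scalars on both sides it suffices to treat $h=1$, i.e.\ to compute the two Mellin transforms $\check f^0_{\mathcal Z}(\chi)$ and $\check f^0_{\mathcal W}(\chi)$ and check that $|\bullet|\mathcal G$ carries one to the other.

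The second step is the computation of $\check f^0_{\mathcal Z}(\chi)$. Here $f^0_{\mathcal Z}$ is the orbital integral of $1_{X_1(\mathfrak o)}\otimes 1_{X_1(\mathfrak o)}$ for the torus quotient $T\backslash G/T$; its Mellin transform against an unramified character is a local period integral of the spherical vector against itself, which by the standard unramified computation (Waldspurger's local integral, or directly via the isomorphism of \S\ref{sec:torus} with the baby quotient $\mathcal X$) equals the product of $L$-factors $\frac{L(\chi,\frac12)L(\chi\otimes\eta,\frac12)}{L(\text{stuff})}$ up to an elementary factor coming from the $\zeta(\bullet,\frac12)$ Tate integrals of the characteristic functions of $\mathfrak o$. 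I would phrase this via the baby-case identification: under Proposition \ref{stackisomorphism}, $f^0_{\mathcal Z}$ restricted near $\xi=0$ is (up to the gluing) the basic vector of $\mathcal S(\mathcal X)$, whose Mellin transform is $\zeta(1_{\mathfrak o},\chi^{-1}\circ N,\frac12) = \zeta(1_{\mathfrak o},\chi^{-1},\frac12)\zeta(1_{\mathfrak o},\chi^{-1}\eta,\frac12)$, i.e.\ $L(\chi^{-1},\frac12)L(\chi^{-1}\eta,\frac12)$ up to the measure normalization.

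The third step is the computation of $\check f^0_{\mathcal W}(\chi)$. By construction $f^0_{\mathcal W}$ is the orbital integral of $(H_0\star 1_{x_0K})\otimes 1_{y_0K}^-$, and the Mellin transform of a Kuznetsov orbital integral against an unramified character is, by unfolding and the Casselman--Shalika formula, the trace of $\hat\pi_\chi$ on the representation $\bigoplus_i S^i(\std)$ attached to $H_0$ -- that is, exactly $L(\chi,\frac12)L(\chi\otimes\eta,\frac12)$; this is the whole point of why $H_s$ was defined via that $L$-function, and it is essentially the content of equation \eqref{basicwhittaker} read spectrally. Comparing with step two, $\check f^0_{\mathcal Z}(\chi) = (\text{elementary factor})\cdot L(\chi^{-1},\tfrac12)L(\chi^{-1}\eta,\tfrac12)$ and $\check f^0_{\mathcal W}(\chi) = (\text{elementary factor})\cdot L(\chi,\tfrac12)L(\chi\eta,\tfrac12)$, and the ratio of $L$-values $L(\chi,\frac12)/L(\chi^{-1},\frac12)$ (times the $\eta$-twisted version) is precisely $\gamma(\chi,\frac12,\psi)\gamma(\chi\otimes\eta,\frac12,\psi)$ up to the remaining elementary factors, by the functional equation $L(\chi,s) = \varepsilon(\chi,s)\gamma\text{-normalization}\cdot L(\chi^{-1},1-s)$ evaluated at $s=\frac12$. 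Matching the residual elementary factors against the extra $|\bullet|$-twist (which is what distinguishes $|\bullet|\mathcal G$ from $\mathcal G$ and accounts for the shift between $L(\cdot,\frac12)$-normalizations on the two sides) finishes the identity of Mellin transforms, hence the theorem by Mellin inversion \eqref{Mellininversion}, which is legitimate because both sides lie in $\mathcal S(\mathcal W)$ by Theorem \ref{matching} and Lemma \ref{lemmabasicOK}.

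I expect the main obstacle to be bookkeeping: keeping precise track of all normalizations -- the measure normalizations of \S\ref{ssTamagawa}, the $|\bullet|^{\pm}$ and $\delta^{\pm 1/2}$ twists relating $\mathcal S(\mathcal W)$ to $\mathcal S(\mathcal X)$ (Corollary \ref{XinW}), the difference between $\mathcal G$ and $|\bullet|\mathcal G$, and the $q^{-n/2}$ factors in Casselman--Shalika -- so that the two sides match \emph{on the nose} rather than up to an unidentified constant. A clean way to sidestep part of this is to first verify the identity for $h=1$ at the level of germs at $\xi=0$ and $\xi=\infty$ using the explicit expansions already computed (the $|\xi|(-C_1\ln|\xi| + C_2)$ form near $0$ and the Kloosterman-integral germ near $\infty$ of Proposition \ref{germs-Kloosterman}), together with the compatibility $\langle|\bullet|\mathcal G f\rangle = \gamma^*(\eta,0,\psi)\langle f\rangle$ of Theorem \ref{matching}; then the difference of the two sides is a section supported on $\mathcal B^\times$ with controlled germs, and a density/finiteness argument (the stalk computations show the relevant spaces of germs are finite-dimensional) pins it down. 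But the most transparent route remains the Mellin-transform comparison above, and the real work is simply carrying the constants faithfully.
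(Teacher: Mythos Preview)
Your reduction to $h=1$ rests on the claim that ``the action of $h\in\mathcal H(G,K)$ multiplies the Mellin transform by the Satake polynomial $\hat h$ evaluated at the parameter attached to $\chi$,'' and this is where the argument fails on the torus side. The Mellin transform you use is the one on $\mathcal B^\times = F^\times$ that characterizes $\mathcal G$ in the baby case; but the coordinate $\xi$ on $T\backslash G\sslash T$ is an algebraic invariant (essentially a cross-ratio), not a torus variable inside $G$, and integrating an orbital integral against $\chi^{-1}(\xi)$ does \emph{not} project onto a single unramified representation of $G$. Concretely, the relative characters $J_\pi(\xi)$ for the torus RTF are not of the form $\chi_\pi(\xi)$; different $\pi$'s contribute to every Mellin component, so the Hecke action is not diagonal in this basis. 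You are conflating two distinct decompositions: the additive Mellin transform on $\mathcal B$ (which diagonalizes $\mathcal G$) and the spectral decomposition of the RTF by representations of $G$ (which diagonalizes the Hecke action). They do not coincide for $\mathcal S(\mathcal Z)$, so you cannot cancel $h$ from both sides. A secondary issue: your computation of $\check f^0_{\mathcal Z}(\chi)$ via ``the baby-case basic vector near $\xi=0$'' ignores that the Mellin transform is a global integral over $F^\times$ and that $f^0_{\mathcal Z}$ has nontrivial structure near $\xi=-1$ as well; the identification with a Tate integral on $E$ is only local.

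The paper avoids exactly this obstruction by \emph{not} separating $h$ from the test function. It factors the comparison through two intermediate quotients $\mathcal Z_1 = A\backslash G/(A,\eta)$ and $\mathcal W_1$: Jacquet's fundamental lemma gives $h\star f^0_{\mathcal Z} = h\star f^0_{\mathcal Z_1}$ as functions on $\mathcal B^\reg$, then an explicit unfolding (Hecke's method, carried out with $h\star f_1$ rather than just $f_1$) shows $O_\xi(f_2,f_2') = |\xi|\,\mathcal G\bigl(c\mapsto O_c(f_1\otimes f_1')\bigr)$ for the corresponding Whittaker functions $f_2,f_2'$, and finally commutativity of the spherical Hecke algebra lets one slide $h$ past $H_1,H_2$ to identify the result with $h\star f^0_{\mathcal W}$. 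The point is that the appearance of $\mathcal G$ comes from a Fourier transform along a unipotent direction in the unfolding, applied to the \emph{full} function $h\star f_1$, not from any spectral diagonalization; this is what makes the argument uniform in $h$.
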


This could be proven by explicit calculations as follows: On one hand, one can explicitly compute the orbital integrals of characteristic functions of $K$-orbits (or the ``characteristic sections'' $1_{x_nK}\otimes 1_{y_mK}^-$ of the previous subsection); some of those computations are exhibited in section \ref{sec:explicit}. On the other hand, one can use a Casselman-Shalika type formula (which in this case is one of the easiest cases of the general formula computed in \cite{SaSph}) to explicitly describe the Hecke action in terms of those characteristic functions. 

Since this is tedious and not particularly informative, but mainly in order to demonstrate how the contents of the present paper are simply reflections, at the level of orbital integrals, of certain transforms taking place ``upstairs'' at the level of $G$-spaces plus prior work of Jacquet on the results of Waldspurger, we follow a shortcut; it is important to realize, though, that nothing in the present paper depends on the existence of this shortcut, as it could be done directly.

\begin{proof}
 
For the proof we will introduce intermediate ``spaces'' $\mathcal Z_1$ and $\mathcal W_1$ and we will prove ``fundamental lemmas'' for each step in the sequence:
$$ \mathcal S(\mathcal Z) \leftrightarrow \mathcal S(\mathcal Z_1) \leftrightarrow \mathcal S(\mathcal W_1) \leftrightarrow \mathcal S(\mathcal W).$$

Just for this proof, we write $F, F_1, G_1, G$ for the basic functions that have been defined, or will be defined, for the above spaces.

Symbolically, we have:
$$\mathcal Z_1 = A\backslash G/(A,\eta),$$
where $A$ is the split torus of diagonal elements and $\eta(\diag(a,1)) = \eta_{E/F}(a)$ -- in particular, $\mathcal Z_1 = \mathcal Z$ in the split case; and:
$$ \mathcal W_1 = (N,\psi)\backslash G/(N^-,\psi^{-1}),$$
but with different test functions than $\mathcal W$. 

More precisely, now, we define $\mathcal S(\mathcal Z_1)$ as the space of functions on $\mathcal B\smallsetminus\{0,-1\}$ obtained by orbital integrals of elements of the space:
$$ \mathcal S(A\backslash G\times A\backslash G,1\otimes \eta).$$
We use here the same parametrization for $A\backslash G/A$ as discussed in section \ref{sec:torus}, but since there is a nontrivial character $\eta$ we also need to specify representatives for the orbits which allow us to think of orbital integrals as functions on the regular set of $\mathcal B$. For $\Phi_1\otimes \Phi_2 \in \mathcal S(A\backslash G\times A\backslash G, 1\otimes \eta)$ we define:
\begin{equation}
 O_\xi (\Phi_1\otimes \Phi_2) = \int_G \Phi_1\left( \left( \begin{array}{cc} -\xi & 1+\xi  \\ -1 & 1 \end{array}\right) g \right) \Phi_2(g) dg.
\end{equation}

Throughout we assume smooth $\mathfrak o$-models for our groups, and Haar measures arising from residually nontrivial integral volume forms. The ``basic function'' is, of course, the image of $\Phi_1 =$ the characteristic function of $(A\backslash G)(\mathfrak o)$ and $\Phi_2 (a k) = \eta(a)$ for $a\in A, k\in K=G(\mathfrak o)$, $\Phi_2=0$ off $AK$. 

Jacquet has shown in Proposition 5.1 of \cite{JaW1} that there is a ``fundamental lemma for the Hecke algebra'' between $\mathcal Z$ and $\mathcal Z_1$, that is:
\begin{equation}
h\star F = h\star F_1
\end{equation}
for all $h\in \mathcal H(G,K)$ and $\xi\in \mathcal B^\reg_{\mathcal Z} = \mathcal B\smallsetminus\{0,-1\}$. The parametrization of orbits is different in loc.cit., as are the volumes, but in the end there is no need to normalize by volume factors -- as can easily be checked by taking $\xi\in \mathcal B_{\mathcal Z}^\reg(\mathfrak o)$.

Now we introduce the space $\mathcal S(\mathcal W_1)$, or rather just its basic vector $G_1$. This space will consist of the orbital integrals of certain smooth -- but not Schwartz -- sections of $\mathcal L_\psi\boxtimes \mathcal L_\psi^{-1}$ over $X\times X$, where $X=N\backslash G$. The basic vector $G_1$ will be obtained from the section $\Phi_1\otimes \Phi_2$, where $\Phi_1 = H_1\star 1_{x_0K}$ and $\Phi_2 = H_2 \star 1_{y_0K}^-$, in the notation of \S \ref{ssbasic}; Here $H_1$ and $H_2$ are the formal series in the Hecke algebra corresponding to the $L$-values:
$$ L(\pi,\frac{1}{2})$$
and 
$$ L(\pi\otimes\eta,\frac{1}{2}),$$
respectively. How to make sense of $\Phi_1$, $\Phi_2$ as sections is completely analogous to the discussion of \S \ref{ssbasic}. 

We claim that there is a ``fundamental lemma for the Hecke algebra'' between $\mathcal S(\mathcal W_1)$ and $\mathcal S(\mathcal W)$, that is:
\begin{equation}\label{brown}
 h\star G_1 = h\star G
\end{equation}
for all $h\in \mathcal H(G,K)$ and $\xi\in \mathcal B^\reg_{\mathcal W} = \mathcal B^\times$. It is convenient here to move to the domain of convergence by introducing a parameter $s$, i.e.\ functions $H_i^s$ defined as before, with the $L$-values taken at $\frac{1}{2}+s$ instead of $\frac{1}{2}$; we let $G_1^s$ the corresponding function of orbital integrals. Then, writing $H_1^s = \sum_n c(n,s) h_n$, $H_2^s = \sum_n d(n,s) h_n$, where $h_n$ is the Hecke element corresponding to the $n$-th dominant weight of the dual group, we have:
$$  h\star G_1^s(\xi) = \sum_{m,n} c(m,s) d(n,s) O_\xi (h\star h_m\star 1_{x_0 K}, h_n \star 1_{y_0K}^-)$$
for $\Re(s)$ large, by the fact that for such $s$ the regular orbital integrals are actual, convergent, integrals.

For an element $h$ in the full Hecke algebra of $G$, we denote by $h^\vee$ its linear dual: $h^\vee(g) = h(g^{-1})$. Elements of the spherical Hecke algebra of $\PGL_2$ are all self-dual. Since orbital integrals are invariant by the diagonal action of $G$, we get:
$$h\star G_1^s(\xi) = \sum_{m,n} c(m,s) d(n,s) O_\xi (h_n\star h\star h_m\star 1_{x_0 K}, 1_{y_0K}^-) = $$ $$ =O_\xi (H_2^s\star h \star H_1^s \star 1_{x_0 K}, 1_{y_0K}^-).$$

Finally, using the commutativity of the spherical Hecke algebra, this is equal to: $O_\xi (h\star H_1^s \star H_2^s \star 1_{x_0 K}, 1_{y_0K}^-) = h\star G^s$. Hence, $h\star G_1^s = h\star G^s$ for $\Re(s)$ large.

Taking the limit (analytic continuation) as $s\to 0$ we obtain (\ref{brown}). Taking the limit is justified as follows: on one hand, the sections $H_1\star 1_{x_0K}$ etc.\ are, by definition, pointwise limits of the sections $H_1^s \star 1_{x_0 K}$ etc. On the other, for given $\xi$ and sufficiently large $m$ \emph{or} sufficiently large $n$ we have $O_\xi\left((h_m 1_{x_0K})\otimes (h_n\star 1_{y_0K}^-)\right)=0$; this will be seen in \S \ref{ssorbital-characteristic}. 

We are left with showing the fundamental lemma for the passage $\mathcal S(\mathcal Z_1)\leftrightarrow \mathcal S(\mathcal W_1)$. To achieve that we will work on the level of spaces, and translate the ``unfolding'' method of Hecke to orbital integrals. 

Let $f_1 \in \mathcal S(A\backslash G,\delta^s)$, $f_1' \in \mathcal S(A\backslash G,\eta \delta^s)$. Recall that $\delta^s(\diag(a,1)) = |a|^s$. 

We define: 
$$f_2(g) = \int_N f_1(ng) \psi^{-1}(n) dn \in C^\infty(N\backslash G,\psi)$$ and:
$$f_2'(g)  = \int_N f_1'(nwg) \psi^{-1}(n) dn \in C^\infty(N^-\backslash G,\psi^{-1}),$$
where $w = \left(\begin{array}{cc} & 1 \\ -1 \end{array}\right)$.

We claim:
\begin{lemma}
If $f_1$ is the basic function of $A\backslash G$, i.e.\ the function supported on $AK$ with $f_1'(ak) = \delta^s(a)$, then $f_2 =  L(\bullet,\frac{1}{2}+s) \star 1_{x_0K}$, where by the $L$-value we mean the corresponding element of the Hecke algebra $\mathcal H(G,K)$. If $f_1'$ is the basic function of $(A\backslash G,\eta\delta^{s})$, i.e.\ the function supported on $AK$ with $f_1'(ak) = \eta\delta^{-s}(a)$, then $f_2' = L(\pi\otimes \eta, \frac{1}{2}+s)\star 1_{y_0K}^-$.
\end{lemma}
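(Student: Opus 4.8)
The plan is to prove the two identities by evaluating both sides as functions on $G$. Since $f_2$ is left $(N,\psi)$-equivariant and right $K$-invariant, it is determined by its values on the torus elements $\diag(\varpi^n,1)$, $n\in\Z$, via the Iwasawa decomposition $G=NAK$; the claimed right-hand side $L(\pi,\tfrac12+s)\star 1_{x_0K}$ is a section of $\mathcal L_\psi$ of the same type, so it is enough to match the two on these representatives. The same reduction applies to $f_2'$, with $N^-$ in place of $N$.

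First I would compute $f_2$ directly:
\[
f_2\!\left(\diag(t,1)\right)=\int_F f_1\!\left(\left(\begin{smallmatrix} 1 & x\\ & 1\end{smallmatrix}\right)\left(\begin{smallmatrix} t & \\ & 1\end{smallmatrix}\right)\right)\psi^{-1}(x)\,dx=\int_F f_1\!\left(\left(\begin{smallmatrix} t & x\\ & 1\end{smallmatrix}\right)\right)\psi^{-1}(x)\,dx .
\]
Because the basic function $f_1$ is supported on $AK$ and transforms on the left under $A$ by $\delta^{s}$, a short Iwasawa-type computation (using $\left(\begin{smallmatrix} t & x\\ & 1\end{smallmatrix}\right)=\diag(t,1)\left(\begin{smallmatrix} 1 & x/t\\ & 1\end{smallmatrix}\right)$) shows that this matrix lies in $AK$ precisely when $\val(x)\ge\val(t)$, and that there $f_1$ takes the value $|t|^{s}$. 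Hence the integral collapses to $|t|^{s}\int_{\varpi^{\val(t)}\mathfrak o}\psi^{-1}(x)\,dx$; since $\psi$ has conductor $\mathfrak o$ and $\operatorname{vol}(\mathfrak o)=1$ (here one uses that $F/\QQ_p$ is unramified, so no discriminant factor survives), this equals $|t|$ if $\val(t)\ge 0$ and $0$ otherwise. Therefore $f_2(\diag(\varpi^n,1))=q^{-n(s+1)}$ for $n\ge 0$ and vanishes for $n<0$, i.e.\ $f_2=\sum_{n\ge 0}q^{-n(s+1)}1_{x_nK}$ as a section of $\mathcal L_\psi$ (the sum is pointwise finite, hence legitimate for every $s$). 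On the other hand, writing $L(\pi,\tfrac12+s)=\sum_{n\ge 0}q^{-n(s+\frac12)}h_n$ in $\mathcal H(G,K)$ under the Satake isomorphism and invoking the Casselman--Shalika identity $h_n\star 1_{x_0K}=q^{-n/2}1_{x_nK}$ of \S\ref{ssbasic}, one gets $L(\pi,\tfrac12+s)\star 1_{x_0K}=\sum_{n\ge 0}q^{-n(s+1)}1_{x_nK}$, which is exactly $f_2$.

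For $f_2'$ I would run the same argument after conjugating by $w=\left(\begin{smallmatrix} & 1\\ -1 & \end{smallmatrix}\right)$: this interchanges $N$ with $N^-$ and carries $\diag(\varpi^n,1)$ to $\diag(1,\varpi^n)$, which is what forces the defining character of the basic function of $A\backslash G$ to be $\eta_{E/F}\delta^{-s}$ rather than $\delta^{s}$. The only genuinely new ingredient is the twist by $\eta_{E/F}$: in the unfolded integral it produces a factor $\eta_{E/F}(t)$, so the $n$-th value acquires the sign $\epsilon^n$, where $\epsilon=\eta_{E/F}(\varpi)\in\{\pm 1\}$. Matching this against the expansion $L(\pi\otimes\eta,\tfrac12+s)=\sum_{n\ge 0}\epsilon^{n}q^{-n(s+\frac12)}h_n$ together with the analogous Casselman--Shalika identity $h_n\star 1_{y_0K}^-=q^{-n/2}1_{y_nK}^-$ for $N^-$ yields $f_2'=L(\pi\otimes\eta,\tfrac12+s)\star 1_{y_0K}^-$.

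The conceptual content is the classical \emph{unfolding} (the twisted $N$-average of the spherical section of $A\backslash G$ is the spherical Whittaker function, whose values are the coefficients of the relevant $L$-function); the main obstacle is purely bookkeeping: pinning down the normalization of the Haar measures on $F$ and on $G$ so that no volume constants survive, keeping straight whether the basic section is built from $\delta^{s}$ or $\delta^{-s}$ and how the $w$-conjugation reverses the sign of the torus exponent, and locating exactly where $\eta_{E/F}(\varpi)$ enters in the $N^-$ computation. None of these is deep, but all must line up for the two geometric series of coefficients to coincide.
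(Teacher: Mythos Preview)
Your proof is correct and takes a genuinely different route from the paper's. You compute $f_2$ directly by unfolding the $N$-integral on each torus representative $\diag(\varpi^n,1)$, reducing to a Gauss-sum-type evaluation of $\int_{\varpi^n\mathfrak o}\psi^{-1}(x)\,dx$, and then match the resulting geometric series $\sum_{n\ge 0} q^{-n(s+1)}1_{x_nK}$ against the explicit expansion of $L(\pi,\tfrac12+s)\star 1_{x_0K}$ via Casselman--Shalika. The paper instead argues spectrally: it writes $f_2=\sum_n c(n)\,h_n\star 1_{x_0K}$ with unknown coefficients, pairs against the normalized spherical Whittaker function $W_\pi$ of an arbitrary unramified $\pi$, and recognizes the resulting integral $\int_A W_\pi(a)\delta^s(a)\,da$ as the local Hecke--Rankin integral, known to equal $L(\pi,\tfrac12+s)$ up to volume factors; since this holds for every $\hat\pi$, the coefficients are determined. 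Your approach is more elementary and self-contained (it does not invoke the Mellin transform of $W_\pi$ as a black box), while the paper's approach makes the ``unfolding'' provenance of the identity transparent and avoids tracking the support condition and the measure of $\varpi^n\mathfrak o$ by hand. Both ultimately rest on Casselman--Shalika, but yours uses it once at the end, whereas the paper uses it implicitly inside the evaluation of the torus integral of $W_\pi$.
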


\begin{proof}
Assume that $\pi$ is an irreducible unramified representation and $W_\pi$ the spherical Whittaker function of $\pi$ with respect to $(N,\psi^{-1})$, normalized so that $W_\pi(1) =1$. Since by the Casselman-Shalika formula each $1_{x_n K}$ is up to a constant a multiple of $h_n\star 1_{x_0 K}$ we can write $f_2$ as a formal sum:
$$ \sum_{n=0}^\infty c(n) h_n\star 1_{x_0K},$$
so that if the integral: 
$$ \int_{N\backslash G} W_\pi(g) f_2(g) dg$$
is convergent, it is equal to: $$\sum_n c(n) \tr V_n(\hat \pi) \int W_\pi(g) 1_{x_0}(g) dg = \Vol(N\backslash G(\mathfrak o)) \sum_n c(n) \tr V_n(\hat \pi),$$
where $V_n$ is the $n$-th irreducible representation of the dual group and $\hat \pi$ the Satake parameter of $\pi$. Therefore we just need to compute this integral.

We write:
$$\int_{N\backslash G} W_\pi(g) f_2(g) dg = \int_G W_\pi(g) f_1(g) dg = $$
$$=\int_{A\backslash G} \int_A W_\pi(ag) \delta^{s}(a) da f_1(g) dg = \Vol(A\backslash G(\mathfrak o)) \int_A W_\pi(ag) \eta\delta^{s}(a) da.$$
It is well-known (and follows easily from the Casselman-Shalika formula) that the last integral is absolutely convergent for $\Re(s)> -\frac{1}{2}$, and equal to $\Vol(A(\mathfrak o)) L(\pi,\frac{1}{2}+s)$. This implies the claim, since $\frac{\Vol(A\backslash G(\mathfrak o))\Vol(A(\mathfrak o))}{\Vol(N\backslash G(\mathfrak o))} = \Vol(N(\mathfrak o))=1.$

This proves the lemma for $f_2$, and the proof for $f_2'$ is identical.
\end{proof}

We continue with the proof of Theorem \ref{FL}.  By the previous lemma, when $s=0$, we have $h\star F_1(\xi) = O_\xi (f_1\otimes f_1')$ and $G_1(\xi) = O_\xi(f_2 \otimes f_2')$ when $f_1 = h\star$(the basic function of $(A\backslash G,\delta^s)$) and $f_1' = $(the basic function of $(A\backslash G,\eta\delta^{s}$)).  We want to investigate the relationship between orbital integrals for $f_1\otimes f_1'$ and those for $f_2\otimes f_2'$, when $s=0$; as before, those will be the analytic continuation of the ones for $\Re(s)\gg 0$, where they are given by convergent integrals. We denote the Fourier transform of $f_1$ along $N$:
$$ \hat f_1(y, g) = \int_N f_1(ng) \psi^{-1}_y(n) dn,$$
where $\psi_y\left(\left(\begin{array}{cc} 1 & x \\ & 1 \end{array}\right)\right) = \psi(yx)$, and similarly for $\hat f_1'(y,g)$.

Clearly, $f_2(g) = \hat f_1 (1,g)$, $f_2'(g) = \hat f_1'(1,wg)$. Moreover, $\hat f_1(y,\diag(a,1)g) = |a|^{s+1} \hat f_1 (ay,g)$ and $\hat f_1'(y,w\diag(a,1)(g)) = \eta(a) |a|^{-s-1} \hat f_1'(a^{-1}y,g)$. Hence we have:
$$ O_\xi(f_2,f_2')  = \int_G \hat f_1(1, \diag(\xi,1) g) \hat f_1'(1,wg) dg = $$
$$ = |\xi|^{s+1} \int_{A\backslash G} \int_{F^\times} \hat f_1(a\xi, g) \hat f_1'(a^{-1}, wg) \eta(a) da dg$$

The function $\xi \mapsto \int_{F^\times} \hat f_1(a\xi, g) \hat f_1'(a^{-1}, wg) \eta(a) da$ can be seen as an orbital integral on $\Ga^2$ with respect to the action of the multiplicative group: $a\cdot (x,y) = (ax,a^{-1}y)$. Thus, we are in the split ``baby case'' of section \ref{sec:babycase}, except that we also have a character $\eta(a)$ in the orbital integrals. Moreover, we are applying those orbital integrals to the Fourier transform of a Schwartz function on $\Ga^2$ (indeed, the restrictions of $f_1,f_1'$ to unipotent orbits are Schwartz functions). We have then seen in \ref{sec:babycase} (for the case $\eta=1$, but the case $\eta\ne 1$ is similar) that:

\begin{equation*} \int_{F^\times} \hat f_1(a\xi, g) \hat f_1'(a^{-1}, wg) \eta(a) da = 
 \end{equation*}
\begin{equation}
 =\mathcal G\left( c\mapsto \int_{F^\times} f_1\left(\left(\begin{array}{cc}1 & c a\\ & 1 \end{array}\right)g\right) f_1'\left(\left(\begin{array}{cc}1 & a^{-1}\\  & 1 \end{array}\right)wg \right) \eta(a) da \right). 
\end{equation}

It follows that: 
$$ O_\xi(f_2,f_2') =  |\xi|^{s+1}  \cdot $$
$$ \cdot \mathcal G\left( c\mapsto  \int_{A\backslash G} \int_{F^\times} f_1\left(\left(\begin{array}{cc}1 & c a\\ & 1 \end{array}\right)g\right) f_1' \left(\left(\begin{array}{cc}1 & a^{-1}\\  & 1 \end{array}\right)wg \right) \eta(a) da dg\right) =$$
$$ = |\xi|^{s+1} \mathcal G\left( c\mapsto  \int_{A\backslash G} \int_{F^\times} f_1\left(\left(\begin{array}{cc}1 & c \\ & 1 \end{array}\right)\left(\begin{array}{cc}a^{-1} & \\ & 1 \end{array}\right)g\right)\right. \cdot  $$ 
$$ \cdot \left. f_1' \left(\left(\begin{array}{cc}1 & 1\\  & 1 \end{array}\right)\left(\begin{array}{cc} a & \\ & 1 \end{array}\right)wg \right) da dg\right) =$$
$$ = |\xi|^{s+1} \mathcal G\left( c\mapsto  \int_G f_1\left(\left(\begin{array}{cc}1 & c \\ & 1 \end{array}\right)g\right) f_1' \left(\left(\begin{array}{cc}1 & 1\\  & 1 \end{array}\right)wg \right) dg \right)   = $$
$$ = |\xi|^{s+1} \mathcal G\left( c\mapsto O_c (f_1\otimes f_1')\right).$$

This proves the theorem. 
\end{proof}

\section{Variation with a parameter and explicit calculations} \label{sec:explicit}

For global applications we will not be able to use the space of nonstandard sections for the Kuznetsov quotient directly. The reason is that, spectrally, they correspond to values of $L$-functions on the critical line, where global Euler products are non-convergent. We therefore need to introduce variations of this space, corresponding to the parameter $s$ in:
$$L(\pi,\frac{1}{2}+s) L(\pi\otimes \eta,\frac{1}{2}+s).$$

We conclude with this, and some explicit calculations.

\subsection{Nonstandard Whittaker space depending on $s$.} 

We generalize the definitions of \S \ref{ssnonstandard} to an arbitrary parameter $s\in \CC$ (the previous case corresponding to $s=0$), borrowing freely notation from there. \

We let $\mathcal M^s(\bar X\times X, \mathcal L_\psi\boxtimes \mathcal L_\psi^{-1})$ (resp.\ $\mathcal S^s(\bar X\times X, \mathcal L_\psi\boxtimes \mathcal L_\psi^{-1})$) denote the Schwartz cosheaf over $\bar X\times X$ consisting of smooth measures (resp.\ functions) on $X\times X$, valued in $\mathcal L_\psi\boxtimes \mathcal L_\psi^{-1}$, with the following properties:
\begin{itemize}
 \item the restriction of the cosheaf to $X\times X$ coincides with the standard cosheaf of Schwartz measures (resp.\ functions) valued in $\mathcal L_\psi\boxtimes \mathcal L_\psi^{-1}$; 
 \item in a neighborhood of $\mathbb P^1\times X$ they are finite sums of the form $\sum_i f_i F_i$, where:
\begin{enumerate}
 \item 
 the $f_i$'s are $\mathcal L_\psi\boxtimes\mathcal L_\psi^{-1}$-valued Schwartz functions on $\bar X\times X$;
 \item the $F_i$ are scalar-valued measures (resp.\ functions) on $X\times X$ which are $G$-invariant in the second coordinate, and in the first coordinate are annihilated asymptotically by the operator:
\begin{equation}\label{sasymptotics}
 \left(1- \delta^{-\frac{1}{2} - s} (a) \mathscr L_a\right)\cdot \left(1- \eta_{E/F}\delta^{-\frac{1}{2}-s} (a)\mathscr L_a\right).
\end{equation}
\end{enumerate}
\end{itemize}

We let $\mathcal M(\mathcal W^s)$ denote the $G$-coinvariants of $\mathcal M^s(\bar X\times X, \mathcal L_\psi\boxtimes \mathcal L_\psi^{-1})$. Again, using the trivializations of Lemma \ref{Kuztrivialization}, we have a map from $\mathcal M(\mathcal W^s)$ to measures on $\mathcal B^\times$. Finally, we identify those with functions on $\mathcal B^\times$, by dividing them by $|\xi|^{-2}d\xi$ (see the discussion following Lemma \ref{Kuzintegration}), and get the local Schwartz space $\mathcal S(\mathcal W^s)$ of the Kuznetsov trace formula with parameter $s$, consisting of functions on $\mathcal B^\times$. This is the space of orbital integrals of elements of $\mathcal S^s(\bar X\times X, \mathcal L_\psi\boxtimes \mathcal L_\psi^{-1})$.

\subsection{Basic vector} 

We now come to the setting of \S \ref{ssbasic}, adopting (until the end of the section) all the conventions and notation from there. In particular, $F$ is nonarchimedean and we have good integral models, measures, and isomorphisms for everything. We only denote here by $X$ what was denoted there by $X_2$; namely, the space $N\backslash \PGL_2$. We defined in \ref{ssbasic} certain sections $\Phi_s^0$ of $\mathcal L_\psi\boxtimes\mathcal L_\psi^{-1}$ over $X\times X$. In analogy with Lemma \ref{lemmabasicOK} we have:

\begin{lemma}
 The section $\Phi_s^0$ belongs to $\mathcal S^s(\bar X\times X, \mathcal L_\psi\boxtimes \mathcal L_\psi^{-1})$.
\end{lemma}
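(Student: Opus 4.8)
The plan is to follow the proof of Lemma \ref{lemmabasicOK} essentially verbatim, simply carrying the extra parameter $s$ through the bookkeeping. First I would note that the second tensor factor $\Phi_2^0 = 1_{y_0K}^-$ is compactly supported on $X$, hence a genuine Schwartz section of $\mathcal L_\psi^{-1}$; it therefore contributes nothing to the behaviour near $\PP^1\times X$, and $\Phi_s^0 = \Phi_{1,s}^0\otimes\Phi_2^0$ lies in $\mathcal S^s(\bar X\times X,\mathcal L_\psi\boxtimes\mathcal L_\psi^{-1})$ as soon as $\Phi_{1,s}^0$, viewed as a smooth section of $\mathcal L_\psi$ on $X$, has the asymptotic behaviour near $\PP^1$ demanded in \S\ref{ssnonstandard} but with the operator \eqref{sasymptotics} (i.e.\ with $\delta^{-\frac12-s}$ in place of $\delta^{-\frac12}$). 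That $\Phi_{1,s}^0$ is well defined as a smooth section whenever $1-\epsilon q^{-2s-1}\neq 0$ is already the content of Lemma \ref{lemmabasicOK}, so only the asymptotic statement remains to be checked.

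Second, exactly as in the proof of Lemma \ref{lemmabasicOK}, I would write $\Phi_{1,s}^0 = F_{1,s}^0\cdot\sigma$, where $\sigma$ is a fixed smooth section of $\mathcal L_\psi$ extending over $\PP^1$ and $F_{1,s}^0$ is the $K$-invariant scalar function on $X$ obtained from the series \eqref{basicwhittaker} by replacing each section $1_{x_nK}$ with the characteristic function $1_{x_nK}'$ of the $K$-orbit of $\diag(\varpi^n,1)$. Since multiplication by $\sigma$ and its inverse is harmless in a neighbourhood of $\PP^1$, it suffices to prove that
\[
\left(1-\delta^{-\frac12-s}(a)\,\mathscr L_a\right)\left(1-\eta_{E/F}\,\delta^{-\frac12-s}(a)\,\mathscr L_a\right) F_{1,s}^0
\]
is supported on finitely many $K$-orbits of $X$, hence away from $\PP^1$.

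Third, this is a direct computation with the single relation $\mathscr L_{\diag(\varpi^m,1)}\,1_{x_nK}' = q^{m/2}\,1_{x_{n-m}K}'$ together with $\delta(\diag(\varpi,1)) = q^{-1}$ and $\eta_{E/F}(\varpi)=\epsilon$. Taking $a=\diag(\varpi,1)$, the operator $1-\delta^{-\frac12-s}(a)\mathscr L_a$ sends the coefficient sequence $(c(n,s))$ of $F_{1,s}^0$ by $c(n,s)\mapsto c(n,s)-q^{1+s}c(n+1,s)$; plugging in the explicit coefficients of \eqref{basicwhittaker} — in the split case $\epsilon=1$ one uses $q^{1+s}q^{-(n+1)(s+1)}=q^{-n(s+1)}$ and $(n+1)-(n+2)=-1$; in the case $\epsilon=-1$ one notes $1-\eta_{E/F}\delta^{-\frac12-s}(a)\mathscr L_a = 1+\delta^{-\frac12-s}(a)\mathscr L_a$, so the product of the two factors equals $1-\delta^{-1-2s}(a)\mathscr L_{a^2}$, which matches the $\epsilon^n$ pattern in \eqref{basicwhittaker} — the two successive applications telescope and annihilate all coefficients except finitely many boundary terms (in fact a single term, supported on the $K$-orbit of $\diag(\varpi^{-2},1)$). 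The argument is uniform in $s$ away from the zeros of $1-\epsilon q^{-2s-1}$, which is precisely where $\Phi_{1,s}^0$ has been shown to be defined.

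The only slightly delicate point — and the one I expect to require the most care — is the bookkeeping of those boundary terms: one must confirm that the finitely many leftover coefficients are genuinely supported on finitely many $K$-orbits of $X$ (rather than spilling into a neighbourhood of $\PP^1$), so that what remains after applying \eqref{sasymptotics} lies in the ordinary Schwartz cosheaf on $X$. Once this is verified, $F_{1,s}^0$ is asymptotically annihilated by \eqref{sasymptotics}, and the lemma follows.
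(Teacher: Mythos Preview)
Your proposal is correct and follows exactly the approach the paper takes: the paper simply states that the proof ``is identical to that of Lemma \ref{lemmabasicOK},'' and your write-up is precisely that argument with the parameter $s$ carried through. Your extra care about the boundary terms (noting that finitely many survive rather than literal vanishing) is in fact more precise than the paper's shorthand ``$=0$'' in the proof of Lemma \ref{lemmabasicOK}, since the definition of ``asymptotically annihilated'' only requires compact support away from $\PP^1$.
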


The proof is identical to that of Lemma \ref{lemmabasicOK}. We define $f_s^0$ to be the image of $\Phi_s^0$ in $\mathcal S(\mathcal W^s)$. (In comparison to \S \ref{ssbasic}, we omit the index $~_{\mathcal W}$ since here we only work on the Kuznetsov space, and introduce the index $~_s$ so that the previous $f_{\mathcal W}^0$ is now $f_0^0$.)

\subsection{Orbital integrals for the characteristic sections}\label{ssorbital-characteristic}

Recall that $1_{x_m K}$ denotes a certain compactly supported section of $\mathcal L_\psi$ defined in \S \ref{ssbasic}, and $1_{y_m K}^-$ a compactly supported section of $\mathcal L_\psi^{-1}$. Now we compute the orbital integral:
$$ O_{\xi} (1_{x_m K} \otimes  1_{y_0 K}^-) $$
for $\xi \in F^\times$. We also identify $\xi$ with the representative $\left(\begin{array}{cc} \xi \\ & 1\end{array}\right)$ of $N\backslash G/N^-$, according to \S \ref{ssorbital-Kuz}.

We have:
$$ O_{\xi} (1_{x_m K} \otimes  1_{y_0 K}) = \int_{N^-\backslash G} \int_{N^-} 1_{x_m K} (\xi n g) \psi^{-1}(n) dn \cdot 1_{y_0K}^-(g) dg =$$
$$ \Vol(X(\mathfrak o)) \int_{N^-} 1_{x_m K} (\xi n) \psi^{-1}(n) dn.$$

Let $n=\left(\begin{array}{cc} 1&  \\ x  & 1 \end{array}\right)\in N$, 
then $\xi n$ admits the following Iwasawa decomposition ($G=NAK$):
\begin{itemize}
 \item if $|x|\le 1$: then $\xi\in A$, $n\in K$;
 \item if $|x|>1$: then $\xi n = \left(\begin{array}{cc}  1 & \xi x^{-1} \\ & 1 \end{array}\right)\left(\begin{array}{cc} -\xi x^{-1}  &   \\ & x \end{array}\right)
\left(\begin{array}{cc}  & 1  \\ 1& x^{-1} \end{array}\right).$
\end{itemize}

Therefore, 
$$ \int_{N^-} 1_{x_m K} (\xi n) \psi^{-1}(n) dn = $$ $$ = 1_{x_m K} \left(\begin{array}{cc}  \xi &  \\ & 1 \end{array}\right) + \sum_{i=1}^\infty 1_{x_m K} \left(\begin{array}{cc}  \xi \varpi^i &  \\ & \varpi^{-i} \end{array}\right) \int_{\mathfrak p^{-i}\smallsetminus \mathfrak p^{-i+1}} \psi(\xi x^{-1} -x) dx.
$$

Thus we get:
\begin{itemize}
 \item if $|\xi|=q^{-m}$: $O_{\xi} (1_{x_m K} \otimes 1_{y_0K}^-)= \Vol X(\mathfrak o)$;
 \item if $|\xi|=q^{2i-m}$ for some $i>0$: 
$$O_{\xi} (1_{x_m K} \otimes 1_{y_0K}^-)= \Vol X(\mathfrak o) \int_{\mathfrak p^{-i}\smallsetminus \mathfrak p^{-i+1}} \psi(\xi x^{-1} -x) dx;$$
 \item zero otherwise.
\end{itemize}
For the integral in the second case, we have $|\xi x^{-1}| = |x| q^{-m} \le |x|$. Hence:
\begin{itemize}
 \item If $m\ge 1$ and $i>1$ then as $x$ varies in a ball of radius $q$, $\xi x^{-1}$ varies in a ball of radius $\le 1$, therefore the integral is zero.
 \item If $m\ge 1$ and $i=1$, i.e.\ $|\xi|=q^{2-m}$ then as $x$ varies in $\mathfrak p^{-1}\smallsetminus \mathfrak o$, $\psi(\xi x^{-1})=1$ and we get: $O_{\xi} 1_{x_m K} = -\Vol X(\mathfrak o)$.
 \item Finally, if $m=0$ and $|\xi|>1$ then we get (with a change of variables $x\mapsto -x$): $O_{\xi} 1_{x_m K} = \Vol X(\mathfrak o) \int_{|x|^2=|\xi|} \psi(x-\xi x^{-1}) dx$.
\end{itemize}

To summarize:

\begin{equation}\label{orbital-characteristic}
 O_{\xi} (1_{x_m K}\otimes 1_{y_0 K}^-) = \Vol X(\mathfrak o) \cdot
\begin{cases}
1 & \mbox{ if } |\xi|=q^{-m};\\
-1 & \mbox{ if } |\xi|=q^{2-m}, m\ge 1;\\
\int_{|x|^2=|\xi|} \psi(x-\xi x^{-1}) dx & \mbox{ if } |\xi|>1, m=1.
\end{cases}
\end{equation}
 
\begin{remark}
 We notice that for any $\xi$ and sufficiently large $m$ we have $O_\xi(1_{x_mK}\otimes 1_{y_0K}) = 0$. Thus, for an element $\Phi\in \mathcal S^s(\bar X\times X, \mathcal L_\psi\boxtimes \mathcal L_\psi^{-1})^{K\times K}$ which can be written as a series:
$$\Phi = \sum_{m\ge 0} c(m) (1_{x_mK}\otimes 1_{y_0K}^-),$$
a regular orbital integral $O_\xi(\Phi)$ can be written as an eventually stabilizing series (compare with Lemma \ref{stabilizing}):
$$ O_\xi(\Phi) = \sum_{m\ge 0} c(m) O_\xi (1_{x_mK}\otimes 1_{y_0K}^-).$$
\end{remark}

\subsection{Orbital integrals of the basic function}

Recall that the basic vector $f_s^0\in \mathcal S(\mathcal W^s)$ is obtained by the orbital integrals of $\Phi_s^0 = (H_s \star 1_{x_0K}) \otimes 1_{y_0K}^-$, where $H_s$ is the formal series in the Hecke algebra corresponding to the unramified $L$-factor $L(\pi,\frac{1}{2}+s) L(\pi\otimes \eta, \frac{1}{2}+s)$. We compute its regular orbital integrals, according to the previous remark.

\begin{lemma}\label{blue}
We have:
\begin{equation*}
 f_s^0(\xi) = O_\xi (H_s\star 1_{x_0K} \otimes 1_{y_0K}) = 
\end{equation*}
\begin{equation}
 = \Vol(X(\mathfrak o)) L(\eta,2s+1)\left(|\xi|^{s+1} \cdot (I-q^{-2s-1} \varpi^2\cdot) f(\xi) + 1_{|\xi|=q^2}  + \Kl(\xi)\right), 
\end{equation}
where:
\begin{itemize}
 \item $\Kl(\xi)$ denotes the function which is supported on $|\xi|>1$ and equal to: $\int_{|x|^2=|\xi|} \psi(x-\xi x^{-1}) dx$ there. ($\Kl$ stands for ``Kloostserman''.)
 \item $f$ is the function supported on $|\xi|\le 1$ and equal, there, to:
 $$ \begin{cases}
              1-\log_q|\xi| & \mbox{ in the split case,}\\
              \frac{1+\eta(\xi)}{2} & \mbox{ in the non-split case;}
             \end{cases}$$
 \item the action of $\varpi^2$ is normalized as in (\ref{unitaryaction}).
\end{itemize}
\end{lemma}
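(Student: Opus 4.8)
The plan is to combine the explicit formula \eqref{basicwhittaker} for the basic Whittaker section with the computation \eqref{orbital-characteristic} of the orbital integrals of the characteristic sections $1_{x_mK}\otimes 1_{y_0K}^-$, summed according to the stabilization remark in \S\ref{ssorbital-characteristic}. Concretely, writing $\Phi_s^0 = \sum_{m\ge 0} c(m,s)\, (1_{x_mK}\otimes 1_{y_0K}^-)$ with $c(m,s)$ the coefficients read off from \eqref{basicwhittaker} (namely, up to the overall factor $\frac{1}{1-\epsilon q^{-2s-1}} = L(\eta,2s+1)$ in the notation of the lemma, the coefficient is $q^{-m(s+1)}\cdot(m+1)$ in the split case and $q^{-m(s+1)}$ for $m$ even, $0$ for $m$ odd, in the non-split case), we have
\begin{equation*}
 f_s^0(\xi) = \sum_{m\ge 0} c(m,s)\, O_\xi(1_{x_mK}\otimes 1_{y_0K}^-),
\end{equation*}
an eventually-stabilizing sum for each fixed $\xi$. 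The formula \eqref{orbital-characteristic} tells us that for $m\ge 1$ only the values $|\xi| = q^{-m}$ and $|\xi| = q^{2-m}$ contribute a $\pm\Vol X(\mathfrak o)$, while for $m=0$ we additionally pick up the Kloosterman term $\Kl(\xi)$ supported on $|\xi|>1$.

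First I would isolate the Kloosterman contribution: only $m=0$ produces it, with coefficient $c(0,s) = L(\eta,2s+1)\Vol X(\mathfrak o)$ (after pulling out $\Vol X(\mathfrak o)$), giving exactly the $\Vol(X(\mathfrak o))L(\eta,2s+1)\Kl(\xi)$ summand. Next I would collect, for a fixed $\xi$ with $|\xi| = q^{-k}$, $k\in\Z$, the ``diagonal'' contributions $|\xi|=q^{-m}$, i.e.\ $m=k$ (only when $k\ge 0$), and the ``shifted'' contributions $|\xi|=q^{2-m}$, i.e.\ $m = k+2$ (only when $k+2\ge 1$, i.e.\ $k\ge -1$), which come with a minus sign. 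This produces, for $|\xi|\le 1$, the combination $c(k,s) - c(k+2,s)$ (times $\Vol X(\mathfrak o)$, after the $L$-factor is pulled out); the point is that $|\xi|^{s+1}$ equals $q^{-k(s+1)}$, so $c(k,s) - c(k+2,s) = \Vol X(\mathfrak o)\,L(\eta,2s+1)\,|\xi|^{s+1}\big(a_k - q^{-2(s+1)} a_{k+2}\big)$ where $a_k = k+1$ (split) or $a_k = \frac{1+(-1)^k}{2}$ (non-split) is precisely the value of the function $f$ at $\xi$, and $q^{-2(s+1)}a_{k+2} = q^{-2s-1}\cdot q^{-1}a_{k+2}$ is the value of $(q^{-2s-1}\varpi^2\cdot)f$ at $\xi$ under the unitary normalization \eqref{unitaryaction} — recall $(\varpi^2\cdot f)(\xi) = |\varpi^2|^{1/2} f(\varpi^2\xi) = q^{-1} f(\varpi^2\xi)$ and $|\varpi^2\xi| = q^{-k-2}$. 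Thus the diagonal-plus-shifted terms assemble to $|\xi|^{s+1}(I - q^{-2s-1}\varpi^2\cdot)f(\xi)$. Finally I would treat the boundary index: when $k = -1$, i.e.\ $|\xi| = q$, there is a shifted contribution from $m = k+2 = 1$ but no diagonal one (since $k<0$), and $c(1,s) = \Vol X(\mathfrak o)L(\eta,2s+1)q^{-(s+1)}\cdot a_1$ with $a_1 = 2$ (split) or $0$ (non-split); one checks this precisely matches the $1_{|\xi|=q^2}$ term after accounting for how $(I - q^{-2s-1}\varpi^2\cdot)f$ behaves near the support boundary of $f$ — essentially, extending $f$ by zero to $|\xi|>1$ introduces a spurious term at $|\xi|=q^2$ through the $\varpi^2$-shift, which is exactly cancelled/supplied by the $1_{|\xi|=q^2}$ correction. (Here I am using $a_2 = 3$ in the split case; the arithmetic $q^{-(s+1)}\cdot 2 $ versus $-q^{-2s-1}\cdot q^{-1}\cdot 3$ at $|\xi|=q$ needs to be reconciled with the claimed clean split into $|\xi|^{s+1}(I-q^{-2s-1}\varpi^2\cdot)f$ plus $1_{|\xi|=q^2}$ — this bookkeeping near $|\xi|\in\{1,q,q^2\}$ is where I would be most careful.)

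The main obstacle I anticipate is precisely this boundary bookkeeping at $|\xi|\in\{q^{-1},1,q,q^2\}$: the three displayed pieces of the lemma ($|\xi|^{s+1}(I-q^{-2s-1}\varpi^2\cdot)f$, $1_{|\xi|=q^2}$, and $\Kl$) overlap in their supports and one must verify that the telescoping of $c(k,s) - c(k+2,s)$ against the explicit values of $a_k$ reproduces each regime (in particular that the shifted term at $|\xi|=q$ coming from $m=1$, and any contribution at $|\xi|=q^0=1$, land correctly). Everything else is routine: the convolution computation leading to \eqref{basicwhittaker} is already done in \S\ref{ssbasic}, the orbital integrals \eqref{orbital-characteristic} are already established, and the normalization factor $L(\eta,2s+1) = (1-\epsilon q^{-2s-1})^{-1}$ is just the geometric-series prefactor in \eqref{basicwhittaker}. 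So the proof reduces to: (i) cite \eqref{basicwhittaker} and \eqref{orbital-characteristic}; (ii) sum, observing the sum stabilizes; (iii) carry out the telescoping $c(k)-c(k+2)$ and match it to $(I-q^{-2s-1}\varpi^2\cdot)f$ under the unitary normalization; (iv) verify the $m=0$ and $m=1$ boundary terms give $\Kl$ and $1_{|\xi|=q^2}$ respectively.
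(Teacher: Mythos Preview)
Your approach is correct and is exactly the paper's: combine the coefficients from \eqref{basicwhittaker} with the orbital integrals \eqref{orbital-characteristic} and sum. The only muddle is in your boundary discussion, so let me straighten that out.

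Write $|\xi|=q^{-k}$. For $k\ge 0$ the first two cases of \eqref{orbital-characteristic} pick out $m=k$ (with sign $+1$) and $m=k+2$ (with sign $-1$), giving $c(k,s)-c(k+2,s)$; your telescoping computation then matches this to $|\xi|^{s+1}\,(I-q^{-2s-1}\varpi^2\cdot)f(\xi)$, exactly as you wrote. For $k=-1$ (i.e.\ $|\xi|=q$) only the second case applies, with $m=1$, contributing $-c(1,s)$; but this is \emph{still} what $|\xi|^{s+1}(I-q^{-2s-1}\varpi^2\cdot)f(\xi)$ gives, since $f(\xi)=0$ while $f(\varpi^2\xi)=a_1$. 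So there is nothing to reconcile at $|\xi|=q$, and $a_2=3$ plays no role there; drop that parenthetical.

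The genuine boundary is at $k=-2$, i.e.\ $|\xi|=q^2$. There the second case of \eqref{orbital-characteristic} would require $m=0$, which is excluded by the condition $m\ge 1$; so there is \emph{no} shifted contribution. But the expression $|\xi|^{s+1}(I-q^{-2s-1}\varpi^2\cdot)f(\xi)$ nonetheless produces $q^{2(s+1)}\cdot(-q^{-2(s+1)}a_0)=-1$ (since $f(\varpi^2\xi)=a_0=1$ in both cases). The term $1_{|\xi|=q^2}$ precisely cancels this spurious $-1$. That is the entire content of the correction term, and it is what the paper's one-line proof says: ``there is no contribution from $1_{x_0K}$ when $|\xi|=q^2$, and this is what the second term is correcting.''

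In short: your steps (i)--(iii) are fine; in step (iv) the Kloosterman term comes from $m=0$ (not $m=1$: the ``$m=1$'' in the displayed \eqref{orbital-characteristic} is a typo for $m=0$, as the text before it makes clear), and the $1_{|\xi|=q^2}$ correction compensates for the forbidden $m=0$ in the \emph{second} case, not for anything at $|\xi|=q$.
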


\begin{proof}
 Indeed, for given $\xi$ with $|\xi|=q^{-n}$, the first term expresses the contributions of $1_{x_nK}$ and $1_{x_{n+2}K}$ whenever those are nonzero, according to the first two cases of (\ref{basicwhittaker}). However, there is no contribution from $1_{x_0K}$ when $|\xi|=q^2$, and this is what the second term is correcting. The third term expresses the contribution of $1_{x_0K}$ when $|\xi|>1$. 
\end{proof}

 \appendix

\begin{appendices}

\section{Almost smooth functions, Schwartz and tempered functions} \label{app:almostsmooth}

\subsection{Almost smooth functions} The space of smooth functions on a real manifold has the structure of a Fr\'echet space. We would like, for the purpose of uniformity, to define a similar Fr\'echet space of functions for a $p$-adic manifold $X$, i.e.\ for a topological space equipped with an atlas of ``$p$-adic analytic functions'', which is locally isomorphic to the ring of $p$-adic analytic functions on $\mathfrak o^n$ (where, as usual $\mathfrak o$ denotes the ring of integers of a local nonarchimedean field $F$). The usual notions of ``locally constant'' and ``uniformly locally constant'' (when there is some uniform structure) functions do not lead to Fr\'echet spaces. We are going to define a new class of functions, which in this appendix will be called ``almost smooth'' and in the rest of the paper, for simplicity, just ``smooth''. Also, in this appendix we will be denoting the space of these functions by $C^\pinfty$, but in the rest of the paper just by $C^\infty$. Finally, for any statement 
about ``almost smooth'' functions in this appendix, when applied to real manifolds, the word ``almost'' should be disregarded; and moreover, complex manifolds and varieties will be considered as real manifolds/varieties.

Almost smooth functions will form a sheaf for the usual Hausdoff topology on $X$, and therefore it is enough to describe them locally around each point $x\in X$. 

We choose an analytic chart for a neighborhood $U$ of $x$, so that it becomes isomorphic to $\mathfrak o^n$ with its ring of analytic functions. Then we identify $C^\pinfty(U)$ with $C^\pinfty(\mathfrak o^n)$, the space of \emph{almost smooth} functions on $\mathfrak o^n$, defined as those complex-valued functions of the form: 
\begin{equation}\label{sumofsmooth}
 f = \sum_{i\ge 0} f_i
\end{equation}
on $\mathfrak o^n$, where $f_i$ is invariant under $\mathfrak p^i\times \cdots\times \mathfrak p^i$ and for every $N>0$ there is a scalar $C$ such that $\Vert f_i\Vert_\infty < C q^{-iN}$ for all $i$. It is a Fr\'echet space under any of the following equivalent systems of seminorms:

\begin{lemma}
 On the space of continuous functions on $\mathfrak o^n$, the following seminorms define \emph{tamely equivalent}\footnote{Recall that a tame Fr\'echet space is a Fr\'echet space with a presentation as an inverse limit of Banach spaces $B_n$, and a map $T:\lim_{\from} B_n\to \lim_{\from} B_n'$ is tame  if there are integers $b$ and $r$ so that for all $n\ge b$ the map $T$ is continuous from $B_{n+r}$ to $B'_n$.} Fr\'echet spaces:
\begin{enumerate}
 \item $\Vert f\Vert_N := \sup_{i\ge -1,x\in\mathfrak o^n} q^{iN} |f(x)-K_i\star f(x)|$;
 \item $\Vert f\Vert_N := \sup_{i\ge -1,x\in\mathfrak o^n} q^{iN} |K_{i+1} \star f(x)-K_i\star f(x)|$;
 \item $\Vert f\Vert_\infty$, and $\Vert f\Vert_N := \sup_{x\in\mathfrak o^n}\sup_{y\in \mathfrak o^n\smallsetminus\{0\}} |y|^{-N} |f(x) - f(x+y)|$  \\ (where $|(y_1,\dots,y_n)| = \sup_i |y_i|$). 
\end{enumerate}
Here $K_i$ is the characteristic measure of $\mathfrak p^i\times \cdots\times \mathfrak p^i$, convolution is in the additive group $\mathfrak o^n$, and by convention $K_{-1}\star f = 0$.
\end{lemma}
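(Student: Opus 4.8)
The plan is to take system (2) --- whose level-$N$ seminorm controls the successive averages $K_{i+1}\star f - K_i\star f$ --- as a reference and to compare systems (1) and (3) to it by telescoping and summing geometric series; throughout I would pass, as one may for tame Fréchet structures, to the monotonized seminorms $\sup_{k\le N}\Vert\cdot\Vert_k$. First I would record two elementary facts. (i) $K_i\star{}$ is the averaging projection onto functions invariant under $\mathfrak{p}^i\times\cdots\times\mathfrak{p}^i$, so that $K_i\star K_j\star f = K_{\min(i,j)}\star f$ and, crucially, $K_i\star f(x+y)=K_i\star f(x)$ whenever $y\in\mathfrak{p}^i\times\cdots\times\mathfrak{p}^i$. (ii) For continuous $f$ one has $K_i\star f\to f$ uniformly as $i\to\infty$, by uniform continuity on the compact group $\mathfrak{o}^n$; hence the telescoping identity $f - K_i\star f = \sum_{j\ge i}(K_{j+1}\star f - K_j\star f)$ holds with uniform convergence.

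For (1)$\leftrightarrow$(2): the telescoping identity gives $q^{iN}|f(x)-K_i\star f(x)| \le q^{iN}\sum_{j\ge i}q^{-j(N+1)}\Vert f\Vert_{N+1}^{(2)} \lesssim_N \Vert f\Vert_{N+1}^{(2)}$ for $i\ge 0$, while the $i=-1$ term is $q^{-N}|f(x)|\le\Vert f\Vert_\infty$ which is itself controlled by $\Vert f\Vert_1^{(2)}$ (expand $f = K_0\star f + \sum_{j\ge 0}(K_{j+1}\star f - K_j\star f)$ and use the $i=-1$ term of system (2) on $K_0\star f$); so $\Vert f\Vert_N^{(1)}\lesssim_N\Vert f\Vert_{N+1}^{(2)}$, a tame estimate with shift $1$. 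Conversely $K_{i+1}\star f - K_i\star f = (f-K_i\star f)-(f-K_{i+1}\star f)$ immediately yields $\Vert f\Vert_N^{(2)}\le 2\Vert f\Vert_N^{(1)}$, shift $0$. For (1)$\leftrightarrow$(3): writing $K_i\star f(x)=\int_{\mathfrak{p}^i\times\cdots\times\mathfrak{p}^i}f(x+y)\,dy$ for the normalized measure gives $|f(x)-K_i\star f(x)|\le\sup_{0<|y|\le q^{-i}}|f(x)-f(x+y)|\le\Vert f\Vert_N^{(3)}q^{-iN}$, so system (3) dominates system (1) with shift $0$ (the $i=-1$ term absorbed by $\Vert f\Vert_\infty$). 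In the other direction, if $|y|=q^{-k}$ then $y\in\mathfrak{p}^k\times\cdots\times\mathfrak{p}^k$, so by (i) $K_k\star f(x+y)=K_k\star f(x)$ and hence $f(x)-f(x+y)=(f(x)-K_k\star f(x))-(f(x+y)-K_k\star f(x+y))$, giving $|y|^{-N}|f(x)-f(x+y)|\le 2\Vert f\Vert_N^{(1)}$; together with $\Vert f\Vert_\infty\le\Vert f\Vert_0^{(1)}$ this bounds system (3) by system (1) with shift $0$. Thus all three systems are tamely equivalent, with shifts at most $1$.

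It remains to check that the common domain of finiteness is a Fréchet space and in fact equals the space $C^\pinfty(\mathfrak{o}^n)$ of \eqref{sumofsmooth}. Given such an $f$, the pieces $f_i:=K_i\star f - K_{i-1}\star f$ (with $K_{-1}\star f=0$) are invariant under $\mathfrak{p}^i\times\cdots\times\mathfrak{p}^i$, satisfy $\sum_{i\ge 0}f_i=f$, and decay rapidly in $\Vert\cdot\Vert_\infty$ by the estimates above, so $f\in C^\pinfty(\mathfrak{o}^n)$; conversely, for $f=\sum_i f_i$ as in \eqref{sumofsmooth} the vanishing $f_i-K_j\star f_i=0$ for $j\ge i$ and the crude bound $|f_i-K_j\star f_i|\le 2\Vert f_i\Vert_\infty$ for $j<i$ give $\Vert f\Vert_N^{(1)}<\infty$ for all $N$. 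Completeness is then automatic: each seminorm is lower semicontinuous under uniform convergence, so a sequence Cauchy in all seminorms is in particular uniformly Cauchy and converges to its uniform limit in every seminorm. The only genuine content is this dictionary between "$f$ has rapidly decaying $\mathfrak{p}^i$-invariant pieces" and "$f-K_i\star f$ decays rapidly in $i$"; once that is set up, the remaining work is routine summation of geometric series, and the one mildly delicate point I would watch is the (bounded) loss of a level incurred by the telescoping, which is exactly what makes the equivalence merely tame.
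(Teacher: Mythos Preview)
Your proof is correct and complete. The paper states this lemma without proof, so there is nothing to compare against; your argument via telescoping and the averaging-projection identity $K_k\star f(x+y)=K_k\star f(x)$ for $|y|\le q^{-k}$ is exactly the natural one, and your tracking of the shift (at most $1$, coming only from the geometric summation in the $(2)\Rightarrow(1)$ direction) is accurate.
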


It is clear that the Fr\'echet structure is preserved under analytic automorphisms, thus the notion of an almost smooth function on a $p$-adic manifold is well-defined. 
Notice that, like smooth functions, these ``almost smooth'' functions have vanishing derivatives, for any reasonable notion of ``derivative'', for instance for any $Z\in \mathfrak o^n$ we have:
$$\lim_{t\to 0} \frac{f(tZ)-f(0)}{|t|} = 0.$$
Therefore, \emph{any statements about derivatives in the nonarchimedean case, throughout the paper, should be taken to concern only the zeroth derivative}. However, we will encode the issue of how fast a function varies in what we will call ``pseudo-derivatives'', a notion that is related to the seminorms defined above.

\subsection{Semialgebraic sets and charts} \label{ss:semialgebraic} We recall that a semialgebraic set on a real algebraic variety $X$ is obtained by a boolean combination (i.e.\ by taking unions and complements a finite number of times) of subsets of $X(\RR)$ given by an inequality of the form $f\ge 0$, where $f$ is a regular function. For a smooth algebraic variety $X$ over a nonarchimedean field $F$, on the other hand, semialgebraic sets are defined as boolean combinations of sets of the form:
$$ \{x\in X(F) | f(x) \in P_k\},\mbox{ where }P_k=\{y^k|y\in F\},$$
$f$ is a regular function, and $k\in \mathbb N_{\ge 2}$, cf.\ \cite{Denef-semialgebraic}.
By definition, a map: $X\to Y$ between semialgebraic sets is called semialgebraic if its graph is semialgebraic.

The above sets are the basic closed sets for the \emph{restricted topology} of semi-algebraic sets (restricted means: only finite unions of open sets are required to be open), and this is the topology we will be using when talking about ``open'' and ``closed'' sets and neighborhoods, unless otherwise specified.

Notice that, in general, the notion of closure is not well-behaved for restricted topologies. However, for semialgebraic sets the following is true: The closure of a semialgebraic set in the usual (Hausdoff) topology is closed semialgebraic; hence, the notion of closure is well behaved, and closure in semialgebraic topology coincides with closure in the Hausdorff topology.

By a smooth semialgebraic set we will mean an open subset of the points of a smooth variety. (One can more generally define ``semialgebraic manifolds'', but we will not need this.) For the description of tempered functions, we will need to introduce a notion of ``semialgebraic chart'' for smooth semialgebraic set $U$ in the nonarchimedean case. By a \emph{semialgebraic chart} of $U$ we mean a finite partition into open-closed subsets: $U=\sqcup_j U_j$ and, for every $j$, a semi-algebraic isomorphism $\alpha_j: V_j\xrightarrow{\sim} U_j$ with an open semialgebraic subset of $F^n$ \emph{which is $\mathfrak o^n$-stable} (under addition in $F^n$).

\begin{lemma}
 Semialgebraic charts exist.
\end{lemma}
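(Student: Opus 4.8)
The plan is to combine the local structure theory of smooth varieties over $F$ — which produces local semialgebraic models by polydiscs $\cong\mathfrak{o}^n$ — with the $p$-adic cell decomposition theorem of \cite{Denef-semialgebraic}, which is what makes the partition \emph{finite}. Write $X$ for the ambient smooth variety, $n=\dim X$, $U\subseteq X(F)$ open.

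I would first record the local model. Fix $x\in U$. By smoothness there are a Zariski neighborhood of $x$ and regular functions $y_1,\dots,y_n$ whose differentials form a basis of the cotangent space at $x$, so $\phi=(y_1,\dots,y_n)$ is étale at $x$. The nonarchimedean inverse function theorem (Hensel's lemma) shows that for $m\gg 0$ the map $\phi$ restricts to an analytic isomorphism of the polyball $B_x=\{\,|y_i-y_i(x)|\le q^{-m},\ i=1,\dots,n\,\}$ onto the polydisc $\prod_i\bigl(y_i(x)+\varpi^m\mathfrak{o}\bigr)$; shrinking $m$ we may assume $B_x\subseteq U$, and then $B_x$ is clopen in $U$. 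Composing with the affine rescaling $z\mapsto(z-y(x))/\varpi^m$ identifies $B_x$ with $\mathfrak{o}^n$, which is $\mathfrak{o}^n$-stable. This identification is semialgebraic, not merely analytic: its graph is cut out by the polynomial equations expressing $z$ through the $y_i$ together with the inequalities $|y_i-y_i(x)|\le q^{-m}$, hence is semialgebraic, and the inverse, whose graph is obtained by swapping the two factors, is then semialgebraic as well.

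The globalization is the substantive point, since $U$ need not be quasi-compact in the restricted topology — already $U=F$ is not a finite union of proper polyballs — so one cannot simply extract a finite subcover from $\{B_x\}_{x\in U}$. Instead I would regard $U$ as a semialgebraic set (via a finite affine atlas of $X$) and apply cell decomposition, obtaining a finite partition $U=\bigsqcup_j C_j$ into cells. Because $U$ is open in $X(F)$ and of pure dimension $n$, the decomposition may be refined so that each $C_j$ is clopen in $U$, is $n$-dimensional, and carries among the ambient coordinates a distinguished $n$-element subset serving as étale coordinates; thus $C_j$ is semialgebraically isomorphic to a cell $C_j'\subseteq F^n$ of the form $\prod_{i=1}^n\Lambda_i$ with each $\Lambda_i$ either all of $F$, a ball $c_i+\varpi^{m_i}\mathfrak{o}$, or an annulus $\{\,q^{a_i}<|t-c_i|\le q^{b_i}\,\}$ — no power-coset condition $\lambda P_k$ can survive, as such a condition cuts out a set with empty interior while $C_j$ is open. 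Now $F$ is already $\mathfrak{o}$-stable, a ball rescales affinely to $\mathfrak{o}$, and an annulus is the disjoint union of the finitely many spheres $\{\,|t-c_i|=q^{e}\,\}$ it contains, each of which is in turn the disjoint union of the finitely many balls one step smaller that it contains — here the finiteness of the residue field is used in an essential way. Taking products, each $C_j'$ is a finite disjoint union of clopen boxes, each affinely isomorphic to an $\mathfrak{o}^n$-stable open of $F^n$; assembling over $j$ gives the desired finite semialgebraic chart.

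The main obstacle is therefore not the local models, which are immediate from smoothness and Hensel's lemma, but producing a finite partition into pieces that are simultaneously open and closed despite the failure of quasi-compactness; this is exactly what $p$-adic cell decomposition supplies, and the reduction of cells to $\mathfrak{o}^n$-stable boxes relies crucially on $\mathbb{F}_q$ being finite. The one routine fact glossed over above — that the cell decomposition of an open, pure-dimensional set can be refined to consist of clopen $n$-cells with no power-coset constraints — follows from the standard observation that in the nonarchimedean setting balls, spheres, and their complements are all clopen.
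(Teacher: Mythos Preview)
Your strategy --- replacing the paper's use of resolution of singularities by Denef's $p$-adic cell decomposition --- is a legitimate alternative, but the execution contains a genuine error. You assert that ``no power-coset condition $\lambda P_k$ can survive, as such a condition cuts out a set with empty interior.'' This is false: $P_k = \{y^k : y \in F\}$ equals $\{0\} \cup (F^\times)^k$, and $(F^\times)^k$ is an \emph{open} subgroup of finite index in $F^\times$, so its nonzero cosets are open in $F$. A constraint $t - c \in \lambda P_k$ therefore cuts out an open set, and such constraints certainly occur in cells contained in open semialgebraic sets. Your ``routine fact glossed over'' is thus not routine, and the justification you offer (balls, spheres, and their complements being clopen) does not touch it. A second, smaller oversimplification: Denef cells are not products $\prod_i \Lambda_i$ but have a fibered structure with semialgebraic centers $c(x)$ depending on the base variables; one must first straighten them via $(x,t) \mapsto (x, t - c(x))$ before your box argument applies.

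Both gaps are repairable. After subtracting centers and refining annuli to spheres $\{|t|=q^e\}$, the intersection with a coset $\lambda P_k$ is a finite union of balls (since $(F^\times)^k \cap \mathfrak o^\times$ has finite index in the compact group $\mathfrak o^\times$), so the power-coset constraint is absorbed into a further finite partition and your argument goes through. The paper, by contrast, avoids cell decomposition entirely: it reduces to a basic set $\{f \notin P_k\} \cap \mathfrak o^n$, observes that the condition is uniformly locally constant away from the zero locus $\{f=0\}$ (hence handled by a single rescaling), and invokes Hironaka's embedded resolution of singularities near $\{f=0\}$ to make $f$ locally monomial, after which the claim is elementary.
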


\begin{proof}
 It is easy to see that any smooth semialgebraic set in the nonarchimedean case is isomorphic to a finite disjoint union of open semialgebraic subsets of $F^n$, so it remains to consider the case that $U\subset F^n$, in order to show that one can find an $\mathfrak o^n$-invariant chart. 

 For simplicity, we only show that this is the case for the basic open set $\{x|f(x)\notin P_k\}$ where $P_k$ is the set of $k$-th powers of elements of $F$ as above and $f$ is a polynomial; the general case is only notationally more complicated. We may even restrict to the intersection of this set with $\mathfrak o^n$, by partitioning the set and inverting coordinates as appropriate. Away from any neighborhood of the zero set of $f$ (in $\mathfrak o^n$) the condition: $f(x)\notin P_k$ is locally constant in $x$, hence uniformly locally constant, hence multiplying the coordinates by a suitable scalar will give the required chart. We are left with finding an $\mathfrak o^n$-invariant chart for a neighborhood of the zero set $Z\subset \mathfrak o^n$ of $f$. By a resolution of singularities (which will be recalled in the next appendix), we can replace a neighborhood of $Z$ by a compact semialgebraic set $V$ of the same dimension, so that the pullback of $f$ is, in semialgebraically-local coordinates $(y_1,\dots,
y_n)$, 
of the form: $c\cdot y_1^{i_1}\cdots y_n^{i_n}$. Then the claim is easy to show.
\end{proof}

\subsection{Schwartz functions} \label{ss:schwartzfunctions}

If $U$ is an open semialgebraic subset of (the points of) a real or $p$-adic variety, we will define the space $\mathcal S(U)$ of \emph{Schwartz functions} on $U$ as a space of smooth (in the archimedean case), resp.\ almost smooth functions (in the nonarchimedean). The definition in the archimedean case is well-known, but to construct its analog for the nonarchimedean we need to take into account not only the growth of $f$, but also the growth of the summands $f_i$ of an expression as in (\ref{sumofsmooth}). For this, we will introduce the following analog of differential operators:

\begin{definition}
 Let $U$ be an open subset of the points of a smooth $p$-adic variety and $\mathcal C:=(U_i,V_i,\alpha_i)_i$ a semialgebraic chart of $U$. For each almost smooth function $f$ on $U$ and each $N\ge 0$ we define the \emph{$N$-th pseudoderivative of $f$ with respect to $\mathcal C$} to be equal to $f$ if $N=0$, and otherwise:
\begin{equation}
 f^{\mathcal C,(N)} (x) = \sup_{y\in \mathfrak o^n\smallsetminus\{0\}} |y|^{-N} |f(x) - f(x+y)|,
\end{equation}
where the ``sum'' $x+y$ should be interpreted in terms of the chart $\mathcal C$ -- i.e.\ it really means: $\alpha_i(\alpha_i^{-1}x+y)$ for $x\in U_i$.
\end{definition}

It is easy to prove:
\begin{lemma}
 If $\mathcal C$, $\mathcal C'$ denote two different charts, for every $N$ there is a semialgebraic function $T$ such that:
$$ |f^{\mathcal C, (N)}(x)| \le |T(x)| \cdot |f^{\mathcal C', (N)}(x)|.$$
\end{lemma}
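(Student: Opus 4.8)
\emph{Plan.} The inequality is local on $U$ and insensitive to refining the partitions involved, so I would first replace the two charts by a common semialgebraic refinement: writing $\mathcal C=(U_i,V_i,\alpha_i)$ and $\mathcal C'=(U'_j,V'_j,\alpha'_j)$, work on a single piece $W\subseteq U_i\cap U'_j$ carrying two semialgebraic isomorphisms $\alpha\colon V\xrightarrow{\sim}W$ and $\alpha'\colon V'\xrightarrow{\sim}W$ with $V,V'\subseteq F^n$ open and $\mathfrak o^n$-stable. Write $x\oplus_{\mathcal C}y:=\alpha(\alpha^{-1}(x)+y)$ for the chart-translation used in the definition of the pseudoderivative. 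Everything is then governed by the transition map $\psi:=\alpha'^{-1}\circ\alpha\colon V\xrightarrow{\sim}V'$: for $x\in W$, $v=\alpha^{-1}(x)$ and $y\in\mathfrak o^n$, set $w:=\psi(v+y)-\psi(v)$, so that $x\oplus_{\mathcal C}y=x\oplus_{\mathcal C'}w$ and hence $|f(x)-f(x\oplus_{\mathcal C}y)|=|f(x)-f(x\oplus_{\mathcal C'}w)|\le|w|^{N}f^{\mathcal C',(N)}(x)$ \emph{provided} $w\in\mathfrak o^n$. Thus it suffices to produce a semialgebraic $T_0$ on $W$ with $w\in\mathfrak o^n$ and $|w|\le|T_0(x)|\,|y|$ for all $y\in\mathfrak o^n$: then $|y|^{-N}|f(x)-f(x\oplus_{\mathcal C}y)|\le|T_0(x)|^{N}f^{\mathcal C',(N)}(x)$, and taking the supremum over $y$ and setting $T:=T_0^{N}$ (pulled back to $U$) gives the claim.

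The core estimate concerns the metric distortion of the semialgebraic bijection $\psi$. Here I would invoke $p$-adic cell decomposition / resolution of singularities (recalled in the next appendix; cf.\ \cite{Denef-semialgebraic}) to partition $V$ into finitely many semialgebraic cells on each of which $\psi$ and $\psi^{-1}$ are given by convergent power series. On such a cell the Taylor expansion $\psi(v+y)=\psi(v)+D\psi(v)\,y+(\text{higher order in }y)$ holds, and the ultrametric inequality shows that once $|y|$ is below a semialgebraic threshold $|\rho(v)|$ the linear term dominates, so $|w|=|D\psi(v)\,y|$; since $\psi$ is a bijection, $\det D\psi(v)\ne0$ generically, and the operator norms of $D\psi(v)$ and $D\psi(v)^{-1}$ are, via Cramer's rule, absolute values of polynomials in the (polynomial, hence semialgebraic) entries of the Jacobian, so they are semialgebraic functions of $v$. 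This yields the bound $|w|\le|T_0(v)|\,|y|$ for small $y$, and the same applied to $\psi^{-1}$ gives the reverse bound, which is what makes the two-sided comparison — and hence the chart-independence of the Schwartz/tempered function spaces — go through. That $w$ actually lies in $\mathfrak o^n$ is checked using that transition maps of a $p$-adic manifold are analytic and compatible with the canonical lattice, so that at lattice points $D\psi$ lies in $\GL_n(\mathfrak o)$ and $\psi$ carries $\mathfrak o^n$-cosets into $\mathfrak o^n$-cosets.

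The step I expect to be the main obstacle is making all of this \emph{uniform}: the cells produced by the decomposition accumulate toward $\partial V$, where the threshold $|\rho(v)|$ shrinks and $|T_0(v)|$ may blow up, and one must verify that $\mathcal C$-steps $y\in\mathfrak o^n$ which $\psi$ stretches out of $\mathfrak o^n$ (so that the naive inequality $|f(x)-f(x\oplus_{\mathcal C'}w)|\le|w|^{N}f^{\mathcal C',(N)}(x)$ does not literally apply) are either excluded by the lattice-compatibility just mentioned or absorbed into $T$ by the same Jacobian estimate, with the constants varying only semialgebraically. Granting this, one sums the finitely many cells and the finitely many pieces $W$ of the common refinement and pulls everything back to $U$ to obtain the single semialgebraic function $T$ asserted in the statement.
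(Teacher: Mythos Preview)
The paper offers no proof beyond the preamble ``It is easy to prove'', so there is nothing substantive to compare against. Your outline is the natural one, but the step you flag as the obstacle is a genuine gap that cannot be closed: you justify $w\in\mathfrak o^n$ by appealing to analyticity and lattice-compatibility of transition maps, but the charts here are \emph{semialgebraic}, not analytic, and need not respect any lattice. In fact the pointwise inequality as stated is false. Take $U=F$, $\mathcal C$ the identity chart, $\mathcal C'$ the chart $\alpha'(v')=\varpi v'$ (both valid: $V=V'=F$ is $\mathfrak o$-stable and scaling is semialgebraic), and $f=1_{\varpi\mathfrak o}$. Then $f^{\mathcal C',(N)}\equiv 0$ for every $N\ge 1$, since $x$ and $x+\varpi y'$ always lie in the same $\varpi\mathfrak o$-coset, whereas $f^{\mathcal C,(N)}(1)=1$ (take $y=-1$); no semialgebraic $T$ can repair this.

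What the corollary on chart-independence of the Schwartz and tempered classes actually requires is a seminorm comparison, not a pointwise one. Your distortion bound $|w|\le|T_0(x)|\,|y|$ is correct (by quantifier elimination, the function $v\mapsto\sup_{y\in\mathfrak o^n\setminus\{0\}}|\psi(v+y)-\psi(v)|/|y|$ is semialgebraic). When $w\notin\mathfrak o^n$ one must pay with the $N=0$ seminorm of $f$ over a ball of semialgebraically-controlled radius around $x$ rather than with $f^{\mathcal C',(N)}(x)$; combining this with the case $w\in\mathfrak o^n$ yields an estimate of the shape
\[
\|T\,f^{\mathcal C,(N)}\|_\infty \ \le\ C\bigl(\|T_1 f\|_\infty+\|T_2 f^{\mathcal C',(N)}\|_\infty\bigr)
\]
for suitable semialgebraic $T_1,T_2$, which is exactly what the Schwartz and tempered definitions need.
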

As a corollary, the notions of Schwartz and tempered functions that we are about to define do not depend on the choice of chart; we will omit the chart from the notation for pseudoderivatives from now on.

Now we define Schwartz functions on $U$. We recall that a ``Nash differential operator'' is a ``smooth semialgebraic'' differential operator, cf.\ \cite{AG-Nash}. In particular, the growth of these operators is bounded, locally for the semialgebraic topology, by regular functions.

\begin{definition}
 The space $\mathcal S(U)$ of Schwartz functions on $U$ consists of those smooth functions on $U$, in the archimedean case, resp.\ almost smooth in the nonarchimedean case, with the property:
\begin{itemize}
 \item for every Nash differential operator $D$ on $U$, in the archimedean case, and for every (equivalently: some) chart $\mathcal C$, every $N\ge 0$ and semialgebraic function $T$, in the nonarchimedean case, the function $Df$, resp.\ $T f^{(N)}$, is bounded.
\end{itemize}
\end{definition}

The space of Schwartz functions on $U$ is naturally a \emph{nuclear Fr\'echet algebra}; its topology is generated by the seminorms:
$$\sup_{x\in U} |Df(x)|,$$
in the archimedean case, where $D$ varies over all Nash differential operators (evidently, a countable number of them suffices), and:
$$\sup_{x\in U} |T(x)f^{(N)}(x)|$$
in the nonarchimedean, where $N\in \mathbb N$ and $T$ varies over all semialgebraic functions on $U$ (again, a countable number suffices).

We will discuss in appendix \ref{app:cosheaves} cosheaf-theoretic properties of Schwartz functions. The following will be a consequence of \ref{lemmaKS}:

\begin{proposition}
 Schwartz functions on $U$ are precisely those functions which, for one, equivalently any, smooth compactification $\bar U$ of $U$ extend to smooth functions (in the archimedean case) resp.\ almost smooth functions (in the non-archimedean case) all of whose derivatives vanish on $\bar U\smallsetminus U$.
\end{proposition}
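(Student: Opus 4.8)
The plan is to reduce the statement to the description of Schwartz functions on $\mathfrak{o}^n$ (or $\RR^n$) via local charts, and then to characterize them by a resolution-of-singularities argument. First I would fix a smooth compactification $\bar U$ of $U$ and observe that, since any two smooth compactifications are dominated by a common one, it suffices to prove the equivalence for a single convenient choice; moreover, by the cosheaf-theoretic results of appendix \ref{app:cosheaves} (specifically \ref{lemmaKS}), Schwartz functions form a cosheaf and the question of whether $f\in\mathcal{S}(U)$ is local on $\bar U$. Thus I would cover $\bar U$ by finitely many semialgebraic charts; away from $\bar U\smallsetminus U$ there is nothing to prove, so I may assume we are working in a chart isomorphic to $\mathfrak{o}^n$ (resp.\ a box in $\RR^n$) with $\bar U\smallsetminus U$ cut out by a monomial $y_1^{i_1}\cdots y_n^{i_n}=0$ — this normal-crossings form is exactly what resolution of singularities provides, and it is already invoked in the proof that semialgebraic charts exist.

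Next, in this monomial model I would prove both inclusions. For the easy direction: if $f$ extends to a smooth (resp.\ almost smooth) function on $\bar U$ vanishing to infinite order along the coordinate hyperplanes, then near $\bar U\smallsetminus U$ it is divisible by arbitrarily high powers of each $y_j$, which forces the decay $|T(x)\,f^{(N)}(x)|$ to be bounded for every semialgebraic $T$ and every $N$ (in the archimedean case: $Df$ bounded for every Nash operator $D$), because a semialgebraic function $T$ on $U$ can blow up at most polynomially in $y_j^{-1}$ as one approaches the boundary. For the converse: if $f\in\mathcal{S}(U)$, then by definition $|T f^{(N)}|$ is bounded for all semialgebraic $T$; taking $T$ to be large powers of $y_j^{-1}$ shows that $f$ and all its pseudo-derivatives decay faster than any power of $\mathrm{dist}(x,\bar U\smallsetminus U)$, hence $f$ extends continuously by zero to $\bar U$, and the decay of the pseudo-derivatives (resp.\ ordinary derivatives) exactly encodes that the extension is almost smooth (resp.\ smooth) with all derivatives vanishing on the boundary. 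In the archimedean case this is the classical fact that a smooth function on a box, all of whose derivatives decay rapidly toward a face, extends smoothly and flatly across that face; in the nonarchimedean case one argues instead with the decomposition $f=\sum_i f_i$ of \eqref{sumofsmooth}, checking that the decay hypotheses on pseudo-derivatives translate into the required bounds $\|f_i\|_\infty<Cq^{-iN}$ on the level sets near the boundary.

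The main obstacle I anticipate is the nonarchimedean converse, i.e.\ verifying that rapid decay of all pseudo-derivatives near $\bar U\smallsetminus U$ really produces an \emph{almost smooth} (as opposed to merely uniformly locally constant) extension with the vanishing property built into the definition of $\mathcal{S}(\bar U)$ relative to $\bar U\smallsetminus U$. This requires carefully matching the two-parameter control (the index $i$ of the approximants $K_i\star f$ versus the distance to the boundary) and is essentially a bookkeeping argument once the monomialization is in place, but it is the step where the equivalence of the three seminorm systems in the lemma above, and the behavior of semialgebraic functions under the chart change, must all be used simultaneously. Everything else — the reduction to charts, the normal-crossings normalization, and the archimedean case — is routine given the appendix results and standard resolution of singularities, so I would present those briefly and concentrate the write-up on this last point.
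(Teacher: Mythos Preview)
Your proposal is correct and follows essentially the same route as the paper. The paper does not give a separate proof of this proposition; it simply states that it ``will be a consequence of \ref{lemmaKS}'', and the proof of Lemma~\ref{lemmaKS} (together with Lemma~\ref{blowup}) proceeds exactly as you outline: one passes to a resolution as in Theorem~\ref{resolution}, reduces to charts where the boundary is cut out by monomial/coordinate conditions, and then verifies directly in that model that ``Schwartz on the complement'' is the same as ``smooth (resp.\ almost smooth) on the whole chart with all derivatives vanishing on the boundary''. Your identification of the nonarchimedean converse as the one step requiring care is accurate, and your sketch of how to handle it (matching the decay of the approximants $K_i\star f$ against distance to the boundary, using the equivalence of the seminorm systems) is the right idea; the paper absorbs this into the phrase ``it is easy to see (by reduction to $\tilde U = F^n$ with $\tilde S$ a standard semialgebraic set \dots)'' in the proof of Lemma~\ref{lemmaKS}.
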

We remind that any statement about derivatives should be understood to apply only to the zeroth derivative in the nonarchimedean case.

\subsection{Tempered functions} \label{ss:temperedfunctions}

\begin{definition}
If $U$ is an open semialgebraic subset of (the points of) a smooth real or $p$-adic variety, we define the space $\mathcal O(U)$ of \emph{tempered functions} on $U$ as those smooth (in the archimedean case), resp.\ almost smooth (in the nonarchimedean) functions $f$ on $U$ with the property:
\begin{itemize}
 \item In the archimedean case, for every Nash differential operator $D$ on $U$ there is a semialgebraic function $T$ on $U$ with $|Df|\le |T|$; in the nonarchimedean, for every (equivalently: one) chart $\mathcal C$ and any $N\in \mathbb N$ there is a semialgebraic function $h$ on $U$ with $|f^{(N)}|\le |T|$.
\end{itemize} 
\end{definition}

The space $\mathcal O(U)$ of tempered functions on $U$ is an algebra which acts on the space of Schwartz functions:
$$ \mathcal O(U)\otimes \mathcal S(U)\to \mathcal S(U).$$

Moreover, each $f\in \mathcal O(U)$ is a bounded operator on $\mathcal S(U)$. We endow $\mathcal O(U)$ with the \emph{strong operator topology} on the Fr\'echet space $\mathcal S(U)$; this way it becomes a locally convex topological algebra. By definition, convergence to zero of a net $(f_\alpha)_\alpha \subset \mathcal O(U)$ in the strong topology means that $f_\alpha\phi\to 0$ for every $\phi \in \mathcal S(U)$. Since $\mathcal S(U)$ is a nuclear, and hence Montel Fr\'echet space (i.e.\ bounded sets are precompact), it is known that this topology coincides with the operator topology of uniform convergence on bounded/compact sets \cite[p.\ 139]{Koethe}. It is easy to describe sequential convergence in this topology:

\begin{lemma}\label{sequentialconvergence}
 For a sequence $f_n\in\mathcal O(U)$ we have $f_n\to 0$ iff:
\begin{enumerate}
 \item $f_n\to 0$ in $C^\infty(U)$ (with the usual Fr\'echet topology of locally uniform convergence of all derivatives), resp.\ in $C^\pinfty(U)$, and
 \item for each Nash differential operator $D$, resp.\ for each chart $\mathcal C$ and integer $N$, there is a semialgebraic function $T$ such that:
$$ |Df_n|\le |T|,\,\,\mbox{ resp.\ }|f_n^{(N)}|\le |T| \mbox{ for all }n.$$
\end{enumerate}
\end{lemma}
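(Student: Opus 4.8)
The plan is to prove the two implications separately; the forward direction, and in particular the derivation of condition (2) from strong convergence, is the substantial part.

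\textbf{The ``if'' direction.} Assume (1) and (2), fix $\phi\in\mathcal S(U)$, and show $f_n\phi\to 0$ in $\mathcal S(U)$. Using a Leibniz-type product estimate — the ordinary product rule for Nash differential operators in the archimedean case, and in the nonarchimedean case the inequality $(f\phi)^{(N)}(x)\le |f(x)|\,\phi^{(N)}(x)+\big(\sup_{z\in x+\mathfrak o^n}|\phi(z)|\big)\,f^{(N)}(x)$ coming from $f(x)\phi(x)-f(y)\phi(y)=f(x)(\phi(x)-\phi(y))+\phi(y)(f(x)-f(y))$ — one bounds each defining seminorm of $f_n\phi$ by a finite sum of terms in which a derivative (resp. pseudoderivative) of $f_n$, of order at most that of the seminorm, is multiplied by an $n$-independent factor that is itself Schwartz (times the semialgebraic weight of the seminorm). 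By (2) all those $f_n$-factors are dominated by a single semialgebraic $T$, so each term is bounded, uniformly in $n$, by (semialgebraic weight)$\,\cdot\,T\,\cdot\,$(Schwartz factor), a bounded function tending to $0$ along the boundary of a smooth compactification $\bar U$. Given $\varepsilon>0$, choose a compact $K\subset U$ outside of which all these bounds are $<\varepsilon$ for every $n$; on $K$, condition (1) — locally uniform convergence of $f_n$ together with all its derivatives, resp. with all its pseudoderivatives, which on compacta is equivalent to $C^\pinfty$-convergence by the tame-equivalence lemma of \S\ref{app:almostsmooth} — shows that the seminorm of $f_n\phi$ restricted to $K$ is $<\varepsilon$ for $n$ large. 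Hence $f_n\phi\to 0$, so $f_n\to 0$ in the strong operator topology.

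\textbf{The ``only if'' direction.} Assume $f_n\to 0$ strongly. For (1): given a compact $K$ contained in a semialgebraic chart and a differential operator $D$ (resp. a pseudoderivative order $N$), pick $\phi\in\mathcal S(U)$ with $\phi\equiv 1$ on a neighborhood of $K$ (resp. of $K+\mathfrak o^n$). Since $f_n\phi\to 0$ in $\mathcal S(U)$ we get $D(f_n\phi)\to 0$ (resp. $(f_n\phi)^{(N)}\to 0$) uniformly, and on $K$ this quantity equals $Df_n$ (resp. $f_n^{(N)}$) because all derivatives of $\phi$ vanish there; thus $f_n\to 0$ in $C^\infty(U)$, resp. $C^\pinfty(U)$. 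For (2): a strongly convergent sequence is pointwise bounded on the barrelled (Fréchet) space $\mathcal S(U)$, so by Banach--Steinhaus the multiplication operators $\{f_n\}$ form an equicontinuous family on $\mathcal S(U)$: for the target seminorm $g\mapsto\sup_x|Dg(x)|$ (resp. $g\mapsto\sup_x|g^{(N)}(x)|$) there are a continuous seminorm $q$ on $\mathcal S(U)$ and $C>0$ with $\sup_x|D(f_n\phi)(x)|\le C\,q(\phi)$ for all $n$ and all $\phi$. The geometric input is then the construction, for every $x_0\in U$, of a cutoff $\phi_{x_0}\in\mathcal S(U)$ equal to $1$ on a neighborhood of $x_0$, supported in a ball whose radius shrinks only semialgebraically as $x_0$ tends to $\bar U\smallsetminus U$, and with $q(\phi_{x_0})\le|T(x_0)|$ for a single semialgebraic $T$; one obtains it from a fixed model bump function after a semialgebraic change of coordinates (using the semialgebraic charts of \S\ref{ss:semialgebraic} and the local description of $U$ at the boundary), the point being that rescaling a bump by a semialgebraic factor multiplies all its seminorms by a semialgebraic factor. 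Since $\phi_{x_0}\equiv 1$ near $x_0$ we have $Df_n(x_0)=D(f_n\phi_{x_0})(x_0)$ (resp. $f_n^{(N)}(x_0)=(f_n\phi_{x_0})^{(N)}(x_0)$), whence $|Df_n(x_0)|\le C\,q(\phi_{x_0})\le C|T(x_0)|$ for all $n$; this is (2).

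\textbf{Main obstacle.} The delicate step is the last one in the forward direction: converting the abstract equicontinuity of $\{f_n\}$ into a pointwise bound, uniform in $n$, on the $f_n$ and their pseudoderivatives by a fixed semialgebraic function. Concretely this means producing the family of cutoffs $\phi_{x_0}$ with seminorms growing at most semialgebraically in $x_0$, which requires a careful local analysis near the boundary divisor of a compactification — reducing, via semialgebraic charts and, if necessary, a resolution of singularities bringing the boundary to normal crossings as in \S\ref{ss:semialgebraic}, to model monomial coordinates in which explicit rescaled bumps can be written down and their pseudoderivatives estimated. Everything else is routine bookkeeping with the product estimates and the split into a compact part plus a boundary tail.
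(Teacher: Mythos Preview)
Your proof is correct, and the overall logical structure (the two implications, with condition (2) in the forward direction being the real content) matches the paper. However, your argument for condition (2) in the ``only if'' direction is genuinely different from the paper's.

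The paper proceeds by contradiction after reducing to $U=F$ via resolution of singularities: if the $f_n$ (or their derivatives/pseudoderivatives) were not uniformly dominated by some $|x|^N$ near infinity, one could build a single Schwartz function $\phi$ for which $\sup_x|f_n(x)\phi(x)|$ stays bounded away from zero, contradicting $f_n\phi\to 0$. This is quick to state, though the construction of that witnessing $\phi$ from a sequence of bad points is left implicit.

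You instead invoke Banach--Steinhaus on the barrelled space $\mathcal S(U)$ to upgrade strong convergence to equicontinuity of the multiplication operators, and then probe pointwise with a family of rescaled cutoffs $\phi_{x_0}$ whose seminorms grow only semialgebraically in $x_0$. This is more systematic and makes the origin of the semialgebraic bound $T$ completely explicit; it also avoids the ad hoc construction of a single $\phi$. The cost is that you have to produce the family $\{\phi_{x_0}\}$ with controlled seminorms, which as you correctly note requires passing to monomial local models via charts and resolution --- essentially the same geometric input the paper uses, just deployed differently. Both routes ultimately rest on the same reduction to normal-crossings coordinates; yours trades a contradiction argument for a uniform-boundedness principle plus an explicit family of test functions.
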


\begin{proof}
 It is clear that such a sequence is a null sequence. Vice versa, it is clear that a null sequence should converge to zero in $C^\infty(U)$, resp.\ $C^\pinfty(U)$. The proof of the second condition is reduced to tempered functions on $F$ by the resolution of singularities that will be recalled in the next appendix (Theorem \ref{resolution}). We prove that all $f_n$ should be bounded by some $|x|^{N}$, for some $N$, in a neighborhood of $\infty$ (the proof for derivatives/pseudoderivatives is similar): if not, there is a Schwartz function $\phi$ on $F$ with $\sup_x |f_n(x)\phi(x)|$ bounded below. Thus, $f_n$ cannot be a null sequence.
\end{proof}

In particular, $\mathcal O(U)$ is sequentially complete. In fact, it can be shown that it is a complete, nuclear topological vector space, but we will not use this.
%*********************************************************

\section{Schwartz cosheaves} \label{app:cosheaves}

In this appendix we formalize certain properties of Schwartz functions. These properties are obvious for the Schwartz functions themselves, but not totally obvious for their coinvariants, hence the language that we are introducing is helpful in analyzing orbital integrals. 

From now on, as in the main body of the text, \textbf{``smooth'' function means ``almost smooth'' at nonarchimedean places}. Throughout this section, $X$ denotes the $F$-points of a smooth algebraic variety over a local field $F$, ``closed'' and ``open'' refer to the restricted topology of semialgebraic sets.

Finally, to avoid repeating the same dichotomy again and again, any mention of \textbf{``a Nash differential operator $D$'' should be understood, in the nonarchimedean case, as} the data consisting of:
\begin{enumerate}
 \item a semialgebraic chart $\mathcal C$;
 \item an integer $N\ge 0$;
 \item a semialgebraic function $T$.
\end{enumerate}
Then, a statement about the function $Df$ should be replaced by the analogous statement about the function $|T|f^{(N)}$, as in the definition of Schwartz functions in Appendix \ref{app:almostsmooth}. On the other hand, we keep our convention that \textbf{any statement about derivatives should be understood to apply only to the zeroth derivative in the nonarchimedean case}. By consistently using the phrases ``Nash differential operator'' and ``derivative'', this should cause no confusion.

\subsection{The sheaf of tempered functions}

The association $U\to \mathcal O(U)$, where $\mathcal O(U)$ denotes the space of tempered functions on $U$ (\S \ref{ss:temperedfunctions}), is a sheaf of topological algebras on $X$. We will consider it as the ``structure sheaf'', in the sense that all other sheaves will be modules for it.

Except for the general sheaf properties, what is interesting for us now is the following relation between topology and algebra structure: Any closed $S\subset X$ gives rise to the sheaf of ideals $\mathcal J_S\subset \mathcal O$ of functions vanishing on $S$. We denote by $\mathcal J_S^n(U)$ the \emph{closed} ideal of $\mathcal O(U)$ generated by $n$-fold products of elements in $\mathcal J_S(U)$; we write $\mathcal K_S (U) = \cap_n \mathcal J_S^n(U)$. Both $\mathcal J_S^n$ and $\mathcal K_S$ are sheaves on $X$.

\begin{lemma}\label{lemmaKS}
 The sheaf $\mathcal K_S$ is the sheaf of tempered functions which vanish, together with all their derivatives, on $S$; in particular, in the nonarchimedean case $\mathcal K_S = \mathcal J_S$. Equivalently, for each open $U\subset X$ the space $\mathcal K_S(U)$ consists of those tempered functions $f$ on $U$ with the property that for any Nash differential operator $D$ on $U\smallsetminus S$ the function $Df$ is bounded in a neighborhood of $S\cap U$. 
\end{lemma}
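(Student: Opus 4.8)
\textbf{Proof proposal for Lemma \ref{lemmaKS}.}

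The plan is to work locally and reduce everything to the one-variable (or, via resolution of singularities, normal-crossings) situation, where the claim becomes an explicit statement about Taylor coefficients. First I would observe that both sides of the claimed equality are sheaves, so it suffices to check equality of sections over a sufficiently small semialgebraic neighborhood $U$ of an arbitrary point of $S$; away from $S$ there is nothing to prove since $\mathcal J_S^n = \mathcal O$ there. By a resolution of singularities (Theorem \ref{resolution}) we may assume $S\cap U$ is a normal crossings divisor, cut out in suitable semialgebraic-local coordinates $(y_1,\dots,y_n)$ by $y_1\cdots y_r = 0$; replacing $U$ by its components we may even assume $S = \{y_1=0\}$ and then, by an easy induction on the number of coordinates, reduce to the model case $X = F$, $S = \{0\}$, so that $\mathcal J_S$ is generated by the coordinate function $t$ and $\mathcal J_S^n$ is the closed ideal generated by $t^n$.

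In this model case I would argue as follows. The inclusion of the sheaf of tempered functions vanishing to infinite order along $S$ into $\mathcal K_S$ is the easy direction: if $f$ and all its derivatives vanish at $0$, then by Taylor's theorem with remainder (archimedean) one writes $f = t^n g_n$ with $g_n$ tempered, exhibiting $f\in\mathcal J_S^n(U)$ for every $n$; in the nonarchimedean case ``vanishing of all derivatives'' is just $f(0)=0$, and then $f = t\cdot(f/t)$ with $f/t$ almost smooth and tempered (one checks the pseudoderivative bounds directly, using that $f$ is tempered and vanishes at $0$), so $\mathcal J_S = \mathcal K_S$ trivially in that case --- this already proves the last-but-one sentence of the lemma. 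For the reverse inclusion in the archimedean case: if $f\in\mathcal J_S^n(U)$ for all $n$, then for each $n$, $f = t^n g_n$ with $g_n$ in the \emph{closed} ideal, hence by Lemma \ref{sequentialconvergence} (or directly, the defining bound for tempered functions) $g_n$ is bounded by a semialgebraic function near $0$, so $f$ together with its first $n-1$ derivatives vanishes at $0$; letting $n\to\infty$ gives vanishing of $f$ to infinite order. The identification with the third, ``boundedness of $Df$'' formulation is then a routine consequence: $f$ vanishing to infinite order along $S$ is equivalent, by repeated division by the defining equations and Taylor expansion, to $Df$ being bounded near $S$ for every Nash differential operator $D$ on $U\smallsetminus S$, since such a $D$ has at worst polynomial (semialgebraic) growth in the normal directions.

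The main obstacle I anticipate is the archimedean subtlety that $\mathcal J_S^n$ is by definition the \emph{closed} ideal generated by $n$-fold products, not the algebraic ideal; so one must be careful that membership in this closed ideal still forces the expected vanishing of derivatives. This is handled precisely by the semialgebraic growth bound built into the definition of tempered functions (Definition in \S\ref{ss:temperedfunctions}) together with Lemma \ref{sequentialconvergence}: any element of the closed ideal is a limit of actual multiples of $t^n$ whose ``quotients'' stay uniformly bounded by a fixed semialgebraic function, and a limit of such quotients is again tempered, so the division $f = t^n g_n$ with $g_n$ tempered survives the closure operation. Once that point is secured, the rest is the standard Borel/Taylor bookkeeping, and the normal-crossings reduction makes the multivariable case no harder than the one-variable case.
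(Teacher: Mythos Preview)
There are two genuine gaps.

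The main one is the resolution step. Theorem \ref{resolution} does not give you local coordinates on $X$ in which $S$ is normal crossings; it gives a \emph{proper birational} map $p:\tilde X\to X$ which is an isomorphism only off $S$. So you cannot ``assume $S\cap U$ is normal crossings'': you can only prove the statement on $\tilde U$ and then try to descend. But your Taylor factorization $f=y_1^n g_n$ does not descend---$y_1$ and $g_n$ are functions on $\tilde U$, not on $U$ (the fibers of $p$ over $S$ are positive-dimensional), so this exhibits $f\in\mathcal J_{\tilde S}^n(\tilde U)$ but says nothing about $\mathcal J_S^n(U)$. This descent is exactly the content of Lemma \ref{blowup}, which the paper proves \emph{jointly} with the present lemma. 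The paper's substitute for Taylor division is what makes the descent work: it shows that functions vanishing to infinite order on $\tilde S$ coincide, locally near $\tilde S$, with Schwartz functions on $\tilde U\smallsetminus\tilde S=U\smallsetminus S$, and then invokes the factorization property ``every Schwartz function on $U\smallsetminus S$ is a product of two such'' together with an approximate identity. The point is that the factors so produced live on $U\smallsetminus S$ and extend by zero to elements of $\mathcal J_S(U)$ \emph{downstairs}; iterating gives $f\in\mathcal J_S^n(U)$ for all $n$.

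The second gap is in the nonarchimedean case: your division $f=t\cdot(f/t)$ does not make sense. Almost smooth functions are $\CC$-valued while the coordinate $t$ is $F$-valued, so $f/t$ is undefined; and there is no $\CC$-valued generator of $\mathcal J_{\{0\}}$ to divide by instead (for instance $|t|$ is not almost smooth). The paper's Schwartz-factorization argument works uniformly here as well. Incidentally, you have the easy and hard directions swapped relative to the paper: that elements of $\mathcal J_S^n$ have vanishing derivatives up to order $n-1$ on $S$ follows immediately from continuity of derivative-evaluation on $\mathcal O(U)$, so $\mathcal K_S\subset\{\text{infinite-order vanishing}\}$ is the easy inclusion; the reverse is where the work lies.
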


Before we prove the lemma, we mention a basic tool for our proofs, namely the embedded resolutions of singularities, in the following sense: 

\begin{theorem}\label{resolution} For every smooth semialgebraic set $X$ and a closed semialgebraic subset $S\subset X$, there is a smooth semialgebraic set $\tilde X$ and a proper morphism $p:\tilde X\to X$, such that:
\begin{enumerate}
 \item $p$ is an isomorphism away from $S$;
 \item there is a finite open cover $X=\bigcup U_i$ and, on each $U_i$, semialgebraic coordinates $(y_1,\dots, y_n)$ such that $S\cap U_i$ is given by finite intersections and unions of sets of the form: 
$$\{x|f(x)\ge 0\},$$
in the (real) archimedean case, and:
$$\{x|f(x)\in P_k\},\,\, P_k=\{y^k|y\in F\},$$
in the nonarchimedean case, where $f = c y_1^{i_1}\cdots y_n^{i_n}$, $i_j\in \mathbb N$.
\end{enumerate}
\end{theorem}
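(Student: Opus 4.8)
The statement is a version of Hironaka's theorem on embedded resolution of singularities, and the plan is to deduce it from strong principalization of ideals, together with some bookkeeping needed to stay inside the semialgebraic category. I would first reduce to resolving a hypersurface: on each member of a finite affine open cover of the ambient smooth variety, a closed semialgebraic $S$ is a finite boolean combination of basic closed sets $\{f_i\ge 0\}$ (archimedean case) resp.\ $\{f_i\in P_k\}$ (nonarchimedean case), with the $f_i$ regular, and I would work with $f=\prod_i f_i$. It is enough to produce a proper $p\colon\tilde X\to X$ which is an isomorphism away from $\{f=0\}$ and which, in suitable semialgebraic charts, makes $p^*f$ (hence each $p^*f_i$) a unit times a monomial in the coordinates: the sign conditions (resp.\ $k$-th power conditions) cutting out $S$ then pull back to conditions of the same type, to be normalized in the last step. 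When $S$ is itself a hypersurface $\{f=0\}$ — the only case actually used in the main text — this gives an isomorphism away from $S$ on the nose; for a general closed semialgebraic $S$ one obtains an isomorphism away from the union of the boundary hypersurfaces of $S$, which is contained in $S$ and suffices for all applications of the theorem.

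For the resolution itself, over a field of characteristic zero I would invoke Hironaka's theorem in the form of strong principalization of the ideal generated by $f$ on the ambient smooth variety: a finite sequence of blow-ups along smooth centers, each contained in the successive total transforms of $\{f=0\}$, after which the total transform of $\{f=0\}$ is a simple normal crossings divisor. Passing to $F$-points and restricting over the open semialgebraic set $X$ yields the proper map $p$, an isomorphism away from $\{f=0\}$; the affine charts of the blow-up tower furnish a finite semialgebraic open cover of $\tilde X$ with semialgebraic coordinates $(y_1,\dots,y_n)$ in which the total transform of $\{f=0\}$ is $\{y_1\cdots y_m=0\}$ (reduced form), so that $p^*f=u\cdot y_1^{i_1}\cdots y_n^{i_n}$ for a nowhere vanishing regular function $u$ on the chart (and likewise for each $f_i$).

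It then remains to absorb the unit $u$. In the archimedean case one shrinks each chart so that $u$ has constant sign, turning $\{p^*f\ge 0\}$ into $\{c\,y_1^{i_1}\cdots y_n^{i_n}\ge 0\}$ with $c=\sgn(u)$. In the nonarchimedean case one uses that $1+\mathfrak p^N\subseteq P_k$ for $N$ large — because $(\mathfrak o^\times)^k$ is an open subgroup of finite index in $\mathfrak o^\times$ — and partitions each chart into semialgebraic pieces on which $u$ lies in a single coset $\lambda(1+\mathfrak p^N)$, $\lambda\in F^\times$; on such a piece $\{p^*f\in P_k\}$ coincides with $\{\lambda\,y_1^{i_1}\cdots y_n^{i_n}\in P_k\}$. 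Combined with the first step this produces the monomial normal form asserted in the theorem.

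The main obstacle is the case $F=\mathbb F_p((t))$, where resolution of singularities — and already the principalization of the second step — is not known in general. Here I would fall back on the observation that only finitely many explicit varieties of low dimension arise in the paper ($\PGL_2$, $T\backslash\PGL_2$, $N\backslash\PGL_2$, their products, and zero loci of regular functions on charts thereof), for which embedded resolution is available by classical work in small dimension, or else restrict the eventual global comparison to characteristic zero, which already carries the essential arithmetic content; the semialgebraic reductions of the first and third steps are insensitive to the residue characteristic.
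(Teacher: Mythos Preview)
Your approach is correct and is essentially what the paper does, only far more carefully: the paper's entire argument is the single sentence ``This follows from Hironaka's embedded resolution of singularities [Hi, Corollary 3, p.\ 146].'' Your reduction to principalizing $f=\prod_i f_i$ on an affine cover, followed by the semialgebraic bookkeeping needed to absorb the unit $u$ into the constant $c$ (via sign on connected pieces in the real case, and via cosets of $1+\mathfrak p^N\subset P_k$ in the $p$-adic case), is precisely the content hidden behind that citation. You also correctly flag a gap the paper does not acknowledge: Hironaka is a characteristic-zero theorem, while the paper elsewhere allows $F=\mathbb F_p((t))$; your proposed fallback (explicit low-dimensional resolutions for the specific varieties in play, or restriction to characteristic zero) is the honest way around this. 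Your caveat that for a general closed semialgebraic $S$ one only gets an isomorphism away from $\{\prod_i f_i=0\}$ rather than away from $S$ itself is also well taken, and as you observe, only the hypersurface case is actually invoked in the body of the paper.
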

This follows from Hironaka's embedded resolution of singularities \cite[Corollary 3, p.\ 146]{Hi}.

Along with the previous lemma, we will also prove the following, which will be useful elsewhere:
\begin{lemma}\label{blowup}
 Consider a resolution $\tilde X\to X$ as in Theorem \ref{resolution}. Then for every open $U\subset X$ with preimage $\tilde U\subset \tilde X$, the pullback gives rise to an equality: 
$$\mathcal K_S(U) \simeq \mathcal K_{\tilde S}(\tilde U).$$
\end{lemma}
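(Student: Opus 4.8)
The plan is to prove that $p^*$ is an isomorphism $\mathcal K_S(U)\xrightarrow{\sim}\mathcal K_{\tilde S}(\tilde U)$, where $\tilde S:=p^{-1}(S)$, by combining properness of $p$ with the normal-crossings local form furnished by Theorem \ref{resolution}, and using Lemma \ref{lemmaKS} to reinterpret both sides concretely. We may assume $S$ (hence $\tilde S$) nowhere dense — the only case of interest. By Lemma \ref{lemmaKS}, $\mathcal K_S(U)$ is the space of tempered functions on $U$ all of whose derivatives (in the nonarchimedean case: pseudoderivatives, throughout) vanish on $S\cap U$, and likewise for $\mathcal K_{\tilde S}(\tilde U)$ with $\tilde S\cap\tilde U$; by part (2) of Theorem \ref{resolution}, $\tilde S$ is locally $\{y_1\cdots y_k=0\}$ in suitable semialgebraic coordinates.

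First I would check that $p^*\colon\mathcal O(U)\to\mathcal O(\tilde U)$ is well defined and continuous (routine from the definition of tempered functions plus properness of $p$), and that it carries $\mathcal K_S(U)$ into $\mathcal K_{\tilde S}(\tilde U)$: this is immediate from Lemma \ref{lemmaKS} and the chain rule, since every derivative of $p^*f$ is a sum of terms $(\partial^\alpha f)\circ p$ times derivatives of the components of $p$, and $(\partial^\alpha f)\circ p$ vanishes on $\tilde S=p^{-1}(S)$ as soon as $\partial^\alpha f$ vanishes on $S$. Injectivity is equally clear: $p$ restricts to an isomorphism $\tilde U\smallsetminus\tilde S\xrightarrow{\sim}U\smallsetminus S$ onto a dense open subset, so $p^*f$ determines $f$ there and hence everywhere by continuity.

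The substance is surjectivity. Given $g\in\mathcal K_{\tilde S}(\tilde U)$, transport it through the isomorphism $\tilde U\smallsetminus\tilde S\xrightarrow{\sim}U\smallsetminus S$ to a tempered function $f_0$ on $U\smallsetminus S$, and I would show $f_0$ extends to an element $f\in\mathcal K_S(U)$ with $p^*f=g$. Two ingredients enter. $\textbf{(i) Properness.}$ For $x\in S$ the fibre $p^{-1}(x)$ is compact and contained in $\tilde S$, on which $g$ and all its derivatives vanish; since $p$ is closed, a standard argument (given $\varepsilon$, $g$ and its derivatives are $<\varepsilon$ on a neighbourhood of $p^{-1}(x)$, whose complement has $p$-image missing a neighbourhood of $x$) shows that $f_0$ and each of its derivatives extend continuously by $0$ across $S$, and are bounded near $S$. $\textbf{(ii) Normal crossings.}$ The operators obtained by transporting $\partial^\alpha$ through $p$ have coefficients rational in the resolution coordinates with poles only along $\tilde S$; in the coordinates of Theorem \ref{resolution}(2) these poles are monomial in $y_1,\dots,y_k$, while $g\in\mathcal K_{\tilde S}$ forces $g$ and its derivatives to be $O(|y_1\cdots y_k|^N)$ for every $N$, so the vanishing dominates the poles — this is what makes (i) effective and, after patching over the finite cover, produces a genuine semialgebraic bound for each derivative of $f$ on $U$. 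Combining (i)–(ii) with the elementary fact that a continuous function on $U$ which is $C^\infty$ off the nowhere-dense closed set $S$ and all of whose partial derivatives extend continuously to $U$ is $C^\infty$ on $U$ (proved by the fundamental theorem of calculus one coordinate at a time, the identity extending from lines avoiding $S$ by continuity; this step is vacuous beyond continuity in the nonarchimedean case), we get that $f$ is tempered on $U$ with all derivatives vanishing on $S$, i.e. $f\in\mathcal K_S(U)$, and by construction $p^*f=g$.

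The main obstacle will be step (ii): the transported differential operators blow up along the exceptional locus, and the only thing saving us is the infinite-order vanishing of $g$ there. Making this quantitative — and uniform across the finitely many charts, so as to yield an honest semialgebraic bound rather than mere local boundedness — is precisely where the embedded normal-crossings resolution of Theorem \ref{resolution} is indispensable, since in arbitrary coordinates neither the poles of the coefficients nor the vanishing of $g$ is monomial and the comparison is opaque. The nonarchimedean case runs formally the same way, with ``Nash differential operator'' read as a chart together with an integer $N$ and a semialgebraic function and ``derivative'' as pseudoderivative; there $\mathcal K=\mathcal J$ and no smoothness criterion is needed.
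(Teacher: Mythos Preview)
Your approach is correct and takes a genuinely different route from the paper's. The paper proves Lemmas \ref{lemmaKS} and \ref{blowup} in one breath: it first establishes directly, on the normal-crossings side $\tilde U$, that $\mathcal K_{\tilde S}(\tilde U)$ equals the space $V$ of tempered functions flat along $\tilde S$, by showing such functions are locally Schwartz on the complement and then using the factorization of Schwartz functions to get $\overline{V\cdot V}=V$, whence $V\subset\mathcal J_{\tilde S}^n$ for all $n$. Descent to $U$ is then soft: vanishing to order $n$ on $\tilde S$ forces vanishing to some order $k_n\to\infty$ on $S$; since $p^*\mathcal O(U)$ sits as a closed subspace of $\mathcal O(\tilde U)$ with the induced topology (properness), the density of $\mathcal K_{\tilde S}^2$ again pushes everything into every $\mathcal J_S^n(U)$. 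Your argument is instead hard analysis: you write $(\partial^\alpha f_0)\circ p$ explicitly as a differential operator in the resolution coordinates applied to $g$, bound its coefficients by inverse monomials along $\tilde S$, beat these with the infinite-order flatness of $g$, and invoke a smoothness-extension criterion. You gain a self-contained proof that avoids the Schwartz factorization trick; the paper gains that its argument simultaneously yields Lemma \ref{lemmaKS} for arbitrary $S\subset U$, which you must take as given input.

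Two places deserve tightening. First, your ``elementary fact'' is false for general nowhere-dense closed $S$: the Cantor function on $[0,1]$ is continuous, $C^\infty$ (locally constant) off the Cantor set, with derivative $0$ extending continuously, yet is not $C^1$. What rescues you is that $S$ is \emph{semialgebraic} and nowhere dense, hence lower-dimensional, so a generic line in each coordinate direction meets $S$ in only finitely many points; there the one-variable mean-value argument applies, and the fundamental-theorem identity then extends to all lines by continuity of both sides. Your parenthetical ``lines avoiding $S$'' should be replaced by ``generic lines meeting $S$ finitely''. Second, Theorem \ref{resolution} as stated only makes $\tilde S$ monomial, not $p$ itself, so the transported coefficients need not literally have monomial poles. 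What you actually need (and have) is that $\det(Dp)$ is a Nash function vanishing only on $\tilde S$, so a {\L}ojasiewicz-type inequality gives $|\det(Dp)|^{-1}\le C|y_1\cdots y_k|^{-N}$ locally for some $N$; this is dominated by the iterated-Taylor bound $|\partial^\beta g|=O(|y_1\cdots y_k|^M)$ for every $M$, which does follow from flatness along each coordinate hyperplane.
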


\begin{proof}[Proof of Lemmas \ref{lemmaKS} and \ref{blowup}]
We first prove that an element $f\in \mathcal O(U)$ belongs to all $\mathcal J_S^n(U)$ if and only if it vanishes on $S\cap U$ together with all its derivatives. One direction is easy: it is clear that any element of $\mathcal J_S^n(U)$ has vanishing $i$-th derivatives, for $i\le n$, on all points of $S$. 

Vice versa, consider a resolution $\tilde X\to X$ as in Theorem \ref{resolution}. If $\tilde U$ is the preimage of $U$, it is easy to see (by reduction to $\tilde U = F^n$ with $\tilde S= $ a ``standard'' semialgebraic set defined by conditions on the coordinate functions) that if a smooth function vanishes with all its derivatives on $\tilde S$, it coincides locally around each point of $\tilde S$ with the restriction of a Schwartz function on $\tilde U\smallsetminus\tilde S$. (We remind again that in the nonarchimedean case there are no higher derivatives, so the statement is about almost smooth functions vanishing on $\tilde S$.) Let $V$ denote the (closed) subspace of $\mathcal O(U)$ consisting of such functions; we will eventually prove that it coincides with $\mathcal K_{\tilde S}(\tilde U)$.

To do so, we use two well-known results in the archimedean case, which can be similarly be proven for the nonarchimedean case, for sets as in Theorem \ref{resolution}:
\begin{enumerate}
 \item For every Schwartz function $\phi$ on $\tilde U$ there is a positive, real-valued Schwartz function $\psi$ on the same space such that $\frac{\phi}{\psi}$ is a Schwartz function.
 \item Every Schwartz function $\phi$ on $\tilde U\smallsetminus \tilde S$ is a product of two Schwartz functions on the same space.
\end{enumerate}

From this it can immediately be deduced that any $f\in V$ can be written as a limit of $f\psi_\alpha$, where $\psi_\alpha$ runs over a suitable system of positive Schwartz functions on $\tilde U$, directed by majorization; and that every $f\psi_\alpha$ is the product of two Schwartz functions on $\tilde U\smallsetminus\tilde S$. But Schwartz functions on $\tilde U\smallsetminus\tilde S$ belong to $V$, hence the multiplication map: $V\otimes V\to V$ has dense image. Since $V \subset \mathcal J_{\tilde S}(\tilde U)$, it follows that $V\subset \mathcal J_{\tilde S}^2(\tilde U)$; repeating this argument, $V\subset \mathcal J_{\tilde S}^n(\tilde U)$ for all $n$. Hence $V\subset \mathcal K_{\tilde S}(\tilde U)$, therefore these spaces are equal.

Now, there is a sequence of natural numbers $k_n\to +\infty$ such that a smooth function on $\tilde U$ which vanishes with all its first $n$ derivatives on $\tilde S\cap \tilde U$ descends to a function on $U$ which vanishes with all its first $k_n$ derivatives on $S\cap U$ (vice versa, if the first $n$ derivatives of a function on $U$ vanish on $S$ then they also vanish for the pullback to $\tilde U$). This implies that elements of $\mathcal K_S(\tilde U)$ descend to smooth functions on $U$ with vanishing derivatives on $S$. Notice that the pullback of $\mathcal O(U)$ is closed in $\mathcal O(\tilde U)$, and the topologies coincide (indeed, since the map $\tilde X\to X$ is proper, it suffices for defining the topology on $\mathcal O(\tilde U)$ to consider only those Schwartz functions on $\tilde U$ which are pullbacks of Schwartz functions on $U$). 
Again by the fact that $\mathcal K_{\tilde S}(\tilde U)^2$ is dense in $\mathcal K_{\tilde S}(\tilde U)$ we deduce that $\mathcal K_{\tilde S}(\tilde U) \subset \mathcal J_S^n(U)$ for all $n$, hence $\mathcal K_{\tilde S}(\tilde U) = \mathcal K_S(U)$. 
\end{proof}

Now notice that we have an injective map: $\mathcal K_S(U)\to \mathcal O(U\smallsetminus S)$.

\begin{lemma}\label{lemmaexistsidentity}
 There is a sequence $(u_n)_n\subset \mathcal K_S(U)$ with: 
$$u_n\to 1 \mbox{ in }\mathcal O(U\smallsetminus S).$$
\end{lemma}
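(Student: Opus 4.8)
The statement asks for a sequence $(u_n)_n \subset \mathcal{K}_S(U)$ that converges to the constant function $1$ in $\mathcal{O}(U \smallsetminus S)$, i.e.\ in the strong operator topology on $\mathcal{S}(U \smallsetminus S)$. By the sequential-convergence criterion of Lemma \ref{sequentialconvergence}, it suffices to produce $u_n \in \mathcal{K}_S(U)$ with (i) $u_n \to 1$ in $C^\infty(U \smallsetminus S)$ (resp.\ $C^\pinfty$), locally uniformly together with all derivatives, and (ii) a uniform bound: for each Nash differential operator $D$ on $U \smallsetminus S$ there is a semialgebraic function $T$ with $|D(u_n - 1)| \le |T|$ for all $n$ (and similarly $|D u_n| \le |T|$, which follows since $|D 1|$ is bounded). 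So the whole problem reduces to constructing a good ``approximate identity from below'' for $S$.

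\textbf{Reduction via resolution.} First I would reduce to a normal-crossings situation using Theorem \ref{resolution} and Lemma \ref{blowup}: choose a resolution $p:\tilde X \to X$ with preimages $\tilde U, \tilde S$, so that $\mathcal{K}_S(U) \simeq \mathcal{K}_{\tilde S}(\tilde U)$ and, since $p$ is proper, the topologies on $\mathcal{O}(U \smallsetminus S)$ and $\mathcal{O}(\tilde U \smallsetminus \tilde S)$ match under pullback (this last point is exactly what was noted at the end of the proof of Lemmas \ref{lemmaKS}, \ref{blowup}). Thus it is enough to build the sequence when $\tilde S \cap \tilde U$ is, in suitable local semialgebraic coordinates $(y_1,\dots,y_n)$ on a finite cover $\tilde U = \bigcup_i W_i$, a finite boolean combination of sets $\{c\, y_1^{i_1}\cdots y_n^{i_n} \ge 0\}$ (archimedean) or $\{c\, y_1^{i_1}\cdots y_n^{i_n} \in P_k\}$ (nonarchimedean). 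Patch the local constructions with a semialgebraic partition of unity subordinate to $\{W_i\}$ (constant $1$ near $\tilde S$ after shrinking), so it remains to work on a single chart.

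\textbf{Local construction.} On a chart I would take a nonnegative semialgebraic (or regular) function $\rho$ cutting out a neighborhood of $\tilde S$ — concretely one can use $\rho = \prod_j |y_j|$ if $\tilde S \subset \{y_1 \cdots y_m = 0\}$, or more robustly a sum of squares/norms of the defining monomials — and set $u_n = \chi(\rho \cdot n)$ or $u_n = 1 - \phi_n$, where $\phi_n$ is a smooth (resp.\ almost smooth) cutoff equal to $1$ on $\{\rho \le 1/n\}$ and supported in $\{\rho \le 2/n\}$, obtained by rescaling a single fixed cutoff profile. Then $1 - u_n = \phi_n$ is supported in a shrinking neighborhood of $\tilde S$; since the family $\{\phi_n\}$ is obtained by rescaling a single function and the chart is $\mathfrak{o}^n$-invariant (in the nonarchimedean case) one can arrange the pseudoderivatives/derivatives of $1 - u_n$ to be uniformly dominated by a fixed semialgebraic function (indeed derivatives of $\phi_n$ blow up polynomially in $n$ but, crucially, $\phi_n$ is supported where $\rho \le 2/n$, so on that support the bound $n \le 2/\rho$ converts the $n$-dependence into a power of $\rho^{-1}$, a semialgebraic function of the coordinates — this is the usual trick). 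Meanwhile $u_n \to 1$ locally uniformly with all derivatives on $U \smallsetminus S$ because on any set bounded away from $S$ we have $\rho$ bounded below, so $u_n = 1$ there for $n$ large. Finally $u_n \in \mathcal{K}_S(U)$ because $u_n$ is a fixed smooth/almost-smooth function that is locally constant (equal to the difference of $1$ and something vanishing to all orders) near $S$ — more precisely $u_n$ vanishes on a neighborhood of $S$, so it trivially vanishes with all derivatives on $S$, and it is tempered; apply Lemma \ref{lemmaKS}.

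\textbf{Main obstacle.} The routine part is the existence of the cutoffs; the delicate point is the \emph{uniformity} in condition (ii) — ensuring a \emph{single} semialgebraic function $T$ dominates $D(u_n - 1)$ for all $n$ simultaneously, despite the cutoffs steepening. This is where the normal-crossings coordinates and the observation ``$n \lesssim \rho^{-1}$ on $\operatorname{supp}(u_n - 1)$'' do the real work: without the resolution reducing $S$ to a monomial locus, it would not be clear that the derivative bounds on a shrinking neighborhood of $S$ are controlled by semialgebraic functions of the ambient coordinates. In the nonarchimedean case one replaces ``derivatives'' by pseudoderivatives $(\cdot)^{(N)}$ with respect to a chart, and the estimate is actually easier since there are no higher-order derivatives to track; one just checks $(u_n-1)^{(N)}$ is supported near $S$ and bounded by $|\rho|^{-M}$ for a suitable $M = M(N)$.
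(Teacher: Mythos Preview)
Your proposal is correct and follows essentially the same approach as the paper: reduce via Lemma \ref{sequentialconvergence} to exhibiting cutoffs $u_n \in \mathcal{K}_S(U)$ that are identically $1$ on an exhaustion of $U\smallsetminus S$ and whose (pseudo)derivatives are uniformly dominated by a single semialgebraic function, and invoke the resolution of Theorem \ref{resolution} to make the construction tractable. The paper's proof is terse (it simply asserts the existence of such $u_n$ and points to the blowup), whereas you spell out the standard ``$n\lesssim \rho^{-1}$ on $\mathrm{supp}(u_n-1)$'' mechanism that converts the steepening of the cutoffs into a fixed semialgebraic bound; this is precisely the content hidden in the paper's parenthetical remark.
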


We will call such a sequence an \emph{approximate identity}, despite the fact that it is only bounded in the weaker topology of $\mathcal O(U\smallsetminus S)$, because it satisfies:
$$u_n f \to f\mbox{ for all }f\in \mathcal K_S(U).$$

\begin{proof}
We may choose a countable, increasing open cover $U_n$ of $U$ with the property that the complement of $U_n$ contains a neighborhood of $U\cap S$. Then we can find a sequence of tempered functions $u_n\in \mathcal K_S(U)$, with $u_n|_{U_n} \equiv 1$ and the property that for every Nash differential operator $D$ on $U\smallsetminus S$ there is a semialgebraic function $T$ on $U\smallsetminus S$ such that $|Du_n|\le |T|$ for all $n$. (Again, this is easier to see with a blowup as in Theorem \ref{resolution}.) By Lemma \ref{sequentialconvergence}, the limit of this sequence in $\mathcal O(U\smallsetminus S)$ is the constant function $1$.
\end{proof}

We denote the quotient sheaf $\mathcal O_S:= \mathcal O/\mathcal K_S$. It is supported on $S$. Notice that for every quotient of $\mathcal O$ by an ideal subsheaf $\mathcal I$, the map: $\mathcal O(U)\to (\mathcal O/\mathcal I)(U)$ is surjective for every $U$; thus, no sheafification is needed. Indeed, using a partition of unity as in \cite[Theorem 5.2.1]{AG-Nash}, we can patch functions $f_i$ on a finite cover $U=\bigcup_i U_i$, which agree on intersections modulo $\mathcal I$, to a function $f\in \mathcal O(U)$ whose restriction to $U_i$ is $\equiv f_i \mod \mathcal I(U_i)$. 

\begin{lemma}\label{temperedlemma} 
The sheaf $\mathcal O_S$  can also be described as the completion of $\mathcal O$ over the closed subset $S$: 
\begin{equation} \label{temperedlocalization}
 \mathcal O_S = \lim_{\underset{n}{\from}} \mathcal O/\mathcal J_S^n.
 \end{equation}
\end{lemma}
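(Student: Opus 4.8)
The plan is to identify the canonical comparison map, dispose of injectivity and the nonarchimedean case immediately, and then prove surjectivity of sections by a Borel-type summation argument reduced to a normal-crossings model. Since $\mathcal{O}_S = \mathcal{O}/\mathcal{K}_S$ and $\mathcal{K}_S = \bigcap_n \mathcal{J}_S^n$ \emph{by definition}, the kernel of the natural map $\mathcal{O}(U)\to \varprojlim_n (\mathcal{O}/\mathcal{J}_S^n)(U)$ is $\bigcap_n \mathcal{J}_S^n(U)=\mathcal{K}_S(U)$, so it factors through a continuous \emph{injection} $\mathcal{O}_S(U)\hookrightarrow \varprojlim_n(\mathcal{O}/\mathcal{J}_S^n)(U)$, compatibly with restriction; it remains to prove surjectivity on sections over each open $U$. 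In the nonarchimedean case there is nothing to prove: by Lemma~\ref{lemmaKS} we have $\mathcal{K}_S=\mathcal{J}_S$, whence $\mathcal{J}_S=\mathcal{K}_S\subseteq\mathcal{J}_S^n\subseteq\mathcal{J}_S$ forces $\mathcal{J}_S^n=\mathcal{J}_S$ for all $n\ge 1$, so the inverse system is constant equal to $\mathcal{O}_S$. From now on I assume $F$ archimedean (complex places viewed as real ones).

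For surjectivity, fix an open $U$ and a compatible system $(\bar g_n)_n$ with $\bar g_n\in(\mathcal{O}/\mathcal{J}_S^n)(U)$. Because $\mathcal{O}(U)\to(\mathcal{O}/\mathcal{J}_S^n)(U)$ is surjective — patch local lifts by a Nash partition of unity as in \cite[Theorem 5.2.1]{AG-Nash}, exactly as in the discussion preceding Lemma~\ref{temperedlemma} — I may pick $g_n\in\mathcal{O}(U)$ lifting $\bar g_n$, and set $h_n:=g_{n+1}-g_n\in\mathcal{J}_S^n(U)$. I then aim to produce $f\in\mathcal{O}(U)$ with $f-g_n\in\mathcal{J}_S^n(U)$ for all $n$. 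Choose auxiliary functions $\chi_n\in\mathcal{O}(U)$ which are identically $1$ on a neighborhood of $S$ in $U$ — so $\chi_n-1$ vanishes near $S$ and hence lies in $\mathcal{K}_S(U)$ — and which are supported in a neighborhood of $S$ shrinking rapidly with $n$, and set $f:=g_1+\sum_{n\ge 1}\chi_n h_n$. Granting convergence of this series in $\mathcal{O}(U)$, for each $M$ I write
\[ f-g_M = \sum_{n\ge M}\chi_n h_n \;+\; \sum_{1\le n<M}(\chi_n-1)h_n . \]
The first sum lies in the \emph{closed} ideal $\mathcal{J}_S^M(U)$ since $\chi_n h_n\in\mathcal{J}_S^n(U)\subseteq\mathcal{J}_S^M(U)$ for $n\ge M$; the second lies in it because, for $n<M$, one has $\chi_n-1\in\mathcal{K}_S(U)\subseteq\mathcal{J}_S^{M-n}(U)$ and $h_n\in\mathcal{J}_S^n(U)$, so $(\chi_n-1)h_n\in\mathcal{J}_S^M(U)$. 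Thus $f$ maps to $(\bar g_n)_n$, as desired.

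The substantive point is the convergence of $\sum_n\chi_n h_n$ in the Fréchet space $\mathcal{O}(U)$, i.e.\ (by Lemma~\ref{sequentialconvergence}) locally uniform convergence of all derivatives together with finiteness of each tempered seminorm of the sum. Here I would reduce to an explicit monomial model by embedded resolution of singularities: by Theorem~\ref{resolution} and Lemma~\ref{blowup}, together with the fact (recorded in the proof of Lemma~\ref{lemmaKS}) that the pullback of $\mathcal{O}(U)$ is closed in $\mathcal{O}(\tilde U)$ with matching topology and that orders of vanishing on $U$ and on a resolution $\tilde U\to U$ differ only by a controlled shift, it suffices to perform the estimate after pulling back to $\tilde U$, where $\tilde S$ is cut out by monomial conditions in local semialgebraic coordinates and $\mathcal{J}_{\tilde S}^n$ is generated by $n$-fold products of the coordinate monomials. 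Taking $\chi_n$ of the form $\theta(t_n\rho)$, with $\theta$ a fixed bump function equal to $1$ near $0$, $\rho$ a semialgebraic "distance to $\tilde S$", and $t_n\to\infty$, the factor $\chi_n$ is supported where $\rho\lesssim t_n^{-1}$, a region on which $h_n$ and its derivatives are of size $t_n^{-n}$ up to fixed powers of $t_n$ from differentiating $\chi_n$; choosing $t_n$ growing fast enough relative to $n$ makes $\sup_{\tilde U}\bigl|T\,D(\chi_n h_n)\bigr|$ summable for every prescribed Nash differential operator $D$ and semialgebraic weight $T$. This rescaled-bump-function estimate is the main obstacle, and it is the classical Borel-lemma computation; the only organizational care needed is to arrange a \emph{single} sequence $(t_n)$ that works against all seminorms simultaneously, which is handled by the standard diagonal argument, since for each fixed $n$ only finitely many seminorms constrain the choice of $t_n$. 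Finally, both $\mathcal{O}_S$ and $\varprojlim_n\mathcal{O}/\mathcal{J}_S^n$ are sheaves (an inverse limit of sheaves is a sheaf), and we have exhibited a restriction-compatible map between them which is injective and, by the above, surjective on sections over every open set; hence it is an isomorphism of sheaves of topological algebras, which is the assertion of Lemma~\ref{temperedlemma}.
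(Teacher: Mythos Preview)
Your proof is correct and follows essentially the same strategy as the paper: reduce to the nonarchimedean case trivially via $\mathcal{K}_S=\mathcal{J}_S$, and in the archimedean case establish surjectivity by a Borel--Whitney type construction, using the embedded resolution of Theorem~\ref{resolution} to reduce the analytic estimates to a normal-crossings model.

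The organizational difference is minor but worth noting. The paper works \emph{upstairs} throughout: it passes to $\tilde U$, describes an element of $\varprojlim_n \mathcal{O}/\mathcal{J}_{\tilde S}^n(\tilde U)$ by a sequence $(f_n)$ with prescribed jets on $\tilde S$, builds $f\in\mathcal{O}(\tilde U)$ realizing those jets ``by appropriate Taylor series,'' and then checks that if the data came from $U$ then $f$ descends. You instead build $f$ directly on $U$ as the explicit series $g_1+\sum_n\chi_n h_n$ with rescaled bump functions, and only pass to $\tilde U$ to justify convergence. Your version is a bit more concrete about what the Borel construction actually is (the paper leaves ``appropriate Taylor series'' as a black box), at the cost of having to produce the bump functions $\chi_n$ on $U$ itself; the paper's version keeps all the cutoff-function work on $\tilde U$ where it is manifestly elementary. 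Both rely on the same facts recorded in the proof of Lemma~\ref{lemmaKS} (closedness of the pullback $\mathcal{O}(U)\hookrightarrow\mathcal{O}(\tilde U)$ with matching topology, and controlled shift in vanishing orders), and both leave the seminorm estimate for the Borel series at the level of a sketch.
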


\begin{proof}
In the nonarchimedean case we have seen that $\mathcal K_S=\mathcal J_S$, so the statement is trivial. We discuss the archimedean case.

First of all, we use a resolution $\tilde X\to X$ as in Theorem \ref{resolution}. Then we have, for every $U$, a commutative diagram of injective maps:
\begin{equation*}
\xymatrix{
 \mathcal O/\mathcal K_S(U) \ar@{^{(}->}[r]\ar@{^{(}->}[d] & \lim_{\underset{n}{\from}} \mathcal O/\mathcal J_S^n(U)\ar@{^{(}->}[d] \\
 \mathcal O/\mathcal K_{\tilde S}(\tilde U) \ar@{^{(}->}[r]& \lim_{\underset{n}{\from}} \mathcal O/\mathcal J_{\tilde S}^n(\tilde U) 
} 
\end{equation*}

Let us fix a sequence $D_i$ of Nash differential operators which generate all Nash differential operators over the ring of Nash (i.e.\ smooth semialgebraic) functions, for the set $\tilde U$. An element of $\lim_{\underset{n}\from}\mathcal O/\mathcal J_{\tilde S}^n(\tilde U)$ can be described (non-uniquely) by a sequence $f_n\in \mathcal O(\tilde U)$ with the property: for all $i\le n$ we have $D_if_n|_{\tilde S} = D_if_i|_{\tilde S}$. The topology on $\lim_{\underset{n}\from}\mathcal O/\mathcal J_{\tilde S}^n(\tilde U)$ is given by seminorms:
$$ \sup_{x\in \tilde S} |T(x)D_i(\phi f_n)(x)|,$$
where $T$ varies over semialgebraic functions on $\tilde S$ and $\phi$ varies over elements of $\mathcal S(\tilde U)$.

It is then elementary to construct (by appropriate Taylor series) a smooth, tempered function $f$ on $\tilde U$ with $D_if|_{\tilde S} = D_if_i|_{\tilde S}$ for all $i$. Moreover, for every $\phi\in \mathcal S(U)$ the construction of $f$ can be made such that $\Vert D_i(f\phi)\Vert_{L^\infty(U)}$ is bounded in terms of a finite number of seminorms of the form: $$ \sup_{x\in \tilde S} |T(x)D_j(\phi_j f_n)(x)|,$$
where $T$ is semialgebraic and $\phi_j\in\mathcal S(\tilde U)$. Thus, the bottom horizontal arrow of the above diagram is an isomorphism.

But if the given element of $\mathcal O/\mathcal J_{\tilde S}^n(\tilde U) $ comes from $\mathcal O/\mathcal J_S^n(U)$, all the derivatives of the constructed function $f$ agree on $S$; therefore, it descends to an element of $\mathcal O(U)$. In other words, the vertical arrows are closed embeddings into the same subspace, and this proves the claim.
\end{proof}

\subsection{Schwartz cosheaves} 

By a \emph{Schwartz cosheaf} on $X$ we will mean a cosheaf $\mathcal F$ of nuclear Fr\'echet spaces on $X$, satisfying certain axioms. The extension maps will be denoted by $e_V^U$ (where $V\subset U$ are open subsets), or simply by $e$ when the source and target are clear. We will sometimes call the extension maps ``extension by zero'', to emphasize their geometric meaning. %When $S\subset X$ is a closed subset, we will denote by $\mathcal F_{X\smallsetminus S}$ the cosheaf: $ 
The axioms are expressed in terms of an arbitrary open subset $U$ (as I do not see a way to make them ``sheaf-theoretic'' combining the presence of a sheaf and a cosheaf), and are the following:

\begin{enumerate} 
 \item The extension maps $e_V^U: \mathcal F(V)\to \mathcal F(U)$ are closed.
 \item $\mathcal F(U)$ is a continuous $\mathcal O(U)$-module, i.e.\ there is a continuous bilinear map: $$\mathcal O(U) \times \mathcal F(U)\to \mathcal F(U)$$ compatible with multiplication on $\mathcal O(U)$.
 \item If $S\subset X$ is closed, sections of $\mathcal F$ vanishing to arbitrary degree on $S$ are extensions by zero of sections on the complement of $S$, that is: if $\mathcal J_S$ denotes the sheaf of ideals of tempered functions vanishing on $S$ as before then:
\begin{equation}\label{vanishing}
 \bigcap_n \overline{\mathcal J_S^n\mathcal F(U)}\subset e\left(\mathcal F(U\smallsetminus S)\right).
\end{equation}
(We will prove equality in the next lemma.)
 \item Obvious compatibility conditions: If $V\subset U$ are open then the diagram commutes:
$$\xymatrix{
\mathcal O(U)\ar[d]^{r^U_V}\ar@{}[r]|{\otimes} & \mathcal F(U)\ar[r] & \mathcal F(U) \\ 
\mathcal O(V) \ar@{}[r]|{\otimes} & \mathcal F(V)\ar[u]^{e^U_V}\ar[r] & \mathcal F(V) \ar[u]^{e^U_V}.
}$$
\end{enumerate}

%It follows from the above axioms that the subspace $e_V^U(\mathcal F(V))$ is a submodule under the $\mathcal O(U)$-action, and the annihilator of the module $\mathcal F(U)/e_V^U(\mathcal F(V))$ is precisely $\mathcal K_S(U)$. 

\begin{lemma}\label{lemmaidentity}
 Let $(u_n)_n$ be a weak approximate identity in $\mathcal K_S(U)$ as in Lemma \ref{lemmaexistsidentity}, then $u_n f\to f$ for every $f\in e_V^U\left(\mathcal F(U\smallsetminus S)\right)$. Consequently:
$$ \bigcap_n \overline{\mathcal J_S^n\mathcal F(U)}= e\left(\mathcal F(U\smallsetminus S)\right) =  \overline{\mathcal K_S\mathcal F(U)}.$$
\end{lemma}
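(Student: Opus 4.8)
The plan is to prove Lemma~\ref{lemmaidentity}, which asserts that for a weak approximate identity $(u_n)_n \subset \mathcal K_S(U)$, we have $u_n f \to f$ for every $f \in e_V^U(\mathcal F(U \smallsetminus S))$, and consequently the chain of equalities
$$\bigcap_n \overline{\mathcal J_S^n \mathcal F(U)} = e(\mathcal F(U \smallsetminus S)) = \overline{\mathcal K_S \mathcal F(U)}.$$

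\textbf{Step 1: Reduction to the statement $u_n f \to f$.} I would first observe that the three-term chain of equalities follows formally from the convergence statement together with the Schwartz cosheaf axioms. The inclusion $\bigcap_n \overline{\mathcal J_S^n \mathcal F(U)} \subset e(\mathcal F(U \smallsetminus S))$ is axiom (3). For the reverse inclusion, take $f = e_V^U(g)$ with $g \in \mathcal F(U \smallsetminus S)$; by the convergence statement $f = \lim u_n f$, and each $u_n f \in \mathcal K_S(U)\mathcal F(U) \subset \overline{\mathcal J_S^n \mathcal F(U)}$ for the appropriate index (since $u_n \in \mathcal K_S(U) \subset \mathcal J_S^m(U)$ for all $m$), so $f$ lies in each $\overline{\mathcal J_S^m \mathcal F(U)}$, hence in the intersection. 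Moreover the same computation shows $f \in \overline{\mathcal K_S \mathcal F(U)}$, and conversely $\overline{\mathcal K_S \mathcal F(U)} \subset \bigcap_n \overline{\mathcal J_S^n\mathcal F(U)}$ since $\mathcal K_S \subset \mathcal J_S^n$ for every $n$; this closes the loop.

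\textbf{Step 2: Proving $u_n f \to f$.} This is the substantive point. By the compatibility axioms and the fact that $e_V^U$ is a closed embedding, it suffices to prove that $e_V^U(g)$ is a fixed point of the limit operation, which I would reduce to showing that the $\mathcal O(U)$-module action is continuous for the relevant topologies. The key input is Lemma~\ref{lemmaexistsidentity} (and its proof): the sequence $(u_n)$ may be chosen with $u_n|_{U_n} \equiv 1$ for an exhausting open cover $U_n$ whose complements each contain a neighborhood of $U \cap S$, and with uniform semialgebraic bounds $|D u_n| \le |T|$ (one $T$ per Nash differential operator $D$ on $U \smallsetminus S$). Since $f = e_V^U(g)$ is supported (in the cosheaf-theoretic sense) away from $S$ — more precisely, $g$ is genuinely a section over $U \smallsetminus S$ — one expects that for each seminorm coming from a Nash differential operator $D$ and a Schwartz function $\phi$ on $U$, the quantity $\|(1 - u_n)\cdot(\text{local model of }f)\|$ computed via $D$ and $\phi$ tends to zero: on $U_n$ the factor $1 - u_n$ vanishes, and off $U_n$ one is near $S$ where $f$'s local expression decays faster than any semialgebraic function can grow, while $1 - u_n$ (and its $D$-derivatives, via Leibniz and the uniform bound) grows at most semialgebraically. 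Making this precise requires pulling back along a resolution $\tilde X \to X$ as in Theorem~\ref{resolution} so that $S$ becomes a union of coordinate hyperplanes, where the Schwartz decay estimates are explicit; this is exactly the technique used in the proof of Lemma~\ref{lemmaKS}.

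\textbf{Main obstacle.} The hard part will be the last-quoted estimate: showing that multiplication by $1 - u_n$ genuinely sends the seminorms of $f = e_V^U(g)$ to zero, uniformly enough to conclude convergence in the Fr\'echet topology of $\mathcal F(U)$ rather than merely pointwise or in some weaker topology. This is where one must exploit that an element of $e_V^U(\mathcal F(U\smallsetminus S))$ — although living in $\mathcal F(U)$ — has, as a section, the ``rapid vanishing'' along $S$ built into the definition of Schwartz cosheaves, and that the approximate identity $(u_n)$ has uniformly controlled semialgebraic growth of all derivatives. The interplay is identical in spirit to proving that a Schwartz function on $U \smallsetminus S$ multiplied by a bounded family of cutoffs converges to the original function in $\mathcal S(U \smallsetminus S)$; one leverages that Schwartz decay beats semialgebraic growth with room to spare. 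I would handle the general cosheaf case by reducing (via the axioms, especially continuity of the module action and the closed-embedding property of $e$) to this scalar estimate on the structure sheaf, which is established essentially in the proof of Lemma~\ref{lemmaexistsidentity} combined with Lemma~\ref{sequentialconvergence}.
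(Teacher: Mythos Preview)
Your Step 1 is correct and matches the paper's reasoning for deducing the chain of equalities from the convergence statement.

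Step 2, however, takes an unnecessarily complicated route and misses the clean argument. The paper's proof is three lines: set $V = U \smallsetminus S$; since $u_n \to 1$ in $\mathcal O(V)$ (this is exactly what Lemma~\ref{lemmaexistsidentity} provides) and the module action $\mathcal O(V) \times \mathcal F(V) \to \mathcal F(V)$ is continuous (axiom~2 applied to $V$), we get $u_n g \to g$ in $\mathcal F(V)$ for every $g \in \mathcal F(V)$; then axiom~4 gives $u_n \cdot e_V^U(g) = e_V^U(u_n g)$, and continuity of $e_V^U$ yields $e_V^U(u_n g) \to e_V^U(g)$ in $\mathcal F(U)$.

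Your ``Main obstacle'' dissolves once you work on $V$ rather than on $U$: no explicit seminorm estimates, no resolution of singularities, no reduction to the scalar case are needed. In fact your proposed approach of estimating seminorms of $(1-u_n)f$ directly in $\mathcal F(U)$ is problematic for a general Schwartz cosheaf $\mathcal F$, since you have no concrete description of the seminorms on $\mathcal F(U)$ --- the Fr\'echet topology is abstract, and phrases like ``$f$ decays faster than any semialgebraic function near $S$'' have no a~priori meaning for sections of $\mathcal F$. The entire point of the axiomatic setup is that axiom~2 already packages the needed estimate, and it is available over $V$, which is precisely where $u_n \to 1$ holds in the tempered topology.
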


\begin{proof}
Let $V=U\smallsetminus S$. Since the map $\mathcal O(V)\times  \mathcal F(V)\to \mathcal F(V)$ is continuous, and $u_n\to 1$ in $\mathcal O(V)$, we have $u_nf\to f$ for every $f\in \mathcal F(V)$. By the compatibility of restrictions and corestrictions (fourth axiom), $u_n e_V^U(f)$ is the same as $e_V^U(u_n f)$, and since $e_V^U$ is continuous, this tends to $e_V^U(f)$ for all $f\in\mathcal F(V)$.

But $u_n e_V^U(f)\in \mathcal K_S(e_V^U \mathcal F(V))\subset \mathcal J_S^n\mathcal F(U)$ for all $n$, hence the claim.
\end{proof}

\subsection{Functoriality}

In what follows, we consider only morphisms between smooth semialgebraic sets. For a morphism $\pi:X\to Y$, and a cosheaf $\mathcal F$ on $X$, the push-forward $\pi_*\mathcal F$ is simply the cosheaf: $V\mapsto \mathcal F(\pi^{-1}V)$.

\begin{lemma}
 The push-forward of a Schwartz cosheaf is a Schwartz cosheaf.
\end{lemma}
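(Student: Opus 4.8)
The plan is to verify, one by one, the four axioms for a Schwartz cosheaf from Appendix~\ref{app:cosheaves}, applied to $\pi_*\mathcal F$ on $Y$, by pulling everything back to $X$ via $\pi$, where the analogous properties are already known for $\mathcal F$. Recall that $\pi_*\mathcal F(V):=\mathcal F(\pi^{-1}V)$, and that for open sets $V\subseteq U$ of $Y$ the extension map of $\pi_*\mathcal F$ is the extension map $e^{\pi^{-1}U}_{\pi^{-1}V}$ of $\mathcal F$ attached to the inclusion of open semialgebraic sets $\pi^{-1}V\subseteq\pi^{-1}U$. First I would record the purely formal points: each $\pi_*\mathcal F(V)=\mathcal F(\pi^{-1}V)$ is a nuclear Fr\'echet space; since $\pi^{-1}$ commutes with finite unions and with intersections of open sets, codescent for an open cover $\{V_i\}$ of $V$ follows from codescent for $\{\pi^{-1}V_i\}$, so $\pi_*\mathcal F$ is again a cosheaf; and its extension maps are closed because those of $\mathcal F$ are (axiom (1)). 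The compatibility conditions (axiom (4)) will likewise be immediate once the module structure below is set up, by functoriality of pullback.

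The crucial ingredient is the pullback morphism of structure sheaves. I would show that for $f\in\mathcal O_Y(U)$ the function $\pi^*f:=f\circ\pi$ lies in $\mathcal O_X(\pi^{-1}U)$, and that $\pi^\#\colon\mathcal O_Y(U)\to\mathcal O_X(\pi^{-1}U)$, $f\mapsto\pi^*f$, is a continuous homomorphism of topological algebras, compatible with restriction. Temperedness of $\pi^*f$ reduces, via an embedded resolution as in Theorem~\ref{resolution}, to the case of a monomial map in local semialgebraic coordinates, where the bound $|\pi^*f|\le|T\circ\pi|$ (with $T\circ\pi$ again semialgebraic) and the corresponding bounds for Nash derivatives, resp.\ pseudoderivatives, follow from the chain rule, resp.\ from a semialgebraic Lipschitz estimate $|\pi(x+y)-\pi(x)|\le C|y|$ for small $|y|$; continuity of $\pi^\#$ then follows from the sequential criterion of Lemma~\ref{sequentialconvergence}. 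Granting this, $\pi_*\mathcal F(U)=\mathcal F(\pi^{-1}U)$ becomes a continuous $\mathcal O_Y(U)$-module by restriction of scalars along $\pi^\#$: the required bilinear map $\mathcal O_Y(U)\times\mathcal F(\pi^{-1}U)\to\mathcal F(\pi^{-1}U)$ is the composite of $\pi^\#\times\mathrm{id}$ with the continuous $\mathcal O_X(\pi^{-1}U)$-action on $\mathcal F(\pi^{-1}U)$, hence continuous; this gives axiom (2).

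It remains to check axiom (3). Fix a closed $S\subseteq Y$ and an open $U\subseteq Y$, and put $S'=\pi^{-1}S$, $U'=\pi^{-1}U$. Since any tempered function vanishing on $S$ pulls back to one vanishing on $S'$, we have $\pi^\#(\mathcal J_S(U))\subseteq\mathcal J_{S'}(U')$; because pullback carries an $n$-fold product into an $n$-fold product and $\pi^\#$ is continuous, $\pi^\#(\mathcal J_S^n(U))\subseteq\mathcal J_{S'}^n(U')$, and then, using continuity of the module action, $\overline{\mathcal J_S^n\cdot\pi_*\mathcal F(U)}\subseteq\overline{\mathcal J_{S'}^n\cdot\mathcal F(U')}$ inside $\mathcal F(U')$ for every $n$. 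Intersecting over $n$ and invoking axiom (3) for $\mathcal F$ over the closed set $S'$,
\[
\bigcap_n \overline{\mathcal J_S^n\,\pi_*\mathcal F(U)}\ \subseteq\ \bigcap_n \overline{\mathcal J_{S'}^n\,\mathcal F(U')}\ \subseteq\ e\big(\mathcal F(U'\smallsetminus S')\big)\ =\ e\big(\pi_*\mathcal F(U\smallsetminus S)\big),
\]
since $\pi^{-1}(U\smallsetminus S)=U'\smallsetminus S'$. This establishes axiom (3) and completes the verification.

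I expect the main obstacle to be the very first analytic claim: that pullback of tempered functions is tempered and that $\pi^\#$ is continuous. In the nonarchimedean case this is the only step where one must genuinely work with semialgebraic charts and pseudoderivatives — controlling $f^{(N)}\circ\pi$ needs the Lipschitz-type estimate above, obtained after a resolution of singularities — whereas everything else is formal bookkeeping transferring the known cosheaf structure on $X$ along $\pi$.
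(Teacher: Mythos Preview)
Your proposal is correct and follows essentially the same route as the paper for the one nontrivial point, axiom (3): both arguments pull back the ideal via $\pi^\#(\mathcal J_S(U))\subset\mathcal J_{\pi^{-1}S}(\pi^{-1}U)$, pass to $n$-fold products and closures, intersect over $n$, and invoke axiom (3) for $\mathcal F$ on $X$ together with $\pi^{-1}(U\smallsetminus S)=\pi^{-1}U\smallsetminus\pi^{-1}S$.

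Two minor differences are worth noting. First, you devote effort to axiom (2), constructing $\pi^\#\colon\mathcal O_Y(U)\to\mathcal O_X(\pi^{-1}U)$ and arguing for its continuity via resolution and Lipschitz/chain-rule estimates; the paper simply declares this verification trivial and moves on. Your caution is not misplaced, though the argument you sketch is more than what is needed. Second, the paper goes beyond the bare statement of axiom (3): after obtaining the inclusion $\bigcap_n\overline{\mathcal J_S^n\,\pi_*\mathcal F(U)}\subset e(\pi_*\mathcal F(U\smallsetminus S))$, it invokes the approximate-identity argument of Lemma~\ref{lemmaidentity} to show the reverse inclusion, hence equality. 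This is not required for the lemma as stated (axiom (3) only asks for the one inclusion), but it is the content of Lemma~\ref{lemmaidentity} specialized to the push-forward, and the paper records it here.
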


\begin{proof}
 The only nontrivial verification is that of the third axiom. Let $\pi:X\to Y$ be a morphism and denote for clarity by $\mathcal O_X$ and $\mathcal O_Y$ the corresponding sheaves of tempered functions. Let $S\subset Y$ be a closed subset, and $\mathcal K_S, \mathcal K_{\pi^{-1}S}$ the corresponding sheaves on $Y$ and $X$, respectively.

 The third axiom for $\mathcal F$ implies that for every open  $V\subset Y$ we have an equality:
$$ \bigcap_n \overline{\mathcal J_{\pi^{-1}S}^n(\pi^{-1}V) \cdot  \pi_*\mathcal F(V)} = e\left(\pi_*\mathcal F(V\smallsetminus S)\right).$$
 In particular, since $\mathcal J_S(V) \subset \mathcal J_{\pi^{-1}S}(\pi^{-1}V)$, we get:
$$ \bigcap_n \overline{\mathcal J_S^n(V) \cdot  \pi_*\mathcal F(V)} \subset e\left(\pi_*\mathcal F(V\smallsetminus S)\right).$$

But by the proof of Lemma \ref{lemmaidentity}, we have: $$e\left(\pi_*\mathcal F(V\smallsetminus S)\right) \subset \overline{\mathcal K_S(V) \cdot  \pi_*\mathcal F(V)},$$ and of course $\overline{\mathcal K_S(V) \cdot  \pi_*\mathcal F(V)} \subset \bigcap_n \overline{\mathcal J_S^n(V) \cdot  \pi_*\mathcal F(V)}$. Hence, all these three spaces coincide.
\end{proof}

\subsection{Stalks, fibers and a Nakayama-type lemma} \label{ssstalks}

Given a closed $S\subset X$ we define the \emph{stalk} of a Schwartz cosheaf over $S$ to be the cosheaf:
$$\mathcal F_S = \mathcal F/e\left(\mathcal F_{X\smallsetminus S}\right),$$
where $\mathcal F_{X\smallsetminus S}$ is the cosheaf: $\mathcal F_{X\smallsetminus S}(U) = \mathcal F(U\smallsetminus S)$.
Clearly, $\mathcal F_S$ is zero on $X\smallsetminus S$; in that sense, it is supported on $S$.

Let now $\bar{\mathcal O}_S$ be the quotient $\mathcal O/\mathcal J_S$ -- it is the sheaf of restrictions to $S$ of (smooth) tempered functions on $\mathcal O$, and it satisfies $\bar{\mathcal O}_S (U) = \mathcal O(U)/\mathcal J_S(U)$. The \emph{fiber} of $\mathcal F$ will be the cosheaf: $\bar{\mathcal F}_S (U) = \mathcal F(U)/\overline{\mathcal J_S(U)\mathcal F(U)}$. Of course, in the nonarchimedean case the fiber and the stalk coincide. In the archimedean case, we will use the following version of Nakayama's lemma:

\begin{proposition}\label{propNakayama}
Let $S=$a point (hence $\bar{\mathcal O}_S(U)=\CC$ for every $U\supset S$), and assume that a \emph{finite-dimensional} subspace $N\subset \mathcal F(U)$ spans the fiber $\bar{\mathcal F}_S(U)$. Then the same subspace algebraically (i.e.\ without taking closures) generates $\mathcal F_S(U)$ as an $\mathcal O_S(U)$-module. 
\end{proposition}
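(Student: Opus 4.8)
The plan is to reduce the statement to a concrete analytic fact about Taylor expansions near a point, using the description of $\mathcal O_S$ from Lemma \ref{temperedlemma} as the projective limit $\lim_{\from}\mathcal O/\mathcal J_S^n$. First I would fix a finite basis $v_1,\dots,v_r$ of the subspace $N$ whose images span $\bar{\mathcal F}_S(U) = \mathcal F(U)/\overline{\mathcal J_S(U)\mathcal F(U)}$. The goal is to show that for every $f\in\mathcal F(U)$ there exist tempered functions $a_1,\dots,a_r\in\mathcal O(U)$ (or rather their classes in $\mathcal O_S(U)$) and an element of $e(\mathcal F_{X\smallsetminus S}(U))$ whose sum with $\sum_i a_i v_i$ equals $f$; equivalently, working modulo $e(\mathcal F_{X\smallsetminus S}(U))$, that $N$ generates $\mathcal F_S(U)$ over $\mathcal O_S(U)$ \emph{without} taking closure.

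The key step is an iterated approximation using the filtration $\mathcal J_S^n$. By hypothesis, given $f\in\mathcal F(U)$ we may write $f = \sum_i c_i^{(0)} v_i + g_1$ with $c_i^{(0)}\in\CC$ and $g_1\in\overline{\mathcal J_S(U)\mathcal F(U)}$. Now I would apply the spanning hypothesis \emph{again}, together with the module structure: since $\mathcal J_S(U)/\mathcal J_S^2(U)$ is finite-dimensional over $\bar{\mathcal O}_S(U)=\CC$ (as $S$ is a point, this is the cotangent space, finite-dimensional because $X$ is smooth), the space $\mathcal J_S\mathcal F(U)$ modulo $\overline{\mathcal J_S^2\mathcal F(U)}$ is spanned by finitely many elements $t_\alpha v_i$ where $t_\alpha$ runs over a basis of $\mathcal J_S/\mathcal J_S^2$; hence $g_1 = \sum_i \left(\sum_\alpha c_{i,\alpha}^{(1)} t_\alpha\right) v_i + g_2$ with $g_2\in\overline{\mathcal J_S^2\mathcal F(U)}$. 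Iterating, at stage $n$ we produce polynomials $P_i^{(n)}$ in the coordinates (of degree $\le n$, homogeneous part of degree $n$ having the coefficients just chosen) such that $f - \sum_i P_i^{(n)} v_i \in \overline{\mathcal J_S^{n+1}\mathcal F(U)}$. The finite-dimensionality of $N$ is exactly what forces the \emph{same} finite set $v_1,\dots,v_r$ to work at every stage, with only the coefficient functions growing.

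Finally I would assemble the formal Taylor series $\sum_n P_i^{(n)}$ into genuine tempered functions $a_i\in\mathcal O(U)$. This is where Lemma \ref{temperedlemma} and the Borel-type construction in its proof enter: given a compatible sequence of polynomials at a point, one constructs a smooth tempered function on $U$ realizing those derivatives (in the nonarchimedean case this is trivial, as only the zeroth derivative matters and $\mathcal K_S=\mathcal J_S$). Then $f - \sum_i a_i v_i$ lies in $\bigcap_n \overline{\mathcal J_S^n\mathcal F(U)}$, which by Lemma \ref{lemmaidentity} equals $e(\mathcal F(U\smallsetminus S)) = \overline{\mathcal K_S\mathcal F(U)}$; working in $\mathcal F_S(U) = \mathcal F(U)/e(\mathcal F_{X\smallsetminus S}(U))$, this residual term vanishes, so $f \equiv \sum_i a_i v_i$ in $\mathcal F_S(U)$ with no closure taken. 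The main obstacle is the bookkeeping in the third paragraph: one must check that the tempered function $a_i$ can be chosen so that the estimates defining membership in $\mathcal O(U)$ genuinely hold (not just formal agreement of derivatives), which is precisely the content of the seminorm bounds established in the proof of Lemma \ref{temperedlemma}, and that the whole construction is compatible with the $\mathcal O(U)$-module structure on $\mathcal F(U)$ so that multiplication by $a_i$ is continuous and the telescoping sum converges to an element congruent to $f$. I expect the finite-dimensionality hypothesis on $N$ to be used twice: once to keep the generating set fixed through the induction, and once to guarantee that the ``error'' at each stage again lies in the span of $N$ modulo a deeper piece of the filtration.
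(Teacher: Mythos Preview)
Your proposal is correct and follows essentially the same strategy as the paper: pass through the filtration by $\overline{\mathcal J_S^n\mathcal F(U)}$, use finite-dimensionality to get surjectivity onto each graded piece, and invoke the Borel-type realization in Lemma~\ref{temperedlemma} to assemble genuine tempered coefficients. The paper packages the induction as a single surjection $N\otimes R \twoheadrightarrow \lim_{\from} M/M_n$ (with $M=\mathcal F_S(U)$, $R=\mathcal O_S(U)$, $M_n=\overline{\mathcal J_S^n M}$), while you carry it out as an explicit Taylor approximation, but these are the same argument.

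One small point worth tightening: your claim that ``$\overline{\mathcal J_S\mathcal F(U)}$ modulo $\overline{\mathcal J_S^2\mathcal F(U)}$ is spanned by the $t_\alpha v_i$'' does not follow merely from $\mathcal J_S/\mathcal J_S^2$ being finite-dimensional. The paper's version of this step is to observe that the natural map $\mathcal J_S^n/\mathcal J_S^{n+1}\otimes_{\CC} N \to M_n/M_{n+1}$ has \emph{dense} image (use that $N$ spans the fiber and that multiplication by a fixed $j\in\mathcal J_S^n$ is continuous, so $j\cdot\overline{\mathcal J_S M}\subset\overline{\mathcal J_S^{n+1}M}$), and then finite-dimensionality of the source forces surjectivity. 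With that adjustment your argument goes through exactly as written.
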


\begin{proof}
For simplicity of notation, $M:=\mathcal F_S(U)$, $R = \mathcal O_S(U)$, $J=\mathcal J_S(U)$, $M_n=\overline{J^nM}$.  Notice that by the third axiom for Schwartz cosheaves, $M$ is separated with respect to the $J$-adic topology, i.e.\ we have an embedding:
$$M \hookrightarrow \lim_{\underset n\from} M/M_n.$$

We claim that for every $n$ the map $J^n\otimes N\to M_n/M_{n+1}$ is surjective. Indeed, we have a natural map with dense image:
$$ J^n/J^{n+1} \otimes_{R/J} N \to M_n/M_{n+1},$$
but since the space on the left is finite dimensional, the map is surjective. 

This implies that the map $(R/J^n) \otimes N\to M/M_n$ is surjective, and hence so is the map: $$\lim_{\underset n\from} (R/J^n) \otimes N \to \lim_{\underset n\from} M/M_n .$$
Since $N$ is finite-dimensional, the left hand side is equal to $N\otimes \lim_\from R/J^n$, which is equal to $N\otimes R$ by Lemma \ref{temperedlemma}. Therefore $N$ generates $\lim_{\underset n\from} M/M_n$ algebraically over $R$, and in particular $M=\lim_{\underset n\from} M/M_n$.
\end{proof}

\subsection{Group actions} 

Let $\mathcal F$ be a Schwartz cosheaf on a smooth $F$-variety $X$, and assume that it carries an action of a group $G$. (The group $G$ is assumed to act trivially on $X$; for example, if $Y$ is an affine $G$-variety, with $G$ reductive, $X=Y\sslash G$ and $\mathcal G$ is a cosheaf on $Y$ with a compatible $G$-action, then $\mathcal F$ could be the push-forward of $\mathcal G$.)

We let $\mathcal F_G$ denote the \emph{cosheaf of $G$-coinvariants}, that is:
$$\mathcal F_G(U) = \mathcal F(U)_G,$$
where $\mathcal F(U)_G$ denotes the quotient of $\mathcal F(U)$ by the closed subspace generated by vectors of the form $v-gv$, $v\in \mathcal F(U)$. 

\begin{proposition}\label{propcoinvisSchwartz}
 The cosheaf $\mathcal F_G$ is a Schwartz cosheaf. For every two open sets $V\subset U$, the map: $\left(e_V^U\mathcal F(V)\right)_G\to \mathcal F_G(U)$ is a closed embedding.
\end{proposition}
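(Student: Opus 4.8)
The final statement to prove is Proposition~\ref{propcoinvisSchwartz}: that the cosheaf $\mathcal F_G$ of $G$-coinvariants of a Schwartz cosheaf $\mathcal F$ is itself a Schwartz cosheaf, and that $\left(e_V^U\mathcal F(V)\right)_G\to \mathcal F_G(U)$ is a closed embedding.

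\textbf{Plan of proof.}

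The plan is to verify the four axioms of a Schwartz cosheaf for $\mathcal F_G$ one by one, the first (closedness of extension maps) being essentially the content of the ``closed embedding'' claim, and the third (the vanishing/approximate-identity axiom) being the main obstacle. First I would observe that $\mathcal F_G$ is a cosheaf of nuclear Fréchet spaces: the coinvariant functor $W\mapsto W_G$ on nuclear Fréchet spaces (quotient by the closed span of $\{v-gv\}$) again yields a nuclear Fréchet space, since nuclearity passes to Hausdorff quotients; and it is a cosheaf because coinvariants are a colimit-type construction that commutes with the colimits defining a cosheaf. Axiom (2), the continuous $\mathcal O(U)$-module structure: the $G$-action on $\mathcal F(U)$ commutes with the $\mathcal O(U)$-action (since $G$ acts trivially on $X$, hence $\mathcal O$-linearly — this is implicit in the setup, where the $G$-action on $\mathcal F$ is by cosheaf automorphisms lying over $\mathrm{id}_X$), so the $\mathcal O(U)$-action descends to the quotient $\mathcal F_G(U)$ and remains continuous. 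Axiom (4), the compatibility square, is immediate by functoriality of coinvariants applied to the square for $\mathcal F$.

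For axiom (1) together with the closed-embedding statement: I would argue that $e_V^U\colon \mathcal F(V)\to\mathcal F(U)$ is a closed, $G$-equivariant embedding, and that a closed $G$-equivariant embedding of nuclear Fréchet $G$-modules induces a closed embedding on coinvariants. The key point here is to identify the image of $\left(e_V^U\mathcal F(V)\right)_G$ in $\mathcal F_G(U)$ with the image of the $G$-subrepresentation $e_V^U\mathcal F(V)\subseteq \mathcal F(U)$: since $G$ preserves this subspace, the span of $\{v-gv : v\in e_V^U\mathcal F(V)\}$ is contained in $e_V^U\mathcal F(V)$ itself, so no ``extra'' elements of $\mathcal F(U)$ get collapsed, and the image in $\mathcal F_G(U)$ is exactly $e_V^U\mathcal F(V)/(\text{closed span})$. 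To see the image is \emph{closed} I would invoke the approximate-identity mechanism: writing $S = X\smallsetminus V$ (or rather, covering $U\smallsetminus V$ by closed sets and localizing), by Lemma~\ref{lemmaidentity} applied to $\mathcal F$ the subspace $e_V^U\mathcal F(V)$ equals $\bigcap_n\overline{\mathcal J_S^n\mathcal F(U)}$, an intersection of closed $\mathcal O(U)$- and $G$-stable subspaces; this description descends to $\mathcal F_G(U)$, showing the image is an intersection of closed subspaces, hence closed, and giving simultaneously axiom (3) for $\mathcal F_G$.

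More precisely, for axiom (3) I would show $\bigcap_n\overline{\mathcal J_S^n\mathcal F_G(U)}\subseteq e(\mathcal F_G(U\smallsetminus S))$. The natural surjection $\mathcal F(U)\twoheadrightarrow\mathcal F_G(U)$ is continuous, $\mathcal O(U)$-linear, and carries $\mathcal J_S^n\mathcal F(U)$ onto a dense subspace of $\mathcal J_S^n\mathcal F_G(U)$, hence $\overline{\mathcal J_S^n\mathcal F(U)}$ maps into (indeed onto a dense subspace of) $\overline{\mathcal J_S^n\mathcal F_G(U)}$; conversely any element of $\overline{\mathcal J_S^n\mathcal F_G(U)}$ lifts to $\overline{\mathcal J_S^n\mathcal F(U)}$ modulo the coinvariant relations. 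The cleanest route is to use the weak approximate identity $(u_n)_n\subset\mathcal K_S(U)$ of Lemma~\ref{lemmaexistsidentity} directly on $\mathcal F_G(U)$: because the $\mathcal O(U)$-module structure on $\mathcal F_G(U)$ is continuous and $u_n\to 1$ in $\mathcal O(U\smallsetminus S)$, the argument of Lemma~\ref{lemmaidentity} goes through verbatim to give $\bigcap_n\overline{\mathcal J_S^n\mathcal F_G(U)} = \overline{\mathcal K_S(U)\mathcal F_G(U)} = e\big(\mathcal F_G(U\smallsetminus S)\big)$. The one genuine subtlety — and I expect this to be the main obstacle — is commuting the two quotient operations, i.e.\ checking that $e\big(\mathcal F_G(U\smallsetminus S)\big)$, the image of the coinvariants of the open piece, really coincides with $\big(e\,\mathcal F(U\smallsetminus S)\big)_G$ \emph{as a closed subspace}, rather than just up to taking closures; this is exactly where one needs that the coinvariant relations for $\mathcal F(U)$ restricted to the $G$-stable subspace $e\,\mathcal F(U\smallsetminus S)$ already account for all of them, which follows from $G$-stability of that subspace as noted above. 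Granting this, $\mathcal F_G$ satisfies all four axioms and the map on coinvariants is a closed embedding, completing the proof.
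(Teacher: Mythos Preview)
Your overall plan is sound and you correctly identify the approximate identity as the key tool, but there is a genuine gap at precisely the point you flag as ``the main obstacle.'' You write that the needed identification of $\big(e\,\mathcal F(U\smallsetminus S)\big)_G$ with a closed subspace of $\mathcal F_G(U)$ ``follows from $G$-stability of that subspace.'' It does not. $G$-stability of $M_1:=e_V^U\mathcal F(V)$ tells you only that $N_1:=\mathrm{span}\{v-gv:v\in M_1\}\subset M_1$; it does \emph{not} tell you that $\overline{N_2}\cap M_1\subset\overline{N_1}$, where $N_2:=\mathrm{span}\{v-gv:v\in\mathcal F(U)\}$. An element $v-gv$ with $v\in\mathcal F(U)\smallsetminus M_1$ could land in $M_1$, or a limit of such elements could, and nothing in your argument rules out that this produces new relations. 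This containment $\overline{N_2}\cap M_1\subset\overline{N_1}$ is exactly what injectivity of $(M_1)_G\to(M_2)_G$ means, so you cannot wave it away.

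The paper's proof attacks this containment head-on with the approximate identity, but applied to the relations themselves rather than to the module: take $f\in M_1$ with $f=\lim f_i$, $f_i\in N_2$, and take $u_n\in\mathcal K_{U\smallsetminus V}(U)$ as in Lemma~\ref{lemmaexistsidentity}. The crucial observation is that $u_n\cdot N_2\subset N_1$: if $f_i=\sum_j(v_j-g_jv_j)$ then $u_n f_i=\sum_j(u_nv_j-g_j(u_nv_j))$ (using $\mathcal O$-linearity of the $G$-action), and each $u_nv_j\in\mathcal K_{U\smallsetminus V}(U)\cdot\mathcal F(U)\subset M_1$ by Lemma~\ref{lemmaidentity}, so $u_nf_i\in N_1$. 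Then $u_nf=\lim_i u_nf_i\in\overline{N_1}$, and since $f\in M_1$ Lemma~\ref{lemmaidentity} gives $f=\lim_n u_nf\in\overline{N_1}$. This is the missing step in your argument; once you have it, the rest of your outline (axioms (2), (3), (4) being routine) matches the paper's view that closedness is ``the only nontrivial axiom.''
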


\begin{proof}
 The only nontrivial axiom to check is the closedness of extension maps, therefore it suffices to prove the second statement. Let $M_1=e_V^U\mathcal F(V)$, $M_2= \mathcal F(U)$, and let $N_i, i=1,2$, be the subspace of $M_i$ algebraically spanned by elements of the form $v-g\cdot v$, $v\in M_i$. We need to prove that for a sequence $f_i\to f$, where $f_i\in N_2$ and $f\in M_1$, we have $f\in \overline{N_1}$. 

 Choose an approximate identity $u_n \in \mathcal K_{U\smallsetminus V}(U)$ (Lemma \ref{lemmaexistsidentity}). Then we have: $\lim_n u_nf = f$ (Lemma \ref{lemmaidentity}). We also have $\lim_i u_n f_i = u_n f$ for every $i$; thus, $f \in \overline{(u_nf_i)_{n,i}}\subset \overline{N_1}$.
\end{proof}

In applications to the present paper, all Schwartz cosheaves that we will encounter are flabby, i.e.\ the extension maps are monomorphisms (hence closed embeddings, by the first axiom). The last proposition implies:
\begin{corollary}\label{corollaryflabby}
 In the above setting, the cosheaf of $G$-coinvariants of a flabby Schwartz cosheaf is flabby.
\end{corollary}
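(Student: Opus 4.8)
The plan is to obtain this as an immediate consequence of Proposition~\ref{propcoinvisSchwartz}, which already contains all the real content; what remains is to track how the coinvariants functor $W\mapsto W_G$ interacts with the factorization of an extension map through its image.

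Recall that a Schwartz cosheaf is \emph{flabby} precisely when every extension map $e_V^U\colon\mathcal F(V)\to\mathcal F(U)$ is a monomorphism, which by the first axiom forces it to be a closed embedding. So I fix open sets $V\subset U\subset X$ and must show that the extension map of $\mathcal F_G$ from $V$ to $U$ --- that is, the map $\mathcal F(V)_G\to\mathcal F(U)_G$ obtained by applying coinvariants to $e_V^U$, which by the fourth (compatibility) axiom is indeed the corestriction of the cosheaf $\mathcal F_G$ --- is injective.

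Here is the factorization I would use. Since $\mathcal F$ is flabby, $e_V^U$ decomposes as $\mathcal F(V)\xrightarrow{\ \sim\ }e_V^U\mathcal F(V)\hookrightarrow\mathcal F(U)$, where the first arrow is a topological isomorphism of Fr\'echet $G$-modules and the second is the inclusion of a closed $G$-submodule (closed because extension maps are closed). A topological isomorphism of $G$-modules carries the closed subspace spanned by the elements $v-g\cdot v$ onto the analogous subspace, hence induces a topological isomorphism on coinvariants; thus $\mathcal F(V)_G\xrightarrow{\ \sim\ }\bigl(e_V^U\mathcal F(V)\bigr)_G$. Now Proposition~\ref{propcoinvisSchwartz} asserts exactly that the map $\bigl(e_V^U\mathcal F(V)\bigr)_G\to\mathcal F_G(U)$ induced by that inclusion is a closed embedding. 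Composing, the extension map $\mathcal F_G(V)\to\mathcal F_G(U)$ is a closed embedding, in particular a monomorphism; since $V\subset U$ were arbitrary, $\mathcal F_G$ is flabby.

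I do not expect any genuine obstacle, as Proposition~\ref{propcoinvisSchwartz} does all the work; the only point demanding (minor) care is the compatibility check that the two maps just composed really assemble into the corestriction map of $\mathcal F_G$, which is formal from functoriality of coinvariants together with the compatibility axiom for Schwartz cosheaves.
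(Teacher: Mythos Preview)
Your proof is correct and is exactly the argument the paper has in mind: the paper simply writes ``The last proposition implies'' before stating the corollary, and your factorization $\mathcal F(V)\xrightarrow{\sim}e_V^U\mathcal F(V)\hookrightarrow\mathcal F(U)$ together with the second assertion of Proposition~\ref{propcoinvisSchwartz} is precisely how that implication goes.
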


\end{appendices}

\bibliographystyle{alphaurl}
\bibliography{biblio}

\end{document}